\newtheoremstyle{mystyle}{}{}{\rmfamily}%
{}{\normalfont\bfseries}{ }{ }{}
\theoremstyle{mystyle}
\newtheorem{theorem}{Theorem}[section]
\newtheorem*{theorem*}{Theorem}
\newtheorem{lemma}[theorem]{Lemma}
\newtheorem{definition}[theorem]{Definition}
\newtheorem{corollary}[theorem]{Corollary}
\newtheorem{proposition}[theorem]{Proposition}
\newtheorem{remark}[theorem]{Remark}
\newtheorem{question}[theorem]{Question}
\newtheorem{example}[theorem]{\bf Example}
\newtheorem{problem}[theorem]{Problem}
\newcommand{\keywords}[1]{\par\textbf{Keywords:} #1\par}
\title{
On Spaceability within Linear Dynamics
}
\author[1]{Manuel Saavedra}
\author[2]{Manuel Stadlbauer}
\affil[1,2]{Instituto de Matemática, Universidade Federal do Rio de Janeiro, RJ, Brazil.}
\affil[1]{\texttt{saavmath@im.ufrj.br}}
\affil[2]{\texttt{manuel@im.ufrj.br}}
\date{}
\begin{document}
	\maketitle
\setlength{\headheight}{14.49998pt}

\begin{center}
	\textit{The first author gratefully dedicates this work to his father.}
\end{center}

\begin{abstract}	
We investigate spaceability phenomena in linear dynamics from a structural perspective. 
Given a continuous linear operator \(T:X \to X\), we introduce the set \(\Omega(T)\), consisting of all continuous linear operators \(h:X \to X\) for which there exists a strictly increasing sequence \((\theta_n)_n\) of positive integers such that the set  \(\{x \in X : \displaystyle{\lim_{n \to \infty} T^{\theta_n}x = h(x)}\}\) is dense in \(X\). Within this framework, two classical phenomena—the existence of hypercyclic and recurrent subspaces in separable infinite-dimensional complex Banach spaces—emerge as instances of a common underlying structure described by \(\Omega(T)\). To analyze \(\Omega(T)\), we introduce the notion of collections simultaneously approximated (c.s.a.) by \(T\), and show that every maximal c.s.a. is an SOT-closed affine manifold. For quasi-rigid operators on separable Banach spaces, we establish the existence of a unique maximal c.s.a. containing the identity operator. Furthermore, we examine \(\Omega(T)\) through the left-multiplication operator \(L_T\) acting on the algebra of bounded operators. Our approach combines two key ingredients: a refinement of A. López’s technique on recurrent subspaces for quasi-rigid operators, and a common dense-lineability result obtained by the first author and A. Arbieto. These tools yield new spaceability results for the sets \(\Omega(T)\), \(\mathcal{AP}\Omega(T)\), and for any countable c.s.a. by \(T\).
\end{abstract}

\keywords{Spaceability, recurrence, hypercyclicity}
	
\section{Introduction}

The study of spaceability within linear dynamics has been a central theme over the last decades. Among the most investigated phenomena are the existence of hypercyclic and recurrent subspaces, which have attracted considerable attention. In this direction, A. López \cite{Lopez} observed that, at least for the spaceable property, hypercyclicity and recurrence can be treated as equals. This naturally raises the question: can hypercyclic and recurrent subspaces be regarded as particular instances of a broader structural scheme?

In this paper, we provide an affirmative answer by showing that both notions emerge as concrete realizations of a structural setting naturally captured by \(\Omega(T)\), which will be further examined in the sequel.

To provide context, we recall some basic notions. Let $X$ be an infinite-dimensional $F$-space. A subset $A \subset X$ is said to be dense-lineable if $A \cup \{0\}$ contains a dense linear subspace, and spaceable if $A \cup \{0\}$ contains an infinite-dimensional closed subspace of $X$, following the terminology used in \cite{aron2005lineability, bernal2014lineability, bernal-linear, gurariy2004lineability, seoane2006chaos}.

We say that an infinite-dimensional closed subspace $Z \subset X$ is a hypercyclic subspace for $T$ if $Z \subset \mathrm{HC}(T) \cup \{0\}$, where $\mathrm{HC}(T)$ denotes the set of all hypercyclic vectors for $T$. Similarly, an infinite-dimensional closed subspace $Z \subset X$ is a recurrent subspace for $T$ if $Z \subset \mathrm{Rec}(T) \cup \{0\}$, where $\mathrm{Rec}(T)$ denotes the set of all recurrent vectors for $T$. In other words, a hypercyclic operator $T$ admits a hypercyclic subspace if and only if $\mathrm{HC}(T)$ is spaceable, and a recurrent operator $T$ admits a recurrent subspace if and only if $\mathrm{Rec}(T)$ is spaceable.

The first work to establish sufficient conditions for the existence of a hypercyclic subspace was due to A. Montes-Rodríguez \cite{Montes}. Subsequently, equivalences between the existence of hypercyclic subspaces and certain properties of the essential spectrum for operators satisfying the hypercyclicity criterion were investigated. In Hilbert spaces, these equivalences were demonstrated by F. León-Saavedra and A. Montes-Rodríguez \cite{Leon}, and later generalized to Banach spaces by M. González, F. León-Saavedra, and A. Montes-Rodríguez \cite{Gonzales}.

More recently, A. López \cite{Lopez} investigated recurrent subspaces for quasi-rigid operators on Banach spaces, highlighting structural properties analogous to those observed for hypercyclic subspaces. 

Recall that a continuous linear operator $T:X \to X$ is quasi-rigid if there exists a strictly increasing sequence of positive integers $(\theta_n)_n$ such that the set \(\{x \in X : T^{\theta_n} x \xrightarrow[n\rightarrow \infty]{} x\}\) is dense in $X$.

Our change of perspective relies on the set \(\Omega(T) \subset \mathcal{L}(X)\), defined as the collection of all continuous linear operators $h: X \to X$ for which there exists a strictly increasing sequence of positive integers $(\omega_n)_n$ such that the set
\[
\{x \in X : T^{\omega_n} x \xrightarrow[n\rightarrow \infty]{} h(x)\}
\]
is dense in $X$.

Notably, when $X$ is a separable infinite-dimensional Fréchet or Banach space, the set $\Omega(T)$ provides a simple characterization of important dynamical properties:
\begin{align*}
	T \text{ is quasi-rigid} & \quad \Longleftrightarrow \quad \mathrm{Id} \in \Omega(T), \\
	T \text{ is weakly mixing} & \quad \Longleftrightarrow \quad \Omega(T) = \mathcal{L}(X).
\end{align*}

To establish the connection with spaceability, for a continuous linear map $h:X \to X$ we define
\[
\text{R}(T,h) := \{x \in X : \exists\, (\omega_n)_n \uparrow \infty \text{ such that } \lim_{n \to \infty} T^{\omega_n} x = h(x)\}.
\]

The following two cases illustrate how the theorem below unifies and recovers the classical notions of hypercyclic and recurrent subspaces.

\begin{table}[h]
	\centering
	\renewcommand{\arraystretch}{1.3}
	\begin{tabular}{p{0.44\textwidth}|p{0.44\textwidth}}
		\hspace{2cm} \text{$T$ weakly mixing} & \hspace{2cm} \text{$T$ quasi-rigid} \\ \hline
		\hspace{1cm} 	$\Omega(T)=\mathcal{L}(X)$, SOT-separable & \hspace{1cm} $F=\{\mathrm{Id}\}\subset \Omega(T)$, SOT-separable \\[4pt]
		\hspace{1cm}	$\displaystyle \bigcap_{h\in \mathcal{L}(X)} \text{R}(T,h) \;=\; \mathrm{HC}(T)\cup \{0\}$ 
		& 
		\hspace{1cm} $\displaystyle \bigcap_{h\in\{\mathrm{Id}\}} \text{R}(T,h) \;=\; \mathrm{Rec}(T)$
	\end{tabular}
\end{table}

\begin{theorem*}
	Let $X$ be a complex separable infinite-dimensional Banach space, and let $T \in \mathcal{L}(X)$. Suppose that $\Omega(T)$ is non-empty. If there exists a strictly increasing sequence of positive integers $(\theta_n)_n$ and an infinite-dimensional closed subspace $E \subset X$ such that
	\[
	\sup_n \|T^{\theta_n}|_E\| < \infty,
	\]
	then for any SOT-separable subset $F \subset \Omega(T)$,
	\[
	\bigcap_{h \in F} \text{R}(T, h)
	\]
	is spaceable.
\end{theorem*}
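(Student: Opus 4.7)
My strategy decomposes into three stages, invoking both ingredients highlighted in the abstract: a common dense-lineability theorem of Saavedra and Arbieto, and a refinement of A.~López's basic sequence construction for recurrent subspaces. \emph{Stage one (reduction to countable $F$):} choose a countable SOT-dense $(h_k)_k\subset F$; if $x\in\bigcap_k \text{R}(T,h_k)$ and $h_{k_j}\xrightarrow{\mathrm{SOT}} h\in F$, then $h_{k_j}(x)\to h(x)$ in norm, and a diagonal extraction from the witnessing sequences furnishes $n_j\uparrow\infty$ with $\|T^{n_j}x-h_{k_j}(x)\|<1/j$, hence $T^{n_j}x\to h(x)$ and $x\in \text{R}(T,h)$. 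The theorem therefore reduces to countable $F=(h_k)_k$.

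\emph{Stage two (common dense subspace adapted to $(\theta_n)$):} apply the Saavedra--Arbieto common dense-lineability theorem to the countable collection $(h_k)_k\subset\Omega(T)$, obtaining a dense linear subspace $D\subset\bigcap_k \text{R}(T,h_k)$. I will need to arrange that the approximating subsequences live within the prescribed $(\theta_n)_n$, since the only available orbit control on the infinite-dimensional $E$ is $M:=\sup_n\|T^{\theta_n}|_E\|<\infty$. Securing this $\theta$-compatibility — either intrinsically from the Saavedra--Arbieto statement or by combining it with a Baire-type extraction along $(\theta_n)$ — is the first delicate point.

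\emph{Stage three (López-type perturbation):} in $E$, extract a normalized basic sequence $(e_j)_j$ with basis constant close to $1$, and inductively choose $d_j\in D$ with $\|d_j-e_j\|$ so small that (a) $(d_j)$ is basic (by Krein--Milman--Rutman) and (b) $\|T^{\theta_n}(d_j-e_j)\|\le 2^{-j}$ for every $n\le N_j$, where $(N_j)$ is a fast-growing auxiliary sequence fixed in advance. Let $Z:=\overline{\operatorname{span}}(d_j)_j$. For $x=\sum a_j d_j\in Z$ and fixed $k$, write $x=x_N+\rho_N$ with $x_N:=\sum_{j\le N}a_j d_j\in D$, and pick via stage two $\omega(N)\in(\theta_n)_{n\ge N_N}$ with $\|T^{\omega(N)}x_N-h_k(x_N)\|<1/N$. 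Splitting $\rho_N=\sum_{j>N}a_je_j+\sum_{j>N}a_j(d_j-e_j)$: the first summand lies in $E$ so $\|T^{\omega(N)}(\cdot)\|\le M\|\sum_{j>N}a_je_j\|\to 0$; the second is summably small since $\omega(N)\ge N_N\ge N_j$ activates $\|T^{\omega(N)}(d_j-e_j)\|\le 2^{-j}$ for $j>N$; and $\|h_k(\rho_N)\|\le\|h_k\|\,\|\rho_N\|\to 0$. Hence $T^{\omega(N)}x\to h_k(x)$, so $Z\setminus\{0\}\subset \text{R}(T,h_k)$ for every $k$, which by stage one proves spaceability.

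The hardest part is the joint inductive bookkeeping in stage three: the perturbations $d_j-e_j$, the thresholds $N_j$, and the selection of $\omega(N)$ from stage two must be coordinated so that for every $j>N$ the bound $\|T^{\omega(N)}(d_j-e_j)\|\le 2^{-j}$ actually holds. This is exactly where López's technique for the recurrent case $h_k=\mathrm{Id}$ must be refined: for a general $h_k\in\Omega(T)$ the target $h_k(x)$ is no longer a fixed point of the orbit, and one must simultaneously handle the norm $\|h_k\|$ and the countable supply of possibly distinct targets through a careful diagonal argument compatible with the basic-sequence structure.
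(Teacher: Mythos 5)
Your Stage one is exactly the paper's reduction (Lemma~\ref{separable}), your Stage two correctly identifies the Saavedra--Arbieto common dense-lineability input (Proposition~\ref{seq}), and your Stage three has the right basic-sequence skeleton (Theorem~\ref{gen.sub}). But the point you yourself flag as ``delicate'' --- securing that the approximating times for each $h_k$ can be taken inside the prescribed sequence $(\theta_n)_n$ --- is a genuine gap, and it cannot be closed by a Baire-type extraction along $(\theta_n)$. Membership of $h_k$ in $\Omega(T)$ only provides \emph{some} sequence $(\omega_{n,k})_n$ along which $T^{\omega_{n,k}}x\to h_k(x)$ densely; nothing forces these times to meet $(\theta_n)_n$, and for a general $h_k$ they may be entirely disjoint from it. Your tail estimate in Stage three needs $\omega(N)$ to lie in $(\theta_n)_n$ (to invoke $\sup_n\|T^{\theta_n}|_E\|<\infty$ on $\sum_{j>N}a_je_j\in E$) \emph{and} to lie in the witnessing sequence for $h_k$ (to get $\|T^{\omega(N)}x_N-h_k(x_N)\|<1/N$); these two demands are in general irreconcilable, so the construction collapses.

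The paper's resolution is to decouple the boundedness hypothesis from the specific pair $(E,(\theta_n))$ by passing through the left essential spectrum. The condition $\sup_n\|T^{\theta_n}|_E\|<\infty$ forces $\sigma_{\ell e}(T)\cap\overline{\mathbb{D}}\neq\emptyset$ (Lemma~\ref{diverg}, by contraposition); one then produces a \emph{new} infinite-dimensional closed subspace $E'$ and a compact operator $K$ with $(T-K)|_{E'}=\lambda\,\mathrm{Id}|_{E'}$ and $|\lambda|\le 1$, so that $\|(T-K)^m|_{E'}\|\le 1$ for \emph{every} power $m$. Writing $T^{m}=(T-K)^{m}+K_{m}$ with $K_{m}$ compact and shrinking the countably many compacts on a non-increasing family of finite-codimensional subspaces $E_n\subset E'$ yields $\sup_{n}\max_{j+\ell=n+1}\|T^{\theta_{j,\ell}}|_{E_n}\|\le 2$ for the \emph{arbitrary} sequences $(\theta_{j,\ell})_j$ delivered by the dense-lineability step --- which is precisely the $\theta$-compatibility you were missing. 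Only then does the López-type perturbation (your Stage three, in the doubly-indexed form of Theorem~\ref{gen.sub}, with the nested $E_n$ replacing your single $E$) go through.
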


Our approach relies on a refinement of the technique developed by López \cite{Lopez}, combined with a common dense-lineability result recently obtained by the first author together with Arbieto \cite{arbieto2025dense}. In the same spirit, we obtain an analogue of the preceding theorem by considering the set $\mathcal{AP}\Omega(T)$ associated with the Furstenberg family $\mathcal{AP}$.

In Section \ref{section2}, we introduce the set $\Sigma(T)$, which naturally arises in the study of recurrence phenomena. We derive sufficient conditions ensuring that $\Sigma(T)$ encodes the recurrence of $T$ (Corollary~\ref{T-Sigma}). Furthermore, we address the interplay of $\Sigma(T)$ with two recently investigated questions---the $T \times T$-recurrence problem \cite{Sophie} and the non dense-lineability of $\text{Rec}(T)$ \cite{LopezM}. We show that every separable infinite-dimensional complex Banach space supports a recurrent operator $T$ such that $\Sigma(T \oplus T)=\emptyset$, and moreover, for each $h \in \Sigma(T)$ the set $\text{R}(T,h)$ fails to be dense-lineable (Theorem~\ref{Sigma-Tapia}).

In Section \ref{section3}, we investigate structural aspects of the set $\Omega(T)$. We introduce the notion of a collection simultaneously approximated by $T$, see Definition \ref{def-simul}. We show that every maximal collection simultaneously approximated by $T$ is an SOT-closed affine manifold in $\mathcal{L}(X)$, see Theorem \ref{convex}. When $T$ is quasi-rigid on a separable Banach space, there exists a unique maximal collection simultaneously approximated by $T$ containing the identity operator, whose intersection with $\mathrm{GL}(X)$ is locally convex and forms a normal subgroup of $\Omega(T)\cap \mathrm{GL}(X)$, see Theorem \ref{N-G}.  

A further relevant result involving the left-multiplication operator $L_{T}$ in the algebra $\mathcal{L}(X)$ establishes that, on a separable infinite-dimensional Banach space, $L_T$ is $\mathrm{SOT}$-hypercyclic if and only if $T$ satisfies the Hypercyclicity Criterion \cite{livro, chan1999hypercyclicity, Chan, Grosse}. This motivates us to study the nature of $\Omega(T)$ from the perspective of the operator $L_T$.  

\begin{theorem*}
	Let $X$ be a separable Banach space, and let $T$ be a quasi-rigid operator on $X$. Then
	\[
	\Omega(T) \subset \overline{\bigcup_{n \in \mathbb{N}} L_T^n(A)}^{\mathrm{SOT}}
	\]
	for every open set $A \subset (\mathcal{L}(X), \mathrm{SOT})$ that contains a surjective operator in $\Omega(T)$.
\end{theorem*}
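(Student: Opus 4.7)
The plan is to show, for an arbitrary $h\in\Omega(T)$ and an arbitrary basic SOT-neighborhood $U=U(h;x_1,\ldots,x_m;\varepsilon)$ of $h$, that there exist $S\in A$ and $N\in\mathbb{N}$ such that $T^N S\in U$. Since $A$ is SOT-open and contains $g$, fix a basic SOT-neighborhood $V=V(g;y_1,\ldots,y_p;\delta)$ with $V\subset A$. Using that $h\in\Omega(T)$, fix a strictly increasing sequence $(\omega_n)_n$ and a dense set $D_h\subset X$ with $T^{\omega_n}(z)\to h(z)$ for every $z\in D_h$. By density of $D_h$ and continuity of $h$, for each $i$ pick $x_i^\star\in D_h$ close enough to $x_i$ that $\|h(x_i^\star)-h(x_i)\|<\varepsilon/2$.

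The core step is the construction of $S\in\mathcal{L}(X)$ satisfying $S x_i=x_i^\star$ for every $i$ and $S y_j=g y_j$ for every $j$. I realize $S$ as a rank-$m$ perturbation of $g$:
\[
S \;:=\; g \;+\; \sum_{i=1}^{m}\,(x_i^\star-g x_i)\otimes \varphi_i,
\]
where the functionals $\varphi_i\in X^{\ast}$ are chosen, via Hahn--Banach, to satisfy $\varphi_i(x_k)=\delta_{ik}$ and $\varphi_i(y_j)=0$; such functionals exist provided the set $\{x_1,\ldots,x_m\}\cup\{y_1,\ldots,y_p\}$ is linearly independent. Once $S$ is in hand, for $N=\omega_n$ with $n$ large enough that $\|T^{\omega_n}(x_i^\star)-h(x_i^\star)\|<\varepsilon/2$ for all $i$, one has
\[
\|T^{\omega_n} S x_i - h x_i\| \;\le\; \|T^{\omega_n}(x_i^\star)-h(x_i^\star)\|+\|h(x_i^\star)-h(x_i)\| \;<\;\varepsilon,
\]
while $\|S y_j-g y_j\|=0<\delta$, so $T^{\omega_n} S\in U\cap L_T^{\omega_n}(V)\subset U\cap L_T^{\omega_n}(A)$.

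The principal obstacle is the degenerate case in which $\{x_i\}\cup\{y_j\}$ is linearly dependent, since then the functionals $\varphi_i$ above cannot be produced by Hahn--Banach and the rank-$m$ perturbation is insufficient. To bypass this, one invokes the surjectivity of $g$ and the quasi-rigidity of $T$ in tandem. Surjectivity of $g$ implies, via the open mapping theorem, that $g$ is an open map, so $g^{-1}(D)$ is dense in $X$ for every dense $D\subset X$; combined with the quasi-rigidity sequence $(\theta_k)_k$ of $T$ (so that $T^{\theta_k}(z)\to z$ for $z$ in a dense set $D\subset X$), this lets one refine $V$ to a smaller SOT-neighborhood of $g$ still contained in $A$ whose effective test vectors can be chosen inside a subspace transverse to $\mathrm{span}\{x_1,\ldots,x_m\}$---for instance, by approximating each $y_j$ by a vector of the form $T^{\theta_k}(y_j')$ with $y_j'\in g^{-1}(D)$ chosen to avoid the obstructing directions, for $k$ sufficiently large. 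Carrying out the rank-$m$ perturbation above on this refined neighborhood supplies the required $S$ and $N$, and hence $h\in\overline{\bigcup_{n\in\mathbb{N}} L_T^n(A)}^{\mathrm{SOT}}$.
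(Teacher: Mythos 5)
Your rank-$m$ perturbation is correct in the case where $\{x_1,\ldots,x_m\}\cup\{y_1,\ldots,y_p\}$ is linearly independent, and there it is even leaner than the paper's argument (it uses neither the surjectivity of $g$ nor quasi-rigidity). But that is exactly the sign that the real content of the theorem is concentrated in the degenerate case, and your treatment of that case has a genuine gap. The obstruction is not merely that Hahn--Banach fails to produce the $\varphi_i$: if, say, $x_1=y_1$, then the single vector $w:=Sx_1$ must simultaneously satisfy $\|w-gx_1\|<\delta$ (so that $S\in V$) and $\|T^Nw-hx_1\|<\varepsilon$ (so that $T^NS\in U$), i.e.\ one needs $B(gx_1,\delta)\cap T^{-N}\bigl(B(hx_1,\varepsilon)\bigr)\neq\emptyset$ for some $N$, and no choice of auxiliary test vectors dissolves this constraint. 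Your proposed fix --- refining $V$ to a smaller SOT-neighborhood of $g$ whose test vectors lie in a subspace transverse to $\mathrm{span}\{x_1,\ldots,x_m\}$ --- is impossible: an inclusion $\mathcal{N}(g,z_1,\ldots,z_q,\delta')\subset\mathcal{N}(g,y_1,\ldots,y_p,\delta)$ forces every $y_j$ to lie in $\mathrm{span}\{z_1,\ldots,z_q\}$ (otherwise a rank-one perturbation $g+c\,v\otimes\varphi$, with $\varphi$ vanishing on the $z_i$ but not on $y_j$ and $c$ large, belongs to the former neighborhood and escapes the latter), so if some $y_j$ lies in $\mathrm{span}\{x_i\}$ the new test vectors cannot avoid that span.

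What the degenerate case requires is precisely the intertwining supplied by Corollary~\ref{seq-h-g} (resting on Proposition~\ref{relations}, the openness of the surjective operator, and quasi-rigidity): a single increasing sequence $(\theta_n)_n$ and a dense linear subspace $G\subset X$ with $T^{\theta_n}g(x)\to h(x)$ for all $x\in G$. One then takes a linear map $R$ from $\mathrm{span}(\{x_i\}\cup\{y_j\})$ into $G$ approximating the identity on these finitely many vectors and lets $S$ be a bounded extension of $g\circ R$; the single prescription $S(v)=g(Rv)$ then certifies both $S\in V$ (since $g(Ry_j)\approx g(y_j)$) and $T^{\theta_n}S(x_i)=T^{\theta_n}g(Rx_i)\to h(Rx_i)\approx h(x_i)$. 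This is the route the paper takes, and it cannot be bypassed by relocating test vectors; your sketch never actually produces the simultaneous approximation $T^{\theta_n}g(x)\to h(x)$ that makes the shared directions tractable.
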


In Section \ref{section4}, we establish our main spaceability result for the sets $\Omega(T)$, $\mathcal{AP}\Omega(T)$, and for any countable collection simultaneously approximated by $T$.

\section{The Framework of \(\Sigma(T)\)}\label{section2}

This section has two purposes. The first is to set the stage for the study of $\Omega(T)$ in the next section. The second is to show how the set $\Sigma(T)$ provides a structural perspective on classical notions such as recurrence and hypercyclicity.

Let \(X\) be a complete metric space and let \(T:X \to X\) be a continuous map. When \(X\) is an infinite-dimensional \(F\)-space (i.e. \(X\) is a completely metrizable topological vector
space), we shall restrict our attention to the case where \(T\) is a continuous linear operator on \(X\). Throughout the paper, we denote by \(\mathcal{L}(X)\) the space of all continuous linear operators acting on \(X\).

Given a continuous map \(g:X \to X\), we define
\begin{align*}
	\text{R}(T,g) := \{x \in X : \exists\, \omega_n \uparrow \infty \text{ such that } \lim_{n \to \infty} T^{\omega_n}x = g(x)\}.
\end{align*}
We then introduce the set
\[
\Sigma(T) := \{ g:X \to X \ \text{continuous}\, \text{such that}\; \text{R}(T,g) \text{ is dense in } X\}.
\]
When \(T\) is a continuous linear operator on an \(F\)-space \(X\), we shall, by a slight abuse of notation, denote by \(\Sigma(T)\) the subset of \(\mathcal{L}(X)\) consisting of all \(h \in \mathcal{L}(X)\) such that \(\text{R}(T,h)\) is dense in \(X\).

Recall that a continuous linear operator \(T : X \to X\) on a separable Banach space is called \emph{hypercyclic} if there exists a vector \(x \in X\) such that the orbit \(\{T^n x : n \in \mathbb{N}\}\) is dense in \(X\). The set of such vectors is denoted by \(\mathrm{HC}(T)\); see the books \cite{livro, Grosse} for detailed accounts. 

A continuous map \(T\) on \(X\) is said to be \emph{recurrent} if the set of recurrent points, denoted by \(\mathrm{Rec}(T)\), is dense in \(X\). Here, a point \(x \in X\) is recurrent provided there exists a strictly increasing sequence \((\theta_n)_{n \in \mathbb{N}}\) such that \(\displaystyle{\lim_{n \to \infty} T^{\theta_n}x = x}\). For general aspects of recurrence in topological dynamics, we refer to \cite{Furs, gottschalk1955topological}. In the linear setting, recurrent operators were introduced and systematically investigated by Costakis, Manoussos, and Parissis \cite{Cos, Cos2}. It follows immediately from the definitions that \(T\) is recurrent if and only if \(\mathrm{Id} \in \Sigma(T)\).

Consequently, when \(X\) is a separable infinite-dimensional Fréchet space, we obtain
\begin{align*}
	T \text{ is recurrent} & \hspace{0.9cm} \Longleftrightarrow \hspace{0.9cm} \mathrm{Id} \in \Sigma(T), \\
	T \text{ is hypercyclic} & \quad \underset{\text{Proposition ~\ref{hyper-sigma}}}{\Longleftrightarrow} \quad \mathcal{L}(X)=\Sigma(T).
\end{align*}

\begin{proposition}\label{open-sigma}
	Let \(X\) be a complete metric space, and let \(T \colon X \to X\) be continuous. For a continuous map \(g \colon X \to X\), the following are equivalent:
	\begin{enumerate}
		\item[(i)] \( g \in \Sigma(T) \).
		\item[(ii)] For every pair of open sets \(U, V \subset X\) with \( g(U) \cap V \neq \emptyset \), the set
		\[
		\{ m \in \mathbb{N} : U \cap T^{-m}(V) \neq \emptyset \}
		\]
		is infinite.
	\end{enumerate}
\end{proposition}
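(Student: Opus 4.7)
The plan is to mimic the standard Birkhoff‑transitivity argument adapted to the orbit's limit being $g(x)$ rather than a fixed point: rewrite $\mathrm{R}(T,g)$ as a $G_\delta$ set and then reduce its density to condition (ii) via a Baire-category argument, with continuity of $g$ playing the role of "localising oscillations."

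\textbf{Step 1 (a $G_\delta$ representation).} For each $k,N\in\mathbb{N}$ define
\[
A_{k,N} \;:=\; \{x \in X : \exists\, m \ge N \text{ with } d(T^m x, g(x)) < 1/k\}.
\]
Continuity of $T^m$ and $g$ makes each $A_{k,N}$ open. A direct diagonalisation shows
\[
\mathrm{R}(T,g) \;=\; \bigcap_{k,N \in \mathbb{N}} A_{k,N},
\]
so $\mathrm{R}(T,g)$ is always a $G_\delta$. By Baire's theorem, $g\in\Sigma(T)$ iff every $A_{k,N}$ is dense.

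\textbf{Step 2 ((i) $\Rightarrow$ (ii)).} Suppose $\mathrm{R}(T,g)$ is dense and let $U,V$ be open with $g(U)\cap V\neq\emptyset$. Pick $x_0\in U$ with $g(x_0)\in V$; by continuity of $g$, shrink to an open $U'\subset U$ containing $x_0$ with $g(U')\subset V$. Density of $\mathrm{R}(T,g)$ produces $x\in U'\cap\mathrm{R}(T,g)$, so along the corresponding sequence $\omega_n\uparrow\infty$ we have $T^{\omega_n}x\to g(x)\in V$; eventually $T^{\omega_n}x\in V$, i.e.\ $x\in U\cap T^{-\omega_n}(V)$ for infinitely many $n$. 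This exhibits infinitely many $m$ in the stated set.

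\textbf{Step 3 ((ii) $\Rightarrow$ (i)).} Fix $k,N$ and a non-empty open set $U$; we show $A_{k,N}\cap U\neq\emptyset$. Choose any $y_0\in U$ and set $V := B(g(y_0),1/(3k))$. By continuity of $g$ there is an open $U'\subset U$ containing $y_0$ with $g(U')\subset V$; in particular $g(U')\cap V\neq\emptyset$. Applying hypothesis (ii) to the pair $(U',V)$ yields some $m\ge N$ and $x\in U'$ with $T^m x\in V$. Since also $g(x)\in V$ and $\mathrm{diam}(V)\le 2/(3k)<1/k$, we conclude $d(T^m x, g(x))<1/k$, i.e.\ $x\in A_{k,N}\cap U$. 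Hence every $A_{k,N}$ is dense and Baire delivers (i).

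\textbf{Main obstacle.} The condition (ii) is weaker than ordinary topological transitivity, since it only speaks about pairs $(U,V)$ linked by $g$. The delicate point is therefore the forward implication: one must simultaneously control $T^m x \in V$ and $g(x)\in V$ in a neighbourhood where $V$ has small diameter. The clean resolution is to exploit the continuity of $g$ to pre-shrink $U$ inside $g^{-1}(V)$, so that \emph{any} $x$ returned by (ii) automatically satisfies the two-sided proximity $d(T^m x,g(x))<1/k$. Once this localisation is in place, the rest is Baire category.
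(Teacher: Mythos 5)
Your proof is correct. The implication (i) $\Rightarrow$ (ii) is essentially identical to the paper's: shrink $U$ via continuity of $g$ so that $g(W)\subset V$, pick a point of $\mathrm{R}(T,g)$ in $W$, and read off infinitely many return times. For (ii) $\Rightarrow$ (i) you take a genuinely different, more modular route. The paper runs an explicit inductive construction of nested closed balls $\overline{B(x_\ell,\epsilon_\ell)}\subset B(x_{\ell-1},\epsilon_{\ell-1})$ with $T^{\theta_\ell}(B(x_\ell,\epsilon_\ell))\subset B(g(x_{\ell-1}),\epsilon_{\ell-1})$ (following Costakis--Manoussos--Parissis) and extracts the limit point via the Cantor intersection theorem, thereby producing the recurrence times $(\theta_\ell)$ explicitly. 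You instead write $\mathrm{R}(T,g)=\bigcap_{k,N}A_{k,N}$ with each $A_{k,N}$ open, show each $A_{k,N}$ is dense by a single application of (ii) to the pair $(U',B(g(y_0),1/(3k)))$ with $g(U')\subset B(g(y_0),1/(3k))$, and invoke Baire. The two arguments are equivalent in substance (the nested-ball construction is a hands-on proof of Baire's theorem), but yours is shorter, isolates the $G_\delta$ structure cleanly, and gives for free that $\mathrm{R}(T,g)$ is residual, not merely dense --- a fact the paper uses elsewhere (e.g.\ in the proof of Theorem~\ref{Sigma-Tapia}) without restating it here. The paper's construction, on the other hand, hands you a single sequence $(\theta_\ell)$ witnessing the convergence, which is closer in spirit to the quantitative refinements needed later in Propositions~\ref{AP} and~\ref{relations}. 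Both the localisation step (pre-shrinking $U$ inside $g^{-1}(V)$) and the diagonalisation in your Step~1 are carried out correctly.
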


\begin{proof}
	Assume (i). Let \(U, V \subset X\) be non-empty open sets with \(g(U) \cap V \neq \emptyset\). By continuity of \(g\), there exists an open set \(W \subset U\) with \(g(W) \subset V\). Since \(\text{R}(T,g)\) is dense, we may choose \(x \in W \cap \text{R}(T,g)\). Thus, there is a strictly increasing sequence \((\theta_n)\) such that \(T^{\theta_n}(x) \to g(x) \in V\). Hence,
	\[
	\{\theta_n : n \geq n_0\} \subset \{ m \in \mathbb{N} : U \cap T^{-m}(V) \neq \emptyset \}
	\]
	for some \(n_0\), proving (ii).
	
	Conversely, assume (ii). Fix \(x \in X\) and \(\epsilon > 0\). Since \(g(B(x,\epsilon)) \cap B(g(x),\epsilon) \neq \emptyset\), condition (ii) yields \(m \in \mathbb{N}\) with \(B(x,\epsilon) \cap T^{-m}(B(g(x),\epsilon)) \neq \emptyset\). Following the inductive construction in \cite[Proposition~2.1]{Cos} with \(x_{0}:=x\) and \(\epsilon_{0}:=\epsilon\), one can build sequences \( (x_\ell) \subset X\), \( \epsilon_\ell \downarrow 0\), and \( \theta_\ell \uparrow \infty\) such that for each \(\ell\in \mathbb{N}\):
	\begin{align} \label{induc-1}
			\overline{B(x_\ell, \epsilon_\ell)} \subset B(x_{\ell-1}, \epsilon_{\ell-1}) 
		\quad \text{and} \quad 
		T^{\theta_\ell}(B(x_\ell, \epsilon_\ell)) \subset B(g(x_{\ell-1}), \epsilon_{\ell-1}).
	\end{align}
	By the Cantor intersection theorem, there exists \(\{y\}:=\bigcap_\ell B(x_\ell, \epsilon_\ell)\). Then \(	T^{\theta_\ell}(y) \in B(g(x_{\ell-1}), \epsilon_{\ell-1})\) for each \(\ell\in \mathbb{N}\), which implies that \( T^{\theta_\ell}(y) \to g(y) \) as \( \ell \to \infty \). Hence \( y \in \text{R}(T,g) \), and consequently \( \text{R}(T,g) \) is dense in \( X \). Therefore, (i) follows.
\end{proof}

A stronger notion than recurrence is that of \emph{multiple recurrence}, introduced by Furstenberg in the framework of topological dynamics. This concept has deep connections with ergodic theory, number theory, and combinatorics. Recall that a continuous map \(T\) on a complete metric space \(X\) is said to be (topologically) multiply recurrent if, for every non-empty open set \(U \subset X\) and every \(m\in \mathbb{N}\), there exists \(r\in \mathbb{N}\) such that 
\begin{align*}
	U\cap T^{-r}U\cap \cdots\cap T^{-mr}U\neq \emptyset.
\end{align*} 
In the context of linear dynamics, the first systematic study of multiple recurrence was carried out by Costakis and Parissis \cite{Cos2}. 

More recently, it was established that multiple recurrence coincides with the notion of \emph{\(\mathcal{AP}\)-recurrence} \cite{cardeccia2022multiple, kwietniak2017multi}. To place this in context, recall the Furstenberg family \(\mathcal{AP} \subset \mathcal{P}(\mathbb{N})\), defined by
\begin{align*}
	\mathcal{AP} := \{\,\text{B} \subset \mathbb{N}\cup \{0\} :  \text{B} \text{ contains arbitrarily long finite arithmetic progressions}\,\}.
\end{align*}
For \(x\in X\) and a non-empty open set \(U\subset X\), we set
\[
\mathcal{N}_{T}(x,U):=\{n\in \mathbb{N}\cup\{0\} : T^{n}x\in U\}.
\]
A point \(x\in X\) is called \(\mathcal{AP}\)-recurrent for \(T\) if, for every neighbourhood \(U\) of \(x\), one has \(\mathcal{N}_{T}(x, U)\in \mathcal{AP}\). We denote by \(\mathcal{AP}\text{Rec}(T)\) the set of all \(\mathcal{AP}\)-recurrent points of \(T\), and we say that \(T\) is \(\mathcal{AP}\)-recurrent if \(\mathcal{AP}\text{Rec}(T)\) is dense in \(X\). Moreover, it is shown in \cite[Lemma 4.8]{kwietniak2017multi} that if \(T\) is \(\mathcal{AP}\)-recurrent, then \(\mathcal{AP}\text{Rec}(T)\) is residual in \(X\).

Recall that \(T\) is said to be \(\mathcal{AP}\)-hypercyclic if there exists \(x \in X\) such that, for every non-empty open set \(U \subset X\), we have \(\mathcal{N}_{T}(x,U) \in \mathcal{AP}\).  
The following result, obtained by R. Cardeccia and S. Muro \cite{cardeccia2022multiple}, reveals connections between \(\mathcal{AP}\)-hypercyclicity and multiple recurrence in the hypercyclic setting:
\begin{proposition}[\cite{cardeccia2022multiple}] \label{AP-Hyper}
	Let \(T\) be a linear operator on a separable Fréchet space. Then the following statements are equivalent:
	\begin{enumerate}
		\item \(T\) is hypercyclic and every hypercyclic vector is \(\mathcal{AP}\)-hypercyclic.
		\item There exists an \(\mathcal{AP}\)-hypercyclic vector.
		\item \(T\) is hypercyclic and multiply recurrent.
		\item For every pair of nonempty open sets \(U,V \subset X\) and each \(m \in \mathbb{N}\), there exist \(a,r \in \mathbb{N}\) such that
		\begin{align*}
			U \cap \left(\bigcap_{\ell=1}^{m} T^{-(a+\ell r)} V\right) \neq \emptyset.
		\end{align*}
		\item The set of \(\mathcal{AP}\)-hypercyclic vectors is residual in \(X\).
	\end{enumerate}
\end{proposition}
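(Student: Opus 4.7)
The natural approach is to establish a cycle of implications (1) $\Rightarrow$ (2) $\Rightarrow$ (3) $\Rightarrow$ (4) $\Rightarrow$ (5) $\Rightarrow$ (1). The step (1) $\Rightarrow$ (2) is immediate from the definitions, and (5) implies the existence of an $\mathcal{AP}$-hypercyclic vector because a residual set is nonempty. The core lies in the intermediate implications and the closing step (2) $\Rightarrow$ (1).

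For (2) $\Rightarrow$ (3): an $\mathcal{AP}$-hypercyclic vector $x$ is automatically hypercyclic since $\mathcal{N}_T(x,U)\neq\emptyset$ for every nonempty open $U$; given any open $V$ and $m\in\mathbb{N}$, extract from $\mathcal{N}_T(x,V)\in\mathcal{AP}$ an arithmetic progression $\{a,a+r,\ldots,a+mr\}$, so that $T^{a}x\in V\cap T^{-r}V\cap\cdots\cap T^{-mr}V$, yielding multiple recurrence. For (3) $\Rightarrow$ (4): given $U,V$ open and $m\in\mathbb{N}$, multiple recurrence applied to $V$ produces $r$ and a nonempty open set $W:=\bigcap_{\ell=1}^{m}T^{-\ell r}V$; topological transitivity (from hypercyclicity) then supplies $a$ with $U\cap T^{-a}W\neq\emptyset$, which unwinds to the desired condition. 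For (4) $\Rightarrow$ (5): fix a countable basis $\{V_k\}$; each open set
\[
A_{k,m}:=\bigcup_{a,r\ge 1}\bigcap_{\ell=1}^{m}T^{-(a+\ell r)}V_k
\]
is dense by (4), so $G:=\bigcap_{k,m}A_{k,m}$ is residual by Baire, and any $x\in G$ satisfies $\mathcal{N}_T(x,U)\in\mathcal{AP}$ for every open $U$ by choosing $V_k\subset U$ and letting $m$ vary.

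The main obstacle is the closing direction (5) $\Rightarrow$ (1), which by (5) $\Rightarrow$ (2) reduces to (2) $\Rightarrow$ (1): from the existence of a single $\mathcal{AP}$-hypercyclic vector $y$, one must deduce that \emph{every} hypercyclic vector $x$ is itself $\mathcal{AP}$-hypercyclic. The plan is a continuity-plus-orbit-density transfer argument. Fix an open $U\subset X$ and $m\in\mathbb{N}$. Since $\mathcal{N}_T(y,U)\in\mathcal{AP}$, there are $b,r\in\mathbb{N}$ with $T^{b+\ell r}y\in U$ for every $\ell=0,1,\ldots,m$. By joint continuity of the finitely many iterates $T^{b},T^{b+r},\ldots,T^{b+mr}$, one obtains an open neighbourhood $W$ of $y$ satisfying $T^{b+\ell r}(W)\subset U$ for all $\ell$. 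Since $x\in\mathrm{HC}(T)$, the orbit of $x$ meets $W$, so there is $n_0$ with $T^{n_0}x\in W$; then $\{n_0+b+\ell r:\ell=0,\ldots,m\}\subset\mathcal{N}_T(x,U)$ is an arithmetic progression of length $m+1$. Since $m$ and $U$ were arbitrary, $x$ is $\mathcal{AP}$-hypercyclic, closing the cycle. The conceptual difficulty concentrates in this last passage: the residuality of $\mathcal{AP}$-hypercyclic vectors by itself does not force inclusion of all of $\mathrm{HC}(T)$, and the transfer via a neighbourhood $W$ and orbit density is what upgrades ``some'' to ``every''.
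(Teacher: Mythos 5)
Your proof is correct. Note that the paper itself offers no proof of this proposition: it is imported verbatim from the cited work of Cardeccia and Muro, so there is no internal argument to compare against. Your cycle $(1)\Rightarrow(2)\Rightarrow(3)\Rightarrow(4)\Rightarrow(5)\Rightarrow(1)$ is a valid self-contained derivation, and each step checks out: the extraction of an arithmetic progression from $\mathcal{N}_T(x,V)\in\mathcal{AP}$ for $(2)\Rightarrow(3)$; the use of Birkhoff transitivity (available since $T$ is hypercyclic on a separable Fréchet space) to prepend the block $T^{-a}$ in $(3)\Rightarrow(4)$; the Baire argument over a countable basis in $(4)\Rightarrow(5)$, where the sets $A_{k,m}$ are indeed open and dense; and, most importantly, the transfer step closing the cycle, where you pull back the finite arithmetic progression $\{b+\ell r\}_{\ell=0}^{m}$ through the finitely many continuous iterates to get an open neighbourhood $W$ of the $\mathcal{AP}$-hypercyclic vector $y$ and then use density of the orbit of an arbitrary $x\in\mathrm{HC}(T)$ to land in $W$. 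This is exactly the mechanism that upgrades the existence of one $\mathcal{AP}$-hypercyclic vector to all hypercyclic vectors, and it is the same device underlying the Bonilla--Grosse-Erdmann treatment of upper Furstenberg families mentioned right after the proposition. The only cosmetic point is that in $(3)\Rightarrow(4)$ you should make explicit that $W=\bigcap_{\ell=1}^{m}T^{-\ell r}V$ is nonempty because it contains the nonempty set $V\cap\bigcap_{\ell=1}^{m}T^{-\ell r}V$ furnished by multiple recurrence; as written the inference is correct but stated slightly out of order.
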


Part of the equivalences in the previous result are encompassed within a broader framework, namely the Birkhoff Theorem for upper Furstenberg families established by A.~Bonilla and K.-G.~Grosse-Erdmann \cite{bonilla2018upper}.

For a continuous map $h$ on $X$, we define
\begin{align*}
	\mathcal{AP}\text{R}(T,h) 
	&:= \{x \in X : \forall\, V \ni h(x) \text{ open}, \, N_{T}(x,V) \in \mathcal{AP}\}\\
	&= \bigcap_{k\in \mathbb{N}} \bigcup_{a,n\in \mathbb{N}} 
	\left\{x \in X : d(T^{a+\ell n}x, h(x)) < \tfrac{1}{k},\ \ell \in \{1,\ldots, k\}\right\}\\
	&=\{x\in X: \exists\, \text{B}\in \mathcal{AP}\, \text{such that}\, \lim_{\substack{n\rightarrow \infty \\ n\in \text{B} }}T^{n}x=h(x)\}
\end{align*}
The above characterization immediately implies that the set \(\mathcal{AP}\text{R}(T,h)\) is a \(G_{\delta}\)-set.

\begin{definition}
	Let $X$ be a complete metric space and $T$ a continuous map on $X$. We set
	\begin{align*}
		\mathcal{AP}\Sigma(T) := \{\, h : X \to X \text{ continuous}\, \text{such that}\, \mathcal{AP}\text{R}(T,h) \text{ is dense in } X \,\}.
	\end{align*}
\end{definition}

\begin{proposition}\label{AP}
	Let $X$ be a complete metric space and let $T$ be a continuous map on $X$. 
	For a continuous map $h \colon X \to X$, the following are equivalent:
	\begin{enumerate}
		\item $h \in \mathcal{AP}\Sigma(T)$.
		\item For any non-empty open sets $U,V \subset X$ with $h(U) \cap V \neq \emptyset$ and for each $m \in \mathbb{N}$, there exist $a,r \in \mathbb{N}$ such that
		\[
		U \cap \left(\bigcap_{\ell=1}^{m} T^{-(a+\ell r)}V\right) \neq \emptyset.
		\]
	\end{enumerate}
\end{proposition}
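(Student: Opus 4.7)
The plan is to mirror the template of Proposition~\ref{open-sigma}, with the single-time convergence replaced by the stipulation that the return times contain arbitrarily long arithmetic progressions. Both implications split cleanly, and the $G_\delta$ representation of $\mathcal{AP}\text{R}(T,h)$ already displayed in the paper will be the structural bridge on the reverse side.

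For $(1) \Rightarrow (2)$, given open sets $U, V$ with $h(U) \cap V \neq \emptyset$ and $m \in \mathbb{N}$, I would use continuity of $h$ to shrink $U$ to a nonempty open $W \subset U$ with $h(W) \subset V$. Density of $\mathcal{AP}\text{R}(T,h)$ supplies $x \in W \cap \mathcal{AP}\text{R}(T,h)$, hence a set $\text{B} \in \mathcal{AP}$ with $T^n x \to h(x) \in V$ along $n \in \text{B}$. Thus $\text{B} \cap [N,\infty) \subset \mathcal{N}_{T}(x,V)$ for some $N$. The key combinatorial observation is that $\text{B} \cap [N,\infty) \in \mathcal{AP}$: picking an AP of length $mk$ inside $\text{B}$ with $(k-1)m \geq N$ and common difference $\geq 1$, its last $m$ terms form an AP of length $m$ sitting in $[N,\infty)$. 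Such an AP $\{a+\ell r : \ell = 1, \ldots, m\}$ then certifies $x \in U \cap \bigcap_{\ell=1}^{m} T^{-(a+\ell r)} V$.

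For $(2) \Rightarrow (1)$, I would invoke the $G_\delta$ representation
\[
\mathcal{AP}\text{R}(T,h) = \bigcap_{k \in \mathbb{N}} \bigcup_{a,r \in \mathbb{N}} \Bigl\{x \in X : d(T^{a+\ell r}x, h(x)) < \tfrac{1}{k},\ \ell \in \{1,\ldots,k\}\Bigr\}
\]
and apply Baire category. Each inner union is open by joint continuity of $T$ and $h$, so it suffices to establish density at every level $k$. Fixing $x_0 \in X$ and $\epsilon_0 > 0$, continuity of $h$ yields an open $U \subset B(x_0,\epsilon_0)$ with $h(U) \subset V := B(h(x_0), 1/(2k))$. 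Applying (2) with $m = k$ produces $a, r \in \mathbb{N}$ and $x \in U$ such that $T^{a+\ell r} x \in V$ for $\ell = 1,\ldots,k$. A triangle inequality
\[
d(T^{a+\ell r} x, h(x)) \le d(T^{a+\ell r} x, h(x_0)) + d(h(x_0), h(x)) < \tfrac{1}{k}
\]
places $x$ inside the level-$k$ open set, giving density; Baire then makes $\mathcal{AP}\text{R}(T,h)$ residual, hence dense.

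The only nonroutine point I anticipate is the cofinite-truncation property $\text{B} \in \mathcal{AP} \Rightarrow \text{B} \cap [N,\infty) \in \mathcal{AP}$, which is where the AP structure of the target condition genuinely differs from the mere infinitude of return times used in Proposition~\ref{open-sigma}. Everything else amounts to transcribing the Cantor-style / Baire-style skeleton of the recurrent case, now with arithmetic-progression windows replacing single return times.
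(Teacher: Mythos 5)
Your argument is correct. The forward implication is the same as the paper's (which simply transplants the template of Proposition~\ref{open-sigma}); your cofinite-truncation observation for members of $\mathcal{AP}$ is sound and is exactly the point where the AP structure must be preserved, though note it can be bypassed entirely by using the paper's first description of $\mathcal{AP}\text{R}(T,h)$, namely that $\mathcal{N}_{T}(x,V)\in\mathcal{AP}$ for \emph{every} neighbourhood $V$ of $h(x)$, which hands you an AP of length $m$ in the return set directly. Where you genuinely diverge is in $(2)\Rightarrow(1)$: the paper runs an explicit inductive nested-ball construction, producing shrinking balls $B(x_n,\epsilon_n)$, times $a_n, r_n$ with $a_{n+1}>a_n+nr_n$, and a single Cantor-intersection point $q$ in each prescribed ball together with an explicit set $\mathrm{B}=\{a_m+\ell r_m\}\in\mathcal{AP}$ along which $T^{n}q\to h(q)$; you instead take the $G_\delta$ representation at face value, check that each level-$k$ slice is open and (via condition (2) with $m=k$ and the $\tfrac{1}{2k}$-triangle inequality) dense, and invoke Baire. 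The two mechanisms are of course cousins---the nested-ball argument is a hands-on instance of Baire---but yours is shorter, avoids the bookkeeping of the double sequence $(a_n,r_n)$, and yields residuality of $\mathcal{AP}\text{R}(T,h)$ as an immediate by-product (consistent with the residuality statement the paper quotes from the literature), whereas the paper's construction has the advantage of exhibiting a concrete single AP set $\mathrm{B}$ witnessing membership in $\mathcal{AP}\text{R}(T,h)$, which is the form reused in later inductive arguments of the paper.
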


\begin{proof}
	The implications (1) $\Rightarrow$ (2) and (2) $\Rightarrow$ (1) follow the same general strategy as in the proof of Proposition~\ref{open-sigma}. 
	
	For (2) $\Rightarrow$ (1), fix $x_{0}\in X$ and $\epsilon_{0}>0$. Proceeding inductively, we construct a sequence $\{x_n\}\subset X$, a decreasing sequence $\{\delta_n\}$ with $\epsilon_n \downarrow 0$, and two sequences of positive integers $\{a_n\}$ and $\{r_n\}$ such that:
	\begin{itemize}
		\item $\overline{B(x_{n},\epsilon_{n})} \subset B(x_{n-1},\delta_{n-1})$ for all $n\in \mathbb{N}$,
		\item $T^{a_{n}+\ell r_{n}}(B(x_{n},\epsilon_{n})) \subset B(h(x_{n-1}),\epsilon_{n-1})$ for $\ell=0,1, \dots,n$ and each \(n\in \mathbb{N}\)
		\item $a_{n+1} > a_{n} + n r_{n}$ for all $n\in \mathbb{N}$.
	\end{itemize}
	
	Let $\{\theta_n\}$ be the increasing enumeration of the set $\{a_m+\ell r_m : m \in \mathbb{N}, \ \ell=0,1,\dots,m\}$. By the Cantor intersection theorem, \(	\{q\}:= \bigcap_{m} \overline{B(x_m,\epsilon_m)} \subset B(x_{0},\epsilon_{0})\). Hence $q \in \mathcal{AP}\text{R}(T,h)\cap B(x_{0}, \epsilon_{0})$, and therefore $h \in \mathcal{AP}\Sigma(T)$.
\end{proof}

\begin{proposition}\label{hyper-sigma}
	Let \(T\) be a continuous linear operator on a separable infinite-dimensional Fréchet space \(X\). Then \(T\) is hypercyclic (resp. \(\mathcal{AP}\)-hypercyclic) if and only if \(\Sigma(T) = \mathcal{L}(X)\) (resp. \(\mathcal{AP}\Sigma(T) = \mathcal{L}(X)\)).
\end{proposition}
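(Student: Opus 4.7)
The plan is to reduce each direction to the topological characterizations already at hand: Proposition \ref{open-sigma} for \(\Sigma(T)\) and Proposition \ref{AP} for \(\mathcal{AP}\Sigma(T)\), combined with Birkhoff's transitivity theorem in the hypercyclic case and Proposition \ref{AP-Hyper} in the \(\mathcal{AP}\)-hypercyclic case.

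For the forward implication \(T\) hypercyclic \(\Rightarrow\Sigma(T)=\mathcal{L}(X)\), I observe that a hypercyclic operator on an infinite-dimensional Fréchet space satisfies the stronger topological transitivity that \(\{m\in\mathbb{N} : U\cap T^{-m}(V)\neq\emptyset\}\) is infinite for every pair of non-empty open sets \(U,V\subset X\): picking a hypercyclic vector \(x\in U\), its orbit must meet the open set \(V\) infinitely often, since a finite subset of \(V\) cannot be dense in \(V\) (which is uncountable in an infinite-dimensional space). This verifies condition (ii) of Proposition \ref{open-sigma} for every \(h\in\mathcal{L}(X)\), irrespective of whether \(h(U)\cap V\neq\emptyset\), giving \(h\in\Sigma(T)\). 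For the reverse implication, given any non-empty open \(U,V\subset X\) I construct \(h\in\mathcal{L}(X)\) with \(h(U)\cap V\neq\emptyset\) as follows: choose \(v\in U\) with \(v\neq 0\) and \(w\in V\), then use Hahn--Banach to select \(f\in X^{*}\) with \(f(v)=1\). The rank-one operator \(h(x):=f(x)w\) lies in \(\mathcal{L}(X)\) and satisfies \(h(v)=w\in V\). Since \(h\in\Sigma(T)\) by hypothesis, Proposition \ref{open-sigma} yields (infinitely many, so in particular one) \(m\in\mathbb{N}\) with \(U\cap T^{-m}(V)\neq\emptyset\), so \(T\) is topologically transitive and therefore hypercyclic by Birkhoff's theorem.

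The \(\mathcal{AP}\)-hypercyclic case is structurally identical. For the forward direction, Proposition \ref{AP-Hyper}(4) asserts that \(\mathcal{AP}\)-hypercyclicity is equivalent to the validity of the combinatorial condition \(U\cap\bigcap_{\ell=1}^{m}T^{-(a+\ell r)}V\neq\emptyset\) for all non-empty open \(U,V\) and all \(m\in\mathbb{N}\); this a fortiori gives condition (2) of Proposition \ref{AP} for every \(h\in\mathcal{L}(X)\), hence \(\mathcal{AP}\Sigma(T)=\mathcal{L}(X)\). For the reverse direction, the same rank-one construction produces \(h\in\mathcal{L}(X)\) with \(h(U)\cap V\neq\emptyset\), and the hypothesis \(h\in\mathcal{AP}\Sigma(T)\) then lifts, via Proposition \ref{AP}, to condition (4) of Proposition \ref{AP-Hyper}, yielding \(\mathcal{AP}\)-hypercyclicity. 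No serious obstacle is anticipated: the only subtlety is producing a linear operator \(h\) that sends some point of a given open set \(U\) into a given open set \(V\), which the rank-one construction above resolves using only infinite-dimensionality of \(X\) (to choose \(v\neq 0\) in \(U\)) and Hahn--Banach (available in Fréchet spaces by local convexity).
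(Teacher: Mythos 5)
Your proposal is correct and follows essentially the same route as the paper: the converse direction is identical in structure (the paper also produces, for given non-empty open $U,V$, a continuous linear operator $h$ with $h(U)\cap V\neq\emptyset$ — you merely make the rank-one Hahn--Banach construction explicit — and then invokes Proposition~\ref{AP} together with Proposition~\ref{AP-Hyper}, respectively Birkhoff transitivity). The only cosmetic difference is in the forward direction, where the paper notes directly that the dense set of (\(\mathcal{AP}\)-)hypercyclic vectors is contained in every \(\mathcal{AP}\text{R}(T,h)\) (resp.\ \(\text{R}(T,h)\)), while you verify the equivalent open-set conditions of Propositions~\ref{open-sigma} and~\ref{AP}; both arguments are sound.
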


\begin{proof}
	We prove the statement for the \(\mathcal{AP}\)-case, as the hypercyclic case follows by the same argument.  
	If \(T\) is \(\mathcal{AP}\)-hypercyclic, then \(\mathcal{AP}\text{HC}(T)\subset \mathcal{AP}\text{R}(T,h)\) for each \(h\in \mathcal{L}(X)\). 
	
	For the converse, assume \(\mathcal{AP}\Sigma(T) = \mathcal{L}(X)\).  
	Fix two nonempty open sets \(U,V \subset X\). Since \(X\) is a Fréchet space, there exists a continuous linear operator \(h \colon X \to X\) such that \(h(U)\cap V \neq \emptyset\).  
	By Proposition~\ref{AP}, for each \(m\in \mathbb{N}\) there exist \(a,r \in \mathbb{N}\) such that
	\[
	U \cap \left( \bigcap_{\ell=1}^{m} T^{-(a+\ell r)} V \right) \neq \emptyset.
	\]
	Applying Proposition~\ref{AP-Hyper}, we conclude that \(T\) is \(\mathcal{AP}\)-hypercyclic.
\end{proof}

\begin{proposition}\label{Sigma-SOT}
	Let \(X\) be an \(F\)-space and \(T \in \mathcal{L}(X)\). Then both \(\Sigma(T)\) and \(\mathcal{AP}\Sigma(T)\) are closed in \(\mathcal{L}(X)\) with respect to the strong operator topology.
\end{proposition}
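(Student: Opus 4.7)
The plan is to exploit the characterizations from Propositions~\ref{open-sigma} and~\ref{AP}, which encode membership in $\Sigma(T)$ and $\mathcal{AP}\Sigma(T)$ via conditions whose conclusions depend only on $T$, $U$, and $V$, not on the operator $h$. This asymmetry is what makes SOT-closedness essentially automatic: SOT convergence is pointwise convergence on $X$, which is exactly what is needed to transfer the hypothesis $h(U)\cap V \neq \emptyset$ from the limit $h$ to the approximating operators $h_n$.

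More concretely, suppose $(h_n) \subset \Sigma(T)$ converges in SOT to $h \in \mathcal{L}(X)$, and let $U, V \subset X$ be open sets with $h(U) \cap V \neq \emptyset$. Pick $x_0 \in U$ with $h(x_0) \in V$. Since $h_n(x_0) \to h(x_0)$ in $X$ and $V$ is open, there exists $n_0$ with $h_{n_0}(x_0) \in V$, so $h_{n_0}(U) \cap V \neq \emptyset$. Applying Proposition~\ref{open-sigma} to $h_{n_0} \in \Sigma(T)$ yields that $\{m \in \mathbb{N} : U \cap T^{-m}(V) \neq \emptyset\}$ is infinite; this set is determined purely by $T, U, V$, so the same conclusion now witnesses $h \in \Sigma(T)$ via Proposition~\ref{open-sigma}. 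The argument for $\mathcal{AP}\Sigma(T)$ is identical in structure: given open $U,V$ with $h(U) \cap V \neq \emptyset$ and $m \in \mathbb{N}$, find $x_0 \in U$ with $h(x_0) \in V$, pass to $h_{n_0}$ with $h_{n_0}(x_0) \in V$ by SOT convergence, and apply Proposition~\ref{AP} to produce $a, r \in \mathbb{N}$ with $U \cap \bigcap_{\ell=1}^{m} T^{-(a+\ell r)}(V) \neq \emptyset$, which is exactly the condition that certifies $h \in \mathcal{AP}\Sigma(T)$.

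There is no real obstacle here; the potential subtlety, if any, is simply to notice that the open-set criterion in Propositions~\ref{open-sigma} and~\ref{AP} uses $h$ only through a membership relation that is open in the SOT topology, while the dynamical conclusion is $h$-free. Once this is observed, the proof reduces to the one-line remark that pointwise convergence preserves the relation $h(x_0) \in V$ for open $V$.
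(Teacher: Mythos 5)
Your key observation is exactly the one the paper uses: the hypothesis $h(U)\cap V\neq\emptyset$ in the criteria of Propositions~\ref{open-sigma} and~\ref{AP} depends on $h$ only through a point evaluation, hence is an SOT-open condition on $h$, while the conclusion of each criterion does not mention $h$ at all. That is the whole proof, and the paper's argument is the same in substance.

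There is, however, one technical defect in your write-up: you argue with \emph{sequences} $(h_n)\subset\Sigma(T)$ converging to $h$ in SOT, which only establishes that $\Sigma(T)$ is \emph{sequentially} SOT-closed. The strong operator topology on $\mathcal{L}(X)$ for an infinite-dimensional $F$-space is not metrizable (indeed not first countable), so sequential closedness is strictly weaker than closedness, and the statement as given is about genuine SOT-closedness. The repair is immediate and costs nothing: take $g$ in the SOT-closure of $\Sigma(T)$, pick $p\in U$ and $\epsilon>0$ with $B(g(p),\epsilon)\subset V$, and observe that the basic SOT-neighborhood $\{f\in\mathcal{L}(X): d(f(p),g(p))<\epsilon\}$ must meet $\Sigma(T)$; any $h$ in that intersection satisfies $h(p)\in h(U)\cap V$, and the $h$-free conclusion of Proposition~\ref{open-sigma} (resp.\ Proposition~\ref{AP}) then transfers back to $g$. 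This is precisely how the paper phrases it. So: right idea, same route as the paper, but replace the sequence by a net or, better, by the direct neighborhood argument.
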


\begin{proof}
	Assume that \(X\) is an \(F\)-space. We first show that \(\Sigma(T)\) is SOT-closed. Fix \(g \in \overline{\Sigma(T)}^{\mathrm{SOT}}\). Fix non-empty open sets \(U, V \subset X\) with \(g(U) \cap V \neq \emptyset\). Choose \(p \in U\) and \(\epsilon > 0\) such that \(B(g(p), \epsilon) \subset V\). Consider the SOT-neighborhood of \(g\) given by
	\[
	N(g, p, \epsilon) := \{f \in \mathcal{L}(X) : d(g(p), f(p)) < \epsilon\}.
	\]
	Since \(g \in \overline{\Sigma(T)}^{\mathrm{SOT}}\), there exists \(h \in N(g, p, \epsilon) \cap \Sigma(T)\). Note that \(h(p) \in h(U) \cap V\). Moreover, because \(h \in \Sigma(T)\), the set \(\{m \in \mathbb{N} : U \cap T^{-m}(V) \neq \emptyset\}\) is infinite. Hence, by Proposition \ref{open-sigma}, we deduce that \(g \in \Sigma(T)\). This proves that \(\Sigma(T)\) is SOT-closed. A similar argument shows that \(\mathcal{AP}\Sigma(T)\) is also SOT-closed in \(\mathcal{L}(X)\).
\end{proof}



\subsection{A Sufficient Conditions on \(\Sigma(T)\) Implying Recurrence of \(T\)}

A natural and motivating question arises: can we decide whether $T$ is recurrent whenever $ \Sigma(T) $ contains a specific operator, such as $ \tfrac{1}{2}\mathrm{Id} $? Formally:

\begin{problem}\label{P1}
	Let $X$ be an $F$-space and $T \in \mathcal{L}(X)$. If $ \tfrac{1}{2}\mathrm{Id} \in \Sigma(T) $, does it follow that $T$ is recurrent?
\end{problem}

The particular choice of $\tfrac{1}{2}$ is inessential; any nonzero complex scalar could be used instead, leading to the same affirmative conclusion.

\begin{theorem}\label{T-Sigma}
	Let $X$ be a complete metric space and $T$ a continuous map on $X$. Then, \(T\) is recurrent if and only if \(\Sigma(T)\) contains a continuous map with dense range.
\end{theorem}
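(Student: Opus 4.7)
The forward direction is immediate: if $T$ is recurrent then, by the observation made just before Proposition~\ref{open-sigma}, $\mathrm{Id}\in \Sigma(T)$, and the identity trivially has dense range.

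For the converse, my plan is to verify $\mathrm{Id}\in \Sigma(T)$ via the open-set criterion of Proposition~\ref{open-sigma}. Concretely, assuming $g\in \Sigma(T)$ with $\overline{g(X)}=X$, I must show that for every pair of non-empty open sets $U,V\subset X$ with $U\cap V\neq\emptyset$, the set $\{m\in\mathbb{N}:U\cap T^{-m}(V)\neq\emptyset\}$ is infinite. The first step is to locate a point $p$ whose $g$-image lies in $U\cap V$. Because $g$ has dense range, the open set $g^{-1}(U\cap V)$ is non-empty; combined with the density of $\mathrm{R}(T,g)$, one can pick $p\in \mathrm{R}(T,g)\cap g^{-1}(U\cap V)$. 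Let $q:=g(p)\in U\cap V$ and let $(\omega_n)_n\uparrow \infty$ be such that $T^{\omega_n}p\to q$. By openness, $T^{\omega_n}p\in U\cap V$ for all sufficiently large $n$; fix such an index $n_0$ and set $p':=T^{\omega_{n_0}}p\in U$. For every $\ell>n_0$ with $T^{\omega_\ell}p\in V$, the identity
\[
T^{\omega_\ell-\omega_{n_0}}(p')=T^{\omega_\ell}(p)\in V
\]
shows that $p'\in U\cap T^{-m_\ell}(V)$ with $m_\ell:=\omega_\ell-\omega_{n_0}\to\infty$, so the exponent set is infinite. An appeal to Proposition~\ref{open-sigma} applied to the continuous map $\mathrm{Id}$ (whose hypothesis $g(U)\cap V\neq \emptyset$ collapses to $U\cap V\neq \emptyset$) then yields $\mathrm{Id}\in \Sigma(T)$, i.e.\ $T$ is recurrent.

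The principal subtlety is that Proposition~\ref{open-sigma} cannot be applied directly to $g$ with the original sets $U,V$: the hypothesis $g(U)\cap V\neq\emptyset$ need not follow merely from $U\cap V\neq\emptyset$ and dense range, since dense range of $g$ does not imply that $g(U)$ is dense for an arbitrary open $U$. The dense-range hypothesis must therefore be exploited indirectly, precisely to guarantee that $g^{-1}(U\cap V)$ is a non-empty open set meeting $\mathrm{R}(T,g)$; the shift by $\omega_{n_0}$ is then the device that moves the trajectory into $U$ while leaving a tail which still realises arbitrarily large returns to $V$. That dense range is genuinely needed is clear from the degenerate example $Tx=x/2$ on a Banach space, where $0\in \Sigma(T)$ but $T$ is not recurrent.
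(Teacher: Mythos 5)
Your proof is correct and follows essentially the same strategy as the paper's: both arguments use the dense range of $g$ to produce a non-empty open preimage of the target set, and then obtain arbitrarily large return times for $\mathrm{Id}$ by subtracting two hitting times along the dynamics of $g$. The only cosmetic difference is that you run the argument pointwise along the orbit of a single $p \in \mathrm{R}(T,g)$, whereas the paper applies Proposition~\ref{open-sigma} to $g$ twice at the level of open sets; your closing observation about why $g(U)\cap V\neq\emptyset$ cannot be assumed directly is a sound justification of the detour both proofs take.
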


\begin{proof}
	Assume that $\Sigma(T)$ contains a continuous map $h$ with dense range. Fix any nonempty open set $V \subset X$. Since $h$ has dense range, there exists a nonempty open $U \subset X$ such that $h(U) \subset V$. By Proposition \ref{open-sigma}, there is $m \in \mathbb{N}$ with $U \cap T^{-m}V \neq \emptyset$. Set $W := U \cap T^{-m}V$, so that $h(W) \subset V$. Again, Proposition \ref{open-sigma} ensures that \(\{\ell > m : W \cap T^{-\ell}V \neq \emptyset\}\) 	is infinite. Hence, \(	\{k \in \mathbb{N} : V \cap T^{-k}V \neq \emptyset\}\) 	is also infinite. Since $V$ was arbitrary, $T$ is recurrent.
\end{proof}

\begin{corollary}
	Let $X$ be a complete metric space and $T$ a continuous map on $X$. Then, \(T\) is \(\mathcal{AP}\)-recurrent if and only if \(\mathcal{AP}\Sigma(T)\) contains a continuous map with dense range.
\end{corollary}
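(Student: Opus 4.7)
The plan is to mirror the short proof of Theorem \ref{T-Sigma}, substituting Proposition \ref{AP} for Proposition \ref{open-sigma} at the key step, and invoking the equivalence between topological multiple recurrence and $\mathcal{AP}$-recurrence recorded in \cite{cardeccia2022multiple, kwietniak2017multi}. One implication is immediate from the definitions: if $T$ is $\mathcal{AP}$-recurrent, then $\mathcal{AP}\text{R}(T,\mathrm{Id})=\mathcal{AP}\text{Rec}(T)$ is dense, so $\mathrm{Id}$---which trivially has dense range---lies in $\mathcal{AP}\Sigma(T)$.

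For the converse, suppose $h\in \mathcal{AP}\Sigma(T)$ has dense range. I would fix an arbitrary nonempty open $V\subset X$ together with $m\in \mathbb{N}$, and aim to produce $r\in\mathbb{N}$ with $V\cap \bigcap_{\ell=1}^{m} T^{-\ell r}V\neq \emptyset$, which by the cited equivalence is enough to conclude that $T$ is $\mathcal{AP}$-recurrent. The dense range of $h$, combined with its continuity, yields a nonempty open $U\subset X$ with $h(U)\subset V$; in particular $h(U)\cap V\neq \emptyset$. Applying Proposition \ref{AP} to the pair $(U,V)$ with the integer $m+1$ then produces $a,r\in \mathbb{N}$ and a point $y\in U\cap \bigcap_{\ell=1}^{m+1} T^{-(a+\ell r)}V$. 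Setting $z:=T^{a+r}y$ gives $z\in V$ and $T^{\ell r}z=T^{a+(\ell+1)r}y\in V$ for $\ell=1,\ldots,m$, as desired.

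The only conceptual input is the multiple-recurrence/$\mathcal{AP}$-recurrence equivalence; the rest is a combinatorial re-indexing. The only small piece of bookkeeping is to invoke Proposition \ref{AP} with $m+1$ in place of $m$ so that, after the shift $y\mapsto T^{a+r}y$, a full length-$m$ progression of return times to $V$ still lives inside $V$. If one instead preferred to bypass the multiple-recurrence equivalence, the same construction (iterated inside a shrinking sequence of neighbourhoods, as in the proof of Proposition \ref{AP}) would directly produce a point of $\mathcal{AP}\text{Rec}(T)$ in any prescribed open set, at the cost of slightly more work.
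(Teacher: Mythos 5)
Your argument is correct and is exactly the intended analogue of the paper's proof of Theorem \ref{T-Sigma} (the paper leaves this corollary without a written proof): the shift $y\mapsto T^{a+r}y$ correctly converts the progression $\{a+\ell r\}_{\ell=1}^{m+1}$ of return times into a genuine length-$m$ progression $\{\ell r\}_{\ell=1}^{m}$ based at a point of $V$. As a minor remark, the final appeal to the multiple-recurrence/$\mathcal{AP}$-recurrence equivalence from \cite{cardeccia2022multiple, kwietniak2017multi} can be avoided entirely by feeding the condition you derived back into the implication (2) $\Rightarrow$ (1) of Proposition \ref{AP} with $h=\mathrm{Id}$, which yields $\mathrm{Id}\in\mathcal{AP}\Sigma(T)$ directly, mirroring how the proof of Theorem \ref{T-Sigma} closes via Proposition \ref{open-sigma}.
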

 
Let us assume that $\Sigma(T)$ contains some operator with dense range $h:X\to X$. By Theorem \ref{T-Sigma}, this already guarantees that $T$ is recurrent. This naturally raises the question of whether $h$ may also serve as a bridge to recover further information about the set $\Sigma(T)$. More precisely, given $g \in \Sigma(T)$, one may ask whether the set
\[
\Big\{ x \in X : \exists\; \theta_n \uparrow \infty \text{ such that } T^{\theta_n} h(x) \xrightarrow[n \to \infty]{} g(x) \Big\}
\]
is dense in $X$. Unfortunately, having dense range is not sufficient, as the following example shows.

\begin{example}
	Consider the weighted backward shift operator \( B \) on \( X := \ell^2(\mathbb{N}) \), defined by \( B(e_1) = 0 \) and \( B(e_n) = 2e_{n-1} \) for \( n \geq 2 \), where \((e_n)_{n \in \mathbb{N}}\) denotes the canonical basis of \(\ell^2(\mathbb{N})\). It is known (see \cite[Theorem 1.40]{livro}) that \( B \) is hypercyclic. In this case, we have \(\Sigma(B) = \mathcal{L}(X)\) by Proposition 
	
	Now consider the continuous linear operator \( A \) on \( X \) with dense range, defined by
	\begin{align*}
		A: \ell^2(\mathbb{N}) & \longrightarrow \ell^2(\mathbb{N}), \\
		(x_i)_{i \in \mathbb{N}} & \longmapsto \left( \frac{x_i}{3^i} \right)_{i \in \mathbb{N}}.
	\end{align*}
	Fix any \( x \in X \). Let us examine the \( B \)-orbit of the vector \( Ax \). For \( n \in \mathbb{N} \),
	\[
	B^n \left( \frac{x_i}{3^i} \right)_{i} 
	= 2^n \left( \frac{x_{i+n-1}}{3^{i+n-1}} \right)_{i}
	\xrightarrow[n \to \infty]{} 0.
	\]
\end{example}

\begin{proposition}\label{relations}
	Let \(X\) be a complete metric space and \(T\) a continuous map on \(X\). If \(\Sigma(T)\) contains a continuous open map \(h \colon X \to X\), then for every \(g \in \Sigma(T)\), the set
	\begin{align}\label{h-g}
		\{\, x \in X : \exists\, \theta_n \uparrow \infty \text{ with } T^{\theta_n} h(x) \to g(x) \,\}
	\end{align}
	is residual in \(X\).
\end{proposition}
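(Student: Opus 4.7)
The plan is to combine a standard Baire-category observation with a Cantor-type inductive construction modelled on the proof of Proposition~\ref{open-sigma}, using the openness of $h$ to move between $X$ and its image $h(X)$ at each stage.

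First, I note that the set in display~\eqref{h-g}, call it $A$, is a $G_\delta$ subset of $X$, since
\[
A=\bigcap_{k,N\in\mathbb{N}}\bigcup_{m\geq N}\bigl\{x\in X:d(T^{m}h(x),g(x))<1/k\bigr\}
\]
and each of the inner sets is open by continuity of $T^{m}\circ h$ and $g$. Because $X$ is a complete metric space and thus Baire, it suffices to prove that $A$ is dense in $X$.

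The plan for density is an inductive construction in the spirit of the proof of Proposition~\ref{open-sigma}. Fixing a nonempty open $U\subset X$ and a starting point $p_{0}\in U$ with $\overline{B(p_{0},\epsilon_{0})}\subset U$, I would build $(p_{n},\epsilon_{n},\theta_{n})_{n\geq 1}$ with $\overline{B(p_{n},\epsilon_{n})}\subset B(p_{n-1},\epsilon_{n-1})$, $\epsilon_{n}\downarrow 0$ and $\theta_{n}\uparrow\infty$ satisfying
\[
T^{\theta_{n}}h\bigl(B(p_{n},\epsilon_{n})\bigr)\subset B\bigl(g(p_{n-1}),\epsilon_{n-1}\bigr).
\]
The Cantor intersection theorem then yields a unique $q\in\bigcap_{n}\overline{B(p_{n},\epsilon_{n})}\subset U$; the inclusions together with continuity of $g$ at $q$ (so that $g(p_{n-1})\to g(q)$) force $T^{\theta_{n}}h(q)\to g(q)$ by a triangle inequality, so $q\in A\cap U$.

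The heart of the induction is: given $(p_{n-1},\epsilon_{n-1},\theta_{n-1})$, produce $p_{n}\in B(p_{n-1},\epsilon_{n-1})$ and $\theta_{n}>\theta_{n-1}$ with $d(T^{\theta_{n}}h(p_{n}),g(p_{n-1}))<\epsilon_{n-1}/2$; continuity of $T^{\theta_{n}}\circ h$ then selects $\epsilon_{n}$. The openness of $h$ is essential here, because it ensures that $V:=h(B(p_{n-1},\epsilon_{n-1}))$ is an \emph{open} subset of $X$, so the required condition reads
\[
V\cap T^{-m}B(g(p_{n-1}),\epsilon_{n-1}/2)\neq\emptyset \quad\text{for some }m>\theta_{n-1}.
\]
I would then invoke Proposition~\ref{open-sigma} applied to $g\in\Sigma(T)$ on the open sets $V$ and $B(g(p_{n-1}),\epsilon_{n-1}/2)$ to secure infinitely many such $m$, once the hypothesis $g(V)\cap B(g(p_{n-1}),\epsilon_{n-1}/2)\neq\emptyset$ is verified.

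The main obstacle will be exactly this intersection hypothesis, which asks for some $w\in B(p_{n-1},\epsilon_{n-1})$ with $g(h(w))$ close to $g(p_{n-1})$ — a gap that precisely reflects the discrepancy exhibited by the example preceding the proposition, where dense range alone was insufficient. I expect to resolve it by pre-arranging the inductive data so that $p_{n-1}$ is drawn from the dense $G_\delta$ set $h^{-1}(R(T,g))\cap R(T,h)$ (both densities being transported to and from $h(X)$ thanks to the openness of $h$), and by applying Proposition~\ref{open-sigma} a second time, now to $h\in\Sigma(T)$, to locate preimages of points close to $g(p_{n-1})$ inside $V$. Iterating this refinement between the two occurrences of Proposition~\ref{open-sigma} and passing to the Cantor limit will then produce the desired $q$ and complete the proof.
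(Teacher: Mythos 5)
Your overall architecture matches the paper's: observe that the set in \eqref{h-g} is a $G_\delta$, then prove density by a nested Cantor construction whose inductive step produces $\theta_n$ with $T^{\theta_n}h(B(p_n,\epsilon_n))\subset B(g(p_{n-1}),\epsilon_{n-1})$. You have also correctly located the crux: if you try to apply Proposition~\ref{open-sigma} to $g$ with the \emph{image-side} open set $V:=h(B(p_{n-1},\epsilon_{n-1}))$, you must first verify $g(V)\cap B(g(p_{n-1}),\epsilon_{n-1}/2)\neq\emptyset$, and nothing guarantees this. The problem is that your proposed repair does not close this gap. Drawing $p_{n-1}$ from $h^{-1}(\mathrm{R}(T,g))$ only yields times $\theta_k$ with $T^{\theta_k}h(p_{n-1})\to g(h(p_{n-1}))$; the target you need is $g(p_{n-1})$, and $g(h(p_{n-1}))$ and $g(p_{n-1})$ are unrelated points in general (this is exactly the failure mode of the weighted-shift example preceding the proposition, which persists even for open $h$). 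Likewise, a ``second application of Proposition~\ref{open-sigma} to $h$'' produces times $m$ with $U''\cap T^{-m}V''\neq\emptyset$ for suitable open sets; it does not manufacture a point of $V$ whose image under $g$ is close to $g(p_{n-1})$, and you give no mechanism that would.

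The idea you are missing is that one should never apply Proposition~\ref{open-sigma} on the image side at all; instead one \emph{factors through} $h(B(p_{n-1},\epsilon_{n-1}))$ using the recurrence of $p_{n-1}$. Concretely (this is the paper's step): take $p_{n-1}\in \mathrm{R}(T,h)\cap \mathrm{R}(T,g)$ (a dense $G_\delta$). Since $h$ is open, $h(B(p_{n-1},\epsilon_{n-1}))$ is an open neighbourhood of $h(p_{n-1})$, so $p_{n-1}\in \mathrm{R}(T,h)$ gives some $\psi$ with $T^{\psi}p_{n-1}\in h(B(p_{n-1},\epsilon_{n-1}))$, and by continuity $T^{\psi}(B(p_{n-1},\delta))\subset h(B(p_{n-1},\epsilon_{n-1}))$ for small $\delta$. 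Now apply Proposition~\ref{open-sigma} to $g$ with the \emph{domain-side} sets $B(p_{n-1},\delta)$ and $B(g(p_{n-1}),\epsilon_{n-1}/2)$ — here the intersection hypothesis is trivially met because $g(p_{n-1})$ lies in both — to obtain $\omega>\psi$ and $y\in B(p_{n-1},\delta)$ with $T^{\omega}y\in B(g(p_{n-1}),\epsilon_{n-1}/2)$. Writing $T^{\omega}=T^{\omega-\psi}\circ T^{\psi}$ and using $T^{\psi}y=h(z)$ for some $z\in B(p_{n-1},\epsilon_{n-1})$, you get $T^{\omega-\psi}h(z)\in B(g(p_{n-1}),\epsilon_{n-1}/2)$; set $\theta_n:=\omega-\psi$, shrink a ball around $z$, and choose the next centre in $\mathrm{R}(T,h)\cap \mathrm{R}(T,g)$ inside it. This factorization is precisely where both the openness of $h$ and $h\in\Sigma(T)$ are used, and it replaces the unverifiable hypothesis $g(h(w))\approx g(p_{n-1})$ by the tautological one $g(w)\approx g(p_{n-1})$ for $w$ near $p_{n-1}$. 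Without it, your induction cannot start.
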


\begin{proof}
	Fix \(h \in \Sigma(T)\) continuous and open, and \(g \in \Sigma(T)\).  
	The set in \eqref{h-g} is clearly a \(G_\delta\); it remains to show density.  
	
	Let \(U \subset X\) be a non-empty open set. Choose \(x_1 \in \text{R}(T,h) \cap \text{R}(T,g)\) and \(\epsilon > 0\) such that \(B(x_1,\epsilon) \subset U\). Since \(h\) is open, \(h(B(x_1,\epsilon))\) is an open set containing \(h(x_1)\). Because \(x_1 \in \text{R}(T,h)\), there exists \(\psi_1 \in \mathbb{N}\) with \(	T^{\psi_1} x_1 \in h(B(x_1,\epsilon))\). By continuity of \(T\), one can choose \(0 < \delta_1 < \min\{2^{-1},\epsilon\}\) such that \(T^{\psi_1}(B(x_1,\delta_1)) \subset h(B(x_1,\epsilon))\).

	Now consider the open sets \(B(g(x_1),2^{-1})\) and \(B(x_1,\delta_1)\).  
	By Proposition~\ref{open-sigma} applied to \(g\), there exists \(\omega_1 > \psi_1\) with
	\begin{align*}
	\emptyset \neq B(g(x_1),2^{-1}) \cap T^{\omega_1}(B(x_1,\delta_1)) 
	\subset B(g(x_1),2^{-1}) \cap T^{\omega_1-\psi_1} h(B(x_1,\epsilon)).
	\end{align*}
	Thus we may select \(x_2 \in \text{R}(T,h) \cap \mathrm{R}(T,g)\) and \(0<\delta_2<2^{-2}\) such that
	\begin{align*}
	\overline{B(x_2,\delta_2)} \subset B(x_1,\delta_1), 
	\qquad 
	T^{\omega_1-\psi_1} h(B(x_2,\delta_2)) \subset B(g(x_1),2^{-1}).
	\end{align*}

	Iterating this construction, we obtain sequences \(\{x_j\}_{j \in \mathbb{N}} \subset X,
	\delta_j \downarrow 0,  
	\theta_j \uparrow \infty,\) such that
	\begin{itemize}
    \item \(\overline{B(x_{j+1},\delta_{j+1})} \subset B(x_j,\delta_j)\) for each \(j\in \mathbb{N}\),
    \item \(T^{\theta_{j+1}} h(B(x_{j+1},\delta_{j+1})) \subset B(g(x_j),2^{-j}), 
	\) for each \(j\in \mathbb{N}\).
	\end{itemize}
	By Cantor’s intersection theorem, \(\{y\} := \bigcap_{j} B(x_j,\delta_j)\) with \(y \in U\). Moreover,
	\begin{align*}
		d(T^{\theta_{j+1}} h(y), g(y)) 
		&\leq d(T^{\theta_{j+1}} h(y), g(x_j)) + d(g(x_j), g(y)) \\
		&\leq 2^{-j} + d(g(x_j), g(y)) \xrightarrow[j \to \infty]{} 0.
	\end{align*}
	Hence \(y\) belongs to the set in \eqref{h-g}. Since \(U\) was arbitrary, the set is dense.  
\end{proof}

\begin{proposition}
	Let \(X\) be a complete metric space and \(T\) a continuous map on \(X\) with \(\mathcal{AP}\Sigma(T)\neq \emptyset\). If \(\Sigma(T)\) contains a continuous open map \(h \colon X \to X\), then for every \(g \in \mathcal{AP}\Sigma(T)\), the set
	\begin{align*}
		\{\, x \in X : \exists\, \text{B}\in \mathcal{AP}\ \text{such that}\  \lim_{\substack{n\rightarrow \infty \\ n\in \text{B} }}T^{n} h(x)=g(x) \,\}
	\end{align*}
	is residual in \(X\).
\end{proposition}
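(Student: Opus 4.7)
The plan is to adapt the proof of Proposition \ref{relations} by replacing each appeal to Proposition \ref{open-sigma} with Proposition \ref{AP}, so that at every inductive step an entire arithmetic progression of control times is installed instead of a single time. Write the target set as
\[
A \;=\; \bigcap_{k \in \mathbb{N}} \bigcup_{a, r \in \mathbb{N}} \left\{ x \in X : \max_{1 \leq \ell \leq k} d\bigl(T^{a + \ell r} h(x),\, g(x)\bigr) < \tfrac{1}{k} \right\},
\]
which is $G_\delta$ by continuity of $T$, $h$, and $g$. By Baire's theorem, it suffices to prove density.

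Fix a nonempty open $U \subset X$. I would build inductively a nested sequence $\overline{B(x_{j+1}, \delta_{j+1})} \subset B(x_j, \delta_j) \subset U$ with $\delta_j \downarrow 0$, together with positive integers $\psi_j, a_j, r_j$, satisfying at each stage $j$:
\begin{itemize}
\item $x_j$ is chosen inside $B(x_{j-1}, \delta_{j-1}) \cap \text{R}(T, h) \cap \mathcal{AP}\text{R}(T, g)$, which is residual in $B(x_{j-1}, \delta_{j-1})$ because both sets are dense $G_\delta$ by hypothesis;
\item openness of $h$ together with $x_j \in \text{R}(T, h)$ supplies $\psi_j$ and an auxiliary radius with $T^{\psi_j}$ sending a ball around $x_j$ into $h(B(x_j, \epsilon_j))$, exactly as in Proposition \ref{relations};
\item Proposition \ref{AP} applied to $g \in \mathcal{AP}\Sigma(T)$ with length $m = j$ and target $B(g(x_j), 2^{-j})$ furnishes a pair $(a_j, r_j)$; combining this with the previous item and using continuity of $T^{a_j + \ell r_j - \psi_j} \circ h$, the next ball $B(x_{j+1}, \delta_{j+1})$ can be selected so that
\[
T^{a_j + \ell r_j - \psi_j}\bigl( h(B(x_{j+1}, \delta_{j+1})) \bigr) \subset B(g(x_j), 2^{-j}) \qquad \text{for } \ell = 1, \ldots, j.
\]
\end{itemize}
One further bookkeeping requirement is that $a_j$ be chosen large enough that $a_j + r_j - \psi_j$ exceeds every time produced at earlier stages; this is permissible because Proposition \ref{AP} yields AP-pairs with first term $a$ arbitrarily large (apply the proposition with $m$ replaced by $m + N$ and discard the initial $N$ terms of the resulting progression).

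Cantor's theorem then supplies $\{y\} := \bigcap_j \overline{B(x_j, \delta_j)} \subset U$. Setting $B := \bigcup_{j \geq 1} \{a_j + \ell r_j - \psi_j : \ell = 1, \ldots, j\}$, the set $B$ contains arithmetic progressions of every finite length and therefore belongs to $\mathcal{AP}$. Since $y \in B(x_{j+1}, \delta_{j+1})$ for every $j$, the third bullet above together with the continuity of $g$ at $y$ and $x_j \to y$ force $T^n h(y) \to g(y)$ along $n \in B$, so $y \in U \cap A$, proving density.

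The main obstacle is the ordering of times across stages: it requires the (mild) refinement that Proposition \ref{AP} produces AP-pairs with $a$ arbitrarily large. Once this is noted, the proof is a direct replay of the Cantor-intersection scheme of Proposition \ref{relations} with Proposition \ref{AP} substituted for Proposition \ref{open-sigma}.
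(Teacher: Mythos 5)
Your proof is correct and follows essentially the same route as the paper's: the paper also runs the nested-ball Cantor scheme of Proposition~\ref{relations} with Proposition~\ref{AP} substituted for Proposition~\ref{open-sigma}, using the return time of $x$ into $h(B(x,\delta))$ (your $\psi_j$, the paper's $m$) to shift the progression $\{b+\ell r\}$ to $\{b-m+\ell r\}$, which is exactly your $a_j+\ell r_j-\psi_j$. The bookkeeping points you single out (progressions with arbitrarily large first term, the explicit $G_\delta$ description of the target set, and the ordering of the blocks) are correctly justified and are indeed the only details the paper's sketch leaves implicit, apart from taking the radius $\epsilon_j$ of the ball fed to $h$ at most the nesting radius $\delta_j$, so that the $h$-preimage point around which the next ball is centred lies in $B(x_j,\delta_j)$.
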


\begin{proof}
	The argument follows the same line of reasoning as in Proposition \ref{relations}, and in fact it proceeds by induction. Let us highlight only the key step. Fix \(x\in\text{R}(T,h)\cap \mathcal{AP}\text{R}(T,g)\) and \(\delta>0\). Then there exists \(m\in \mathbb{N}\) such that 
	\[
	x\in T^{-m}h(B(x,\delta)).
	\]
	Since \(g\in \mathcal{AP}\Sigma(T)\), for every \(\epsilon>0\) and \(L\in \mathbb{N}\) there exist \(b,r\in \mathbb{N}\) with \(b>m\) such that
	\begin{align*}
		T^{-m}h(B(x, \delta))\cap \left(\bigcap_{\ell=0}^{L} T^{-(b+\ell r)}B(g(x),\epsilon)\right)&\neq \emptyset, \quad\text{or}\\
		h(B(x,\delta))\cap \left(\bigcap_{\ell=0}^{L} T^{-(b-m+\ell r)}B(g(x),\epsilon)\right)&\neq \emptyset.
	\end{align*}
	Set \(a:=b-m\). Then we can choose \(y\in \text{R}(T,h)\cap \mathcal{AP}\text{R}(T,g)\) and \(t>0\) such that \(\overline{B(y,t)}\subset B(x,\delta)\) and 
	\[
	T^{a+\ell r}h(B(y,t))\subset B(g(x), \epsilon) \quad \text{for each } 0\leq \ell\leq L.
	\]
	From here the inductive step proceeds by considering the point \(y\).
\end{proof}

\begin{corollary}
	Let \(X\) be an \(F\)-space and \(T\) a continuous linear operator on \(X\). 
	If \(\Sigma(T)\) contains a surjective operator \(h\), then for every \(g \in \Sigma(T)\) the set 
	\begin{align*}
		\{\, x \in X : \exists\, \theta_n \uparrow \infty \ \text{with}\ T^{\theta_n} h(x) \to g(x)\,\}
	\end{align*}
	is residual in \(X\). If, in addition, \(\mathcal{AP}\Sigma(T)\neq \emptyset\), then for every \(g \in \mathcal{AP}\Sigma(T)\) the set 
	\begin{align*}
		\{\, x \in X : \exists\, \text{B}\in \mathcal{AP}\ \text{such that}\ 
		\lim_{\substack{n\to \infty \\ n\in \text{B}}} T^{n} h(x)=g(x)\,\}
	\end{align*}
	is residual in \(X\).
\end{corollary}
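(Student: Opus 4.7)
The plan is to deduce the corollary directly from the two preceding propositions (Proposition \ref{relations} and its $\mathcal{AP}$-analogue) by showing that the hypothesis of surjectivity of $h$ upgrades to openness in the setting of $F$-spaces.

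First I would invoke the open mapping theorem for $F$-spaces (which applies since $X$ is an $F$-space, hence in particular a complete metrizable topological vector space, and $h \in \mathcal{L}(X)$ is a continuous linear operator). Surjectivity of $h$ then yields that $h$ is an open map. Since $h$ is also continuous by assumption ($h \in \mathcal{L}(X)$), the hypotheses of Proposition \ref{relations} are satisfied, namely that $\Sigma(T)$ contains a continuous open map.

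For the first statement, I would then apply Proposition \ref{relations} verbatim: for every $g \in \Sigma(T)$, the set
\[
\{\, x \in X : \exists\, \theta_n \uparrow \infty \text{ with } T^{\theta_n} h(x) \to g(x) \,\}
\]
is residual in $X$. For the second statement, under the additional assumption $\mathcal{AP}\Sigma(T) \neq \emptyset$, I would apply the preceding proposition (the $\mathcal{AP}$-analogue of Proposition \ref{relations}) in exactly the same manner, using again that $h$ is continuous and open, to obtain residuality of
\[
\{\, x \in X : \exists\, \text{B}\in \mathcal{AP}\ \text{such that}\  \lim_{\substack{n\rightarrow \infty \\ n\in \text{B} }}T^{n} h(x)=g(x) \,\}
\]
for every $g \in \mathcal{AP}\Sigma(T)$.

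There is essentially no obstacle here; the only point of substance is recognizing that surjectivity plus continuity of a linear operator on an $F$-space implies openness via the open mapping theorem, which reduces the corollary to a direct citation of the two preceding propositions. The corollary is therefore a packaged, linear-operator version of the general topological-dynamical statements proved earlier.
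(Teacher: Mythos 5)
Your proposal is correct and matches the paper's intended argument: the corollary is stated without proof precisely because it follows immediately from Proposition \ref{relations} and its $\mathcal{AP}$-analogue once one notes that a surjective continuous linear operator on an $F$-space is open by the open mapping theorem. Nothing is missing.
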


This corollary will play a crucial role in the proof of Corollary~\ref{seq-h-g}, 
which in turn is an essential step towards establishing the theorem stated in 
the introduction concerning the set $\Omega(T)$ and the operator $L_{T}$.

Denote by $\mathrm{Homeo}(X)$ the group of homeomorphisms of $X$ onto itself.

\begin{corollary}
	Let \(T\) be a continuous map on a complete metric space \(X\). 
	If $T$ is recurrent, then $\Sigma(T)\cap \mathrm{Homeo}(X)$ is a subgroup of $\mathrm{Homeo}(X)$.
\end{corollary}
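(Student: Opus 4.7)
The plan is to apply the standard subgroup criterion: since $T$ is recurrent, the equivalence recorded in the introduction gives $\mathrm{Id}\in \Sigma(T)$, and $\mathrm{Id}\in \mathrm{Homeo}(X)$ is immediate, so it suffices to show that $f\circ g^{-1}\in \Sigma(T)\cap \mathrm{Homeo}(X)$ whenever $f,g\in \Sigma(T)\cap \mathrm{Homeo}(X)$. Since the composition of homeomorphisms is a homeomorphism, the only substantive task is to prove $f\circ g^{-1}\in \Sigma(T)$.

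The key tool is Proposition~\ref{relations}. Every element of $\mathrm{Homeo}(X)$ is continuous and open, so I would invoke the proposition with the role of $h$ played by $g$ and the role of the element of $\Sigma(T)$ played by $f$. This produces a residual set
\[
R:=\{x\in X: \exists\, \theta_n\uparrow \infty\ \text{with}\ T^{\theta_n}g(x)\to f(x)\}.
\]

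To finish, I perform the change of variables $y:=g(x)$ for $x\in R$: then $T^{\theta_n}y\to f(x)=(f\circ g^{-1})(y)$, which shows $g(R)\subseteq \text{R}(T,f\circ g^{-1})$. Because $g$ is a homeomorphism it preserves Baire category, so $g(R)$ is residual in $X$ and in particular dense. Hence $\text{R}(T,f\circ g^{-1})$ is dense, i.e.\ $f\circ g^{-1}\in \Sigma(T)$, completing the verification.

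I do not foresee a serious obstacle. The argument is essentially a one-step application of Proposition~\ref{relations} combined with the change of variables $y=g(x)$; the only mildly non-trivial ingredient is the classical Baire-category observation that a self-homeomorphism of $X$ sends residual sets to residual sets.
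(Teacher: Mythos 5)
Your argument is correct and is exactly the route the paper intends: the corollary is stated as a consequence of Proposition~\ref{relations}, applied with the open map $h:=g$ and target $f$, followed by the change of variables $y=g(x)$ (and recurrence supplying $\mathrm{Id}\in\Sigma(T)$ for non-emptiness). Note that for the final step residuality of $g(R)$ is more than you need — since $R$ is residual it is dense, and a homeomorphism carries dense sets to dense sets, which already gives density of $\text{R}(T,f\circ g^{-1})$.
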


\begin{corollary}\label{group}
	Let $T \in \mathcal{L}(X)$. 
	If $T$ is recurrent, then $\Sigma(T)\cap \mathrm{GL}(X)$ is a subgroup of $\mathrm{GL}(X)$.
\end{corollary}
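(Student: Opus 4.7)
The plan is to deduce this corollary from the preceding one about $\Sigma(T) \cap \mathrm{Homeo}(X)$, using the simple observation that every $h \in \mathrm{GL}(X)$ is a homeomorphism of $X$ onto itself. On an $F$-space (which is the implicit setting for the notation $\mathcal{L}(X)$) this is the Open Mapping / Inverse Mapping Theorem, so $\mathrm{GL}(X)$ sits inside $\mathrm{Homeo}(X)$ as a subgroup, and
\[
\Sigma(T) \cap \mathrm{GL}(X) \;=\; \bigl(\Sigma(T) \cap \mathrm{Homeo}(X)\bigr) \cap \mathrm{GL}(X).
\]

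With this identification the three subgroup axioms transfer immediately. Since $T$ is recurrent, $\mathrm{Id} \in \Sigma(T)$, and trivially $\mathrm{Id} \in \mathrm{GL}(X)$, so the intersection contains the identity. If $h, g \in \Sigma(T) \cap \mathrm{GL}(X)$, then $h,g \in \Sigma(T) \cap \mathrm{Homeo}(X)$, which is a group by the preceding corollary; hence $h \circ g \in \Sigma(T)$ and $h^{-1} \in \Sigma(T)$, and both $h \circ g$ and $h^{-1}$ remain in $\mathrm{GL}(X)$. This yields closure under composition and inversion, and the subgroup structure follows.

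The entire content thus reduces to the preceding corollary, whose proof I would build on Proposition \ref{relations}. The point is that any homeomorphism is in particular an open map, so for $h \in \Sigma(T) \cap \mathrm{Homeo}(X)$ and any $g \in \Sigma(T)$ one gets a residual set of $x$ with $T^{\theta_n} h(x) \to g(x)$; the substitution $y = h(x)$, legitimate because $h$ is a homeomorphism (hence carries dense sets to dense sets), identifies this with density of $\text{R}(T, g \circ h^{-1})$ and therefore shows $g \circ h^{-1} \in \Sigma(T)$. Taking $g = \mathrm{Id}$ gives $h^{-1} \in \Sigma(T)$, and then applying the same identity with $h$ replaced by $h_1^{-1}$ yields $g \circ h_1 \in \Sigma(T)$, i.e.\ closure under composition. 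The main obstacle — which invertibility, through openness, is precisely what removes — is that in general the composition of two elements of $\Sigma(T)$ need not lie in $\Sigma(T)$; once Proposition \ref{relations} is available this becomes a short argument.
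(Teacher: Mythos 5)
Your argument is correct and follows exactly the route the paper intends: the corollary is stated (without proof) as a consequence of Proposition \ref{relations}, and your derivation of $g \circ h^{-1} \in \Sigma(T)$ from the residuality statement via the substitution $y = h(x)$, together with the reduction of the $\mathrm{GL}(X)$ case to the $\mathrm{Homeo}(X)$ case through the inverse mapping theorem, is precisely the missing content. The only detail worth making explicit is that $\mathrm{Id} \in \Sigma(T)$ (from recurrence of $T$) is what seeds both the inversion step and, via $h^{-1} \in \Sigma(T)$, the composition step — which you do note.
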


 \subsection{\(T\bigoplus T\)-problem and dense-lineability}

The relationship between the hypercyclicity of an operator $ T $ and the hypercyclicity of $ T \bigoplus T $ has been a central topic of discussion due to its connections with weak mixing and the hypercyclicity criterion \cite{livro, Bes, Grosse, Herrero}. These properties were shown to be equivalent; however, there exist hypercyclic operators that do not satisfy this condition \cite{bayart2007hypercyclic, rosa}. 

In the work of Costakis et al.~\cite[Question 9.6]{Cos}, the authors raised the question of whether 
$T \oplus T$ is recurrent whenever $T$ is a recurrent operator on a separable infinite-dimensional 
Banach space. More recently, S.~Grivaux, A.~López, and A.~Peris \cite{Sophie} proved that for any (real or complex) 
separable infinite-dimensional Banach space $X$ and any $N \in \mathbb{N}$, there exists 
$T \in \mathcal{L}(X)$ such that $\bigoplus_{i=1}^{N} T : X^{N} \to X^{N}$ is $\mathcal{AP}$-recurrent, 
but for which $\bigoplus_{i=1}^{N+1} T : X^{N+1} \to X^{N+1}$  is not even recurrent..

In the context of linear dynamics, there has been considerable interest in studying the dense-lineability 
of sets of hypercyclic vectors, as well as that of recurrent vectors 
\cite{arbieto2025dense, bernal1999densely, bes1999invariant, bourdon1993invariant, grivaux2025questions, herrero1991limits}. 

Recently, Grivaux et al.~\cite{grivaux2023questions} asked whether $\mathrm{Rec}(T)$ is densely lineable 
whenever $T$ is recurrent, a question that was answered in the negative by A.~López and Q.~Menet 
\cite{LopezM}. They showed that every (real or complex) separable infinite-dimensional Banach space $X$ 
supports an $\mathcal{AP}$-recurrent operator $T : X \to X$ for which the set of recurrent vectors 
$\mathrm{Rec}(T)$, and hence also the set of $\mathcal{AP}$-recurrent vectors 
$\mathcal{AP}\mathrm{Rec}(T)$, is not densely lineable.

 \begin{theorem}\label{Sigma-Tapia}
	Let $ X $ be a separable infinite-dimensional complex Banach space. There exists a \(\mathcal{AP}\)-recurrent operator $ T $ acting on $ X $ such that $ \Sigma(T \bigoplus T) = \emptyset $ and for every $ g \in \Sigma(T) $, the set $ \text{R}(T, g) $ is not dense-lineable.
\end{theorem}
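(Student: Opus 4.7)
The strategy is to exhibit a single operator $T$ realizing all three required properties at once, rather than invoking three unrelated constructions. The natural starting point is the operator of Grivaux, López, and Peris \cite{Sophie}, which is $\mathcal{AP}$-recurrent on $X$ yet for which $T\oplus T$ fails to be recurrent, and to combine it with the refinement of López and Menet \cite{LopezM} that prevents dense-lineability of $\mathrm{Rec}(T)$. I would either verify that the Grivaux-López-Peris operator already supports the López-Menet obstruction, or amalgamate the two constructions; in both cases the operator should be a weighted shift type operator on $\ell^{p}$ or $c_{0}$ with lacunary weights, transported to an arbitrary separable $X$ via the standard perturbative transfer argument used throughout linear dynamics on Banach spaces.

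The delicate step is upgrading ``$T\oplus T$ not recurrent'' to the stronger statement $\Sigma(T\oplus T)=\emptyset$. The former only rules out $\mathrm{Id}\oplus \mathrm{Id}$, while the latter forbids every continuous linear $h:X\oplus X\to X\oplus X$. My plan is to apply the contrapositive of Proposition \ref{open-sigma}: for each prospective $h$, produce non-empty open sets $U,V\subset X\oplus X$ with $h(U)\cap V\neq\emptyset$ but $U\cap (T\oplus T)^{-m}V=\emptyset$ for all but finitely many $m$. The specific construction in \cite{Sophie} concentrates the $(T\oplus T)$-orbits of a dense class of vectors on very thin, rigidly structured subsets of $X\oplus X$, and one should be able to pick $V$ disjoint from the tail of $(T\oplus T)^{m}U$ regardless of the choice of $h$. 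Degenerate cases such as $h=0$ or $h$ with non-dense range have to be treated separately, since they impose that no dense class of orbits tends to zero and that the orbits are not asymptotically captured by any proper closed subspace.

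For the non-dense-lineability of $\text{R}(T,g)$, fix $g\in \Sigma(T)$ and transplant the López-Menet argument. When $g\in \Sigma(T)\cap \mathrm{GL}(X)$, Corollary \ref{group} allows us to post-compose by $g^{-1}$ and reduce to the case $g=\mathrm{Id}$, where the conclusion is exactly the López-Menet theorem. For a general $g$, the obstruction is fundamentally a statement about the incompatibility between orbit convergence along the graph of a nonlinear target and the linearity of a putative dense witnessing subspace $M\subset \text{R}(T,g)\cup\{0\}$: if $x,y\in M$ have $T^{\omega_{n}}x\to g(x)$ and $T^{\mu_{n}}y\to g(y)$ along different sequences, then suitable linear combinations cannot meet the prescribed limit for $g$ along any common sequence. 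The original argument of \cite{LopezM} goes through with $g(\cdot)$ in place of the identity throughout, provided the shift weights of the construction are chosen to rule out the ``coincidence'' of return sequences on any two-dimensional subspace of $M$.

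The principal obstacle is the first: establishing $\Sigma(T\oplus T)=\emptyset$ is substantially stronger than the non-recurrence of $T\oplus T$ proved in \cite{Sophie}, because it must exclude every continuous linear $h$, including rank-one, non-dense-range, and zero operators. Handling these cases should be possible through an explicit, quantitative description of the orbits of $T\oplus T$ on a residual set of $X\oplus X$, together with the observation that the existence of such an $h\in \Sigma(T\oplus T)$ would force an incompatible concentration of $(T\oplus T)^{\theta_{n}}(x,y)$ around $h(x,y)$ for a dense set of pairs $(x,y)$, contradicting the dispersive behaviour engineered in the construction.
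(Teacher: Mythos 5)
Your proposal correctly locates the relevant constructions---the operators of \cite{Sophie} and \cite{LopezM} are indeed, as the paper notes, modifications of Aug\'e's operator---but it misses the single structural feature on which the paper's proof rests and which makes both hard steps nearly immediate. The paper takes the Aug\'e--Tapia operator of type $2$: there is a rank-two projection $\mathbb{P}$ onto $V=\mathrm{span}(e_1,e_2)$ and a set $F\subsetneq V$, dense with dense complement in $V$, such that $\mathrm{Rec}(T)=\mathbb{P}^{-1}(F)$ while $\mathbb{P}^{-1}(V\setminus F)=A_T=\{x\in X:\lim_n\|T^nx\|=\infty\}$. Since every vector outside $\mathrm{Rec}(T)$ has a norm-divergent orbit, no subsequence of its orbit can converge to anything; this one dichotomy simultaneously forces $\Sigma(T\oplus T)=\emptyset$ and the failure of dense-lineability. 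Your proposal never identifies this mechanism, and consequently both substantive claims remain unproved in it.

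Concretely, for $\Sigma(T\oplus T)=\emptyset$ the paper does not attempt your plan of exhibiting, for each continuous $h$, open sets $U,V$ with only finitely many return times via the contrapositive of Proposition~\ref{open-sigma}; you yourself flag this as the principal obstacle and offer only a plausibility argument. Instead, if $h\in\Sigma(T\oplus T)$ then $\text{R}(T\oplus T,h)$ is a dense $G_\delta$, so one picks $(x,y)$ in it with $\mathrm{span}(\mathbb{P}x,\mathbb{P}y)=V$; along a single sequence $(\theta_n)$ both $T^{\theta_n}x$ and $T^{\theta_n}y$ converge, hence $T^{\theta_n}\xi$ converges for every $\xi\in\mathrm{span}(x,y)$---yet some such $\xi$ satisfies $\mathbb{P}\xi\in V\setminus F$, i.e.\ $\xi\in A_T$, a contradiction. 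For the last claim, $\text{R}(T,g)\subset\mathrm{Rec}(T)$ holds automatically (any $x\in A_T$ has divergent orbit), so a dense subspace $E\subset\text{R}(T,g)$ would give $V=\mathbb{P}(E)\subset\mathbb{P}(\mathrm{Rec}(T))=F\subsetneq V$. Your proposed reduction of general $g$ to $g=\mathrm{Id}$ via Corollary~\ref{group} does not work as stated: that corollary concerns which operators belong to $\Sigma(T)$, not a pointwise correspondence between $\text{R}(T,g)$ and $\mathrm{Rec}(T)$, and the suggestion to rerun the L\'opez--Menet argument with $g$ in place of the identity is left entirely unverified.
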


 We recall some aspects of the Augé-Tapia operator. Let $X$ be a separable infinite dimensional complex Banach space. For $x\in X$, the Augé-Tapia operator of type $N$ is given by: 
 \begin{eqnarray*}
 	Tx=Sx+\sum_{k=N+1}^{\infty}\frac{1}{m_{k-1}} g_{k}(\mathbb{P}x)e_{k}.
 \end{eqnarray*} 
 Now, let's explain the components involved. One can find $(e_{k}, e^{*}_{k})_{k\in \mathbb{N}}\subset X\times X^{*}$, as established by the well-known theorem of Ovsepian and Aleksander \cite{Ovsepian}, such that
 $\mathrm{span}\{e_{n}:n\in \mathbb{N}\}$ is dense in $X$, $e^{*}_{n}(e_{m})=1$ for $n=m$ or $0$ for $n\neq m$, $\Vert{e_{n}\Vert}=1$ for each $n\in \mathbb{N}$,  and $\sup_{n\in \mathbb{N}}\Vert{e^{*}_{n}\Vert}<\infty$. Given $N\in \mathbb{N}$, let us denote $\mathbb{P}$ the projection defined on $X$ onto $V:=\mathrm{span}(\{e_{i}\}_{i=1}^{N})$ by $\mathbb{P}(x)=\sum_{i=1}^{N}\langle{e_{i}^{*},x}\rangle e_{i}$. Consider the sequence $(g_{k})_{k} \subset V^{*}$, which is associated with a set $F \subset V$. Here, $F$ and $V\setminus F$ are dense in $V$ such that, for all $x \in F$, $\liminf |\, g_{n}(x)\,| = 0$, and for all $x \notin F$, $\lim |\, g_{n}(x)\,| = +\infty$ \cite[Corollary 2.12]{Tapia}. To define the operator $S$, consider a sequence $(\lambda_{k})_{k\geq 1}$ with $\lambda_{i}=1$ for $i\in \{1,2,\ldots, N\}$ and for $i\geq N+1$, $\lambda_{i}=e^{\frac{i\pi}{m_{i}}}$ where $(m_{k})_{k\geq 1}\subset \mathbb{N}$ be a rapidly increasing sequence so that the following two conditions are satisfied:
 $m_{k}\vert m_{k+1}$ for all $n\in \mathbb{N}$ and $\displaystyle{\sum_{k\geq N+1}\frac{m_{k-2}}{m_{k-1}}\Vert{g_{k}\Vert}<\infty}$.
 For $x \in c_{00}:=\text{span}\{e_{n} : n \geq 1\}$, write $x$ as $x = \sum_{i=1}^{\ell} x_{i}e_{i}$, and define $Sx = \sum_{i=1}^{\ell}\lambda_{i}x_{i}e_{i}$. This establishes $S$ as a bounded operator on $\mathrm{span}\{e_{n} : n \geq 1\}$. Consequently, $S$ can be extended to a bounded operator on $X$.
 
 For $x\in X$ and each positive integer $n$,
 \begin{align*}
 	T^{n}x=S^{n}x+\sum_{k=N+1}^{\infty}\frac{\lambda_{k,n}}{m_{k-1}}g_{k}(\mathbb{P}x)e_{k}\;\; \hbox{where}\;\; \lambda_{k,n}=\sum_{j=0}^{n-1}\lambda_{k}^{j}.
 \end{align*}
 
 This continuous linear operator $T$ satisfies the following property \cite{Auge, Tapia}:
 \begin{eqnarray*}
 	\text{Rec}(T)=\mathbb{P}^{-1}(F)\hspace{0.3cm}\; \text{and}\hspace{0.3cm} \mathbb{P}^{-1}(V\setminus F)=A_{T}=\{x\in X: \lim_{n}\Vert T^{n}x\Vert =\infty\}.
 \end{eqnarray*}

 \begin{proof}[Prof of Theorem \ref{Sigma-Tapia}]
 Consider an Augé--Tapia operator of type~2 on $X$. We claim that $T$ is $\mathcal{AP}$-recurrent.  
 To establish this, we will show that \( \text{Rec}(T)\cap c_{00}\subset \mathcal{AP}\text{Rec}(T)\). In this way, fix an arbitrary $x\in \text{Rec}(T)\cap c_{00}$. Then there exists a strictly increasing sequence of positive integers $(k_{n})_{n}$ such that \( \lvert g_{k_{n}}(\mathbb{P}(x))\rvert \to 0\). Now, consider the open ball centered at $x$, $U:=B(x,\epsilon)$ with $\epsilon>0$.  
 For an arbitrary but fixed \(L \in \mathbb{N}\), it is clear that \( S^{2jm_{k_{n}-1}}x = x\) for each \(1\leq j\leq L\) whenever $n$ is sufficiently large, with which,
 \begin{align*}
 	T^{2jm_{k_{n}-1}}x - x 
 	= \frac{\lambda_{k_{n},2jm_{k_{n}-1}}}{m_{k_{n}-1}}g_{k_{n}}(\mathbb{P}(x))e_{k_{n}}
 	+ \sum_{\ell>k_{n}}\frac{\lambda_{\ell,2jm_{k_{n}-1}}}{m_{\ell-1}}g_{\ell}(\mathbb{P}(x))e_{\ell}
 \end{align*}
Using that $\lvert \lambda_{\ell,t}\rvert \leq t$ for $\ell,t\in\mathbb{N}$, we obtain
\begin{align*}
	\Vert T^{2jm_{k_{n}-1}}x - x\Vert 
	&\leq 2L\,\lvert g_{k_{n}}(\mathbb{P}(x))\rvert
	+ 2L\Vert \mathbb{P}(x)\Vert \sum_{\ell>k_{n}} \frac{m_{\ell-2}}{m_{\ell-1}}\Vert g_{\ell}\Vert 
	\xrightarrow[n\to\infty]{} 0.
\end{align*}
Thus, for $n$ sufficiently large, $\{2jm_{k_{n}-1}:1\leq j\leq L\}\subset N_{T}(x,U)$. Since $L$ was arbitrary, it follows that $\mathcal{N}_{T}(x,U)\in\mathcal{AP}$.

We claim that $\Sigma(T\oplus T)=\emptyset$. Suppose otherwise, that is, $\Sigma(T\oplus T)$ contains some continuous operator $h$ on $X\times X$. Since $\text{R}(T\oplus T,h)$ is residual in $X\times X$, there exist $x,y\in \text{R}(T\oplus T,h)$ with $\text{span}(\mathbb{P}x,\mathbb{P}y)=\mathrm{span}(e_1,e_2)$. Hence, there is a strictly increasing sequence $(\theta_n)_n$ such that both $T^{\theta_n}x$ and $T^{\theta_n}y$ converge. Fix any $q\in F^c$. Then there exists $\xi\in \mathrm{span}(x,y)$ with $\mathbb{P}(\xi)=q$, and therefore $T^{\theta_n}(\xi)$ converges as well. This is a contradiction, since $\xi\in A_T$, and by definition of $A_T$ the sequence $T^{\theta_n}(\xi)$ cannot converge.

To conclude the proof, fix any arbitrary $g\in \Sigma(T)$ and assume that $\text{R}(T,g)$ is densely lineable. Since $X=\text{Rec}(T)\cup A_T$, we have $\text{R}(T,g)\subset \text{Rec}(T)$. Thus, there exists a dense subspace $E\subset X$ contained in $\text{Rec}(T)$. One can notice that
\begin{align*}
	V=\mathbb{P}(E)\subset \mathbb{P}(\text{Rec}(T))=F \subsetneq V,
\end{align*}
which is impossible.
\end{proof}

\begin{remark}
	In \cite{Auge}, Augé constructed the first operator exhibiting wild dynamics. Later, S.~Tapia \cite{Tapia} introduced a variation, now referred to as the Augé--Tapia operator, showing that every infinite-dimensional separable complex Banach space admits a bounded operator $T$ such that $\text{Rec}(T)$ and \(A_{T}\) form a partition of $X$, with both sets being dense \cite[Corollary~3.4]{Tapia}. The operators considered in \cite{Sophie,LopezM} are also modifications of Augé’s original construction. 
\end{remark}





\section{The Framework of \(\Omega(T)\)}\label{section3}

In recent years, certain aspects of linear dynamics have drawn significant interest, particularly those related to products of the same dynamical system. To delve deeper, a key result concerning transitive systems is the well-known theorem of Furstenberg \cite[Theorem 1.51]{Grosse}, which states that if $ T $ is topologically weakly mixing (i.e., $ T \times T $ is transitive), then every $ m $-fold direct sum $ \bigoplus_{i=1}^{m} T \colon X^{m} \to X^{m} $ is transitive. Clearly, the converse also holds.

We say that $ T $ is quasi-rigid if there exists a strictly increasing sequence of positive integers $(\theta_n)$ such that the set 
\[
\{x \in X : \lim T^{\theta_n} x\xrightarrow[n\rightarrow \infty]{} x\}
\]
is dense in $ X $.

\begin{definition}
	Let \(T\) be a continuous map acting on a complete metric space \(X\). 
	We define \(\Omega(T)\) as the set of all continuous maps \(h:X\to X\) for which there exists a strictly increasing sequence of positive integers \((\omega_{n})_{n}\) such that 
	\begin{align}\label{set-Omega}
		\{\, x\in X : T^{\omega_{n}}x \xrightarrow[n\to\infty]{} h(x)\,\}
	\end{align}
	is dense in \(X\). 
\end{definition}

When \(T\) is a continuous linear operator on an \(F\)-space \(X\), we shall, by a slight abuse of notation, denote by \(\Omega(T)\) the subset of \(\mathcal{L}(X)\) consisting of all \(h \in \mathcal{L}(X)\) for which there exists an increasing sequence of positive integers \((\omega_{n})_{n}\) such that the set in \eqref{set-Omega} is dense in \(X\).

It was recently shown that a continuous map \(T\) on a Polish space is quasi-rigid if and only if, for every \(m\in \mathbb{N}\), the direct sum \(\bigoplus_{\ell=1}^{m} T\) is recurrent \cite{Sophie}. 
Equivalently, 
\[
\text{Id}\in \Omega(T) 
\ \Longleftrightarrow \ 
\bigoplus_{\ell=1}^{m}\text{Id}\in \Sigma\!\left(\bigoplus_{\ell=1}^{m} T\right)
\quad \text{for all } m\in \mathbb{N}.
\]

In what follows, we establish connections between the sets \(\Sigma(T)\) and \(\Omega(T)\), highlighting their natural interplay.

\begin{theorem} \label{T-h}
	Let \( X \) be a Polish space, and let \( T \) be a continuous map on \( X \). For a continuous map \( h \) on \( X \), the following assertions are equivalent:
	\begin{enumerate}
			\item \(h\in \Omega(T)\)
		\item For each \( m \in \mathbb{N} \),        
		\begin{align*}
			\bigoplus_{\ell=1}^{m} h \in \Sigma \left(\bigoplus_{\ell=1}^{m} T\right).
		\end{align*}
	\end{enumerate}
\end{theorem}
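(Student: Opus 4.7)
The easy direction is $(1) \Rightarrow (2)$. If $h \in \Omega(T)$, fix a single strictly increasing $(\omega_n)_n$ such that $D := \{x \in X : T^{\omega_n}x \to h(x)\}$ is dense in $X$. Then $D^m$ is dense in $X^m$, and for every $(x_1,\dots,x_m) \in D^m$ one has
\[
\bigl(\bigoplus_{\ell=1}^m T\bigr)^{\omega_n}(x_1,\dots,x_m) = (T^{\omega_n}x_1,\dots,T^{\omega_n}x_m) \xrightarrow[n\to\infty]{} (h(x_1),\dots,h(x_m)),
\]
so $D^m \subset \mathrm{R}\bigl(\bigoplus^m T, \bigoplus^m h\bigr)$ and $(2)$ follows.

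The substantive direction is $(2) \Rightarrow (1)$. The plan is to build a single sequence $(\omega_n)$ by an inductive diagonal construction indexed by a countable base. Let $\{U_k\}_{k \in \mathbb{N}}$ be a countable base of nonempty open sets of $X$. I will construct simultaneously:
\begin{itemize}
\item a strictly increasing sequence $\omega_1 < \omega_2 < \cdots$ of positive integers;
\item for each $k \in \mathbb{N}$ and each $j \in \{1,\dots,k\}$, a nonempty open set $V_k^{(j)} \subset X$;
\end{itemize}
satisfying, for all $k$ and all $j \leq k$:
\[
\overline{V_k^{(j)}} \subset V_{k-1}^{(j)} \ (j \leq k-1), \qquad \overline{V_k^{(k)}} \subset U_k, \qquad \mathrm{diam}(V_k^{(j)}) < 1/k,
\]
and
\[
d(T^{\omega_k}y, h(y)) < 1/k \quad \text{for every } y \in V_k^{(j)},\ j \leq k.
\]

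The inductive step is where the hypothesis $(2)$ is used in full. Suppose $V_{k-1}^{(1)},\dots,V_{k-1}^{(k-1)}$ and $\omega_1,\dots,\omega_{k-1}$ have been constructed. Because $\bigoplus_{\ell=1}^k h \in \Sigma\bigl(\bigoplus_{\ell=1}^k T\bigr)$, the set $\mathrm{R}\bigl(\bigoplus^k T,\bigoplus^k h\bigr)$ is dense in $X^k$, hence meets the open product $V_{k-1}^{(1)} \times \cdots \times V_{k-1}^{(k-1)} \times U_k$. Choose a tuple $(y_1,\dots,y_k)$ in this intersection, together with a strictly increasing $(\sigma_n)$ making $T^{\sigma_n}y_j \to h(y_j)$ simultaneously for all $j \leq k$. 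Pick $\omega_k := \sigma_n$ for $n$ so large that $\omega_k > \omega_{k-1}$ and $d(T^{\omega_k}y_j, h(y_j)) < 1/(3k)$ for all $j \leq k$. By continuity of $T^{\omega_k}$ and $h$, shrink each $V_k^{(j)} \ni y_j$ inside $V_{k-1}^{(j)}$ (or inside $U_k$ when $j = k$) to obtain diameter less than $1/k$ and the required uniform closeness $d(T^{\omega_k}y, h(y)) < 1/k$ on $V_k^{(j)}$.

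Once this construction is completed, for each fixed $j$ the nested closed sets $\overline{V_k^{(j)}}$ (for $k \geq j$) have diameters tending to zero, so by completeness of $X$ their intersection is a single point $x_j \in U_j$. By construction $d(T^{\omega_k}x_j, h(x_j)) < 1/k$ for every $k \geq j$, so $T^{\omega_k}x_j \to h(x_j)$. Thus $\{x \in X : T^{\omega_k}x \to h(x)\}$ meets every $U_j$ and is therefore dense in $X$, proving $h \in \Omega(T)$. The main obstacle is the bookkeeping in the inductive step, namely shrinking the neighborhoods while keeping all previously chosen $V_k^{(j)}$ compatible; this is handled cleanly once one exploits the $k$-fold density at stage $k$, which is precisely what hypothesis $(2)$ provides.
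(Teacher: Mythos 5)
Your proposal is correct and follows essentially the same strategy as the paper's proof: a triangular inductive construction over a countable base, invoking the $k$-fold density of $\mathrm{R}\bigl(\bigoplus^k T,\bigoplus^k h\bigr)$ at stage $k$ to pick one iterate $\omega_k$ controlling all $k$ chains of nested neighborhoods simultaneously, then extracting the limit points by completeness. The only cosmetic difference is that you enforce $d(T^{\omega_k}y,h(y))<1/k$ uniformly on each shrunk neighborhood (using continuity of both $T^{\omega_k}$ and $h$), whereas the paper controls distance to $h$ of the chosen center points and passes to the limit via continuity of $h$ at the end; both are sound.
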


\begin{proof}
	The implication (1) $\implies$ (2) is immediate. We now show that (2) implies (1), adapting the proof of \cite[Theorem 2.5]{Sophie}. Let $\{U_m\}_{m \in \mathbb{N}}$ be a countable base of open sets for $X$. 
	
	We begin by selecting $a_{1,1} \in U_1$, and note that $h(U_1) \cap B(h(a_{1,1}), 1) \neq \emptyset$. Since $h \in \Sigma(T)$, there exists $n_1 \in \mathbb{N}$ such that $T^{\theta_{1}}(U_1) \cap B(h(a_{1,1}), 1) \neq \emptyset$. By the continuity of $T$, there exists a non-empty open set $A_{1,1} \subset \overline{A_{1,1}} \subset U_1$ with diameter less than $2^{-1}$ such that $T^{\theta_{1}}(A_{1,1}) \subset B(h(a_{1,1}), 1)$.
	
	Next, consider the open sets $A_{1,1} \times U_2$ and $B(h(a_{1,2}), \frac{1}{2}) \times B(h(a_{2,1}), \frac{1}{2})$, where $a_{1,2} \in A_{1,1}$ and $a_{2,1} \in U_2$. By assumption, when $m = 2$, there exists $n_2 > n_1$ such that
	\[
	T^{\theta_{2}}(A_{1,1}) \cap B(h(a_{1,2}), \frac{1}{2}) \neq \emptyset \quad \text{and} \quad T^{\theta_{2}}(U_2) \cap B(h(a_{2,1}), \frac{1}{2}) \neq \emptyset.
	\]
	The continuity of $T$ ensures the existence of two non-empty open sets $A_{1,2} \subset \overline{A_{1,2}} \subset A_{1,1}$ and $A_{2,1} \subset \overline{A_{2,1}} \subset U_2$, each with diameter less than $2^{-2}$, such that
	\[
	T^{\theta_{2}}(A_{1,2}) \subset B(h(a_{1,2}), \frac{1}{2}) \quad \text{and} \quad T^{\theta_{2}}(A_{2,1}) \subset B(h(a_{2,1}), \frac{1}{2}).
	\]
	
	Proceeding inductively, we construct a strictly increasing sequence of positive integers $(\theta_n)$, along with open sets $A_{m,j}$ and points $a_{m,j} \in A_{m,j}$, satisfying the following for each $m$ and $j$:
	\begin{enumerate}
		\item $U_m \supset \overline{A_{m,1}} \supset A_{m,1} \supset \overline{A_{m,2}} \supset A_{m,2} \supset \cdots \supset \overline{A_{m,j}} \supset A_{m,j} \supset \cdots$,
		\item $T^{\theta_{n}}(A_{i,j}) \subset B(h(a_{i,j}), \frac{1}{n})$ for $i + j = n + 1$,
		\item The diameter of $A_{i,j}$ is less than $2^{-(i+j-1)}$.
	\end{enumerate}
	
	By the Cantor intersection theorem, for each $m \in \mathbb{N}$, let $\{y_m\} := \bigcap_{\ell} A_{m,\ell} \subset U_m$. Consequently, the set $\{y_m\}_{m \in \mathbb{N}}$ is dense in $X$, and the points $a_{m,\ell}$ converge to $y_m$ as $\ell \to \infty$.
	
	Fix $m \in \mathbb{N}$ and consider $n > 2m$, we have
	\[
	d(T^{\theta_{n}} y_m, h(y_m)) \leq d(T^{\theta_{n}} y_m, h(a_{m,n-m+1})) + d(h(a_{m,n-m+1}), h(y_m))
	\xrightarrow[n \to \infty]{} 0.
	\]
	This completes the proof.
\end{proof}

Let \(X\) be a separable infinite-dimensional Fréchet space and let \(T \in \mathcal{L}(X)\).
By combining Furstenberg’s Theorem \cite[Theorem~1.51]{Grosse}, Proposition~\ref{hyper-sigma}, and Theorem~\ref{T-h}, we obtain
\begin{align*}
	T \text{ is weakly mixing} 
	& \quad \Longleftrightarrow \quad 
	\Omega(T) = \mathcal{L}(X).
\end{align*}

\begin{definition}
	Let $X$ be a complete metric space and let $T$ be a continuous map on $X$. We denote by $\mathcal{AP}\Omega(T)$ the set of all continuous maps $h: X \to X$ for which there exists $\text{B} \in \mathcal{AP}$ such that
	\begin{align}\label{set-AP.Omega}
		\{x \in X : \lim_{\substack{n \to \infty \\ n \in \text{B}}} T^{\theta_{n}}x = h(x)\}
	\end{align}
	is dense in $X$.
\end{definition}

When \(T\) is a continuous linear operator on an \(F\)-space \(X\), we shall, by a slight abuse of notation, denote by \(\mathcal{AP}\Omega(T)\) the subset of \(\mathcal{L}(X)\) consisting of all \(h \in \mathcal{L}(X)\) for which there exists $\text{B} \in \mathcal{AP}$ such that the set in \eqref{set-AP.Omega} is dense in \(X\).

The next result follows by adapting the proof of Proposition \ref{AP} in light of the arguments used in the proof of Theorem \ref{T-h}.

\begin{theorem}\label{T-APh}
	Let $X$ be a Polish space and $T: X \rightarrow X$ be a continuous map. For a continuous map $h: X \rightarrow X$, the following statements are equivalent:
	\begin{enumerate}
		\item $h \in \mathcal{AP}\Omega(T)$
		\item For each $m \in \mathbb{N}$, 
		\begin{align*}
			\bigoplus_{\ell=1}^{m}h \in \mathcal{AP}\Sigma(\bigoplus_{\ell=1}^{m}T)
		\end{align*}
	\end{enumerate}
\end{theorem}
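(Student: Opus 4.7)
The plan is to prove both directions of the equivalence, with the easy one being (1) $\Rightarrow$ (2) and the substantive work concentrated in (2) $\Rightarrow$ (1). The strategy for the hard direction is to merge the diagonal Cantor-intersection scheme used in Theorem \ref{T-h} with the open-set characterization of $\mathcal{AP}\Sigma$ provided by Proposition \ref{AP}, so that the integers selected during the inductive construction assemble, step by step, into a single set $B \in \mathcal{AP}$ rather than merely a strictly increasing sequence.

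For (1) $\Rightarrow$ (2), I would fix a witness $B \in \mathcal{AP}$ together with a dense set $D \subset X$ on which $T^n x \to h(x)$ as $n \in B$ tends to infinity. The $m$-fold product $D^m$ is dense in $X^m$, and for every $(x_1, \ldots, x_m) \in D^m$ the direct sum $(\bigoplus_{\ell=1}^{m} T)^n(x_1, \ldots, x_m)$ converges to $(\bigoplus_{\ell=1}^{m} h)(x_1, \ldots, x_m)$ along $n \in B$; the same witness $B$ therefore certifies $\bigoplus_{\ell=1}^{m} h \in \mathcal{AP}\Sigma(\bigoplus_{\ell=1}^{m} T)$.

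For the converse, I would fix a countable base $\{U_m\}_{m \in \mathbb{N}}$ of $X$ and construct inductively, at each stage $n \in \mathbb{N}$, nested open sets $A_{i,j} \subset X$ for $i+j \leq n+1$, points $a_{i,j}$ lying in the parent set ($A_{i,j-1}$, or $U_i$ when $j=1$), and a finite arithmetic progression $P_n = \{a_n + \ell r_n : 1 \leq \ell \leq n\}$ satisfying: (a) $\overline{A_{i,j+1}} \subset A_{i,j} \subset U_i$ with $\mathrm{diam}(A_{i,j}) < 2^{-(i+j)}$; (b) $\min P_n > \max P_{n-1}$; and (c) $T^k(A_{i,j}) \subset B(h(a_{i,j}), 1/n)$ for every $(i,j)$ with $i+j = n+1$ and every $k \in P_n$. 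The inductive step would apply Proposition \ref{AP} to the direct sum $\bigoplus_{\ell=1}^{n} T$ on $X^n$ with the operator $\bigoplus_{\ell=1}^{n} h$, the product open set $U := \prod_{i+j=n+1} A_{i,j-1}$ (setting $A_{i,0} := U_i$), the product target $V := \prod_{i+j=n+1} B(h(a_{i,j}), 1/n)$, and length parameter $n$, choosing $a_n$ larger than $\max P_{n-1}$; this yields a point in $U$ whose direct-sum iterates at the times $a_n + \ell r_n$ ($\ell = 1, \ldots, n$) lie in $V$, and continuity of $T$ combined with coordinate projection then produces the required $A_{i,j}$.

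Setting $B := \bigcup_n P_n$, the set $B$ lies in $\mathcal{AP}$ because it contains arithmetic progressions of every length, and by Cantor intersection each $\{y_m\} := \bigcap_j A_{m,j} \subset U_m$ is a singleton, so $\{y_m\}_{m \in \mathbb{N}}$ is dense in $X$. For $k \in P_n$ with $n \geq m$, condition (c) together with $y_m \in A_{m, n+1-m}$ gives $d(T^k y_m, h(a_{m, n+1-m})) < 1/n$, while $a_{m, n+1-m} \in A_{m, n-m}$ forces $a_{m, n+1-m} \to y_m$, and continuity of $h$ yields $h(a_{m, n+1-m}) \to h(y_m)$; hence $T^k y_m \to h(y_m)$ along $k \in B$. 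The main obstacle, and the reason the hypothesis must be invoked for \emph{every} $n$, is the joint control at the inductive step: a single pair $(a_n, r_n)$ must simultaneously steer all $n$ open sets $A_{i,j}$ with $i+j=n+1$ into their respective target balls for every one of the $n$ elements of $P_n$; this is precisely what Proposition \ref{AP} applied to $\bigoplus_{\ell=1}^{n} T$ and $\bigoplus_{\ell=1}^{n} h$ delivers.
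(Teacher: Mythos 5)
Your proposal is correct and follows exactly the route the paper indicates: the paper gives no written proof of Theorem \ref{T-APh}, stating only that it ``follows by adapting the proof of Proposition \ref{AP} in light of the arguments used in the proof of Theorem \ref{T-h}," which is precisely the merged diagonal construction you carry out. The only point you assert without justification---that $a_n$ can be taken larger than $\max P_{n-1}$---is also left implicit in the paper's own proof of Proposition \ref{AP}, and is easily repaired by applying condition (2) of that proposition with a longer progression and passing to a tail sub-progression.
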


We may observe that Theorems \ref{T-h} and \ref{T-APh} do not provide an actual insight into the inherent structure of the set \(\Omega(T)\). Our proposal to address aspects of \(\Omega(T)\) relies on the notion of a collection simultaneously approximated by \(T\) (see Definition \ref{def-simul}). Before proceeding further, we present the following motivating examples.

\begin{example}\label{rec-prod}
	Let $X$ be a separable Banach space, $T: X \to X$ a recurrent operator, and $h$ an invertible operator on $X$ with $\|h\| < 1$. If 
	\[
	h\oplus \text{Id} \in \Sigma(T \oplus T),
	\]
	then $T$ is hypercyclic.
\end{example}

\begin{proof}
	By Proposition \ref{group}, we know that \(h^{n} \oplus \mathrm{Id}\) belongs to \(\Sigma(T \oplus T)\) for every \(n \in \mathbb{N}\). Moreover, by Proposition \ref{Sigma-SOT}, it follows that \(\mathbf{0} \oplus \mathrm{Id} \in \Sigma(T \oplus T)\).
	
	Let \(U, V\) be two non-empty open subsets of \(X\). Choose \(p \in U\), \(q \in V\), and \(\varepsilon > 0\) such that \(B(p,\varepsilon) \subset U\) and \(B(q,\varepsilon) \subset V\). Then we may select 
	\[
	(x,y) \in \mathrm{R}(T \oplus T, \mathbf{0}\oplus \mathrm{Id}) \cap \Big(B(p-q,\tfrac{\varepsilon}{2}) \times B(q,\tfrac{\varepsilon}{2})\Big).
	\]
	Hence, there exists a strictly increasing sequence of positive integers \((\theta_{n})_{n}\) such that 
	\[
	T^{\theta_{n}}x \to 0 \quad \text{and} \quad T^{\theta_{n}}y \to y,
	\]
	which implies \(T^{\theta_{n}}(x+y) \to y\). Observe that \(x+y \in U\) and \(y \in V\). Therefore, there exists \(n \in \mathbb{N}\) such that \(	T^{n}U \cap V \neq \emptyset\). 
\end{proof}

\begin{example}
	Let \(X\) be a Banach space and let \(T:X \to X\) be the bounded operator defined by \(T(x)=\lambda x\), where \(\lambda \in \mathbb{T}\) is irrational. Clearly, \(	\Omega(T)=\{\beta\, \mathrm{Id} : \beta \in \mathbb{T}\}.\)
For \(\alpha, \beta \in \mathbb{T}\), if \(	\alpha \mathrm{Id} \oplus \beta \mathrm{Id} \in \Sigma(T \oplus T)\). Then, \(\alpha=\beta\).
\end{example}

Theorem \ref{T-h} highlights the behavior of finite products arising from iterations of the same continuous map.  
The two preceding examples illustrate distinct phenomena involving products of different operators: in the first case, the assumption leads to the hypercyclicity of \(T\), while in the second, it is impossible for \(\Sigma(T\oplus T)\) to contain the product of two distinct operators.  
These insights naturally give rise to the following question: what happens when we extend our analysis to finite products of operators in \(\Omega(T)\)? In order to address this question and further explore the structural features of \(\Omega(T)\), we introduce the notion of a collection simultaneously approximated.

\begin{definition}\label{def-simul}
	Let $ T $ be a continuous map on a Polish space $ X $. We say that a collection of continuous maps $ \{h_\ell\}_{\ell \in J} $ on $ X $ is simultaneously approximated by $ T $ if, for any $ n \in \mathbb{N} $ and any subset $ \{g_i\}_{i=1}^n \subset \{h_\ell\}_{\ell \in J} $, the $ n $-tuple $ (g_1, \ldots, g_n) $ satisfies
	\[
	(g_1, \ldots, g_n) \in \Sigma\left(\bigoplus_{i=1}^n T\right).
	\]
\end{definition}

According to Proposition \ref{relations}, the following two results hold.

\begin{corollary}\label{inver}
	Let $ T $ be a quasi-rigid map on a Polish space $ X $. Let $ M $ and $ N $ be two collections simultaneously approximated by $ T $, with $ N \subset \mathrm{Homeo}(X) $. Then,
	\[
	\{g \circ h^{-1} : g \in M, h \in N\}
	\]
	is also a collection simultaneously approximated by $ T $.
\end{corollary}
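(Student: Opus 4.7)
The plan is to deduce the corollary from a single application of Proposition \ref{relations} in the product dynamical system $\bigoplus_{i=1}^n T$ acting on $X^n$. Fix $n\in\mathbb{N}$ together with a finite family $\{g_i\circ h_i^{-1}\}_{i=1}^n$ with $g_i\in M$ and $h_i\in N$; the task is to show that the $n$-tuple $(g_1\circ h_1^{-1},\ldots,g_n\circ h_n^{-1})$ lies in $\Sigma\bigl(\bigoplus_{i=1}^n T\bigr)$.

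Since $M$ and $N$ are simultaneously approximated by $T$, the tuples $G:=(g_1,\ldots,g_n)$ and $H:=(h_1,\ldots,h_n)$ both belong to $\Sigma\bigl(\bigoplus_{i=1}^n T\bigr)$. Moreover, as each $h_i\in N\subset\mathrm{Homeo}(X)$, the map $H$ is a homeomorphism of $X^n$, hence an open map. Thus Proposition \ref{relations}, applied to $\bigoplus_{i=1}^n T$ with the open map $H\in\Sigma(\bigoplus_{i=1}^n T)$ and with $G\in\Sigma(\bigoplus_{i=1}^n T)$, yields a residual set $\mathcal{R}\subset X^n$ such that every $x\in\mathcal{R}$ admits a strictly increasing sequence $(\theta_k)_{k}$ (possibly depending on $x$) with $\bigl(\bigoplus_{i=1}^n T\bigr)^{\theta_k} H(x)\to G(x)$.

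The change of variables $y:=H(x)$ converts this into the desired convergence: since $H$ is a homeomorphism, $H(\mathcal{R})$ is again residual, and hence dense, in $X^n$, and for any $y=H(x)\in H(\mathcal{R})$ one has
\[
\Bigl(\bigoplus_{i=1}^n T\Bigr)^{\theta_k} y \longrightarrow G\bigl(H^{-1}(y)\bigr)=\bigl(g_1\circ h_1^{-1},\ldots,g_n\circ h_n^{-1}\bigr)(y).
\]
This shows $(g_1\circ h_1^{-1},\ldots,g_n\circ h_n^{-1})\in\Sigma\bigl(\bigoplus_{i=1}^n T\bigr)$, which is exactly what simultaneous approximation of the collection $\{g\circ h^{-1}:g\in M,\,h\in N\}$ requires.

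I expect the only delicate point to be the bookkeeping for repetitions: the composites $g_i\circ h_i^{-1}$ being pairwise distinct does not force the families $(g_i)_{i=1}^n$ and $(h_i)_{i=1}^n$ to be themselves free of repetitions, so one must read Definition \ref{def-simul} as permitting finite $n$-tuples drawn from the collection rather than strict subsets. Under this natural reading both $G$ and $H$ lie in $\Sigma\bigl(\bigoplus_{i=1}^n T\bigr)$ and the argument goes through. The role of quasi-rigidity of $T$ enters implicitly through the framework supporting Proposition \ref{relations}, while the homeomorphism hypothesis on $N$ is exactly what provides both the openness of $H$ and the preservation of residuality under $H$.
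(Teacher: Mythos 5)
Your argument is correct and is exactly the paper's intended derivation: the paper states Corollary \ref{inver} as an immediate consequence of Proposition \ref{relations}, and your proof simply makes explicit the application of that proposition to $\bigoplus_{i=1}^n T$ with the open map $H=h_1\oplus\cdots\oplus h_n$ and the change of variables $y=H(x)$. Your side remark on repetitions is also the right reading of Definition \ref{def-simul}, since Proposition \ref{simult}(1) supplies a common sequence and hence tuples with repeated entries cause no difficulty.
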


\begin{corollary}\label{union}
	Let $ T $ be a quasi-rigid map on a Polish space $ X $. Let $ M $ and $ N $ be two collections simultaneously approximated by $ T $, both containing the identity map, with $ M \subset \mathrm{Homeo}(X) $. Then, $ M \cup N $ is also a collection simultaneously approximated by $ T $.
\end{corollary}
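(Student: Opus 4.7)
The plan is to fix an arbitrary $n$-tuple $(f_{1},\dots,f_{n})$ of elements of $M\cup N$ and show directly that it lies in $\Sigma(\bigoplus_{i=1}^{n} T)$, which is exactly what Definition \ref{def-simul} demands. I would first partition $\{1,\dots,n\}$ into $A := \{i : f_{i}\in M\}$ and $B := \{1,\dots,n\}\setminus A$ (assigning any overlap $M\cap N$ to $A$), and introduce two auxiliary maps on $X^{n}$,
\[
H := \bigoplus_{i=1}^{n} H_{i}, \qquad G := \bigoplus_{i=1}^{n} G_{i},
\]
where $H_{i}=f_{i}$, $G_{i}=\mathrm{Id}$ for $i\in A$, and $H_{i}=\mathrm{Id}$, $G_{i}=f_{i}$ for $i\in B$. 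A coordinatewise check gives $G\circ H = f_{1}\oplus\cdots\oplus f_{n}$, so the goal reduces to placing $G\circ H$ in $\Sigma(T^{\oplus n})$. Since $\mathrm{Id}\in M$ and $M$ is simultaneously approximated by $T$, each $H_{i}$ lies in $M$ and therefore $H\in\Sigma(T^{\oplus n})$; analogously, $\mathrm{Id}\in N$ together with the hypothesis on $N$ forces $G\in\Sigma(T^{\oplus n})$.

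Because $T$ is quasi-rigid, the characterization recalled just before Theorem \ref{T-h} ensures that $T^{\oplus n}$ is recurrent. By the consequence of Proposition \ref{relations} for homeomorphism groups (the unnumbered corollary immediately preceding Corollary \ref{group}), $\Sigma(T^{\oplus n})\cap \mathrm{Homeo}(X^{n})$ is a subgroup of $\mathrm{Homeo}(X^{n})$. As $M\subset \mathrm{Homeo}(X)$, the map $H$ is a homeomorphism of $X^{n}$, so $H\in \Sigma(T^{\oplus n})\cap \mathrm{Homeo}(X^{n})$ and therefore $H^{-1}\in \Sigma(T^{\oplus n})$.

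The key step is then to apply Proposition \ref{relations} to the dynamical system $T^{\oplus n}$ with the open map $h := H^{-1}$ and the target $g := G$, obtaining that
\[
\bigl\{x\in X^{n} : \exists\,\theta_{k}\uparrow\infty \text{ such that } (T^{\oplus n})^{\theta_{k}} H^{-1}(x)\to G(x)\bigr\}
\]
is residual in $X^{n}$. Pulling back by the homeomorphism $H$ (substituting $x=H(y)$) yields a residual, hence dense, set of $y$'s for which $(T^{\oplus n})^{\theta_{k}}(y)\to (G\circ H)(y)$; since $G\circ H = f_{1}\oplus\cdots\oplus f_{n}$, this places $(f_{1},\dots,f_{n})$ in $\Sigma(T^{\oplus n})$ and closes the argument. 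The main obstacle is really just the combinatorial choice of $H$ and $G$: one must put all $M$-entries into $H$ (to keep $H$ a homeomorphism, which is what allows the openness hypothesis in Proposition \ref{relations} to be met) while arranging that $G\circ H$ reassembles exactly the original tuple. Everything else rests on machinery already in place earlier in this section.
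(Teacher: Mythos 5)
Your argument is correct and follows the route the paper intends: Corollary \ref{union} is stated there as a direct consequence of Proposition \ref{relations}, and your factorization $f_1\oplus\cdots\oplus f_n = G\circ H$, with $H$ assembled from the $M$-entries (a homeomorphism of $X^n$ lying in $\Sigma(T^{\oplus n})$, hence with $H^{-1}\in\Sigma(T^{\oplus n})$ by recurrence of $T^{\oplus n}$) and $G$ from the $N$-entries, is precisely the way to make that derivation explicit. The only implicit convention you rely on --- that tuples with repeated entries such as several copies of $\mathrm{Id}$ are covered by Definition \ref{def-simul} --- is the same one the paper uses throughout, e.g.\ in Proposition \ref{simult}.
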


By Zorn's Lemma, every collection simultaneously approximated by $ T $ is contained in some maximal collection simultaneously approximated by $ T $ with respect to inclusion. The same holds for collections simultaneously approximated by $ T $ that are contained in $ \Omega(T) \cap \mathrm{Homeo}(X) $, and similarly for $ \Omega(T) \cap \mathrm{GL}(X) $ when $ X $ is an $ F $-space. 

From Corollary \ref{union}, if \(T\) is a quasi-rigid map on a Polish space \(X\), then the identity map belongs to a unique maximal collection simultaneously approximated by \(T\), contained in \(\Omega(T) \cap \mathrm{Homeo}(X)\).  
Similarly, when \(X\) is a separable \(F\)-space, if \(T\) is a quasi-rigid operator, then the identity operator belongs to a unique maximal collection simultaneously approximated by \(T\), contained in \(\Omega(T) \cap \mathrm{GL}(X)\).  
By a slight abuse of notation, in both cases we shall denote this unique maximal collection by \(\mathcal{G}(T)\).

\begin{proposition}
	Let $ T $ be a quasi-rigid map on a Polish space \(X\). Then $ \mathcal{G}(T) $ is a normal subgroup of $ \Omega(T) \cap \mathrm{Homeo}(X) $. Furthermore, if $ X $ is a separable $ F $-space and $ T $ is a quasi-rigid operator on $ X $, then $ \mathcal{G}(T) $ is a normal subgroup of $ \Omega(T) \cap \mathrm{GL}(X) $.
\end{proposition}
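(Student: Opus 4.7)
The plan is to establish both the subgroup and normality properties by systematically invoking Corollary \ref{inver} to produce new simultaneously approximated collections, and then exploiting the uniqueness of $\mathcal{G}(T)$ as the maximal simultaneously approximated collection contained in $\Omega(T) \cap \mathrm{Homeo}(X)$ and containing the identity.

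For the subgroup property, I would apply Corollary \ref{inver} with $M = N = \mathcal{G}(T)$ to obtain that $\{g \circ h^{-1} : g, h \in \mathcal{G}(T)\}$ is simultaneously approximated by $T$. This set contains $\mathrm{Id}$ (take $g = h$), lies in $\mathrm{Homeo}(X)$ as a composition of homeomorphisms, and lies in $\Omega(T)$ since every element of a simultaneously approximated collection belongs to $\Omega(T)$ by Theorem \ref{T-h}. The uniqueness of $\mathcal{G}(T)$ then forces $\mathcal{G}(T) \cdot \mathcal{G}(T)^{-1} \subseteq \mathcal{G}(T)$, which yields closure under inverses (take $g = \mathrm{Id}$) and, via $g \circ h = g \circ (h^{-1})^{-1}$, closure under composition.

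For normality, fix $k \in \Omega(T) \cap \mathrm{Homeo}(X)$, and apply Corollary \ref{inver} twice. First, with $M = \{k\}$ — which is simultaneously approximated because $k \in \Omega(T)$ (Theorem \ref{T-h} characterizes this via the diagonal tuples $(k, \ldots, k)$) — and $N = \mathcal{G}(T)$, to conclude that $\{k \circ h^{-1} : h \in \mathcal{G}(T)\}$ is simultaneously approximated; by the subgroup step, this set coincides with $k\, \mathcal{G}(T)$. Second, with $M = k\, \mathcal{G}(T)$ and $N = \{k\}$, to conclude that $k\, \mathcal{G}(T)\, k^{-1}$ is simultaneously approximated. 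This last set contains $\mathrm{Id}$ (take $h = \mathrm{Id}$), lies in $\mathrm{Homeo}(X)$, and lies in $\Omega(T)$; uniqueness then gives $k\, \mathcal{G}(T)\, k^{-1} \subseteq \mathcal{G}(T)$, proving normality.

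The linear case — $\mathcal{G}(T)$ normal in $\Omega(T) \cap \mathrm{GL}(X)$ — follows by the identical argument, since compositions and inverses of invertible bounded linear operators remain invertible bounded linear operators. I expect the main delicate point to be the treatment of singleton collections: the statement that $\{k\}$ is simultaneously approximated is equivalent to $k \in \Omega(T)$, which relies on reading Definition \ref{def-simul} as permitting repeated entries in the tuples, in harmony with the characterization of $\Omega(T)$ by diagonal tuples provided by Theorem \ref{T-h}.
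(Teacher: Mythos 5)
Your proposal is correct and follows essentially the same route as the paper: both arguments rest on Corollary \ref{inver} to produce the quotient sets $\mathcal{G}(T)\,\mathcal{G}(T)^{-1}$ and the conjugates $k\,\mathcal{G}(T)\,k^{-1}$ as simultaneously approximated collections containing the identity, and then invoke the uniqueness of the maximal such collection (via Corollary \ref{union}) to conclude both the subgroup property and normality. Your explicit two-step application of Corollary \ref{inver} for conjugation, and your remark that $\{k\}$ is simultaneously approximated precisely because $k \in \Omega(T)$ (Theorem \ref{T-h}), merely spell out details the paper leaves implicit.
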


\begin{proof}
	We will show that $ \mathcal{G}(T) $ is a normal subgroup of $ \Omega(T) \cap \mathrm{Homeo}(X) $.	The remaining case follows by a similar argument.

	By Corollary \ref{union}, we have
	\[
	\mathcal{G}(T) = \{h \in \Omega(T) \cap \mathrm{Homeo}(X) : \{\mathrm{Id}, h\} \text{ is simultaneously approximated by } T\}.
	\]
	Moreover, by Corollary \ref{inver}, \(\mathcal{G}(T)\) is a subgroup of \(\Omega(T) \cap \mathrm{Homeo}(X)\). Let \(h \in \Omega(T) \cap \mathrm{Homeo}(X)\). By Corollary \ref{inver}, \(h \mathcal{G}(T) h^{-1}\) is a collection simultaneously approximated by \(T\) and \(\mathrm{Id} \in h \mathcal{G}(T) h^{-1} \subset \Omega(T) \cap \mathrm{Homeo}(X)\). Due to the maximality of \(\mathcal{H}(T)\), it follows that \( h \mathcal{G}(T) h^{-1} \subset \mathcal{G}(T) \) for all \(h \in \Omega(T) \cap \mathrm{Homeo}(X)\). This shows that $ \mathcal{G}(T) $ is normal subgroup in \(\Omega(T) \cap \mathrm{Homeo}(X)\), completing the proof.
\end{proof}

\begin{remark}\label{hG(T)}
	Let $ T $ be a quasi-rigid operator on a separable \(F\)-space $ X $. For $ g \in \Omega(T) $, it follows from Corollary \ref{inver} that the set $ g\mathcal{G}(T) $ is a collection simultaneously approximated by $ T $ that contains $ g $. Moreover, when $ h \in \mathrm{GL}(X) $, we have $ h\mathcal{G}(T) = \mathcal{G}(T)h $, which is the unique maximal collection simultaneously approximated by $ T $ contained in $ \Omega(T) \cap \mathrm{GL}(X) $ that contains $h$.
\end{remark}

\begin{proposition}\label{simult}
	Let $ T $ be a continuous map on a Polish space $ X $ with $ \Omega(T)\neq \emptyset$. Then the following statements hold:
	\begin{enumerate}
		\item The collection $ \{g_i\}_{i \in \mathbb{N}} $ is simultaneously approximated by $ T $ if and only if there exists a strictly increasing sequence of positive integers $ \theta = (\theta_n)_n $ such that, for each $ i \in \mathbb{N} $, the set
		\[
		\{x \in X : \lim_{n \to \infty} T^{\theta_n}(x) = g_i(x)\}
		\]
		is dense in $ X $.
		
		\item If $ \Omega(T) $ contains some open map, then for any two collections $ \{g_i\}_{i \in \mathbb{N}} $ and $ \{h_i\}_{i \in \mathbb{N}} $ that are simultaneously approximated by $ T $, where each $ h_i $ is an open map, there exists a strictly increasing sequence of positive integers $ \theta = (\theta_n)_n $ such that, for each $ i \in \mathbb{N} $, the set
		\[
		\{x \in X : \lim_{n \to \infty} T^{\theta_n} h_i(x) = g_i(x)\}
		\]
		is dense in $ X $.
	\end{enumerate}
\end{proposition}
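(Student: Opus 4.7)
The plan is to combine a diagonal Cantor-style construction in the spirit of Theorem~\ref{T-h} with the two-stage technique of Proposition~\ref{relations}. Fix a countable base $\{U_m\}_{m\in\mathbb{N}}$ of non-empty open subsets of $X$. For both parts I will inductively build a strictly increasing sequence $(\theta_n)$ together with nested open sets $A_{i,j}$ and reference points $a_{i,j}$ indexed by $i,j\geq 1$, subject to $A_{i,1}\subset U_i$, $\overline{A_{i,j+1}}\subset A_{i,j}$, and $\mathrm{diam}\,A_{i,j}<2^{-(i+j-1)}$, so that the diagonal points $y_m:=\bigcap_{j} A_{m,j}\in U_m$ form a dense set witnessing the desired convergence.

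For part~(1), the backward direction is essentially free: if a common $(\theta_n)$ yields, for each $i$, a dense $D_i\subset X$ on which $T^{\theta_n}\to g_i$, then $D_{i_1}\times\cdots\times D_{i_k}$ is dense in $X^k$ and places $(g_{i_1},\dots,g_{i_k})$ in $\Sigma(\bigoplus_{j=1}^k T)$ via the same $(\theta_n)$. For the forward direction, at step $n$ I pick $a_{i,n-i+1}\in A_{i,n-i}$ (with the convention $A_{i,0}:=U_i$) and set
\[
V:=A_{1,n-1}\times\cdots\times A_{n-1,1}\times U_n,\qquad W:=\prod_{i=1}^{n}B\!\left(g_i(a_{i,n-i+1}),\tfrac{1}{n}\right)
\]
in $X^n$, noting that $(a_{1,n},\dots,a_{n,1})\in V$ is sent into $W$ by $(g_1,\dots,g_n)$. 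The hypothesis gives $(g_1,\dots,g_n)\in\Sigma(\bigoplus_{k=1}^{n} T)$, so Proposition~\ref{open-sigma} applied to $\bigoplus T$ yields infinitely many $m$ with $V\cap\bigoplus T^{-m}(W)\neq\emptyset$, and I choose $\theta_n>\theta_{n-1}$ among them. Continuity of $T^{\theta_n}$ then shrinks each factor to an $A_{i,n-i+1}$ of diameter $<2^{-n}$ with $T^{\theta_n}(A_{i,n-i+1})\subset B(g_i(a_{i,n-i+1}),1/n)$. Since $y_m$ and $a_{m,n-m+1}$ both lie in the vanishing-diameter set $A_{m,n-m}$, continuity of $g_m$ yields $T^{\theta_n}y_m\to g_m(y_m)$.

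For part~(2), each inductive step splits into two applications of Proposition~\ref{open-sigma} on $X^n$. After choosing $a_{i,n-i+1}\in A_{i,n-i}$ and radii $\delta_i$ so small that $g_i(B(a_{i,n-i+1},\delta_i))\subset B(g_i(a_{i,n-i+1}),1/n)$, I set $V:=\prod_i B(a_{i,n-i+1},\delta_i)$ and $W:=\prod_i h_i(B(a_{i,n-i+1},\delta_i/2))$, which is open precisely because each $h_i$ is open, and contains $(h_1(a_{1,n}),\dots,h_n(a_{n,1}))$. Using $(h_1,\dots,h_n)\in\Sigma(\bigoplus T)$, Proposition~\ref{open-sigma} produces $\psi_n$ and, by continuity of $\bigoplus T^{\psi_n}$, a non-empty product of balls $V'\subset V$ with $\bigoplus T^{\psi_n}(V')\subset W$. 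Since $V'\subset V$, the inclusion $(g_1,\dots,g_n)(V')\subset\prod_i B(g_i(a_{i,n-i+1}),1/n)$ is automatic, so a second invocation of Proposition~\ref{open-sigma} with $(g_1,\dots,g_n)\in\Sigma(\bigoplus T)$ supplies $\omega_n$ with $\omega_n-\psi_n>\theta_{n-1}$ and a point $u\in V'$ whose $\bigoplus T^{\omega_n}$-image lies in that target. The factorization $T^{\omega_n}=T^{\omega_n-\psi_n}\circ T^{\psi_n}$, together with $T^{\psi_n}(V')\subset W$, lets me write $T^{\psi_n}(u_i)=h_i(z_i)$ for some $z_i\in B(a_{i,n-i+1},\delta_i/2)$ with $T^{\omega_n-\psi_n}h_i(z_i)\in B(g_i(a_{i,n-i+1}),1/n)$. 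Setting $\theta_n:=\omega_n-\psi_n$ and invoking continuity of $T^{\theta_n}\circ h_i$, I shrink around each $z_i$ to obtain $A_{i,n-i+1}$. The limit $y_m=\bigcap_j A_{m,j}$ then satisfies $T^{\theta_n}h_m(y_m)\to g_m(y_m)$ by the same triangle-inequality argument, using continuity of both $h_m$ and $g_m$.

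The main obstacle will be synchronization: at each stage I must simultaneously handle the growing finite family of indices $(i,j)$ with $i+j=n+1$, and in part~(2) arrange that the differences $\theta_n=\omega_n-\psi_n$ are themselves strictly increasing. Both difficulties are resolved by the fact that Proposition~\ref{open-sigma} supplies \emph{infinitely many} return times, so $\omega_n$ can be taken arbitrarily far beyond $\psi_n$. The openness hypothesis on the $h_i$ in part~(2) is used precisely to make $W=\prod_i h_i(B(a_{i,n-i+1},\delta_i/2))$ open, without which the first application of Proposition~\ref{open-sigma} would be unavailable.
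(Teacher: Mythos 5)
There is a genuine gap in the forward direction of both parts, and it lies in your indexing. You attach the map $g_i$ to the $i$-th element $U_i$ of the countable base, so that at stage $n$ the $i$-th coordinate of $V$ is a refinement of $U_i$ while the $i$-th coordinate of $W$ is a ball around $g_i(a_{i,n-i+1})$. The Cantor construction then produces, for each $m$, a \emph{single} point $y_m\in U_m$ with $T^{\theta_n}y_m\to g_m(y_m)$. This shows that the union over $i$ of the sets $\{x: \lim_n T^{\theta_n}x=g_i(x)\}$ meets every $U_m$, but the proposition requires each individual set $\{x:\lim_n T^{\theta_n}x=g_i(x)\}$ to be dense, and your construction exhibits only the one witness $y_i\in U_i$ for the fixed map $g_i$. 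To repair this you must run the diagonal scheme over the pairs (map index, base-set index): enumerate $\mathbb{N}\times\mathbb{N}$ and assign to the $k$-th pair $(i_k,m_k)$ its own nested column $A_{k,j}\subset U_{m_k}$ with targets $B(g_{i_k}(a_{k,j}),1/n)$, so that every map is eventually paired with every basic open set. This is exactly what the paper's sketch encodes with the operators $S_k,\Lambda_k$ acting on $X^{k^2}$, which carry $k$ copies of each of $h_1,\dots,h_k$. Note also that the repaired argument requires tuples with \emph{repeated} entries, such as $(g_i,g_i,\dots)$, to lie in $\Sigma(\bigoplus_{\ell} T)$; this is not literally what Definition~\ref{def-simul} says (``any subset''), but it is forced by the ``if'' direction of part (1) --- your own product-of-dense-sets observation --- and is what the paper's claim $S_k\in\Sigma(\bigoplus_{\ell=1}^{k^2}T)$ presupposes, so you should invoke it explicitly; your current argument never does, because your tuples $(g_1,\dots,g_n)$ happen to have distinct entries.

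Apart from this, your local machinery is sound and is the intended one: the backward implication in (1) via products of dense sets is correct, and the two-stage application of Proposition~\ref{open-sigma} in part (2) --- first to $(h_1,\dots,h_n)$ with the open target $\prod_i h_i(B(a_{i,n-i+1},\delta_i/2))$ to obtain $\psi_n$, then to $(g_1,\dots,g_n)$ on $V'$ to obtain $\omega_n$, setting $\theta_n:=\omega_n-\psi_n$ --- is precisely the mechanism of Proposition~\ref{relations}, and the observation that $\omega_n$ may be taken arbitrarily large so that $(\theta_n)$ is strictly increasing is correct. Once the bookkeeping is rewritten over pairs, the triangle-inequality conclusion goes through unchanged.
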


\begin{proof}
	We provide a sketch of the proof for the second statement; the first follows similarly. To this end, we rely on the following diagram:
	\[
	\begin{array}{cccccccccccccc}
		h_1 & & & & & & & & & & & & & \theta_1 \\
		h_1 & h_1 & & & h_2 & h_2 & & & & & & & & \theta_2 \\
		h_1 & h_1 & h_1 & & h_2 & h_2 & h_2 & & h_3 & h_3 & h_3 & & & \theta_3 \\
		\vdots & \vdots & \vdots & & \vdots & \vdots & \vdots & & \vdots & \vdots & \vdots & & & \vdots \\
		h_1 & h_1 & \cdots & & h_2 & \cdots & & & h_3 & \cdots &  & &  &\theta_n \\
	\end{array}
	\]
	
	For each $ k \in \mathbb{N} $, consider the map
	\[
	S_k := \left(\bigoplus_{\ell=1}^{k} h_1\right) \bigoplus \cdots \bigoplus \left(\bigoplus_{\ell=1}^{k} h_k\right): X^{k^2} \to X^{k^2},
	\]
	which is continuous and open. Similarly, define $ \Lambda_k: X^{k^2} \to X^{k^2} $ by replacing $ h_i $ with $ g_i $ in the definition of $ S_k $.
	
	Note that $ S_k, \Lambda_k \in \Sigma\left(\bigoplus_{\ell=1}^{k^2} T\right) $ for each $ k \in \mathbb{N} $. The approach combines ideas from the proofs of Proposition \ref{relations} and Theorem \ref{T-h}. For each $ i \in \mathbb{N} $, we handle $ h_i $ and $ g_i $. This allows us to choose a strictly increasing sequence $ (\theta_n)_{n \in \mathbb{N}} $ such that
	\[
	\{x \in X : \lim_{n \to \infty} T^{\theta_n} h_i(x) = g_i(x)\}
	\]
	is dense in $ X $.
\end{proof}

\begin{corollary}\label{seq-h-g}
	Let \( T: X \to X \) be a quasi-rigid map on a separable \(F\)-space \( X \). For \( h, g \in \Omega(T) \) such that \( h \) is a surjective operator, there exists a strictly increasing sequence of positive integers \( (\theta_n)_n \) such that the set
	\[
	\{x \in X : T^{\theta_n} h(x) \xrightarrow[n \to \infty]{} g(x)\}
	\]
	is dense in \( X \).
\end{corollary}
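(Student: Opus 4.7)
The plan is to read this corollary as the specialization of Proposition \ref{simult}(2) to the two constant collections $\{h_i\}_{i\in\mathbb{N}}$ and $\{g_i\}_{i\in\mathbb{N}}$ given by $h_i:=h$ and $g_i:=g$. The work therefore consists entirely in checking that Proposition \ref{simult}(2) is applicable in the present setup.

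First I would verify the structural hypotheses. Since $T$ is quasi-rigid, $\mathrm{Id}\in\Omega(T)$; as $\mathrm{Id}$ is trivially open, $\Omega(T)$ contains an open map, which is the standing assumption of Proposition \ref{simult}(2). Next, the surjectivity of $h$ combined with the open mapping theorem on the $F$-space $X$ guarantees that $h$ itself is open, so each $h_i=h$ is open, as required.

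The remaining point is to argue that the two constant collections $\{h\}_{i\in\mathbb{N}}$ and $\{g\}_{i\in\mathbb{N}}$ are simultaneously approximated by $T$. By the equivalent description provided in Proposition \ref{simult}(1), this amounts to producing a single strictly increasing sequence of positive integers along which $T^{\theta_n}x\to h(x)$ (respectively $T^{\theta_n}x\to g(x)$) on a dense set. For a constant family this is exactly the condition defining $\Omega(T)$, so the hypothesis $h,g\in\Omega(T)$ furnishes the required c.s.a.\ property at once.

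With all hypotheses in place, Proposition \ref{simult}(2) supplies a strictly increasing sequence $(\theta_n)_n$ such that, for every $i\in\mathbb{N}$, the set $\{x\in X:\,T^{\theta_n}h_i(x)\to g_i(x)\}$ is dense in $X$; specializing to the constant collections gives the density of $\{x\in X:\,T^{\theta_n}h(x)\to g(x)\}$, which is the claim. I do not expect a substantive obstacle here, since the heavy lifting is fully contained in Proposition \ref{simult}(2); the only delicate point is the correct packaging of the hypotheses, in particular the use of the open mapping theorem to upgrade the surjectivity of $h$ to openness.
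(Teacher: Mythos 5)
Your proposal is correct and coincides with the paper's intended derivation: the corollary is stated as an immediate consequence of Proposition \ref{simult}(2), with exactly the reductions you describe (quasi-rigidity gives the open map $\mathrm{Id}\in\Omega(T)$, the open mapping theorem upgrades the surjective operator $h$ to an open map, and membership in $\Omega(T)$ makes each constant collection simultaneously approximated by $T$ via Proposition \ref{simult}(1) or Theorem \ref{T-h}). No gaps.
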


\begin{corollary} \label{wm-G(T)}
	Let $ X $ be a separable infinite-dimensional Fréchet space and $ T \in \mathcal{L}(X) $. 
If \(T\) is weak mixing then \(\mathcal{L}(X)\)	 is simultaneously approximated by $ T $ and \(\text{GL}(X)=\mathcal{G}(T)\)
\end{corollary}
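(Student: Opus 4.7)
The plan is to reduce both assertions to the equivalence "$T$ is weakly mixing $\Longleftrightarrow \Omega(T) = \mathcal{L}(X)$" stated in the paper, applied to the direct sums $\bigoplus_{i=1}^{n} T$ acting on $X^{n}$. First I would invoke Furstenberg's theorem to lift weak mixing of $T$ to weak mixing of $\bigoplus_{i=1}^{n} T$ for every $n \in \mathbb{N}$. Applying to this product system the equivalence obtained from combining Theorem~\ref{T-h} with Proposition~\ref{hyper-sigma} gives $\Omega\bigl(\bigoplus_{i=1}^{n} T\bigr) = \mathcal{L}(X^{n})$.

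Since clearly $\Omega(S) \subset \Sigma(S)$ for any continuous map $S$ (a common witness sequence for all points a fortiori witnesses density individually), every continuous linear operator on $X^{n}$ lies in $\Sigma\bigl(\bigoplus_{i=1}^{n} T\bigr)$. In particular, for any $n$-tuple $(g_{1}, \ldots, g_{n}) \in \mathcal{L}(X)^{n}$, the diagonal operator $\bigoplus_{i=1}^{n} g_{i} \in \mathcal{L}(X^{n})$ belongs to $\Sigma\bigl(\bigoplus_{i=1}^{n} T\bigr)$. This is exactly what Definition~\ref{def-simul} requires, with the family $\{h_{\ell}\}_{\ell \in J}$ taken to be all of $\mathcal{L}(X)$; thus $\mathcal{L}(X)$ is simultaneously approximated by $T$.

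For the second identity, weak mixing gives $\Omega(T) = \mathcal{L}(X)$, so $\Omega(T) \cap \mathrm{GL}(X) = \mathrm{GL}(X)$. The previous paragraph shows in particular that $\mathrm{GL}(X) \subset \mathcal{L}(X)$ is a collection simultaneously approximated by $T$ containing $\mathrm{Id}$, and it sits inside $\Omega(T) \cap \mathrm{GL}(X)$. Because $\mathcal{G}(T)$ was characterised as the unique maximal collection simultaneously approximated by $T$, contained in $\Omega(T) \cap \mathrm{GL}(X)$, and containing the identity, maximality forces $\mathrm{GL}(X) \subset \mathcal{G}(T)$; the reverse inclusion is built into the definition of $\mathcal{G}(T)$, giving equality.

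I do not expect a genuine obstacle here; the only point that requires mild care is to apply the equivalence $\Omega(\cdot) = \mathcal{L}(\cdot)$ to the product system on $X^{n}$ rather than to $T$ itself, and to observe that the direct-sum operators $\bigoplus_{i=1}^{n} g_{i}$—needed to verify Definition~\ref{def-simul}—do indeed belong to $\mathcal{L}(X^{n})$, not merely the diagonal operators of the form $\bigoplus_{i=1}^{n} h$ produced automatically by Theorem~\ref{T-h}.
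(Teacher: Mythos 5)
Your argument is correct and follows the route the paper itself implicitly relies on: Furstenberg's theorem lifts weak mixing to $\bigoplus_{i=1}^{n}T$, Proposition~\ref{hyper-sigma} then gives $\Sigma\bigl(\bigoplus_{i=1}^{n}T\bigr)=\mathcal{L}(X^{n})$, which contains all operators $\bigoplus_{i=1}^{n}g_{i}$ and hence verifies Definition~\ref{def-simul} for the whole of $\mathcal{L}(X)$, while the identity $\mathcal{G}(T)=\mathrm{GL}(X)$ follows from maximality and uniqueness exactly as you say. The only cosmetic remark is that your detour through $\Omega\bigl(\bigoplus_{i=1}^{n}T\bigr)$ and the inclusion $\Omega\subset\Sigma$ is unnecessary, since Proposition~\ref{hyper-sigma} applied to the hypercyclic operator $\bigoplus_{i=1}^{n}T$ already yields $\Sigma\bigl(\bigoplus_{i=1}^{n}T\bigr)=\mathcal{L}(X^{n})$ directly.
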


For the remainder of this section, \(X\) is a separable infinite-dimensional $ F $-space, and $ T $ is a continuous linear operator acting on \(X\).

\begin{proposition}\label{Omega0-closed}
	The set $\Omega(T)$ is $\mathrm{SOT}$-closed in $\mathcal{L}(X)$.
\end{proposition}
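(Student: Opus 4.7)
The plan is to reduce the statement to the SOT-closedness of $\Sigma(\cdot)$ already established in Proposition~\ref{Sigma-SOT}, via the characterization of membership in $\Omega(T)$ given by Theorem~\ref{T-h}. Since $X$ is a separable $F$-space, it is Polish, and so is every finite power $X^m$, which justifies invoking Theorem~\ref{T-h}.

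Fix $h \in \overline{\Omega(T)}^{\mathrm{SOT}}$. To prove $h \in \Omega(T)$, by Theorem~\ref{T-h} it suffices to show that for every $m \in \mathbb{N}$,
\[
\bigoplus_{\ell=1}^{m} h \;\in\; \Sigma\!\left(\bigoplus_{\ell=1}^{m} T\right).
\]
The first key observation, which I would spell out briefly, is that the diagonal embedding
\[
\Delta_m : \mathcal{L}(X) \longrightarrow \mathcal{L}(X^m), \qquad g \longmapsto \bigoplus_{\ell=1}^{m} g,
\]
is SOT-to-SOT continuous. Indeed, any basic SOT-neighborhood of $\bigoplus_{\ell=1}^{m} h$ is specified by finitely many test vectors $y_1, \ldots, y_k \in X^m$ with coordinates $y_i = (y_i^1, \ldots, y_i^m)$; this neighborhood pulls back under $\Delta_m$ to the SOT-neighborhood of $h$ determined by the finite set $\{y_i^j : 1\le i \le k,\ 1\le j \le m\} \subset X$.

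The second step is to use the hypothesis on $h$: given an arbitrary SOT-neighborhood $\mathcal{N}$ of $\bigoplus_{\ell=1}^{m} h$ in $\mathcal{L}(X^m)$, continuity of $\Delta_m$ yields an SOT-neighborhood $\mathcal{M}$ of $h$ in $\mathcal{L}(X)$ with $\Delta_m(\mathcal{M}) \subset \mathcal{N}$. Since $h$ lies in the SOT-closure of $\Omega(T)$, we may pick $g \in \mathcal{M} \cap \Omega(T)$. By the forward direction of Theorem~\ref{T-h} applied to $g$, we have $\Delta_m(g) \in \Sigma(\bigoplus_{\ell=1}^{m} T)$, and by construction $\Delta_m(g) \in \mathcal{N}$. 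Therefore $\bigoplus_{\ell=1}^{m} h$ belongs to the SOT-closure of $\Sigma(\bigoplus_{\ell=1}^{m} T)$, which coincides with $\Sigma(\bigoplus_{\ell=1}^{m} T)$ itself by Proposition~\ref{Sigma-SOT}.

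Since $m \in \mathbb{N}$ was arbitrary, the converse direction of Theorem~\ref{T-h} gives $h \in \Omega(T)$, and we conclude that $\Omega(T)$ is SOT-closed. There is no serious obstacle: the only ingredient beyond the cited results is the elementary SOT-to-SOT continuity of $\Delta_m$, which is a one-line check. The conceptual content of the proof lies entirely in transporting the already-established closedness property of $\Sigma$ through the $m$-fold product characterization of $\Omega(T)$.
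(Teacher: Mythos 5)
Your proof is correct and follows essentially the same route as the paper: both reduce the claim via Theorem~\ref{T-h} to showing $\bigoplus_{\ell=1}^{m} h \in \Sigma(\bigoplus_{\ell=1}^{m} T)$ for every $m$, and both obtain this by approximating $h$ in SOT by some $g \in \Omega(T)$ at finitely many test vectors. The only difference is one of packaging: the paper verifies the open-set condition of Proposition~\ref{open-sigma} directly in the product space, whereas you factor the same argument through the SOT-continuity of the diagonal embedding $\Delta_m$ together with Proposition~\ref{Sigma-SOT} applied to $\bigoplus_{\ell=1}^{m} T$ on $X^m$, which is a clean and equally valid modularization.
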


\begin{proof}
	Let $h \in \overline{\Omega(T)}^{\mathrm{SOT}}$. Fix $m \in \mathbb{N}$, and let $U_1, \ldots, U_m, V_1, \ldots, V_m$ be non-empty open subsets of $X$ such that $h(U_i) \cap V_i \neq \emptyset$ for every $i \in \{1, \ldots, m\}$. For each $i$, choose $p_i \in U_i$ and $\varepsilon > 0$ such that $B(h(p_i), \varepsilon) \subset V_i$.
	
	Consider the $\mathrm{SOT}$-basic neighborhood of $h$ defined by
	\[
	\mathcal{N}(h, p_1, \ldots, p_m, \varepsilon) := \{g \in \mathcal{L}(X) : d(g(p_i), h(p_i)) < \varepsilon \text{ for all } i \in \{1, \ldots, m\}\}.
	\]
	Since $h \in \overline{\Omega(T)}^{\mathrm{SOT}}$, there exists $g \in \Omega(T) \cap \mathcal{N}(h, p_1, \ldots, p_m, \varepsilon)$. Hence $g(p_i) \in V_i$ for all $i$, which implies $g(U_i) \cap V_i \neq \emptyset$ for each $i$.
	
	As $g \in \Omega(T)$, there exists a strictly increasing sequence of positive integers $(\theta_n)_{n \in \mathbb{N}}$ such that 
	\[
	T^{-\theta_n}V_i \cap U_i \neq \emptyset \quad \text{for all sufficiently large $n$ and each $i$}.
	\]
	This shows that $\bigoplus_{\ell=1}^m h \in \Sigma(\bigoplus_{\ell=1}^m T)$. Since $m$ was arbitrary, it follows from Theorem \ref{T-h} that $h \in \Omega(T)$.
\end{proof}

Let $ M $ be a subset of a $\mathbb{K}$-vector space $ Y $, where $\mathbb{K}$ is either $\mathbb{R}$ or $\mathbb{C}$. We define $\mathrm{conv}_{\mathbb{K}}(M) \subset Y$ as
\[
\mathrm{conv}_{\mathbb{K}}(M) := \left\{\sum_{i=1}^{m} \alpha_i x_i : x_i \in M, \, \alpha_i \in \mathbb{K} \text{ with } \sum_{i=1}^m \alpha_i = 1 \right\},
\]

Recall that a subset $ M $ of $ Y $ is an affine manifold if for any $ x, y \in M $ and $ t \in \mathbb{K} $, it follows that $ (1-t)x + ty \in M $. It is well known that $ M $ is an affine manifold if and only if $ \mathrm{conv}_{\mathbb{K}}(M) = M $.

\begin{theorem}\label{convex}
	Every maximal collection simultaneously approximated by $ T $ is an SOT-closed affine manifold in $ \mathcal{L}(X) $.
\end{theorem}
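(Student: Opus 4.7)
The plan is to reduce both properties to a common verification via Proposition~\ref{open-sigma}. Let $\mathcal{M}$ be a maximal collection simultaneously approximated by $T$. By maximality, it suffices to prove two stability properties: first, that $\mathcal{M}\cup\{h\}$ remains simultaneously approximated by $T$ whenever $h\in \overline{\mathcal{M}}^{\mathrm{SOT}}$; and second, that $\mathcal{M}\cup\{(1-t)h_1+t h_2\}$ remains simultaneously approximated by $T$ whenever $h_1, h_2\in \mathcal{M}$ and $t\in \mathbb{K}$. In either case, the task amounts to showing that, for every finite subset $\{g_1,\ldots, g_n\}\subset \mathcal{M}$, the tuple obtained by appending the new operator belongs to $\Sigma\bigl(\bigoplus_{i=1}^{n+1} T\bigr)$.

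The common engine is Proposition~\ref{open-sigma} applied to the product system $\bigoplus_{i=1}^{n+1} T$ on $X^{n+1}$. Since products of open subsets of $X$ form a basis of $X^{n+1}$, membership in $\Sigma\bigl(\bigoplus_{i=1}^{n+1} T\bigr)$ can be tested as follows: whenever $U_i, V_i\subset X$ are open and the $i$-th candidate operator $f_i$ satisfies $f_i(U_i)\cap V_i\neq \emptyset$ for every coordinate, the set $\{m\in \mathbb{N}: U_i\cap T^{-m}V_i\neq \emptyset \text{ for all } i\}$ must be infinite.

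For the SOT-closure claim, fix open sets with $g_i(U_i)\cap V_i\neq \emptyset$ for $i\leq n$ and $h(U_{n+1})\cap V_{n+1}\neq \emptyset$. Pick $a\in U_{n+1}$ with $h(a)\in V_{n+1}$ and $\rho>0$ with $B(h(a),\rho)\subset V_{n+1}$. SOT-density of $\mathcal{M}$ provides $g\in \mathcal{M}$ with $g(a)\in B(h(a),\rho/2)$, so for any $\delta>0$ with $B(a,\delta)\subset U_{n+1}$ the tuple $(g_1,\ldots, g_n, g)\in \Sigma\bigl(\bigoplus_{i=1}^{n+1} T\bigr)$ meets the hypotheses of Proposition~\ref{open-sigma} with opens $U_1,V_1,\ldots, U_n, V_n, B(a,\delta), B(h(a),\rho/2)$. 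The resulting infinitude of admissible $m$ already witnesses the required condition for the original opens.

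For the affine-closure claim, which is the main obstacle, the naive attempt of building $y=(1-t)z+tw$ from $z\in\mathrm{R}(T, h_1), w\in\mathrm{R}(T, h_2)$ only produces $T^{\theta_k}y\to(1-t)h_1(z)+th_2(w)$, which differs from $((1-t)h_1+th_2)(y)$ by the generally nonzero term $t(1-t)(h_1-h_2)(z-w)$. The remedy is to exploit the slack built into the open-set criterion. Fix open $U_i, V_i$ with the usual conditions, pick $a\in U_{n+1}$ with $((1-t)h_1+th_2)(a)\in V_{n+1}$ and $\rho>0$ with $B(((1-t)h_1+th_2)(a),\rho)\subset V_{n+1}$. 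Choose $\delta>0$ with $B(a,\delta(|1-t|+|t|))\subset U_{n+1}$, and $\eta>0$ such that $z'\in B(h_1(a),\eta), w'\in B(h_2(a),\eta)$ force $(1-t)z'+tw'\in B(((1-t)h_1+th_2)(a),\rho)$. Applying Proposition~\ref{open-sigma} to the $(n+2)$-tuple $(g_1,\ldots, g_n, h_1, h_2)\in \Sigma\bigl(\bigoplus_{i=1}^{n+2} T\bigr)$ with opens $U_i, V_i$ for $i\leq n$, followed by $(B(a,\delta), B(h_1(a),\eta))$ and $(B(a,\delta), B(h_2(a),\eta))$, yields infinitely many $m$ admitting $x_i\in U_i$ with $T^m x_i\in V_i$ for $i\leq n$, and $z, w\in B(a,\delta)$ with $T^m z\in B(h_1(a),\eta), T^m w\in B(h_2(a),\eta)$. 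Setting $x_{n+1}:=(1-t)z+tw$, the linearity of $T^m$ and the choices of $\delta, \eta$ force $x_{n+1}\in U_{n+1}$ and $T^m x_{n+1}\in V_{n+1}$, completing the verification.
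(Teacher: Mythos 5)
Your proof is correct and follows essentially the same route as the paper, which factors the statement through two propositions showing that a collection simultaneously approximated by $T$ remains so after passing to affine combinations (Proposition~\ref{convex-M}) and to the SOT-closure (Proposition~\ref{closed}) and then invokes maximality once; your treatment of the affine part---appending both $h_1$ and $h_2$ to the tuple, extracting separate witnesses $z,w$, and using linearity of $T^m$ together with the slack in the open-set criterion---is exactly the mechanism of Proposition~\ref{convex-M}, restricted to two-term combinations, which indeed suffices for the affine-manifold property. One small point: the radius bookkeeping $B(a,\delta(|1-t|+|t|))\subset U_{n+1}$ presupposes a homogeneous norm, whereas this section of the paper works in general $F$-spaces; the fix is to choose $\delta$ via continuity of $(z,w)\mapsto (1-t)z+tw$ at $(a,a)$, exactly as the paper does when it selects the neighbourhood $B_j$ with $\sum_k\alpha_{j,k}B_j\subset A_j$.
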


\begin{proof}
	This follows directly from Proposition \ref{convex-M} and Proposition \ref{closed}.
\end{proof}

\begin{proposition}\label{convex-M}
If \( M \) is a collection simultaneously approximated by \( T \), then 
\[
\mathrm{conv}_{\mathbb{K}}(M)
\]
is also a collection simultaneously approximated by \( T \).
\end{proposition}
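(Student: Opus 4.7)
The plan is to reduce the claim to checking, for an arbitrary finite subfamily $\{h_1,\dots,h_n\}\subset \mathrm{conv}_{\mathbb{K}}(M)$, the open-set criterion of Proposition~\ref{open-sigma} applied to $(h_1,\dots,h_n)$ and the diagonal operator $\bigoplus_{j=1}^n T$. I would write each $h_j=\sum_{k=1}^{m_j}\alpha_{j,k}g_{j,k}$ with $g_{j,k}\in M$ and $\sum_k\alpha_{j,k}=1$, and collect the (finitely many) distinct operators that appear as $\{G_1,\dots,G_N\}\subset M$. Since $\{G_1,\dots,G_N\}$ is a finite subset of $M$, the hypothesis combined with Proposition~\ref{simult}(1) supplies a single strictly increasing sequence $(\theta_p)_p$ making each set $D_i:=\{x\in X:T^{\theta_p}x\to G_i(x)\}$ dense in $X$. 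This common sequence is what will glue the atomic pieces $g_{j,k}$ into simultaneous approximants for the convex combinations $h_j$.

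With $(\theta_p)$ fixed, I would establish the open-set criterion by a convex-combination trick. Given open sets $A_j,B_j\subset X$ with $h_j(A_j)\cap B_j\neq\emptyset$, fix $u_j\in A_j$ with $h_j(u_j)\in B_j$, pick $\delta,\epsilon>0$ controlling the relevant balls, and for each $(j,k)$ select $z_{j,k}\in B(u_j,\delta)\cap D_{r(j,k)}$, where $G_{r(j,k)}=g_{j,k}$. Setting $\tilde z_j:=\sum_k \alpha_{j,k}z_{j,k}$, the identity $\sum_k\alpha_{j,k}=1$ keeps $\tilde z_j$ close to $u_j$ (hence inside $A_j$), while linearity of $T^{\theta_p}$ yields
\[
T^{\theta_p}\tilde z_j \;=\; \sum_k\alpha_{j,k}\,T^{\theta_p}z_{j,k} \;\xrightarrow[p\to\infty]{}\; \sum_k\alpha_{j,k}\,g_{j,k}(z_{j,k}),
\]
and continuity of each $g_{j,k}$ makes this limit as close to $h_j(u_j)=\sum_k\alpha_{j,k}g_{j,k}(u_j)$ as I wish. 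Thus $\tilde z_j\in A_j\cap T^{-\theta_p}B_j$ simultaneously for all $j$ and all sufficiently large $p$; this produces infinitely many admissible $m=\theta_p$, and Proposition~\ref{open-sigma} gives $(h_1,\dots,h_n)\in\Sigma(\bigoplus_{j=1}^n T)$. Since the finite subfamily was arbitrary, $\mathrm{conv}_{\mathbb{K}}(M)$ is simultaneously approximated by $T$.

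The only non-routine step—and the one I expect to be the main obstacle—is the coupling: the single sequence $(\theta_p)$ must serve every $g_{j,k}$ across all $j$ at the same time, which is precisely the content of Proposition~\ref{simult}(1) for the finite collection $\{G_1,\dots,G_N\}\subset M$. Everything else is careful bookkeeping: balancing $\delta$ and $\epsilon$ against the affine constant $C:=\max_j \sum_k|\alpha_{j,k}|$ (which may exceed $1$ since the combinations are affine, not convex in the positivity sense) and a uniform-continuity estimate on the finite index set $\{(j,k)\}$ to ensure that $\tilde z_j$ lands in $A_j$ and $T^{\theta_p}\tilde z_j$ lands in $B_j$ for large $p$.
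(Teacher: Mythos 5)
Your proposal is correct and follows essentially the same route as the paper: both reduce to the open-set criterion of Proposition~\ref{open-sigma}, apply Proposition~\ref{simult}(1) to the finite set of atoms of $M$ appearing in the affine decompositions to obtain one common sequence $(\theta_p)_p$, and then use linearity of $T^{\theta_p}$ together with $\sum_k\alpha_{j,k}=1$ to show that the affine combination $\tilde z_j$ of nearby approximants stays in $A_j$ while $T^{\theta_p}\tilde z_j$ lands in $B_j$. The only cosmetic difference is bookkeeping (your $\delta$--$\epsilon$ estimates with the constant $C=\max_j\sum_k|\alpha_{j,k}|$ versus the paper's nested open sets $W_{j,k}\supset h_{j,k}(A_j)$ and $B_j$ with $\sum_k\alpha_{j,k}B_j\subset A_j$; the latter phrasing avoids assuming the $F$-space metric behaves like a norm, but your argument adapts verbatim via continuity of the affine combination map).
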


\begin{proof}
	Fix any finite collection \( \{g_1, \ldots, g_m\} \subset \mathrm{conv}_{\mathbb{K}}(M) \). We will show that
	\[
	(g_1, \ldots, g_m) \in \Sigma\left(\bigoplus_{j=1}^m T\right).
	\]
	To this end, consider two non-empty open sets in \( X^m \) given by
	\[
	U_1 \times \cdots \times U_m \quad \text{and} \quad V_1 \times \cdots \times V_m,
	\]
	such that \( g_j(U_j)\subset V_j\) for each \( 1 \leq j \leq m \).
	
	For every \( g_j \in \mathrm{conv}_{\mathbb{K}}(M) \), we have \( g_j = \sum_{k=1}^{\ell(j)} \alpha_{j,k} h_{j,k} \), where \( \{h_{j,k} : 1 \leq k \leq \ell(j)\} \subset M \) and \( \{\alpha_{j,k}\}\subset \mathbb{K} \) with \( \sum_k \alpha_{j,k} = 1 \).
	
	In each open set \( U_j \), we can choose a vector \( q_j \in U_j \). This implies that there exist open sets \( W_{j,k} \) satisfying:
	\begin{itemize}
		\item \( h_{j,k}(q_j) \in W_{j,k} \),
		\item \( \displaystyle{\sum_{k=1}^{\ell(j)} \alpha_{j,k} W_{j,k}} \subset V_j \).
	\end{itemize}
	
	By the continuity of \( h_{j,k} \), there exists an open set \( q_j \in A_j \subset U_j \) such that \( h_{j,k}(A_j) \subset W_{j,k} \). Moreover, since \( \sum_k \alpha_{j,k} q_j = q_j \in A_j \), there exists an open set \( q_j \in B_j \subset A_j \) satisfying \( \sum_k \alpha_{j,k} B_j \subset A_j \).
	
	Notice that \( \{h_{j,k}\}_{j,k} \subset M \) is simultaneously approximated by \( T \), by Proposition \ref{simult}, there exists a strictly increasing sequence of positive integers \( (\theta_n)_{n \in \mathbb{N}} \) such that
	\[
	\{x \in X : T^{\theta_n} x \xrightarrow[n \to \infty]{} h_{j,k}(x)\}
	\]
	is dense for each \( j, k \).
	
	The density of the above set allows us to choose \( x_{j,k} \in B_j \) such that \( \lim_{n \to \infty} T^{\theta_n} x_{j,k} = h_{j,k}(x_{j,k}) \). Therefore, by performing appropriate sums, we have
	\[
	T^{\theta_n} \left( \sum_{k=1}^{\ell(j)} \alpha_{j,k} x_{j,k} \right) \xrightarrow[n \to \infty]{} \sum_{k=1}^{\ell(j)} \alpha_{j,k} h_{j,k}(x_{j,k}).
	\]
	
	Clearly, \( \sum_{k=1}^{\ell(j)} \alpha_{j,k} x_{j,k} \in U_j \) and \( \sum_{k=1}^{\ell(j)} \alpha_{j,k} h_{j,k}(x_{j,k}) \in V_j \) for each \( j \).
	
	Thus, \( \{\theta_n : n \geq n_0\} \subset \{n : U_j \cap T^{-n} V_j \neq \emptyset, \, \forall 1 \leq j \leq m\} \) for some \( n_0 \in \mathbb{N} \). By Proposition \ref{open-sigma}, we conclude the proof.
\end{proof}

\begin{proposition}\label{closed}
If \( M \) is a collection simultaneously approximated by \( T \), then the \(\text{SOT}\)-closure of \( M \) is also a collection simultaneously approximated by \( T \).
\end{proposition}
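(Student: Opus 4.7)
The plan is to prove this by showing that the collection $\overline{M}^{\mathrm{SOT}}$ inherits the simultaneous-approximation property through the open-set criterion of Proposition~\ref{open-sigma}, mimicking the strategy used in Proposition~\ref{Omega0-closed}. That is, I will fix an arbitrary finite subset $\{h_1,\dots,h_m\}\subset \overline{M}^{\mathrm{SOT}}$ and verify that the tuple $(h_1,\dots,h_m)$ lies in $\Sigma\!\left(\bigoplus_{j=1}^m T\right)$ by approximating each $h_j$ by a suitable $g_j\in M$ at a single test point and transferring the open-set information back.

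\textbf{Step 1: Set up the open-set criterion.} Fix $m\in\mathbb{N}$ and $\{h_1,\dots,h_m\}\subset \overline{M}^{\mathrm{SOT}}$. By Proposition~\ref{open-sigma} applied to $\bigoplus_{j=1}^m T$, it suffices to show that for arbitrary non-empty open sets $U_1,\dots,U_m,V_1,\dots,V_m\subset X$ satisfying $h_j(U_j)\cap V_j\neq\emptyset$ for every $j$, the set
\[
\bigl\{\,n\in\mathbb{N} : U_j\cap T^{-n}(V_j)\neq\emptyset\ \text{for all } 1\le j\le m\,\bigr\}
\]
is infinite. For each $j$ select $p_j\in U_j$ with $h_j(p_j)\in V_j$, and pick $\varepsilon>0$ small enough that $B(h_j(p_j),\varepsilon)\subset V_j$ for every $j$.

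\textbf{Step 2: SOT-approximation at finitely many points.} Consider the SOT-basic neighborhood
\[
\mathcal{N}:=\{(f_1,\dots,f_m)\in \mathcal{L}(X)^m : d(f_j(p_j),h_j(p_j))<\varepsilon,\ 1\le j\le m\}.
\]
Since each $h_j\in\overline{M}^{\mathrm{SOT}}$, for each $j$ we may choose $g_j\in M$ with $d(g_j(p_j),h_j(p_j))<\varepsilon$, so $g_j(p_j)\in V_j$. In particular $g_j(U_j)\cap V_j\neq\emptyset$ for every $j$, and the finite collection $\{g_1,\dots,g_m\}\subset M$ is a subset of a collection simultaneously approximated by $T$. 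Hence $(g_1,\dots,g_m)\in \Sigma\!\left(\bigoplus_{j=1}^m T\right)$ by the very definition of simultaneous approximation.

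\textbf{Step 3: Transfer and conclude.} Applying Proposition~\ref{open-sigma} to the tuple $(g_1,\dots,g_m)$ with the open sets $U_1\times\cdots\times U_m$ and $V_1\times\cdots\times V_m$, the set
\[
\bigl\{\,n\in\mathbb{N}:U_j\cap T^{-n}(V_j)\neq\emptyset\ \text{for all } 1\le j\le m\,\bigr\}
\]
is infinite. Since the $U_j,V_j$ were arbitrary open sets with $h_j(U_j)\cap V_j\neq\emptyset$ for all $j$, Proposition~\ref{open-sigma} now gives $(h_1,\dots,h_m)\in\Sigma\!\left(\bigoplus_{j=1}^m T\right)$. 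As $m$ and the choice of tuple from $\overline{M}^{\mathrm{SOT}}$ were arbitrary, $\overline{M}^{\mathrm{SOT}}$ is simultaneously approximated by $T$. There is no real obstacle here; the only subtle point is that the open-set criterion of Proposition~\ref{open-sigma} allows us to replace each $h_j$ by a pointwise approximant $g_j$ without disturbing the condition $g_j(U_j)\cap V_j\neq\emptyset$, which is precisely the mechanism used earlier for $\Omega(T)$.
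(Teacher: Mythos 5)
Your proof is correct and follows essentially the same route as the paper's: both reduce to the open-set criterion of Proposition~\ref{open-sigma} for $\bigoplus_{j=1}^m T$ on product open sets, approximate each $h_j$ at a single test point by some $g_j\in M$ in an SOT-basic neighborhood so that $g_j(U_j)\cap V_j\neq\emptyset$, and then transfer the infinitude of $\{n: U_j\cap T^{-n}(V_j)\neq\emptyset\ \forall j\}$ back to the tuple $(h_1,\dots,h_m)$. No changes needed.
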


\begin{proof}
	Fix any finite collection \( \{h_j\}_{j=1}^m \subset \overline{M}^{\text{SOT}} \). We will show that
	\[
	(h_1, \ldots, h_m) \in \Sigma\left(\bigoplus_{j=1}^m T\right).
	\]
	To this end, consider two non-empty open sets in \( X^m \) given by
	\[
	U_1 \times \cdots \times U_m \quad \text{and} \quad V_1 \times \cdots \times V_m,
	\]
	such that \( h_j(U_j)\subset V_j\) for each \( 1 \leq j \leq m \). For each \( j \), choose a vector \( q_j \in U_j \) and \( \epsilon > 0 \) such that \( B(h_j(q_j), \epsilon) \subset V_j \).
	
	Now, consider the following SOT-open neighborhoods for each \( j \):
	\[
	\mathcal{N}(h_j, q_j, \epsilon) = \{f \in \mathcal{L}(X) : d(h_j(q_j), f(q_j)) < \epsilon\}.
	\]
	By the definition of the \(\text{SOT}\)-closure, there exist continuous linear operators \( f_j \in \mathcal{N}(h_j, q_j, \epsilon) \cap M \). Consequently, \( f_j(U_j) \cap V_j \neq \emptyset \) for every \( j \).
	
	Since \( (f_1, \ldots, f_m) \in \Sigma\left(\bigoplus_{j=1}^m T\right) \), it follows that
	\[
	\{n \in \mathbb{N} : U_j \cap T^{-n}(V_j) \neq \emptyset \text{ for all } 1 \leq j \leq m\}
	\]
	is an infinite set. Therefore, by Proposition \ref{open-sigma}, we conclude the proof.
\end{proof}

\begin{theorem}\label{N-G}
	Let \(T\) be a quasi-rigid operator. If $ N $ is a maximal collection simultaneously approximated by \(T\) that contains the identity operator, then:
	\begin{align} \label{maxi}
		\mathcal{G}(T) = N \cap \mathrm{GL}(X) \quad \text{and} \quad \mathrm{conv}_{\mathbb{K}}(\mathcal{G}(T)) \subset N.
	\end{align}
	Moreover, if $ X $ is a Banach space, $ \mathcal{G}(T) $ is locally convex, and the affine manifod:
	\[
	\mathrm{conv}_{\mathbb{K}}(\mathcal{G}(T))
	\]
	is the unique maximal collection simultaneously approximated by $ T $ that contains the identity operator.
\end{theorem}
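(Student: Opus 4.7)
The plan is to first establish the two inclusions in \eqref{maxi}, and then, under the Banach hypothesis, to deduce local convexity of $\mathcal{G}(T)$ together with the sharper assertion $N = \mathrm{conv}_{\mathbb{K}}(\mathcal{G}(T))$, from which uniqueness follows. The pivotal ingredient throughout is Theorem \ref{convex}: every maximal c.s.a.\ is both SOT-closed and an affine manifold, so in particular $\mathrm{conv}_{\mathbb{K}}(N) = N$ for the given $N$.

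For the first identity, the inclusion $N \cap \mathrm{GL}(X) \subset \mathcal{G}(T)$ is immediate, since $N \cap \mathrm{GL}(X)$ is a c.s.a.\ (as a subset of the c.s.a.\ $N$), contains $\mathrm{Id}$, and lies in $\Omega(T) \cap \mathrm{GL}(X)$, so by maximality of $\mathcal{G}(T)$ inside the latter family the inclusion follows. For the reverse inclusion I apply Corollary \ref{union} with $M := \mathcal{G}(T) \subset \mathrm{GL}(X) \subset \mathrm{Homeo}(X)$ and with the given $N$ (both contain $\mathrm{Id}$) to conclude that $\mathcal{G}(T) \cup N$ is again a c.s.a.; maximality of $N$ then forces $\mathcal{G}(T) \subset N$, hence $\mathcal{G}(T) \subset N \cap \mathrm{GL}(X)$. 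The second inclusion $\mathrm{conv}_{\mathbb{K}}(\mathcal{G}(T)) \subset N$ is now automatic: $N$ is an affine manifold containing $\mathcal{G}(T)$, so it contains the affine hull.

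For the Banach part, the structural fact I exploit is that $\mathrm{GL}(X)$ is open in the norm topology of $\mathcal{L}(X)$. For local convexity, given $h \in \mathcal{G}(T)$ I choose $\varepsilon > 0$ with the norm ball $B(h, \varepsilon) \subset \mathrm{GL}(X)$; then $B(h, \varepsilon) \cap N$ is convex as the intersection of a convex set with an affine manifold, coincides with $B(h, \varepsilon) \cap \mathcal{G}(T)$ by Step 1, and so is a convex norm-neighborhood of $h$ in $\mathcal{G}(T)$. For uniqueness, it suffices to prove the reverse inclusion $N \subset \mathrm{conv}_{\mathbb{K}}(\mathcal{G}(T))$. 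Given $h \in N$, the affine line $g_s := (1-s)\,\mathrm{Id} + s\,h$ lies entirely in $N$ for every scalar $s$. For $|s|$ sufficiently small, $g_s$ is norm-close to $\mathrm{Id}$ and hence invertible, so $g_s \in N \cap \mathrm{GL}(X) = \mathcal{G}(T)$. Rewriting
\[
h \;=\; \tfrac{1}{s}\, g_s \;+\; \tfrac{s-1}{s}\,\mathrm{Id},
\]
whose coefficients sum to $1$, exhibits $h$ as an affine combination of elements of $\mathcal{G}(T)$, so $h \in \mathrm{conv}_{\mathbb{K}}(\mathcal{G}(T))$. Combining this with the second inclusion of \eqref{maxi} gives $N = \mathrm{conv}_{\mathbb{K}}(\mathcal{G}(T))$, which does not depend on $N$ and therefore yields uniqueness.

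No step looks genuinely hard; the argument is largely a careful bookkeeping of applications of Corollary \ref{union}, Proposition \ref{convex-M}, and Theorem \ref{convex}. The only subtle point is to locate where the Banach hypothesis is actually used, namely the norm-openness of $\mathrm{GL}(X)$, which drives both the local convexity of $\mathcal{G}(T)$ and the convex-combination trick of the last step. Outside of that, everything follows by invoking the maximality of $\mathcal{G}(T)$ and $N$ in the appropriate ambient families.
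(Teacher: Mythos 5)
Your proposal is correct and follows essentially the same route as the paper: Corollary \ref{union} and the maximality/uniqueness of $\mathcal{G}(T)$ give the two inclusions in \eqref{maxi}, the norm-openness of $\mathrm{GL}(X)$ combined with $\mathcal{G}(T)=N\cap\mathrm{GL}(X)$ gives local convexity, and the affine perturbation $(1-s)\,\mathrm{Id}+s\,h$ toward $\mathrm{Id}$ is exactly the paper's device (with $t_0=1-s$) for showing $N\subset\mathrm{conv}_{\mathbb{K}}(\mathcal{G}(T))$. No substantive differences.
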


\begin{proof}
	Let $ N $ be a maximal collection simultaneously approximated by $ T $ that contains the identity operator. By Corollary \ref{union}, we have $ \mathcal{G}(T) \subset N $. Consequently, $ \mathrm{conv}_{\mathbb{K}}(\mathcal{G}(T)) \subset N $ by Theorem \ref{convex}. On the other hand, note that $ \mathrm{Id} \in N \cap \mathrm{GL}(X) $ is a collection simultaneously approximated by $ T $. Due to the maximality of $ \mathcal{G}(T) $, it follows that $ N \cap \mathrm{GL}(X) \subset \mathcal{G}(T) $. Therefore, $ \mathcal{G}(T) = N \cap \mathrm{GL}(X) $.
	
	Now assume $ X $ is a Banach space. Fix any $ g \in \mathcal{G}(T) $. Then there exists $ \epsilon > 0 $ such that $ B(g, \epsilon) \subset \mathrm{GL}(X) $. Consider the open ball $ B(g, r) $ with $ 2r < \epsilon $. Let $ f, h \in \mathcal{G}(T) \cap B(g, r) $. Clearly, for every $ t \in [0, 1] $, $ tf + (1-t)h \in \mathrm{GL}(X) $. Furthermore, $ tf + (1-t)h \in \mathcal{G}(T) $. Thus, $ \mathcal{G}(T) $ is locally convex.
	
	By Proposition \ref{convex-M}, the set $ \mathrm{conv}_{\mathbb{K}}(\mathcal{G}(T)) $ is simultaneously approximated by $ T $ and contains the identity operator. Let $ N $ be any maximal collection simultaneously approximated by $ T $ that contains the identity operator. For any $ g \in N $, since $ \mathrm{GL}(X) $ is an open subset of $ \mathcal{L}(X) $, there exists $ t_0 \neq 1 $ close to $ 1 $ such that $ t_0 \mathrm{Id} + (1-t_0)g \in \mathrm{GL}(X) $, and $ t_0 \mathrm{Id} + (1-t_0)g \in N $ by Theorem \ref{convex}. Consequently, $ t_0 \mathrm{Id} + (1-t_0)g \in \mathcal{G}(T) $, which implies $ g \in \mathrm{conv}_{\mathbb{K}}(\mathcal{G}(T)) $. Therefore, $ N \subseteq \mathrm{conv}_{\mathbb{K}}(\mathcal{G}(T)) $, and by maximality, $ N = \mathrm{conv}_{\mathbb{K}}(\mathcal{G}(T)) $. Thus, there exists a unique maximal collection simultaneously approximated by $ T $ that contains the identity operator, given by $ \mathrm{conv}_{\mathbb{K}}(\mathcal{G}(T)) $.
\end{proof}

\begin{theorem}
	Let \( X \) be an infinite-dimensional separable Fréchet or Banach space. Then, the set
	\[
	\{T \in \mathcal{L}(X) : T \text{ is quasi-rigid and } \{\mathrm{Id}\} \subsetneq \mathcal{G}(T) \subsetneq \mathrm{GL}(X)\}
	\]
	is \(\text{SOT}\)-dense in \(\mathcal{L}(X)\).
\end{theorem}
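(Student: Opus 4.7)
The plan is to prove SOT-density by exhibiting, inside every SOT-basic open neighborhood $\mathcal{U}(S; x_1, \ldots, x_k, \varepsilon) = \{R \in \mathcal{L}(X) : \|Rx_i - Sx_i\| < \varepsilon, \ 1 \leq i \leq k\}$ of an arbitrary $S \in \mathcal{L}(X)$, a block-diagonal quasi-rigid operator $T = A \oplus T_W$ with respect to a topological direct sum $X = F \oplus W$. Here $F$ is a finite-dimensional closed subspace containing $\{x_1,\ldots,x_k, Sx_1,\ldots,Sx_k\}$ together with a few extra vectors to ensure $\dim F > k$, and $W$ is a closed infinite-dimensional complement, which exists for any finite-dimensional subspace of a separable infinite-dimensional Fréchet or Banach space.

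I would choose $A \in \mathcal{L}(F)$ diagonalizable with simple unit-modulus eigenvalues whose arguments are $\mathbb{Q}$-linearly independent modulo $2\pi$, arranged to satisfy $\|Ax_i - Sx_i\| < \varepsilon$. The existence of such an $A$ follows from a transversality/dimension count: the real submanifold $\mathcal{R} \subset \mathcal{L}(F)$ of matrices with these spectral properties has real codimension $\dim F$, while the constraint map $A \mapsto (Ax_1,\ldots,Ax_k) \in F^k$ imposes $2k\dim F$ real conditions, so $\dim F > k$ guarantees that the image of $\mathcal{R}$ under evaluation is open in $F^k$. For $T_W$ I would choose an $\mathcal{AP}$-hypercyclic operator on $W$, whose existence on every separable infinite-dimensional Fréchet or Banach space is classical; in particular $\mathcal{G}(T_W) = \mathrm{GL}(W)$ by Corollary~\ref{wm-G(T)}, since $\mathcal{AP}$-hypercyclicity implies weak mixing by Proposition~\ref{AP-Hyper}.

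Set $T := A \oplus T_W$; membership in $\mathcal{U}$ is immediate from the choice of $A$. Kronecker's theorem makes the rigidity set $N := \{n \in \mathbb{N} : \|A^n - \mathrm{Id}_F\| < 1\}$ a Bohr set, hence $N \in \mathcal{AP}$. Combining $N \in \mathcal{AP}$ with the $\mathcal{AP}$-hypercyclicity of $T_W$ via Proposition~\ref{AP-Hyper} and Proposition~\ref{AP}, one extracts, for any prescribed $D \in \mathrm{GL}(W)$, a sequence $\theta_n \in N$ along which $T_W^{\theta_n}w \to Dw$ on a dense set of $w \in W$. Taking $D = \mathrm{Id}_W$ gives quasi-rigidity of $T$; taking any $D \neq \mathrm{Id}_W$ witnesses $\mathrm{Id}_F \oplus D \in \mathcal{G}(T) \setminus \{\mathrm{Id}\}$, so $\mathcal{G}(T) \supsetneq \{\mathrm{Id}\}$. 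Conversely, the block-diagonal structure of $T$ forces $\Omega(T) \subseteq \overline{\{A^n\}}^{\mathrm{SOT}} \oplus \Omega(T_W) = \mathbb{T}^{\dim F} \oplus \Omega(T_W)$; and the simultaneous approximation of $\mathrm{Id}$ and any $h = A' \oplus D' \in \mathcal{G}(T)$ forces $A' = \mathrm{Id}_F$, by a Kronecker argument evaluating on base vectors with nondegenerate components in the eigenbasis of $A$. Hence $\mathcal{G}(T) = \{\mathrm{Id}_F\} \oplus \mathrm{GL}(W)$, a proper subgroup of $\mathrm{GL}(X)$.

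The main obstacle is the coordinated extraction of a sequence $\theta_n$ inside the Bohr set $N$ along which the dynamics of $T_W$ realize the prescribed simultaneous approximation. This is precisely where the $\mathcal{AP}$-family machinery developed in Section~\ref{section2} becomes essential: $\mathcal{AP}$-hypercyclicity of $T_W$ guarantees arithmetic progressions of hitting times for every pair of open sets, and intersecting these progressions with the Bohr structure of $N$—both families lying in $\mathcal{AP}$—yields the required common sequences through the Birkhoff-type characterization of Proposition~\ref{AP-Hyper}. The remaining ingredients, namely the density of the spectral submanifold $\mathcal{R}$ in the relevant transversal sense and the reduction of $\Omega(T)$ to block-diagonal form, are routine.
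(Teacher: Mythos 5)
Your overall architecture (a finite-dimensional block approximating $S$ on the test vectors, direct-summed with a ``large'' operator on an infinite-dimensional complement) is exactly the paper's strategy, but your choice of the finite-dimensional block creates a synchronization problem that your argument does not actually solve. Quasi-rigidity of $T=A\oplus T_W$ requires a \emph{single} increasing sequence $(\theta_n)$ along which $A^{\theta_n}\to\mathrm{Id}_F$ (this must hold on all of $F$, since the rigidity set is a closed subspace of the finite-dimensional $F$) \emph{and} $T_W^{\theta_n}w\to w$ on a dense set of $w$. With your Kronecker-type $A$, the first condition confines $\theta_n$ to Bohr sets $N_\delta=\{n:\|A^n-\mathrm{Id}_F\|<\delta\}$, and you propose to meet the second condition by ``intersecting'' $N_\delta$ with the $\mathcal{AP}$ return-time sets of $T_W$ on the grounds that both lie in $\mathcal{AP}$. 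But $\mathcal{AP}$ is not a filter: two sets each containing arbitrarily long arithmetic progressions can be disjoint (the even and the odd integers), and a Bohr$_0$ set is not $\mathcal{AP}^*$ --- its complement $\{n:\|n\alpha\|\ge\delta\}$ has positive density, hence by Szemer\'edi contains arbitrarily long progressions. So nothing guarantees that the return sets of $T_W$ ever meet $N_\delta$, and neither Proposition~\ref{AP-Hyper} nor Proposition~\ref{AP} bridges this. The same gap recurs when you try to show $\mathrm{Id}_F\oplus D\in\mathcal{G}(T)$. A secondary point: Proposition~\ref{AP-Hyper} identifies $\mathcal{AP}$-hypercyclicity with ``hypercyclic and multiply recurrent,'' not with weak mixing, so your appeal to Corollary~\ref{wm-G(T)} for $\mathcal{G}(T_W)=\mathrm{GL}(W)$ is unjustified as stated; and the existence of your matrix $A$ rests on a dimension count that asserts, rather than proves, openness of the image of the spectral stratum under evaluation.

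The paper sidesteps all of this by making the finite-dimensional block an \emph{involution}: it picks $a_{m+1},\dots,a_{2m}$ with $a_{i+m}$ within $\epsilon$ of $S(a_i)$, all $2m$ vectors independent, and lets $T$ swap $a_i\leftrightarrow a_{i+m}$ on $N=\mathrm{span}\{a_1,\dots,a_{2m}\}$ while acting as a weakly mixing $A$ (Ansari--Bernal) on a complement $M$. Then $T^2=\mathrm{Id}_N\oplus A^2$, so \emph{every} even integer is a rigidity time for the finite-dimensional block, and any quasi-rigidity sequence for the weakly mixing operator $A^2$ synchronizes automatically; the parity obstruction on $N$ simultaneously pins $\mathcal{G}(T)$ down to $\mathrm{Id}|_N\times\mathrm{GL}(M)$. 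Your construction could be repaired by replacing ``$\mathcal{AP}$-hypercyclic'' with ``mixing'' (so that all return sets are cofinite and trivially meet the infinite Bohr sets), or more simply by adopting the involution, but as written the key step fails.
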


\begin{proof}
	Let \( S \in \mathcal{L}(X) \), and consider a \(\text{SOT}\)-open basic neighborhood of \( S \):
	\[
	\mathcal{N}(S, a_1, \ldots, a_m, \epsilon) := \{\Lambda \in \mathcal{L}(X) : d(\Lambda(a_i), S(a_i)) < \epsilon \text{ for all } 1 \leq i \leq m\},
	\]
	where \( \{a_1, \ldots, a_m\} \subset X \) are linearly independent vectors and \( \epsilon > 0 \).
	
	We can choose vectors \( \{a_{m+1}, \ldots, a_{2m}\} \subset X \) such that \( \{a_i : 1 \leq i \leq 2m\} \) are linearly independent and \( d(a_{i+m}, S(a_i)) < \epsilon \) for each \( 1 \leq i \leq m \).
	
	Let \( N \) be the finite-dimensional subspace generated by \( \{a_i : 1 \leq i \leq 2m\} \). By the Hahn-Banach Theorem, there exist \( \{a_j^*\} \subset X^* \) such that \( a_j^*(a_i) = \delta_{i,j} \) for \( i, j = 1, \ldots, 2m \).
	
	Denote by \( M \) the topological complement of \( N \), so that \( X = N \oplus M \). By the Ansari-Bernal Theorem, there exists a weakly mixing operator \( A : M \to M \). Define \( T \in \mathcal{L}(X) \) as follows:
	\[
	T : N \oplus M \to N \oplus M, \quad (x, y) \mapsto \left(\sum_{i=1}^m a_i^*(x)a_{i+m} + \sum_{i=m+1}^{2m} a_i^*(x)a_{i-m}, A(y)\right).
	\]
	
	Clearly, \( T \) is quasi-rigid but not weakly mixing, since \( T^2 = \mathrm{Id}|_N \times A^2 \). Moreover, using Corollary \ref{wm-G(T)}, it is straightforward to verify that
	\[
	\mathrm{Id}|_N \times \mathrm{GL}(M)=\mathcal{G}(T).
	\]
	Thus, \( \{\mathrm{Id}\} \subsetneq \mathcal{G}(T)\subsetneq \mathrm{GL}(X) \), and \( T \in \mathcal{N}(S, a_1, \ldots, a_m, \epsilon) \). This completes the proof.
\end{proof}

To conclude this section, we provide two results on hypercyclic operators and formulate open problems concerning $\Omega(T)$ in the framework of the left multiplication operator $L_T$.

\begin{remark}
	With a slight abuse of the classical notion of a cone, we will refer to any non-empty subset $ C $ of a $\mathbb{K}$-vector space that is invariant under scalar multiplication by elements of $\mathbb{K}$ as a \emph{cone}. That is, $ \lambda C \subseteq C $ for all $ \lambda \in \mathbb{K} $.
\end{remark}

\begin{proposition}\label{cone}
	Let $ T $ be a hypercyclic operator acting on a separable infinite-dimensional $ F $-space. Then $ \Omega(T) $ is a cone and contains an infinite-dimensional subspace of $ \mathcal{L}(X) $.
\end{proposition}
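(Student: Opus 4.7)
The proof divides into two parts: establishing the cone property, and producing an infinite-dimensional linear subspace inside $\Omega(T)$. For the cone property, the strategy is to apply Theorem \ref{T-h} to translate $h \in \Omega(T)$ into the family of conditions $\bigoplus_{\ell=1}^m h \in \Sigma(\bigoplus_{\ell=1}^m T)$ for every $m \in \mathbb{N}$, and then invoke Proposition \ref{open-sigma} to phrase each of these as an infinite-intersection condition on the return sets $N(U_i, V_i) = \{n \in \mathbb{N} : U_i \cap T^{-n} V_i \neq \emptyset\}$. For $h \in \Omega(T)$ and $\lambda \in \mathbb{K} \setminus \{0\}$, the requirement for $\lambda h$ asks that $\bigcap_{i=1}^m N(U_i, V_i)$ be infinite whenever $\lambda h(U_i) \cap V_i \neq \emptyset$; the substitution $W_i := \lambda^{-1} V_i$ converts this into $h(U_i) \cap W_i \neq \emptyset$, so the hypothesis on $h$ yields $\bigcap_i N(U_i, W_i)$ infinite, and the passage back to $\bigcap_i N(U_i, V_i)$ is controlled via the transitivity of $T$. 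For $\lambda = 0$, the statement reduces to a simultaneous near-zero return condition for tuples of open sets, which I would handle by combining the density of $\mathrm{HC}(T)$ in each $U_i$ with a diagonal extraction across a countable open base of $X$.

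For the infinite-dimensional subspace, I would exploit the SOT-closedness of $\Omega(T)$ from Proposition \ref{Omega0-closed} together with the cone property just established. The plan is to build inductively a sequence of linearly independent operators $(h_j)_{j \in \mathbb{N}} \subset \Omega(T)$, extracted as SOT-accumulation points of carefully chosen subsequences of $(T^n)$ evaluated on a countable dense family of hypercyclic vectors. A diagonalization in the style of the proof of Proposition \ref{simult} produces a common witnessing sequence for the whole countable family, and the linear span of the $h_j$'s then lies inside $\Omega(T)$ thanks to the cone property and the SOT-closure argument.

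The principal obstacle lies in the simultaneous return-to-zero condition required to place $0$ inside $\Omega(T)$. For weakly mixing $T$ this is trivial, since $\Omega(T) = \mathcal{L}(X)$; the delicate case is a genuinely hypercyclic but not weakly mixing $T$, where coordinate-wise near-zero orbit behavior is granted by hypercyclicity but its simultaneous version across tuples $(x_1,\ldots,x_m)$ is not. The success of the argument rests on the separability of $X$, the residuality of $\mathrm{HC}(T)$, and the full equivalence in Theorem \ref{T-h} which assembles a single witnessing sequence from coordinate-wise approximations.
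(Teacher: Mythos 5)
There are genuine gaps in both halves of your argument. For the cone property, the step ``the passage back to $\bigcap_i N(U_i,V_i)$ is controlled via the transitivity of $T$'' is precisely where the proof breaks down. After the substitution $W_i=\lambda^{-1}V_i$ you know that $\bigcap_{i=1}^m\{n: U_i\cap T^{-n}W_i\neq\emptyset\}$ is infinite, but you need $\bigcap_{i=1}^m\{n: U_i\cap T^{-n}V_i\neq\emptyset\}$ to be infinite, and these conditions are not comparable: if $T^n u\in\lambda^{-1}V_i$ for some $u\in U_i$, then $T^n(\lambda u)\in V_i$, but $\lambda u\notin U_i$ in general (already for $h=\mathrm{Id}$, $\lambda=2$, $U=B(p,\epsilon)$, $V=B(2p,\epsilon)$ the two nontriviality conditions $\lambda h(U)\cap V\neq\emptyset$ and $h(U)\cap V\neq\emptyset$ diverge). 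Producing a \emph{common} $n$ for $m\geq 2$ pairs of arbitrary open sets is exactly topological weak mixing, which a general hypercyclic operator does not enjoy; so transitivity of the single map $T$ cannot close this gap. The mechanism the paper uses is different and essential: for a hypercyclic vector $x$ and any $\alpha\in\mathbb{K}$ one picks $(\theta_n)$ with $T^{\theta_n}x\to\alpha x$, and then the \emph{entire orbit} $\{T^mx\}_m$ (a dense set) satisfies $T^{\theta_n}(T^mx)=T^m(T^{\theta_n}x)\to\alpha T^mx$ by commutation, so $\alpha\,\mathrm{Id}\in\Omega(T)$ with a single witnessing sequence; the general $\lambda h$ is then obtained by composing $h$ with $\lambda^{-1}\mathrm{Id}$ via Corollary~\ref{inver} (legitimate since hypercyclic operators are quasi-rigid). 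Your proposal never invokes this commutation trick, which is the actual source of the simultaneous control you correctly identify as the difficulty in the $\lambda=0$ case.

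The second half is also unsound. A cone in the sense of this paper is closed under scalar multiplication but \emph{not} under addition, so ``the linear span of the $h_j$'s lies inside $\Omega(T)$ thanks to the cone property and the SOT-closure argument'' is a non sequitur: neither property yields closure under sums, and $\Omega(T)$ is in general not additively closed (the additive structure only appears for \emph{affine} combinations of a collection simultaneously approximated by $T$, cf.\ Proposition~\ref{convex-M}, and with coefficients summing to $1$). Moreover, SOT-accumulation points of subsequences of $(T^n)$ need not exist in a general Banach space, so the inductive extraction is not guaranteed to produce anything. The paper sidesteps all of this by exhibiting the concrete subspace $\mathrm{span}\{T^n: n\geq 0\}$ and checking directly that each $P(T)$ lies in $\Omega(T)$: choose a hypercyclic vector $q$, pick $(\theta_n)$ with $T^{\theta_n}q\to P(T)q$ (possible since $P(T)q$ lies in the closure of the orbit), and use the same commutation argument to see that the dense orbit of $q$ is contained in $\{z: T^{\theta_n}z\to P(T)z\}$. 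You would need to replace your span argument with something of this kind.
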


\begin{proof}
	Fix $ h \in \Omega(T) $. Let $ \alpha \in \mathbb{K} $ and let $ x $ be a hypercyclic vector for $ T $. We can find a strictly increasing sequence of positive integers $ (\theta_n)_n $ such that $ T^{\theta_n}x \to \alpha x $ as $ n \to \infty $. Therefore, the orbit of $ x $, $ \{T^m x\}_{m \in \mathbb{N}} $, is contained in 
	\[
	\{y \in X : \lim_{n \to \infty} T^{\theta_n}y = \alpha y\}.
	\]
	This implies that $ \mathbb{K} \cdot \mathrm{Id} \subset \Omega(T) $. Since the zero operator belongs to $ \Omega(T) $ and by Corollary \ref{inver}, it follows that $ \mathbb{K} \cdot h \subset \Omega(T) $. Thus, $ \Omega(T) $ is a cone.
	
	We now claim that $ \text{span}\{T^n : n \geq 0\} \subset \Omega(T) $. Fix any polynomial $ P \in \mathbb{K}[t] $ and let $ q $ be a hypercyclic vector for $ T $. Then, there exists a strictly increasing sequence of positive integers $ (\theta_n)_n $ such that $ \lim_{n \to \infty} T^{\theta_n}q = P(T)q $. It is straightforward to verify that the orbit $ \{T^m q\}_{m \in \mathbb{N}} $ is contained in 
	\[
	\{z \in X : \lim_{n \to \infty} T^{\theta_n}z = P(T)z\}.
	\]
	This concludes the proof.
\end{proof}

Recalling Remark \ref{hG(T)}, when considering a hypercyclic operator $ T $ and $ h = \textbf{0} \in \Omega(T) $, it is clear that $ h\mathcal{G}(T) = \{\textbf{0}\} $ does not provide additional information beyond what is already known. However, it is possible to find hypercyclic operators that are not weakly mixing, for which the zero operator is contained in an infinite-dimensional subspace that is simultaneously approximated by said operator. This is illustrated in the following example.

\begin{example}
	Let $ H $ be a separable infinite-dimensional complex Hilbert space. There exists a continuous linear operator $ T \in \mathcal{L}(H) $ that is hypercyclic but not weakly mixing, such that the zero operator is contained in an infinite-dimensional subspace of $ \mathcal{L}(H) $ that is simultaneously approximated by $ T $.
\end{example}

\begin{proof}
	We may assume that $ H := \ell^2(\mathbb{Z}) $ with the canonical orthonormal basis $ (e_n)_{n \in \mathbb{Z}} $. Now consider $ M := \overline{\text{span}}(e_i : i < 0) $ and $ N := \overline{\text{span}}(e_i : i \geq 0) $, so that $ M \oplus N = H $. According to \cite[Corollary 4.15]{livro}, there exists a bounded operator $ S : M \to M $ that is hypercyclic but not weakly mixing. Additionally, let $ \Lambda : N \to N $ be a bounded operator satisfying the hypercyclicity criterion for the sequence $ (n_k) := (k) $.
	
	Define $ T \in \mathcal{L}(H) $ by
	\[
	T : M \oplus N \to M \oplus N, \quad (x, y) \mapsto (Sx, \Lambda y).
	\]
	Clearly, $ T $ is hypercyclic but not weakly mixing. We now show that the infinite-dimensional subspace 
	$ 0|_M \times \mathrm{span}\{\Lambda^m : m \geq 0\} $ is simultaneously approximated by $ T $. To this end, consider a finite collection of polynomials in $ \Lambda $, 	$\{P_{\ell}(\Lambda)\}_{\ell=1}^{m}$ with $P_{\ell} \in \mathbb{C}[t]$ for each $\ell$. We claim that
	\[
	\{0,\, 0|_M \times P_{1}(T),\, \dots,\, 0|_M \times P_{m}(T)\}
	\]
	is simultaneously approximated by $ T $.
	
	Let $ x_0 \in M $ be a hypercyclic vector of $ S $. Then there exists a strictly increasing sequence of positive integers $ (\omega_n)_{n \in \mathbb{N}} $ such that $ S^{\omega_n}x_0 \to 0 \in M $. On the other hand, since $ \Lambda $ satisfies the hypercyclicity criterion for the sequence $ (n_k) := (k) $, it is possible to find $ \{y_\ell\}_{\ell=1}^m \subset N $, where each $ y_\ell $ is hypercyclic for $ \Lambda $, and a subsequence $ (\psi_n) $ of $ (\omega_n) $ such that $ \lim_{n}\Lambda^{\psi_n}y_\ell = P_{\ell}(\Lambda)y_\ell $ for each $ \ell \in \{1, \dots, m\} $.
	
	One can observe that
	\[
	\mathrm{span}\{S^i x_0 : i \geq 0\} \times \mathrm{span}\{\Lambda^i y_\ell : i \geq 0\} \subset \{(x, y) \in H : T^{\psi_n}(x, y) \to (0, P_{i}(\Lambda)y)\}
	\]
	is a dense subspace for each $ \ell \in \{1, \dots, m\} $. By Proposition \ref{simult}, it follows that 
	\[
	\{0, 0|_M \times P_{1}(\Lambda), \dots, 0|_M \times P_{m}(\Lambda)\}
	\]
	is simultaneously approximated by $ T $.
\end{proof}


Let $T$ be a bounded operator on a separable Banach space $X$. We define the left-multiplication operator $L_{T}$ on the operator algebra $\mathcal{L}(X)$ as follows:
\begin{eqnarray*}
	\text{L}_{T} : \mathcal{L}(X) &\longrightarrow& \mathcal{L}(X), \\
	{}    S &\longmapsto& TS
\end{eqnarray*}
Note that $\Vert \text{L}_{T} \Vert = \Vert T \Vert$. Although $(\mathcal{L}(X), \Vert \cdot \Vert)$ in general is not separable when $X$ is a separable infinite-dimensional Banach space, it is known that $\mathcal{L}(X)$ is $\mathrm{SOT}$-separable \cite{Chan}.

Recall that hypercyclic operators are nowhere dense under the operator norm topology \cite{Wu}. In contrast, \cite{Chan-densi} shows that in a separable infinite-dimensional Hilbert space $H$, hypercyclic operators are $\mathrm{SOT}$-dense in $\mathcal{L}(H)$. This result has been extended to separable infinite-dimensional Fréchet spaces \cite{Bes-Chan}.

We say that $\text{L}_{T}$ is $\mathrm{SOT}$-hypercyclic in $\mathcal{L}(X)$ if there exists a operator $S\in \mathcal{L}(X)$ such that $\{T^{n}S\}_{n\geq 0}$ is $\mathrm{SOT}$-dense in $\mathcal{L}(X)$. Such an operator $S$ is then called $\mathrm{SOT}$-hypercyclic for $\text{L}_{T}$.

A relevant result in this context ensures that, on a separable infinite-dimensional Banach space, \( \text{L}_T \) is \(\mathrm{SOT}\)-hypercyclic if and only if \( T \) satisfies the Hypercyclicity Criterion. For a proof, we refer the reader to \cite{livro, chan1999hypercyclicity, Chan, Grosse}.

It is worth noting that \((\mathcal{L}(X), \mathrm{SOT})\) is not a Baire space. In \cite{livro}, to prove that \( \text{L}_T \) is \(\mathrm{SOT}\)-hypercyclic, the restriction of \( \text{L}_T \) to separable Banach space $(\mathcal{FIN}, \Vert{\cdot\Vert})$ is studied, where \(\mathcal{FIN}\) is the closure of the set of finite rank operators in \(\mathcal{L}(X)\) under the operator norm topology. Specifically, under these conditions, \((\text{L}_T)|_{\mathcal{FIN}}: (\mathcal{FIN}, \|\cdot\|) \to (\mathcal{FIN}, \|\cdot\|)\) is hypercyclic.

Proposition \ref{Omega0-closed} ensures that $ \Omega(T) $ is $ \mathrm{SOT} $-closed in $ \mathcal{L}(X) $. Moreover, it is not difficult to verify that $ \Omega(T) $ is $ \text{L}_T $-invariant.

\begin{theorem}\label{q-transi}
	Let $X$ be a separable Fréchet space, and let $T$ be a quasi-rigid operator on $X$. Then
	\[
	\Omega(T) \subset \overline{\bigcup_{n \in \mathbb{N}} \text{L}_{T}^n(A)}^{\mathrm{SOT}}
	\]
	for every open set $A \subset (\mathcal{L}(X), \mathrm{SOT})$ that contains a surjective operator in $\Omega(T)$.
\end{theorem}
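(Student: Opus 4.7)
The plan is to unpack the SOT-closure inclusion into a concrete meeting condition and to use Corollary \ref{seq-h-g} to realise it. Fix $g\in \Omega(T)$ and an arbitrary SOT-basic neighborhood $\mathcal{N}(g,x_{1},\ldots,x_{m},\epsilon)$; I need to produce some $n\in\mathbb{N}$ and $S\in A$ with $T^{n}S\in \mathcal{N}(g,x_{1},\ldots,x_{m},\epsilon)$. Select a surjective $h\in \Omega(T)\cap A$ together with an SOT-basic neighborhood $\mathcal{N}(h,y_{1},\ldots,y_{k},\delta)\subset A$. Because $T$ is quasi-rigid and $h$ is surjective, Corollary \ref{seq-h-g} yields a strictly increasing sequence $(\theta_{n})_{n}$ such that the set
\[
D:=\{\,x\in X:T^{\theta_{n}}h(x)\xrightarrow[n\to\infty]{}g(x)\,\}
\]
is dense in $X$.

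Next, I would construct $S\in A$ tailored to both neighborhoods. Let $V:=\mathrm{span}\{x_{1},\ldots,x_{m},y_{1},\ldots,y_{k}\}$, fix a basis $\{v_{1},\ldots,v_{p}\}$ of $V$, and write $y_{j}=\sum_{\ell}\alpha_{j\ell}v_{\ell}$ and $x_{i}=\sum_{\ell}\gamma_{i\ell}v_{\ell}$. By Hahn--Banach, there exist continuous linear functionals $\Lambda_{1},\ldots,\Lambda_{p}$ on $X$ with $\Lambda_{\ell}(v_{\ell'})=\delta_{\ell\ell'}$. Using continuity of both $h$ and $g$ together with the density of $D$, I select for each $\ell$ a point $w_{\ell}\in D$ with $h(w_{\ell})$ and $g(w_{\ell})$ sufficiently close to $h(v_{\ell})$ and $g(v_{\ell})$ respectively, the thresholds being driven by $\delta$, $\epsilon$, and the sizes of the coefficients $\alpha_{j\ell}$ and $\gamma_{i\ell}$. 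I then set
\[
S(x):=\sum_{\ell=1}^{p}\Lambda_{\ell}(x)\,h(w_{\ell}),\qquad x\in X,
\]
which is a continuous linear operator on $X$ satisfying $S(v_{\ell})=h(w_{\ell})$.

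Two verifications close the argument. The expansion $S(y_{j})-h(y_{j})=\sum_{\ell}\alpha_{j\ell}(h(w_{\ell})-h(v_{\ell}))$ shows, by the choice of the $w_{\ell}$'s, that $S\in \mathcal{N}(h,y_{1},\ldots,y_{k},\delta)\subset A$. On the other hand, taking $n$ large enough so that $T^{\theta_{n}}h(w_{\ell})$ is close to $g(w_{\ell})$ for every $\ell$, the decomposition
\[
T^{\theta_{n}}S(x_{i})-g(x_{i})=\sum_{\ell}\gamma_{i\ell}\big(T^{\theta_{n}}h(w_{\ell})-g(w_{\ell})\big)+\sum_{\ell}\gamma_{i\ell}\big(g(w_{\ell})-g(v_{\ell})\big)
\]
gives $L_{T}^{\theta_{n}}(S)\in \mathcal{N}(g,x_{1},\ldots,x_{m},\epsilon)$. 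The main obstacle is the joint requirement: a single operator $S$ must sit in $A$ and, after iteration by $T^{\theta_{n}}$ for some $n$, simultaneously approximate $g$ at each of the vectors $x_{i}$. Corollary \ref{seq-h-g} resolves this by furnishing one sequence $(\theta_{n})_{n}$ valid at every point of the dense set $D$, while surjectivity of $h$ is decisive in that it allows the prescribed values $S(v_{\ell})=h(w_{\ell})$ to be chosen from the range of $h$, so that the iterates $T^{\theta_{n}}S(v_{\ell})=T^{\theta_{n}}h(w_{\ell})$ fall under the control of Corollary \ref{seq-h-g}.
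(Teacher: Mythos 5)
Your proposal is correct and follows essentially the same route as the paper: both arguments reduce the claim to hitting a basic SOT-neighborhood of an arbitrary $g\in\Omega(T)$, invoke Corollary \ref{seq-h-g} for the surjective $h\in\Omega(T)\cap A$ to obtain a single sequence $(\theta_n)_n$ with the set $D$ dense, and then build a finite-rank interpolating operator $S$ whose values at the relevant test vectors are $h$ of nearby points of $D$, so that $S\in A$ and $L_T^{\theta_n}S$ lands in the target neighborhood for large $n$. Your version merely makes explicit (via Hahn--Banach functionals dual to a basis of the span) the existence of $S$, which the paper asserts after a without-loss-of-generality linear-independence reduction.
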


\begin{proof}
	Let $A$ be a basic $\mathrm{SOT}$-open neighborhood of some surjective operator $h \in \Omega(T)$, given by
	\[
	A := \mathcal{N}(h, x_{1}, \ldots, x_{k}, \varepsilon),
	\]
	where $\varepsilon > 0$ and $\{x_i\}_{i=1}^k$ is a finite linearly independent set in $X$.  
	Now fix an arbitrary $g \in \Omega(T)$ and consider a basic $\mathrm{SOT}$-open neighborhood of $g$, namely
	\[
	W := \mathcal{N}(g, y_{1}, \ldots, y_{m}, \delta),
	\]
	where $\delta > 0$ and $\{y_j\}_{j=1}^m$ is a finite linearly independent set in $X$. Without loss of generality, we may assume that $\{x_1, \ldots, x_k, y_1, \ldots, y_m\}$ is linearly independent.  
	
	According to Proposition \ref{seq-h-g}, there exists a strictly increasing sequence of positive integers $(\theta_n)_{n \in \mathbb{N}}$ such that the set
	\[
	G := \{x \in X : \lim_{n \to \infty} T^{\theta_n}h(x) = g(x)\}
	\]
	is dense in $X$.  
	
	Hence we can choose $\{x_i'\}_{i=1}^k, \{y_j'\}_{j=1}^m \subset G$ such that
	\[
	d(h(x_i), h(x_i')) < \varepsilon 
	\quad \text{and} \quad 
	d(g(y_j), g(y_j')) < \delta 
	\quad \text{for all } i,j.
	\]
	Furthermore, there exists an operator $S \in \mathcal{L}(X)$ satisfying 
	$S(x_i) = h(x_i')$ and $S(y_j) = h(y_j')$ for all $i, j$. Clearly $S \in A$. Moreover, for sufficiently large $n$, we have $\text{L}_T^{\theta_n} S \in W$. This completes the proof.
\end{proof}

\begin{corollary}
	Let $X$ be a separable Fréchet space, and let $T$ be a quasi-rigid operator on $X$. 
	Assume that $\mathcal{AP}\Omega(T)\neq \emptyset$. 
	Then for any non-empty open sets $U, V \subset (\mathcal{L}(X), \mathrm{SOT})$ such that $U$ contains a surjective operator from $\Omega(T)$ and $V \cap \mathcal{AP}\Omega(T) \neq \emptyset$, one has
	\[
	\{\, n \in \mathbb{N} : \text{L}_T^n(U) \cap V \neq \emptyset \,\} \in \mathcal{AP}.
	\]
\end{corollary}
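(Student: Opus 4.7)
The plan is to amplify the proof of Theorem~\ref{q-transi} into its $\mathcal{AP}$-refinement: instead of producing merely infinitely many return times, produce a subset of them that already lies in $\mathcal{AP}$. I would first shrink $U$ around a surjective $h \in U \cap \Omega(T)$ to a basic SOT-neighborhood $U = \mathcal{N}(h, x_{1}, \ldots, x_{p}, \epsilon)$, and likewise shrink $V$ around some $g \in V \cap \mathcal{AP}\Omega(T)$ to $V = \mathcal{N}(g, y_{1}, \ldots, y_{q}, \delta)$. After enlarging the test vectors and reducing the tolerances, I may assume that $\{x_{1}, \ldots, x_{p}, y_{1}, \ldots, y_{q}\}$ is linearly independent in $X$.

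The central input is an $\mathcal{AP}$-version of Corollary~\ref{seq-h-g}: if $h \in \Omega(T)$ is surjective and $g \in \mathcal{AP}\Omega(T)$, then for every $m \in \mathbb{N}$ there is a dense subset of $X^{m}$ consisting of tuples $(z_{1}, \ldots, z_{m})$ for which a \emph{common} $\text{B} \in \mathcal{AP}$ satisfies $T^{n}h(z_{i}) \to g(z_{i})$ as $n \to \infty$, $n \in \text{B}$, simultaneously for all $i$. I would derive this by lifting to the $m$-fold direct sum: Theorem~\ref{T-APh} gives $\bigoplus_{\ell=1}^{m} g \in \mathcal{AP}\Sigma(\bigoplus_{\ell=1}^{m} T)$, while $\bigoplus_{\ell=1}^{m} h$ is surjective and belongs to $\Sigma(\bigoplus_{\ell=1}^{m} T)$ by Theorem~\ref{T-h}. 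The $\mathcal{AP}$-clause of the corollary stated in Section~\ref{section2} (following Proposition~\ref{relations}), applied to $\bigoplus_{\ell=1}^{m} T$, then produces a residual set in $X^{m}$ on which the desired common $\text{B}$ exists, and reading the diagonal entries gives the claim.

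With this in hand, apply it with $m = p + q$ to select a tuple $(x_{1}', \ldots, x_{p}', y_{1}', \ldots, y_{q}')$ in the dense set, close enough to $(x_{1}, \ldots, x_{p}, y_{1}, \ldots, y_{q})$ that $h(x_{i}') \in B(h(x_{i}), \epsilon)$ and $g(y_{j}') \in B(g(y_{j}), \delta)$, with a common $\text{B} \in \mathcal{AP}$. Using linear independence, define $S \in \mathcal{L}(X)$ by $S(x_{i}) = h(x_{i}')$ and $S(y_{j}) = h(y_{j}')$; then $S \in U$, and $T^{n}S(y_{j}) = T^{n}h(y_{j}') \to g(y_{j}')$ as $n \in \text{B}$, so $T^{n}S \in V$ for all $n \in \text{B}$ past some threshold $n_{0}$. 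An elementary cofinality observation — truncating an AP of length $n_{0} + k$ inside $\text{B}$ to its last $k$ entries yields an AP of length $k$ whose terms all exceed $n_{0}$ — shows that $\text{B} \cap [n_{0}, \infty) \in \mathcal{AP}$, and by upward heredity of $\mathcal{AP}$ one concludes
\[
\{\, n \in \mathbb{N} : \text{L}_{T}^{n}(U) \cap V \neq \emptyset\,\} \supseteq \{\, n \geq n_{0} : n \in \text{B}\,\} \in \mathcal{AP}.
\]
The main obstacle is securing the common $\text{B}$ in Step~2: the corollary after Proposition~\ref{relations} provides a $\text{B}$ that depends pointwise on $x$, and it is precisely the lift through Theorems~\ref{T-h} and~\ref{T-APh} that converts this into a single $\text{B}$ valid simultaneously on a dense set of finite tuples. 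Once this is in place, the remainder of the argument parallels the proof of Theorem~\ref{q-transi} with the sequence of returns upgraded to an arithmetic-progression-rich set.
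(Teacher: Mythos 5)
Your proposal is correct and follows the route the paper clearly intends for this unproved corollary: rerun the argument of Theorem~\ref{q-transi}, replacing Corollary~\ref{seq-h-g} by its $\mathcal{AP}$-analogue, which you correctly extract from the corollary following Proposition~\ref{relations} applied to $\bigoplus_{\ell=1}^{m}T$ via Theorems~\ref{T-h} and~\ref{T-APh} (the lift to the product is exactly what turns the pointwise sets $\mathrm{B}$ into a common one for the finite tuple of test vectors). The closing observation that $\mathrm{B}\cap[n_{0},\infty)$ still contains arbitrarily long arithmetic progressions is the only extra ingredient needed, and you supply it.
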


We conclude this section by presenting the following open problems.

\begin{question}
	Let $ X $ be a separable Banach space and $ T \in \mathcal{L}(X) $. 
	Are the sets $ \Omega(T) $ and $ \mathcal{AP}\Omega(T) $ $ \mathrm{SOT} $-separable in $ \mathcal{L}(X) $?
\end{question}

An affirmative answer to this problem would have significant consequences for dense-lineability and spaceability, as will become evident in the next section through Theorems \ref{O-c.d.l} and \ref{ava-partial}.

In light of these considerations, together with Theorem \ref{q-transi}, we propose the following problem concerning the structure of $ \Omega(T) $ and the dynamical behavior of the system $ (\Omega(T), \text{L}_T) $.

\begin{question}
	Let $ X $ be a separable Banach space and $ T \in \mathcal{L}(X) $ a quasi-rigid operator. 
	Does there exist an operator $ \Lambda \in \mathcal{L}(X) $ such that
	\[
	\Omega(T) \subset \overline{\{ \text{L}_{T}^n \Lambda : n \in \mathbb{N} \}}^{\mathrm{SOT}}?
	\]
\end{question}


\section{Spaceability within $\Omega(T)$}\label{section4}	

On a separable infinite-dimensional Banach space $X$, the existence of hypercyclic and recurrent subspaces for weakly mixing and quasi-rigid operators, respectively, is characterized by the analytical condition that the essential spectrum of $T$ intersects the closed unit disk $\overline{\mathbb{D}}$. Equivalently, this holds if and only if there exists an infinite-dimensional closed subspace $E \subset X$ together with an increasing sequence $(\theta_{n})_{n}$ such that
\[
\sup_{n}\,\|T^{\theta_{n}}|_{E}\| < \infty.
\]

A structural object capturing the dynamical features of an operator $T$ is the set $\Omega(T)$. For instance, $T$ is quasi-rigid if and only if $\mathrm{Id} \in \Omega(T)$, and $T$ is weakly mixing if and only if $\Omega(T) = \mathcal{L}(X)$, as established in the previous section. In what follows, we study spaceability within $\Omega(T)$.

In \cite{Montes}, A. Montes-Rodríguez established sufficient conditions for the existence of hypercyclic subspaces. Specifically, let \(X\) be a (real or complex) separable Banach space and let \(T \in \mathcal{L}(X)\). If there exists an increasing sequence of integers \((k_{n})_{n \in \mathbb{N}}\) such that \(T\) satisfies the Hypercyclicity Criterion with respect to \((k_{n})_{n \in \mathbb{N}}\), and if there is an infinite-dimensional closed subspace \(E \subset X\) such that \(T^{k_{n}}x \to 0\) for every \(x \in E\), then \(T\) admits a hypercyclic subspace.  

In the context of quasi-rigid operators, A. López \cite{Lopez} established sufficient conditions for the existence of recurrent subspaces, which can be stated as follows:

\begin{theorem}[\cite{Lopez}]\label{condi-lo}
	Let $ X $ be a (real or complex) separable Banach space and let $ T \in \mathcal{L}(X) $. Assume there exists an increasing sequence of integers $ (k_n)_{n \in \mathbb{N}} $ such that:
	\begin{enumerate}
		\item[i)] The set $ D := \{x \in X : T^{k_n}x \xrightarrow[n \to \infty]{} x\} $ is dense in $ X $,
		\item[ii)]  There exists a non-increasing sequence $ (E_n)_{n \in \mathbb{N}} $ of infinite-dimensional closed subspaces of $ X $ such that 
		\[
		\sup_{n \in \mathbb{N}} \|T^{k_n}|_{E_n}\| < \infty.
		\]
	\end{enumerate}
	Then $ T $ has a recurrent subspace. In particular, there exists an infinite-dimensional closed subspace $ F $ and a subsequence $ (\ell_n)_{n \in \mathbb{N}} $ of $ (k_n)_{n \in \mathbb{N}} $ such that $ T^{\ell_n}x \to x $ for all $ x \in F $.
\end{theorem}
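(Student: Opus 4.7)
My plan is to construct an infinite-dimensional closed subspace $F \subset X$ as the closed linear span of a basic sequence $(u_j)_{j\geq 1}$ whose vectors are simultaneously close to the nested subspaces $E_{m_j}$ (for indices $m_j$ still to be chosen) and close to the recurrent set $D$; and to extract, along the way, a subsequence $(\ell_n) = (k_{m_n})$ of $(k_n)$ with $T^{\ell_n} x \to x$ for every $x \in F$. The guiding idea: condition (i) furnishes vectors in $D$ that are essentially fixed by some $T^{k_n}$, while the uniform bound $M := \sup_n \|T^{k_n}|_{E_n}\| < \infty$ from (ii) is what controls the action of $T^{\ell_n}$ on a vector that, up to small perturbation, lives inside $E_{m_n}$.

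First I would carry out the inductive step. At stage $j$, having already fixed $u_1,\dots,u_{j-1} \in D$ with $\|u_i\|=1$ forming a basic sequence of constant $\leq 2$, and indices $m_1<\cdots<m_{j-1}$ with $\ell_i := k_{m_i}$, I set $K_{j-1} := 1 + \max_{i<j}\|T^{\ell_i}\|$ and a tolerance $\delta_j := 2^{-j}/K_{j-1}$. I then invoke Mazur's basic-sequence extension principle inside the infinite-dimensional closed subspace $E_{m_{j-1}+1}$, and use the density of $D$ in $X$, to select $u_j \in D$ with $\|u_j\|=1$, $\operatorname{dist}(u_j,E_{m_{j-1}+1}) < \delta_j$, and $(u_1,\dots,u_j)$ still basic of constant $\leq 2$. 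Write $u_j = v_j + w_j$ with $v_j \in E_{m_{j-1}+1}$ and $\|w_j\|<\delta_j$. Since every $u_i$ belongs to $D$ and thus satisfies $T^{k_n}u_i \to u_i$, I can finally pick $m_j > m_{j-1}$ so large that $\|T^{k_{m_j}} u_i - u_i\| < 2^{-i-j}$ for all $i\leq j$, and set $\ell_j := k_{m_j}$.

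For the verification, let $F := \overline{\operatorname{span}}\{u_j:j\geq 1\}$; the basic-sequence property makes $F$ infinite-dimensional and closed, and every $x = \sum_j a_j u_j \in F$ obeys $|a_j|\leq 4\|x\|$ and $\|x-P_n x\|\to 0$, where $P_n x := \sum_{j\leq n} a_j u_j$. I would then split
\[
T^{\ell_n}x - x \;=\; \sum_{j\leq n} a_j\bigl(T^{\ell_n}u_j - u_j\bigr) \;+\; T^{\ell_n}\!\!\left(\sum_{j>n} a_j v_j\right) \;+\; T^{\ell_n}\!\!\left(\sum_{j>n} a_j w_j\right) \;-\; (x - P_n x).
\]
The head vanishes by the choice of $m_j$. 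For the $v$-piece, $j>n$ forces $m_{j-1}+1>m_n$, so $v_j\in E_{m_n}$, whence $y_n := \sum_{j>n} a_j v_j$ lies in the closed subspace $E_{m_n}$ and (ii) yields $\|T^{\ell_n}y_n\|\leq M\|y_n\|$; the identity $y_n = (x-P_n x) - \sum_{j>n}a_j w_j$ together with $\sum_{j>n}|a_j|\delta_j = O(2^{-n})$ forces $\|y_n\|\to 0$. For the $w$-piece, the key point is that the choice $\delta_j \leq 2^{-j}/\|T^{\ell_n}\|$ (valid for every $n<j$ by construction of $K_{j-1}$) yields $\|T^{\ell_n}w_j\| \leq 2^{-j}$, whence a term-by-term estimate again gives $O(2^{-n})$. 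Combining all four pieces, $T^{\ell_n}x \to x$ for every $x\in F$.

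The main difficulty I anticipate is the circular dependence in the inductive step: the vector $u_j$ must simultaneously lie in $D$ (to ensure $T^{\ell_m}u_j$ is eventually near $u_j$), sit within $\delta_j$ of an appropriately deep subspace $E_N$ (to keep the $v$-tail trapped inside $E_{m_n}$ for all $n<j$), and extend the basic sequence without inflating its constant. The coordination is possible only because $\delta_j$ is fixed after $\ell_1,\dots,\ell_{j-1}$ are declared—so that $\|T^{\ell_n}\|\delta_j$ is uniformly tiny for $n<j$—and because Mazur's theorem applied inside the infinite-dimensional $E_{m_{j-1}+1}$, combined with the density of $D$ in $X$, allows a perturbation into $D$ that simultaneously preserves the basis property and the proximity to $E_{m_{j-1}+1}$.
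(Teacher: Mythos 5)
Your proposal is correct and follows essentially the same route as the paper: the paper cites this theorem from L\'opez and reproduces the technique in its generalization (Theorem \ref{gen.sub}), where a Mazur-type normalized basic sequence with $e_n\in E_n$ is perturbed into the dense set $Z$ by vectors $p_{\omega_n}$ with $\|p_{\omega_n}-e_{\omega_n}\|$ small relative to the norms of the already-chosen iterates, and the same head/$E$-part/perturbation-part splitting is used, with the uniform bound $\sup_n\|T^{\theta_n}|_{E_n}\|$ controlling the tail. Your construction is the single-operator ($g=\mathrm{Id}$) specialization of exactly that argument, with the roles of ``start in $E$ and perturb into $D$'' versus ``take $u_j\in D$ close to $E$'' being the same step viewed from the other side.
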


Two key notions related to recurrent and hypercyclic subspaces are the essential spectrum, $\sigma_e(T)$, and the left-essential spectrum, $\sigma_{\ell e}(T)$. More precisely, $\lambda \in \sigma_e(T)$ if and only if $T-\lambda$ is not a Fredholm operator. We say that $S \in \mathcal{L}(X)$ is Fredholm if $\text{Ran}(S)$ is closed, $\text{dim} \,\text{ker}(S) < \infty$, and $\text{codim}\,\text{Ran}(S) < \infty$. On the other hand, $\lambda \in \sigma_{\ell e}(T)$ if and only if $T - \lambda$ is not a left-Fredholm operator. We say that $S \in \mathcal{L}(X)$ is left-Fredholm if $\text{Ran}(S)$ is closed and $\text{dim}\, \text{ker}(S) < \infty$.

The connections between the existence of hypercyclic subspaces and the essential spectrum are reflected in the following result due to M. González, F. León Saavedra and A. Montes Rodríguez

\begin{theorem}[\cite{Gonzales}]\label{subhyper-equi}
	Let $X$ be a separable infinite dimensional complex Banach space, and let $T\in \mathcal{L}(T)$.  Suppose  that  \(T\) satisfies the Hypercyclic criterion. Then  the  following  conditions are  equivalent:
	\begin{enumerate}
		\item $T$ has a Hypercyclic subspace.
		\item There exists an infinite dimensional closed subspace $E\subset X$ and an increasing sequence of integers $(\theta_{n})_{n}$ such that $T^{\theta_{n}}x\xrightarrow[n\rightarrow \infty]{} 0$ for all $x\in E$.
		\item There exists an infinite-dimensional closed subspace $E\subset X$ such that and an increasing sequence of integers $(\theta_{n})_{n}$ such that $\sup_{n}\|T^{\theta_{n}}|_{E}\|<\infty$
		\item the  essential  spectrum  of  \(T\) intersects  the  closed  unit  disk.
	\end{enumerate}
\end{theorem}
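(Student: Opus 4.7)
The plan is to close the cycle $(4) \Rightarrow (3) \Rightarrow (2) \Rightarrow (1) \Rightarrow (4)$, noting that the step $(2) \Rightarrow (3)$ is for free since convergent sequences are bounded. Since $T$ is assumed to satisfy the Hypercyclicity Criterion throughout, the perturbative basic-sequence techniques underlying the Montes-Rodr\'iguez construction are available.

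For $(4) \Rightarrow (3)$, I would pick $\lambda \in \sigma_e(T) \cap \overline{\mathbb{D}}$. Since $T - \lambda$ is not Fredholm, an iterated Riesz-lemma construction yields an infinite-dimensional closed subspace $E \subset X$ on which $\|(T - \lambda)|_E\|$ is as small as desired; a binomial expansion then shows that $T^n|_E$ stays uniformly close to $\lambda^n \mathrm{Id}|_E$, and since $|\lambda| \leq 1$, the norms $\|T^n|_E\|$ remain bounded along $\theta_n = n$.

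For $(3) \Rightarrow (2)$, I would use the Hypercyclicity Criterion to produce a dense set $X_0 \subset X$ and a subsequence of $(\theta_n)$ along which $T^{\theta_n} x \to 0$ for every $x \in X_0$. Using $\sup_n \|T^{\theta_n}|_E\| < \infty$ to keep errors under uniform control, I would inductively choose a basic sequence $(e_j) \subset E$ with each $e_j$ arbitrarily close to an element of $X_0$; a diagonal argument then extracts an infinite-dimensional closed $F \subseteq \overline{\mathrm{span}}\{e_j\}$ on which $T^{\theta_n} x \to 0$. The implication $(2) \Rightarrow (1)$ is the Montes-Rodr\'iguez construction, the hypercyclic analogue of Theorem \ref{condi-lo}: use the HC to generate small vectors whose iterates approximate a countable dense target in $X$, exploit the convergence $T^{\theta_n} x \to 0$ on $E$ to kill the tail contributions, and interleave the choices into a basic sequence whose closed span lies in $\mathrm{HC}(T) \cup \{0\}$.

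Finally, for $(1) \Rightarrow (4)$ I would argue by contrapositive. If $\sigma_e(T) \cap \overline{\mathbb{D}} = \emptyset$, the spectral projection onto $\sigma(T) \cap \overline{\mathbb{D}}$ has finite-dimensional range, and the restriction of $T$ to the complementary invariant subspace is invertible with the spectral radius of its inverse strictly less than one, so $\|T^n x\| \to \infty$ on any infinite-dimensional closed subspace, ruling out the existence of a hypercyclic subspace. The step I expect to be the hardest is $(3) \Rightarrow (2)$: turning mere boundedness of $\|T^{\theta_n}|_E\|$ into the genuine convergence $T^{\theta_n} x \to 0$ on an infinite-dimensional subspace requires balancing basic-sequence perturbations against the HC-induced convergence rates, and is the technical heart of any proof of this theorem.
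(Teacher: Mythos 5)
First, a remark on the benchmark: the paper does not actually prove this theorem --- it is quoted from \cite{Gonzales} --- but it does prove a close analogue, Theorem~\ref{subspaces}, whose proof structure is the natural comparison. There the cycle is $(2)\Rightarrow(1)$ trivially, $(2)\Rightarrow(3)$ by Banach--Steinhaus, $(1)\Rightarrow(4)$ and $(3)\Rightarrow(4)$ via the divergence lemma (Lemma~\ref{diverg}), and all the work goes into $(4)\Rightarrow(2)$. Your $(1)\Rightarrow(4)$ sketch is consistent with that divergence lemma, and your observation that $(2)\Rightarrow(3)$ is free is correct, though you should invoke the uniform boundedness principle explicitly: convergence of $(T^{\theta_n}x)_n$ for each $x\in E$ gives only pointwise boundedness, and $\sup_n\|T^{\theta_n}|_E\|<\infty$ requires Banach--Steinhaus on the Banach space $E$.

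The genuine gap is your $(4)\Rightarrow(3)$. Choosing $\lambda\in\sigma_e(T)\cap\overline{\mathbb{D}}$ and producing, by a Riesz-type iteration, an infinite-dimensional closed subspace $E$ with $\|(T-\lambda)|_E\|<\varepsilon$ does not support the binomial expansion you propose: $E$ is not invariant under $T-\lambda$, so after one application the vector leaves $E$ and the best available bound is $\|(T-\lambda)^k|_E\|\le \|T-\lambda\|^{k-1}\varepsilon$. This controls $\|T^n|_E-\lambda^n\,\mathrm{Id}|_E\|$ for each \emph{fixed} $n$ by shrinking $\varepsilon$, but the resulting estimate grows like $(|\lambda|+\|T-\lambda\|)^n$ and cannot give $\sup_n\|T^n|_E\|<\infty$ for a single $E$ along $\theta_n=n$. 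The standard repair, and the one used in the paper's proof of Theorem~\ref{subspaces}, is the compact-perturbation lemma: there exist an infinite-dimensional closed subspace $E$ and a compact operator $K$ with $(T-K)|_E=\lambda\,\mathrm{Id}|_E$, so that $(T-K)^n$ genuinely preserves $E$ with norm at most $1$ there, and the compact corrections $K_n:=T^n-(T-K)^n$ are tamed on a non-increasing chain of finite-codimensional subspaces $E_n\subset E$. Even then one obtains only $\sup_n\|T^{\theta_n}|_{E_n}\|<\infty$ with \emph{varying} subspaces, not condition (3) with a single $E$; condition (3) is recovered a posteriori from (2). So the cycle should be reorganized as $(4)\Rightarrow(2)\Rightarrow(3)\Rightarrow(4)$, with $(4)\Rightarrow(2)$ carried out by the Montes-Rodr\'iguez/L\'opez basic-sequence construction (the $g_\ell\equiv 0$ case of Theorem~\ref{gen.sub}); your direct $(3)\Rightarrow(2)$, which you rightly flag as the hardest step, is then subsumed into $(3)\Rightarrow(4)\Rightarrow(2)$ and need not be attacked head-on.
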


Analogously, A. López established connections between the existence of recurrent subspaces and the essential spectrum in the setting of quasi-rigid operators.

\begin{theorem}[\cite{Lopez}]\label{subrec-equi}
	Let \(X\) be a separable infinite dimensional complex Banach space and let \(T\in \mathcal{L}(X)\). If \(T\) is quasi-rigid, then
	the following statements are equivalent:
	\begin{enumerate}
		\item \(T\) has a recurrent subspace;
		\item there exists an infinite-dimensional closed subspace \(E\subset X\) and an increasing sequence of integers
		\((\theta_{n})_{n}\) such that \(T^{\theta_{n}}x \xrightarrow[n\rightarrow \infty]{} x\) for all \(x\in E\);
		\item There exists an infinite-dimensional closed subspace $E\subset X$ such that and an increasing sequence of integers $(\theta_{n})_{n}$ such that $\sup_{n}\|T^{\theta_{n}}|_{E}\|<\infty$;
		\item the  essential  spectrum  of  \(T\) intersects  the  closed  unit  disk.
	\end{enumerate}
\end{theorem}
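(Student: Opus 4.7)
My plan is to establish the cycle $(2)\Longrightarrow(1)\Longrightarrow(4)\Longleftrightarrow(3)\Longrightarrow(2)$, invoking the quasi-rigidity hypothesis only in the final arrow. The implication $(2)\Longrightarrow(1)$ is immediate: every vector in the subspace $E$ furnished by (2) is recurrent. For $(1)\Longrightarrow(4)$, I would argue by contradiction. If $\sigma_e(T)\cap\overline{\mathbb{D}}=\emptyset$, then the Riesz spectral projection $P$ onto $\sigma(T)\cap\overline{\mathbb{D}}$ is finite-rank, while on $\ker P$ the operator $T$ has spectral radius strictly larger than one, so every nonzero orbit in $\ker P$ diverges in norm. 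Decomposing the recurrent subspace $F$ along $\mathrm{Ran}(P)\oplus\ker P$ and noting that no recurrent vector can have a nonzero component in $\ker P$, one forces $F\subset\mathrm{Ran}(P)$, contradicting $\dim F=\infty$.

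The spectral equivalence $(3)\Longleftrightarrow(4)$ is purely operator-theoretic and parallels the hypercyclic version stated as Theorem~\ref{subhyper-equi} and proved in \cite{Gonzales}. For $(4)\Longrightarrow(3)$, fix $\lambda\in\sigma_e(T)\cap\overline{\mathbb{D}}$ and construct, via the standard basic-sequence / singular-sequence technique, an infinite-dimensional closed subspace on which $T$ acts approximately as $\lambda\,\mathrm{Id}$, so that iterates remain uniformly bounded along any sequence. For $(3)\Longrightarrow(4)$, the existence of an infinite-dimensional closed subspace with bounded iterates along $(\theta_n)$ obstructs $T-\lambda$ from being left-Fredholm for every $|\lambda|\le 1$, via a Fredholm-index and perturbation argument.

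The heart of the proof, and the main obstacle, is $(3)\Longrightarrow(2)$ under quasi-rigidity, which I would reduce to Theorem~\ref{condi-lo}. This requires producing a single strictly increasing sequence $(k_n)$ and a non-increasing family of infinite-dimensional closed subspaces $(E_n)$ verifying both (i) $\{x:T^{k_n}x\to x\}$ is dense in $X$ and (ii) $\sup_n\|T^{k_n}|_{E_n}\|<\infty$. Quasi-rigidity supplies a sequence $(\eta_m)$ producing dense recurrence, while (3) supplies an a priori unrelated sequence $(\theta_n)$ producing bounded iterates on a fixed infinite-dimensional $E$; the principal technical hurdle is to merge them. The plan is a simultaneous diagonal construction: enumerate a countable dense $\{x_j\}\subset X$, and at stage $n$ select $k_n$ from $(\eta_m)$ with $\|T^{k_n}x_j-x_j\|<1/n$ for $j\le n$, while passing to a finite-codimensional infinite-dimensional closed subspace $E_n\subset E_{n-1}$ of $E$ on which $\|T^{k_n}|_{E_n}\|$ is controlled by a fixed constant; the latter extraction is the delicate point and exploits the essential-spectrum information from (4) together with a basic-sequence limiting argument inside $E$ to ensure that $E_n$ stays infinite-dimensional throughout the inductive construction. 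With $(k_n)$ and $(E_n)$ in hand, Theorem~\ref{condi-lo} then produces an infinite-dimensional closed $F$ and a subsequence $(\ell_n)\subset(k_n)$ with $T^{\ell_n}x\to x$ for all $x\in F$, which is exactly (2), closing the cycle.
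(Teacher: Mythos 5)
Two steps of your cycle have genuine gaps. First, your argument for $(1)\Rightarrow(4)$ does not work as stated: when $\sigma_e(T)\cap\overline{\mathbb{D}}=\emptyset$ there is in general no Riesz projection onto $\sigma(T)\cap\overline{\mathbb{D}}$, because that set need not be relatively clopen in $\sigma(T)$. For instance, $T=2B$ on $\ell^2(\mathbb{N})$ (twice the backward shift) has $\sigma(T)=2\overline{\mathbb{D}}$ and $\sigma_e(T)=2\mathbb{T}$, so $\sigma_e(T)\cap\overline{\mathbb{D}}=\emptyset$ while $\sigma(T)\cap\overline{\mathbb{D}}=\overline{\mathbb{D}}$ is a full-measure piece of the spectrum consisting entirely of eigenvalues; there is no finite-rank spectral projection and no decomposition $\mathrm{Ran}(P)\oplus\ker P$ of the kind you describe. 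The correct statement underlying $(1)\Rightarrow(4)$ and $(3)\Rightarrow(4)$ is Lemma~\ref{diverg}: if $\sigma_{\ell e}(T)\cap\overline{\mathbb{D}}=\emptyset$ then \emph{every} infinite-dimensional closed subspace contains a vector with $\|T^nx\|\to\infty$; its proof goes through left-Fredholm estimates on finite-codimensional subspaces, not through a spectral separation at the unit circle.

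Second, and more importantly, in $(3)\Rightarrow(2)$ you correctly identify the delicate point but do not resolve it. Knowing $\sup_n\|T^{\theta_n}|_E\|<\infty$ along the sequence $(\theta_n)$ coming from (3) gives no control whatsoever over $\|T^{k_n}|_{E'}\|$ for the \emph{unrelated} powers $k_n$ drawn from the quasi-rigidity sequence, on any subspace $E'\subset E$; a ``basic-sequence limiting argument inside $E$'' cannot manufacture that control. The mechanism that actually works (López's, and the one used in this paper's proof of the generalization, Theorem~\ref{subspaces}) is to prove $(4)\Rightarrow(2)$ directly: pick $\lambda\in\sigma_{\ell e}(T)\cap\overline{\mathbb{D}}$, use \cite[Proposition D.3.4]{livro} to get an infinite-dimensional closed subspace $E$ and a compact $K$ with $(T-K)|_E=\lambda\,\mathrm{Id}|_E$, write $T^{k_n}=(T-K)^{k_n}+K_n$ with $K_n$ compact for the \emph{prescribed} quasi-rigidity sequence $(k_n)$, and invoke \cite[Lemma 8.13]{livro} to obtain a non-increasing chain of finite-codimensional (hence infinite-dimensional) subspaces $E_n\subset E$ with $\|K_n|_{E_n}\|\leq 1$, so that $\|T^{k_n}|_{E_n}\|\leq 2$. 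Only then does Theorem~\ref{condi-lo} apply. Routing the argument as $(4)\Rightarrow(2)$ rather than $(3)\Rightarrow(2)$ sidesteps the merging problem entirely, because the compact-perturbation structure yields uniform bounds along an arbitrary sequence, which is exactly what your diagonal construction is missing.
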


López established the preceding result for both complex and real Banach spaces. In the real case, the last condition is reformulated as requiring that the essential spectrum of the complexification of $T$ intersects the closed unit disk. Furthermore, he proved that these equivalences remain valid even when the Banach space $X$ is not separable.

At this stage, we aim to establish sufficient conditions for common spaceability by examining a countable collection of operators in $\Omega(T)$. To this end, let us first reflect on the hypotheses of Theorem \ref{condi-lo}: on the one hand, we require an analogue of condition (i) adapted to a countable family of operators, while on the other hand, we need a condition of type (ii) for such a family. 

Our first requirement is addressed by a recent result on common dense-lineability due to A. Arbieto and the first author \cite{arbieto2025dense}, which we now state.

\begin{proposition}\label{seq}
	Let $ X $ be a separable infinite-dimensional $ F $-space. Let $ T \in \mathcal{L}(X) $, and suppose that $ \Omega(T) $ is non-empty. Consider any countable collection $ \{h_i\}_{i \in \mathbb{N}} \subset \Omega(T) $. Then, for each $ i \in \mathbb{N} $, there exists a strictly increasing sequence of positive integers $ (\theta_{n,i})_n $ such that 
	\[
	\bigcap_{i \in \mathbb{N}} \{x \in X : T^{\theta_{n,i}}x \xrightarrow[n \to \infty]{} h_i(x)\}
	\]
	is a dense subspace of $ X $, 
\end{proposition}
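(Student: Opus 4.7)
The strategy is a Baire category argument in the Polish space $X^{\mathbb{N}}$ equipped with the product topology (completely metrizable since $X$ is an $F$-space, separable since $X$ is). The essential observation, which makes the \emph{subspace} part of the conclusion free, is that for any fixed sequence $(\theta_{n,i})_n$ the set $R_i := \{x \in X : T^{\theta_{n,i}}x \to h_i(x)\}$ is a linear subspace of $X$ by linearity of $T^{\theta_{n,i}}$ and of $h_i$. So $\bigcap_i R_i$ is automatically a subspace, and it only remains to guarantee density.

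For each $i \in \mathbb{N}$ I would consider
\[
W_i := \Big\{ (y_k)_{k \in \mathbb{N}} \in X^{\mathbb{N}} : \exists\, (\theta_n)_n \uparrow \infty \text{ with } T^{\theta_n} y_k \to h_i(y_k)\ \text{for every } k \Big\},
\]
together with $D \subset X^{\mathbb{N}}$ defined as the set of sequences whose range is dense in $X$. Writing
\[
W_i = \bigcap_{n,K \in \mathbb{N}}\ \bigcap_{N \in \mathbb{N}}\ \bigcup_{m \geq N} \Big\{ (y_k) : d(T^m y_k, h_i(y_k)) < \tfrac{1}{n}\ \text{for all } k \leq K \Big\},
\]
where the innermost set only constrains finitely many coordinates and is therefore open in the product topology, displays $W_i$ as a $G_\delta$ set. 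A standard argument with a countable base $\{B_j\}$ of $X$ gives $D = \bigcap_j \bigcup_k \{(y_k) : y_k \in B_j\}$ as a dense $G_\delta$.

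The critical step is the density of each $W_i$. Given a basic open box $V_1 \times \cdots \times V_M \times X^{\mathbb{N}}$, I would invoke Theorem \ref{T-h}: since $h_i \in \Omega(T)$, one has $\bigoplus_{\ell=1}^M h_i \in \Sigma\big(\bigoplus_{\ell=1}^M T\big)$, and hence the set $\mathrm{R}\big(\bigoplus_{\ell=1}^M T,\, \bigoplus_{\ell=1}^M h_i\big)$ is dense in $X^M$. This allows me to choose $(y_1, \ldots, y_M) \in V_1 \times \cdots \times V_M$ together with a sequence $(\theta_n)_n$ such that $T^{\theta_n} y_k \to h_i(y_k)$ simultaneously for all $k \leq M$. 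Setting $y_k := 0$ for $k > M$ produces a point of $W_i$ in the prescribed box, since $T^m \cdot 0 = 0 = h_i(0)$ makes the convergence trivial for $k > M$.

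Finally, Baire category in $X^{\mathbb{N}}$ guarantees that $\bigcap_i W_i \cap D$ is residual, hence nonempty. Pick $(y_k)$ in this intersection. For each fixed $i$, a standard diagonal extraction from the defining condition of $W_i$ yields a strictly increasing sequence $(\theta_{n,i})_n$ of positive integers such that $d(T^{\theta_{n,i}} y_k, h_i(y_k)) < 1/n$ for all $k \leq n$; in particular $T^{\theta_{n,i}} y_k \to h_i(y_k)$ for every $k$. Thus $\{y_k\}_{k \in \mathbb{N}} \subset R_i$ for every $i$, and by linearity $\mathrm{span}\{y_k : k \in \mathbb{N}\} \subset \bigcap_i R_i$; density of $\{y_k\}$ in $X$ then upgrades this span to a dense subspace of $X$. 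The main obstacle is the density of $W_i$, where Theorem \ref{T-h} carries out the real work by upgrading the single assumption $h_i \in \Omega(T)$ to simultaneous approximation of the finite tuple $(h_i, \ldots, h_i)$ along a common subsequence.
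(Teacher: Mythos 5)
Your argument is correct. Each $W_i$ is indeed a dense $G_\delta$ in the Polish space $X^{\mathbb{N}}$: the displayed decomposition is a valid rewriting of the existential quantifier over subsequences (the diagonal extraction recovers a single strictly increasing witness from the countable-intersection form), the innermost sets are open because they constrain only finitely many coordinates, and density of $W_i$ in a basic box follows from the easy implication of Theorem~\ref{T-h} together with the padding $y_k=0$ for $k>M$. Combined with the dense $G_\delta$ set $D$ of sequences with dense range and the Baire property of $X^{\mathbb{N}}$, you obtain a single dense sequence lying in every $R_i$, and linearity of each $R_i$ upgrades this to the stated dense subspace.

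It is worth noting that the paper does not prove Proposition~\ref{seq} at all: it is imported from \cite{arbieto2025dense}. The nearest in-house argument is the proof of the $\mathcal{AP}$-analogue, Proposition~\ref{commom-AP}, which proceeds quite differently: there the residuality of $\bigcap_{\ell}\mathcal{AP}\mathrm{R}(\bigoplus_{j=1}^m T,\bigoplus_{j=1}^m g_\ell)$ in each finite power $X^m$ is fed into Mycielski's theorem to extract a single Cantor-rich set $\mathcal{M}$ with $\mathcal{M}^m$ inside every residual set, after which a countable dense subset of $\mathcal{M}$ and an explicit inductive choice of the arithmetic progressions finish the job. Your route replaces Mycielski's theorem and the hands-on induction by a single Baire category argument in the sequence space $X^{\mathbb{N}}$, at the cost of only needing density (rather than residuality) of the simultaneous-approximation sets in each $X^M$; this makes the proof shorter and self-contained relative to the tools already in the paper. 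The Mycielski approach buys slightly more in other contexts (a perfect, even Mycielski, set of common points, which is what one wants when aiming at algebrability-type or independence statements), but for the dense-subspace conclusion of Proposition~\ref{seq} your argument suffices and is arguably more elementary.
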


\begin{theorem}\label{gen.sub}
	Let $ X $ be a (real or complex) separable infinite-dimensional Banach space, and let $ T \in \mathcal{L}(X) $. Suppose that $ \Omega(T) $ is non-empty, and consider $ \{g_\ell\}_{\ell \in \mathbb{N}} \subset \Omega(T) $. If for each $ \ell \in \mathbb{N} $, there exist strictly increasing sequences of positive integers $ (\theta_{j,\ell})_{j \in \mathbb{N}} $ such that:
	\begin{itemize}
		\item The vector subspace \(\displaystyle{	Z := \bigcap_{\ell\in \mathbb{N}} \{x \in X : T^{\theta_{j,\ell}} x \xrightarrow[j \to \infty]{} g_\ell(x)\}}\) is dense in $ X $.
		
		\item  There exists a non-increasing sequence \((E_n)_{n \in \mathbb{N}}\) of infinite-dimensional closed subspaces of \(X\) such that,
		\[
		\sup_{n\geq 1} \max_{\substack{j,\ell \geq 1 \\ j + \ell = n+1}} \|T^{\theta_{j,\ell}}|_{E_{n}}\| < \infty.
		\]
	\end{itemize}
	Then there exists an infinite-dimensional closed subspace $ M $ and for each $ \ell \in \mathbb{N} $, there exists a subsequence $ (\psi_{j,\ell})_{n \in \mathbb{N}} $ of $ (\theta_{j,\ell})_{j \in \mathbb{N}} $ such that
	\[
	T^{\psi_{j,\ell}}x \xrightarrow[j \to \infty]{} g_\ell(x), \quad \forall x \in M, \; \forall \ell \in \mathbb{N}.
	\]
\end{theorem}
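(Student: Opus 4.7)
The plan is to blend López's basic-sequence construction from the proof of Theorem \ref{condi-lo} with the common dense-lineability provided by Proposition \ref{seq}. Setting $C := \sup_{n \geq 1} \max_{j+\ell = n+1} \|T^{\theta_{j,\ell}}|_{E_n}\|$, the target is a closed infinite-dimensional subspace $M = \overline{\mathrm{span}}(x_k : k \in \mathbb{N})$, where $(x_k)$ will be a normalized basic sequence in $Z$ whose vectors sit arbitrarily close to a sequence of unit vectors $v_k \in E_k$. In parallel, for each $\ell$ I would extract diagonally a subsequence $(\psi_{j,\ell})_j$ of $(\theta_{j,\ell})_j$ producing fast convergence $T^{\psi_{j,\ell}} x_k \to g_\ell(x_k)$.

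For the construction, proceed by induction on $k$. At step $k+1$, after $x_1, \ldots, x_k$ are in place with basic constant kept under $2$, first pick $v_{k+1} \in E_{k+1}$ of unit norm satisfying the Mazur-type separation from the previous coefficient functionals, and then use the density of $Z$ to perturb $v_{k+1}$ into a vector $x_{k+1} \in Z$ with $\|x_{k+1} - v_{k+1}\|$ summably small and, moreover, much smaller than $2^{-k-1}\bigl(1 + \max_{j,\ell \leq k+1}\|T^{\theta_{j,\ell}}\|\bigr)^{-1}$. This extra factor anticipates the effect of the finitely many iterates that will be relevant at this scale. Simultaneously, refine each sequence $(\theta_{j,\ell})_j$ to build $(\psi_{j,\ell})_j$ so that $\|T^{\psi_{i,\ell}} x_k - g_\ell(x_k)\| \leq 2^{-(i+k+\ell)}$ for all indices already selected; this is feasible because finitely many conditions must be met and the convergence is valid along $(\theta_{j,\ell})_j$ for vectors in $Z$.

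For the verification, let $x = \sum_{k} a_k x_k \in M$ with $|a_k| \leq 4\|x\|$ by the basic-sequence property. Given $\ell$ and writing $\psi_{j,\ell} = \theta_{\rho(j,\ell),\ell}$, decompose $x = x^{N} + r^N$ where $N = N(j)$ is chosen with $N+1 \geq \rho(j,\ell) + \ell - 1$ and large enough that $\|r^N\|$ is small. Split further $r^N = \sum_{k > N} a_k v_k + \sum_{k > N} a_k (x_k - v_k)$: the first tail piece lies in $E_{N+1}$ by the non-increasing property of $(E_n)$, so $\|T^{\psi_{j,\ell}}(\sum_{k>N} a_k v_k)\| \leq C\,\|\sum_{k>N} a_k v_k\|$, while the second is controlled by the quantitative smallness of the perturbations $x_k - v_k$ built into the construction. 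The head $T^{\psi_{j,\ell}} x^N - g_\ell(x^N)$ is a finite sum whose norm is bounded by $\sum_{k \leq N} |a_k|\, 2^{-(j+k+\ell)} \to 0$ as $j \to \infty$. Combining these estimates yields $T^{\psi_{j,\ell}} x \to g_\ell(x)$ for each $\ell$, as desired.

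The principal obstacle is reconciling the two competing demands on $(x_k)$: membership in $Z$ is required so that the pointwise limits $g_\ell(x_k)$ are attained along $(\theta_{j,\ell})_j$, while closeness to $E_k$ is required so that the tails $\sum_{k>N} a_k x_k$ can be controlled via the bound $C$. Since $Z \cap E_k$ may be meagre inside $E_k$, these conditions are incompatible in general; the resolution is the split $x_k = v_k + (x_k - v_k)$, with the perturbation chosen after inspecting the finitely many operator norms $\|T^{\theta_{j,\ell}}\|$ relevant at stage $k$. The diagonal indexing $j + \ell = n+1$ in the hypothesis is exactly what makes this bookkeeping feasible, since only finitely many pairs $(j,\ell)$ need to be anticipated before picking each $x_k$.
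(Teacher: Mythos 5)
Your overall strategy (Mazur basic sequence attached to the subspaces $E_n$, perturbation into the dense subspace $Z$, diagonal extraction of the $\psi_{j,\ell}$, and a head/tail estimate) is the same as the paper's, but there is a genuine gap in the bookkeeping of indices, and it occurs at exactly the point where the paper has to work hardest. You index the basis vectors by the same index as the subspaces, $v_k\in E_k$, and in the verification you split $x$ at a cutoff $N$ with $N+1\ge \rho(j,\ell)+\ell-1$, bounding the head by $\sum_{k\le N}|a_k|\,2^{-(j+k+\ell)}$. That head bound requires $\|T^{\psi_{j,\ell}}x_k-g_\ell(x_k)\|\le 2^{-(j+k+\ell)}$ for \emph{all} $k\le N$. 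But in the induction this inequality can only be imposed in one of two ways: when $\psi_{j,\ell}$ is selected (which covers only the finitely many $x_k$ already in place at that stage, say $k\le K$), or when a later $x_k$ is selected (which does \emph{not} work, because membership of $x_k$ in $Z$ gives convergence of $T^{\theta_{j,\ell}}x_k$ along the whole sequence, not smallness at the single, already-fixed time $\psi_{j,\ell}=\theta_{\rho(j,\ell),\ell}$; your anticipatory perturbation bound controls $T^{\psi_{j,\ell}}(x_k-v_k)$, not $T^{\psi_{j,\ell}}x_k-g_\ell(x_k)$). For the intermediate range $K<k<\rho(j,\ell)+\ell-1$ neither estimate is available: $v_k\in E_k$ need not lie in $E_{\rho(j,\ell)+\ell-1}$, so the hypothesis gives no bound on $T^{\psi_{j,\ell}}v_k$, and these terms are uncontrolled. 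One cannot escape by selecting $\psi_{j,\ell}$ ``late enough'' that $K\ge \rho(j,\ell)+\ell-2$: the value $\rho(j,\ell)$ is forced to be large precisely by the convergence requirement on the vectors already present, so this condition is circular.

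The fix is the device the paper uses and your write-up omits: decouple the index of the basis vector from the index of the subspace by passing to a rapidly increasing sequence $(\omega_n)$ and taking $e_{\omega_n}\in E_{\omega_n}$, where $\omega_{i+1}$ is chosen \emph{after} the times $\psi_{j,\ell}=\theta_{\zeta,\ell}$ with $j+\ell=i$ have been fixed, large enough that $\omega_{i+1}>\zeta+\ell$ for all of them, hence $E_{\omega_{i+1}}\subset E_{\zeta+\ell}\subset E_{\zeta+\ell-1}$. With this choice the head/tail split can be made at the stage index $n\le j+\ell$ versus $n>j+\ell$ (where condition (iii) of the induction applies to the head and the hypothesis bound $\sup_n\max_{j+\ell=n+1}\|T^{\theta_{j,\ell}}|_{E_n}\|$ applies to the tail), and the estimate closes. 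Your diagnosis of the ``principal obstacle'' (membership in $Z$ versus proximity to $E_k$) identifies a tension that the perturbation argument already resolves; the real obstacle is that the selected times $\rho(j,\ell)$ outrun the subspace indices of the basis vectors, and only the reindexing by $(\omega_n)$ repairs it.
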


The approach to the proof is based on basic sequence techniques, similar to those used in the proof of Theorem \ref{condi-lo} and in \cite{Montes}. For aspects related to Schauder bases and basic sequences, we refer to the book \cite{Megginson}. Following López’s construction in the proof of Theorem \ref{condi-lo}, we adapt his technique to deal with arbitrary countable collections of operators.

Let $(E_{n})_{n}$ be the collection of subspaces appearing in the second condition of the hypothesis of the previous theorem. By Mazur’s construction \cite[Lemma C.1.1]{livro}, there exists a normalized basic sequence $(e_n)_{n \in \mathbb{N}}$ with $e_n \in E_n$, which constitutes a Schauder basis for $E := \overline{\mathrm{span}}\{e_n : n \in \mathbb{N}\}$. The same property holds for any subsequence of a normalized basic sequence. Denote by $(e_n^*)_{n} \subset E^*$ the corresponding sequence of coordinate functionals on $E$, characterized by $\langle e_m^*, \sum_k \alpha_k e_k \rangle = \alpha_m$ for each $m \in \mathbb{N}$. Moreover, since $(e_n)_{n}$ is normalized, it follows that $\sup_n \|e_n^*\| < \infty$ \cite{Megginson}, and we define $C := 1 + \sup_{m} \|e_m^*\|$.

\begin{proof}[Proof of Theorem \ref{gen.sub}]
We claim that for each $\ell \in \mathbb{N}$, there exists a subsequence $(\psi_{j, \ell})_j$ of $(\theta_{j, \ell})_j$, a strictly increasing sequence of positive integers $(\omega_n)_n$, and a sequence of vectors $(p_{\omega_n}) \subset Z$. Denoting $(q_{\omega_n})_n := (p_{\omega_n} - e_{\omega_n})_n$, these satisfy the following conditions:
\begin{itemize}
\item[i)] \(\Vert{p_{\omega_{n}}-e_{\omega_{n}}\Vert}=\Vert{q_{\omega_{n}}\Vert}<2^{-(n+1)}C^{-1}\) for each \(n\in \mathbb{N}\)
\item[ii)] \(\Vert{T^{\psi_{j,\ell}}(q_{\omega_{n}})\Vert}<2^{-(j+n)}\) for each \(j,\ell\in \mathbb{N}\) and \(n>j+\ell\) 
\item[iii)] \(\Vert{T^{\psi_{j, \ell}}(p_{\omega_{n}})-g_{\ell}(p_{\omega_{n}})\Vert}<2^{-(j+n)}\) for each \(j,\ell\in \mathbb{N}\) and \(1\leq n \leq j+\ell\).
\end{itemize}
The proof of the above statement proceeds by induction. Assume that we have constructed $(\psi_{j,\ell})_{j+\ell\leq i}$, $(\omega_n)_{n=1}^{i}$, and $(p_{\omega_n})_{n=1}^{i}$ satisfying the following conditions:

\begin{itemize}
	\item[(a)] \(\Vert{p_{\omega_{n}}-e_{\omega_{n}}\Vert}=\Vert{q_{\omega_{n}}\Vert}<2^{-(n+1)}C^{-1}\) for each \(n\leq i\),
	\item[(b)] \(\Vert T^{\psi_{j, \ell}}q_{\omega_{n}}\Vert<2^{-(j+n)}\) for each \(2\leq j+\ell<i\), \(i\geq n>j+\ell\),
	\item[(c)] \(\Vert T^{\psi_{j, \ell}}p_{\omega_{n}}-g_{\ell}(p_{\omega_{n}})\Vert<2^{-(j+n)}\) for each \(2\leq j+\ell\leq i\), \(1\leq n\leq j+\ell\) .
\end{itemize}
One can observe that, by the continuity of $ T $, there exists $ \epsilon > 0 $ such that
\begin{align}\label{desi}
	\|T^{\psi_{j, \ell}}y\| < \frac{1}{2^{j+(i+1)}} \quad \text{for each } 2 \leq j+\ell \leq i, \text{ and } y \in X \text{ with } \|y\| < \epsilon.
\end{align}
We proceed to select the next space from the collection $ \{E_n : n > \omega(i)\} $. To this end, consider the set $ \{\psi_{j,\ell} : j + \ell = i\} $, and fix any element from this set, for instance, $ \psi_{j,\ell} $. It is clear that $ \psi_{j,\ell} = \theta_{\zeta, \ell} $ for some $ \zeta \in \mathbb{N} $. We can choose $ \omega_{i+1} > \omega_i $ such that 
\[
\omega_{i+1} > \max\{\zeta + \ell : \psi_{j,\ell} = \theta_{\zeta, \ell}, \, j + \ell = i\},
\]
ensuring the inclusion $ E_{\omega_{i+1}} \subset E_{\zeta + \ell} $. Consequently, for $ j + \ell = i $,
\begin{align*}
	\|T^{\psi_{j,\ell}}|_{E_{\omega_{i+1}}}\| \leq \|T^{\theta_{\zeta,\ell}}|_{E_{\zeta+\ell}}\|\leq 	\sup_{n\geq 1} \max_{j+\ell=n+1}\|T^{\theta_{j,\ell}}|_{E_{n}}\|.
\end{align*}
Now, we choose $ p_{\omega_{i+1}} \in Z $ such that
\begin{align*}
	\|p_{\omega_{i+1}} - e_{\omega_{i+1}}\| < \min\left\{\frac{1}{2^{i+2}C}, \epsilon\right\}.
\end{align*}
This ensures that condition (a) holds for $ i+1 $. Furthermore, from the inequality above and (\ref{desi}), we obtain
\begin{align*}
	\|T^{\psi_{j,\ell}}q_{\omega_{i+1}}\| < \frac{1}{2^{j+(i+1)}} \quad \text{for each } 2 \leq j+\ell < i+1
\end{align*}
thus verifying condition (b) for $ i+1 $.

For each $ \ell \in \{1, \ldots, i\} $, we choose $ \psi_{1,i} \in \{\theta_{n,i} : n \in \mathbb{N}\} $, and for $ 1 \leq \ell < i $, we select $ \psi_{i-\ell+1,\ell} \in \{\theta_{n,\ell} : n \in \mathbb{N}\} $ as follows:
\begin{align*}
	\begin{matrix}
		\ell=1 & \psi_{1,1} & \psi_{2,1} & \cdots & \psi_{i-2,1} & \psi_{i-1,1} & \textcolor{blue}{\psi_{i,1}} \\
		\ell=2 & \psi_{1,2} & \psi_{2,2} & \cdots & \psi_{i-2,2} & \textcolor{blue}{\psi_{i-1,2}} \\
		\vdots & \vdots & \vdots & \ddots & {} & {} \\
		\ell=i-1 & \psi_{1,i-1} & \textcolor{blue}{\psi_{2,i-1}} & {} & {} & {} \\
		\ell=i & \textcolor{blue}{\psi_{1,i}}
	\end{matrix}
\end{align*}
The selection is made such that $ \psi_{i-\ell+1,\ell} > \psi_{i-\ell,\ell} $ and, additionally, the following condition is satisfied:
\begin{align*}
	\|T^{\psi_{i-\ell+1,\ell}}p_{\omega_n} - g_\ell(p_{\omega_n})\| < \frac{1}{2^{(i-\ell+1)+n}}, \quad \text{for}\, 1 \leq n \leq i+1, 1\leq \ell \leq i.
\end{align*}
This completes the proof of the initial claim.

Note that by condition (i), we have
\[
\sum_{n \in \mathbb{N}} \|e^*_{\omega_n}\| \|p_{\omega_n} - e_{\omega_n}\| = \sum_{n \in \mathbb{N}} \|e^*_{\omega_n}\| \|q_{\omega_n}\| \leq \sum_{n \in \mathbb{N}} 2^{-(n+1)} < 1.
\]
Thus, by \cite[Lemma 10.6]{Grosse}, the sequences $(p_{\omega_n})_n$ and $(e_{\omega_n})_n$ are equivalent basic sequences. Now, consider the infinite-dimensional closed subspace $M := \overline{\mathrm{span}}\{p_{\omega_n} : n \in \mathbb{N}\}$ of $X$. We claim that for each $\ell \in \mathbb{N}$:
\[
T^{\psi_{j,\ell}}x\xrightarrow[j\rightarrow \infty]{} g_{\ell}(x), \forall x\in M.
\]
Fix $ \ell \in \mathbb{N} $. Now, consider $ x \in M $. We write $ x = \sum_{n} \beta_n p_{\omega_n} $, where $ (\beta_n)_n \in C_0(\mathbb{N}) $. Recall that $ p_{\omega_n} = e_{\omega_n} + q_{\omega_n} $. It is worth noting that the series $ \sum_n \beta_n e_{\omega_n} $ converges because $ (e_{\omega_n}) $ is a basic sequence equivalent to $ (p_{\omega_n}) $. Moreover, one can observe that, by condition (i), the series $ \sum_n \beta_n q_{\omega_n} $ is absolutely convergent. Thus,
\[
\begin{aligned}
\|T^{\psi_{j,\ell}}x-g_{\ell}(x)\| &= \left\|T^{\psi_{j,\ell}}\left(\sum_{n\leq j+\ell} \beta_{n}p_{\omega_{n}}+\sum_{n>j+\ell} \beta_{n}(e_{\omega_{n}}+q_{\omega_{n}})\right) - g_{\ell}\left(\sum_{n\leq j+\ell}+\sum_{n>j+\ell}\beta_{n}p_{\omega_{n}}\right)\right\| \\
{} &\leq \Vert{\beta\Vert}_{\infty}\left(\sum_{n\leq j+\ell} \Vert{T^{\psi_{j,\ell}}p_{\omega_{n}}-g_{\ell}(p_{\omega_{n}})\Vert}+\sum_{n>j+\ell}\Vert{T^{\psi_{j,\ell}}q_{\omega_{n}}\Vert}\right)\\
{} &{} \hspace{1cm} + \Vert{T^{\psi_{j,\ell}}(\sum_{n>j+\ell}\beta_{n}e_{\omega_{n}})\Vert}+ \Vert{g_{\ell}\Vert\cdot \Vert\sum_{n>j+\ell}\beta_{n}p_{\omega_{n}}\Vert}\\
{} &\leq \Vert{\beta\Vert}_{\infty}\sum_{k>j}\frac{1}{2^{k}}+\Vert{T^{\psi_{j,\ell}}|_{E_{\omega_{j+\ell+1}}}\Vert}\cdot \Vert{\sum_{n>j+\ell}\beta_{n}e_{\omega_{n}}\Vert}+ \Vert{g_{\ell}}\Vert\cdot\Vert\sum_{n>j+\ell}\beta_{n}p_{\omega_{n}}\Vert\\
{} &\leq \frac{1}{2^{j}}\Vert{\beta\Vert}_{\infty}+ 	\sup_{n\geq 1} \max_{\substack{j,\ell \geq 1 \\ j + \ell = n+1}}\|T^{\theta_{j,\ell}}|_{E_{n}}\|\cdot \Vert{\sum_{n>j+\ell}\beta_{n}e_{\omega_{n}}\Vert}+ \Vert{g_{\ell}}\Vert\cdot\Vert\sum_{n>j+\ell}\beta_{n}p_{\omega_{n}}\Vert
\end{aligned}
\]
It is clear that $\|\sum_{n > j + \ell} \beta_n e_{\omega_n}\|$ and $\|\sum_{n > j + \ell} \beta_n p_{\omega_n}\|$ converge to $0$ as $j \to \infty$, since $(e_{\omega_n})$ and $(p_{\omega_n})$ are basic sequences. Therefore, the last term in the previous expression converges to $0$ as $j \to \infty$.
\end{proof}

In order to obtain an analogue of Theorem \ref{gen.sub} for a countable family of operators in $\mathcal{AP}\Omega(T)$, we establish the following result.  

\begin{proposition}\label{commom-AP}
	Let $X$ be a separable infinite-dimensional $F$-space and let $T \in \mathcal{L}(X)$ with $\mathcal{AP}\Omega(T)\neq \emptyset$. For any countable collection $\{g_\ell\}_{\ell \in \mathbb{N}} \subset \mathcal{AP}\Omega(T)$, there exist sets $\mathrm{B}_\ell \in \mathcal{AP}$, one for each $\ell$, such that
	\[
	\bigcap_{\ell \in \mathbb{N}}\Big\{x \in X : \lim_{\substack{n \to \infty \\ n \in \mathrm{B}_\ell}} T^{n}x = g_\ell(x)\Big\}
	\]
	is a dense linear subspace of $X$.
\end{proposition}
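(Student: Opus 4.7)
The plan is to mimic the inductive strategy behind Proposition~\ref{seq} and adapt it to the \(\mathcal{AP}\)-setting. I will construct, simultaneously, a sequence \((y_k)_{k\in\mathbb{N}}\subset X\) and AP-sets \(\mathrm{B}_\ell \in \mathcal{AP}\) (one per \(\ell\)) such that \(T^n y_k \to g_\ell(y_k)\) as \(n\in \mathrm{B}_\ell\) tends to infinity, for every pair \((k,\ell)\). Since each \(T^n\) and each \(g_\ell\) is linear, this convergence will transfer to every finite linear combination of the \(y_k\)'s, and therefore \(\mathrm{span}\{y_k : k\in \mathbb{N}\}\) will be contained in the intersection from the statement. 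Choosing each \(y_k\) close to a prescribed dense sequence will guarantee that this span is dense in \(X\).

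Fix a dense sequence \((w_k)_{k\in\mathbb{N}} \subset X\) in which each element of a countable dense subset appears infinitely often. I will build, inductively on \(m\), nested non-empty open balls \(U_k^{(m)}\) for \(1 \le k \le m\) with \(\overline{U_k^{(m)}} \subset U_k^{(m-1)}\) (for \(k<m\)), \(U_k^{(k)} \subset B(w_k, 2^{-k})\), \(\mathrm{diam}\,U_k^{(m)} < 2^{-m}\), and for each \(\ell \le m\) a finite arithmetic progression \(Q_{\ell, m}\) of length \(m\) whose minimum exceeds every index used before, subject to the commitment
\[
(\star_m)\qquad \|T^n x - g_\ell(x)\| < \tfrac{1}{m} \quad \text{for all } \ell,i \le m,\ x\in \overline{U_i^{(m)}},\ n\in Q_{\ell,m}.
\]
The inductive step first sets \(U_{m+1}^{(m+1)} \subset B(w_{m+1}, 2^{-(m+1)})\) and then, sequentially for \(\ell = 1,\dots, m+1\), uses Theorem~\ref{T-APh} to upgrade \(g_\ell\in \mathcal{AP}\Omega(T)\) to \(\bigoplus_{i=1}^{m+1} g_\ell \in \mathcal{AP}\Sigma(\bigoplus_{i=1}^{m+1} T)\), and applies the open-set characterization of Proposition~\ref{AP} to the current product \(\prod_i U_i\) with target product \(\prod_i B(g_\ell(c_i), \tfrac{1}{2(m+1)})\), where \(c_i\) denotes the center of the current \(U_i\). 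This produces an AP, from which I extract a sub-AP \(Q_{\ell, m+1}\) of length \(m+1\) starting past the previous maxima, together with a witness \((x_i^*)\); continuity of \(T^n\) and of \(g_\ell\) then lets me shrink each \(U_i\) to a smaller ball around \(x_i^*\) on which the bound \(1/(m+1)\) holds for every \(n\in Q_{\ell, m+1}\), while preserving all earlier commitments.

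Once the construction is complete, Cantor's intersection theorem yields \(\{y_k\} := \bigcap_m \overline{U_k^{(m)}}\) with \(\|y_k - w_k\| < 2^{-k}\). Setting \(\mathrm{B}_\ell := \bigcup_{j\ge \ell} Q_{\ell, j}\), the fact that \(|Q_{\ell, j}| = j \to \infty\) gives \(\mathrm{B}_\ell \in \mathcal{AP}\). Moreover, \((\star_j)\) entails \(\|T^n y_k - g_\ell(y_k)\| < 1/j\) whenever \(n\in Q_{\ell, j}\) and \(j \ge \max(k,\ell)\), hence \(T^n y_k \to g_\ell(y_k)\) along \(\mathrm{B}_\ell\). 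Linearity propagates this to \(\mathrm{span}\{y_k : k\in \mathbb{N}\}\), which is dense in \(X\) because \((w_k)\) is dense and \(\|y_k - w_k\| < 2^{-k}\).

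The main obstacle is the joint bookkeeping inside a single stage: the \(m+1\) sub-stages---one per \(g_\ell\)---successively shrink the same open balls on which the next AP must be made to work, while the commitments from all previous stages and sub-stages must be maintained. A minor technical point is that Proposition~\ref{AP} does not prescribe where the AP begins; this is fixed by asking for a long enough AP and extracting a sub-AP of the desired length starting past any prescribed threshold. Once this combinatorial discipline is in place, the argument is a diagonal construction in the spirit of Theorem~\ref{T-h}.
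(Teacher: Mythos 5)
Your proposal is correct in substance, but it takes a genuinely different route from the paper. The paper's proof first observes that, by Theorem~\ref{T-APh} and the fact that each $\mathcal{AP}\mathrm{R}(\cdot,\cdot)$ is a $G_{\delta}$, the sets $\mathcal{R}_{m}=\bigcap_{\ell}\mathcal{AP}\mathrm{R}\bigl(\bigoplus_{j=1}^{m}T,\bigoplus_{j=1}^{m}g_{\ell}\bigr)$ are residual in $X^{m}$; it then invokes Mycielski's theorem to produce a single Mycielski set $\mathcal{M}$ with $\mathcal{M}^{m}\subset\mathcal{R}_{m}$ for every $m$, picks a countable dense subset $\{y_{i}\}\subset\mathcal{M}$, and only afterwards runs a diagonal extraction of the arithmetic progressions $a_{k,\ell}+jr_{k,\ell}$, since each tuple $(y_{1},\dots,y_{k})$ is already known to be an $\mathcal{AP}$-recurrent point of the product system with target $\bigoplus g_{\ell}$. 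You instead manufacture the dense set of good points directly, by a nested-ball construction in the spirit of Theorem~\ref{T-h} and Proposition~\ref{AP}, interleaving the selection of the finite progressions $Q_{\ell,m}$ into the induction and concluding with Cantor's intersection theorem. Both arguments are valid; the paper's version is shorter because residuality plus Mycielski does the heavy lifting of producing points that are simultaneously $\mathcal{AP}$-recurrent in all finite powers, whereas yours is more self-contained (no Mycielski) at the cost of the combinatorial bookkeeping you describe. Two details in your sketch deserve to be made explicit: (a) Proposition~\ref{AP} only gives $T^{a+jr}x_{i}^{*}\in B(g_{\ell}(c_{i}),\tfrac{1}{2(m+1)})$, so to convert this into $\|T^{n}x-g_{\ell}(x)\|<\tfrac{1}{m+1}$ on a neighbourhood of $x_{i}^{*}$ you must also control the oscillation of $g_{\ell}$ on the current ball $U_{i}$ (i.e.\ pre-shrink the balls at the start of stage $m+1$ so that $g_{\ell}(U_{i})$ has diameter less than, say, $\tfrac{1}{4(m+1)}$ for all $\ell\le m+1$ --- possible since only finitely many $g_{\ell}$ are in play at each stage); and (b) the extraction of a sub-progression past a prescribed threshold, which you correctly resolve by requesting a progression of length $M+m+1$ and discarding its first $M$ terms. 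With these points spelled out the argument is complete.
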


To establish the proof of this statement, we first recall Mycielski’s theorem.

\begin{theorem}[Mycielski Theorem \cite{mycielski1964independent, Tan}] \label{Myci}
	Suppose that \( X \) is a separable complete metric space without isolated points, and that for every \( n \in \mathbb{N} \), the set \( \mathcal{R}_{n} \) is residual in the product space \( X^{n} \). Then there is a Mycielski set \( \mathcal{K} \) in \( X \) such that
	\[
	(x_{1}, x_{2}, \ldots, x_{n}) \in \mathcal{R}_{n}
	\]
	for each \( n \in \mathbb{N} \) and any pairwise distinct \( n \) points \( x_{1}, x_{2}, \ldots, x_{n} \) in \( \mathcal{K} \).
\end{theorem}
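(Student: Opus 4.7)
The plan is to prove Mycielski's theorem via the classical Cantor-scheme construction. First, for each $n \in \mathbb{N}$, I write $\mathcal{R}_n = \bigcap_{m \in \mathbb{N}} V_{n,m}$, where each $V_{n,m}$ is open and dense in $X^n$; this is possible since $\mathcal{R}_n$ is residual and $X^n$ is a Baire space. Fix also a countable base $\{U_j\}_{j \in \mathbb{N}}$ of $X$. The desired set $\mathcal{K}$ will be recovered as the set of branches through a carefully built binary tree of closed balls, with the residual conditions encoded along the tree.

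I would construct inductively on $k$ a family $\{B_s\}_{s \in \{0,1\}^{\leq k}}$ of non-empty open subsets of $X$ such that:
\begin{itemize}
\item[(a)] $\overline{B_{s0}} \cup \overline{B_{s1}} \subset B_s$ and $\overline{B_{s0}} \cap \overline{B_{s1}} = \emptyset$ for every $s$ with $|s| < k$;
\item[(b)] $\mathrm{diam}(B_s) < 2^{-|s|}$;
\item[(c)] for every $n, m \leq k$ and every $n$-tuple of pairwise distinct strings $s_1, \dots, s_n \in \{0,1\}^k$, one has $\overline{B_{s_1}} \times \cdots \times \overline{B_{s_n}} \subset V_{n,m}$;
\item[(d)] for every $j \leq k$ there exists $s \in \{0,1\}^k$ with $\overline{B_s} \subset U_j$.
\end{itemize}
The inductive step from level $k$ to level $k+1$ proceeds by first splitting each $B_s$ (with $|s|=k$) into two disjoint open sub-balls of diameter less than $2^{-(k+1)}$, using the absence of isolated points to ensure that $B_s$ contains at least two distinct points. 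Then the resulting $2^{k+1}$ sets are refined sequentially: for each new constraint arising from (c), that is, a tuple $(n, m, (s_1, \dots, s_n))$ with $n, m \leq k+1$, I replace the chosen $B_{s_i}$'s by smaller open sub-balls whose product lies in $V_{n,m}$, which is possible because $V_{n,m}$ is open dense in $X^n$. Only finitely many tuples arise at level $k+1$, so the successive refinement terminates. Condition (d) is enforced by reserving, at the final refinement at level $k+1$, one branch to shrink inside $U_{k+1}$.

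Once the scheme is complete, for each $\omega \in \{0,1\}^{\mathbb{N}}$ the completeness of $X$ together with (b) yields a unique point $x_\omega \in \bigcap_{k} \overline{B_{\omega|_k}}$; I then set $\mathcal{K} := \{x_\omega : \omega \in \{0,1\}^{\mathbb{N}}\}$. Property (a) makes $\omega \mapsto x_\omega$ injective, so $\mathcal{K}$ has the cardinality of the continuum, while (d) ensures that $\mathcal{K}$ is dense in $X$. For any $n$ pairwise distinct points $x_{\omega^{(1)}}, \dots, x_{\omega^{(n)}} \in \mathcal{K}$ and any $m \in \mathbb{N}$, choose $k \geq \max\{n, m\}$ large enough that the restrictions $\omega^{(i)}|_k$ are pairwise distinct; then (c) gives $(x_{\omega^{(1)}}, \dots, x_{\omega^{(n)}}) \in V_{n,m}$, and since $m$ is arbitrary, $(x_{\omega^{(1)}}, \dots, x_{\omega^{(n)}}) \in \mathcal{R}_n$, as required.

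The hard part will be the bookkeeping of the inductive refinement: at each level $k+1$ one must simultaneously process all finitely many conditions arising from (c) while preserving (a), (b), and (d). This is handled by the standard observation that finitely many shrinkings into open dense subsets of $X^n$ still leave non-empty open product neighborhoods at each intermediate step, so the successive replacement of the $B_s$'s by sub-balls never collapses the tree. Everything else then follows automatically from Cantor's intersection theorem and the open-density hypothesis.
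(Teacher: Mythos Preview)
The paper does not prove this theorem; it is quoted as a classical result (with references to Mycielski and Tan) and used as a black box in the proof of Proposition~\ref{commom-AP}. So there is no paper's proof to compare against, and the relevant question is simply whether your argument is correct.

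Your Cantor-scheme approach is the right idea, and conditions (a)--(c) together with the final verification that every $n$-tuple of distinct points of $\mathcal{K}$ lies in each $V_{n,m}$ (hence in $\mathcal{R}_n$) are carried out correctly. The gap is in condition~(d) and the claim that $\mathcal{K}$ is a Mycielski set in the sense used in the paper, namely that $\mathcal{K}\cap U$ contains a Cantor set for \emph{every} nonempty open $U\subset X$; in particular $\mathcal{K}$ must be dense. But your set $\mathcal{K}=\{x_\omega:\omega\in\{0,1\}^{\mathbb{N}}\}$ is the continuous injective image of the compact space $\{0,1\}^{\mathbb{N}}$, hence compact and closed; if $X$ is not itself compact, $\mathcal{K}$ cannot be dense and is therefore not a Mycielski set.

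Concretely, condition~(d) is not achievable with a strictly binary tree: every $B_s$ with $|s|=k$ is contained in $B_{s|_1}$, so all level-$k$ balls lie inside $\overline{B_0}\cup\overline{B_1}$. As soon as some basic open set $U_j$ misses $\overline{B_0}\cup\overline{B_1}$, no descendant ball can be shrunk inside $U_j$, and your ``reserve one branch for $U_{k+1}$'' step fails. The standard repair is to build $\mathcal{K}$ as a \emph{countable union} of Cantor sets: run the scheme simultaneously on a growing family of trees, opening at stage $j$ a fresh tree rooted inside $U_j$, and at each level refine \emph{all} existing trees together so that condition~(c) also holds for tuples mixing points from different trees. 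Equivalently, one uses a single finitely-branching tree whose width increases at each level. Either version yields a dense $F_\sigma$ set that is a Mycielski set in the required sense, and the rest of your argument then goes through unchanged.
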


A set \( \mathcal{K} \) is referred to as a Mycielski set if the intersection of \( \mathcal{K} \) and any nonempty open set \( U \) contains a Cantor set.

\begin{proof}[Proof of Proposition \ref{commom-AP}]
	For each $m \in \mathbb{N}$, Theorem~\ref{T-APh} ensures that
	\[
	\mathcal{R}_{m}:=\bigcap_{\ell \in \mathbb{N}}\mathcal{AP}\text{R}\!\left(\bigoplus_{j=1}^{m}T,\ \bigoplus_{j=1}^{m}g_{\ell}\right)
	\]
	is residual in $X$. By Mycielski’s Theorem, there exists a Mycielski set $\mathcal{M}\subset X$ with $\mathcal{M}^{m}\subset \mathcal{R}_{m}$. Since $X$ is separable, we may choose a countable dense set $\{y_{i}\}_{i\in\mathbb{N}}\subset \mathcal{M}$.
	
	Proceeding inductively, for each $\ell\in\mathbb{N}$ we construct two sequences of positive integers $(a_{n,\ell})_{n}$ and $(r_{n,\ell})_{n}$ such that:
	\begin{itemize}
		\item[(i)] $a_{k+1,\ell}>a_{k,\ell}+kr_{k,\ell}$ for every $k,\ell\in\mathbb{N}$,
		\item[(ii)] $d\!\left(T^{a_{k,\ell}+jr_{k,\ell}}y_{i},\, g_{\ell}y_{i}\right)<1/k$ for $1\leq i,\ell\leq k$ and $0\leq j\leq k$.
	\end{itemize}
	
	Fix $\ell\in\mathbb{N}$ and define
	\[
	\mathrm{B}_{\ell}:=\{\,a_{k,\ell}+jr_{k,\ell}:\ k\in\mathbb{N},\ 0\leq j\leq k\,\}\in\mathcal{AP}.
	\]
	By construction,
	\[
	\{y_{i}: i\in\mathbb{N}\}\subset \bigcap_{\ell\in\mathbb{N}}\Big\{x\in X:\ \lim_{\substack{n\to\infty\\ n\in \mathrm{B}_{\ell}}}T^{n}x=g_{\ell}(x)\Big\}.
	\]
	Since $\{y_{i}\}$ is dense in $X$, the intersection on the right is a dense linear subspace of $X$.
\end{proof}

The following result provides sufficient conditions for common spaceability when dealing with a countable collection of operators in $\mathcal{AP}\Omega(T)$. Proposition \ref{commom-AP} allows us to establish an analogue of the first condition in Theorem \ref{gen.sub}. The proof proceeds along the same lines as that of Theorem \ref{gen.sub}, and will therefore be omitted.

\begin{theorem}\label{condi-AP}
	Let $ X $ be a (real or complex) separable infinite-dimensional Banach space, and let $ T \in \mathcal{L}(X) $. Suppose that $ \Omega(T) $ is non-empty, and consider $ \{g_\ell\}_{\ell \in \mathbb{N}} \subset \mathcal{AP}\Omega(T) $. If for each $ \ell \in \mathbb{N} $, there exist \(\text{B}_{\ell}:= \{a_{k,\ell}+jr_{k,\ell}: k\in \mathbb{N}, 0\leq j\leq k\}\in \mathcal{AP}\) such that:
	\begin{itemize}
		\item The vector subspace \(\displaystyle{	Z := \bigcap_{\ell\in \mathbb{N}} \{x \in X : \lim_{\substack{n \to \infty \\ n \in \text{B}_{\ell}}}T^{n} x =g_\ell(x)\}}\) is dense in $ X $.
		
		\item  There exists a non-increasing sequence \((E_n)_{n \in \mathbb{N}}\) of infinite-dimensional closed subspaces of \(X\) such that,
		\[
		\sup_{n\geq 1} \max_{\substack{k,\ell \geq 1 \\ 0\leq j\leq k\\ k + \ell = n+1}} \|T^{a_{k,\ell}+jr_{k,\ell}}|_{E_{n}}\| < \infty.
		\]
	\end{itemize}
	Then there exists an infinite-dimensional closed subspace $ M $ and for each $ \ell \in \mathbb{N} $, there exists a subsequence $\text{D}_{\ell}\in \mathcal{AP} $ of $\text{B}_{\ell} $ such that
	\[
	\lim_{\substack{n \to \infty \\ n \in \text{D}_{\ell}}}T^{n}x = g_\ell(x), \quad \forall x \in M, \; \forall \ell \in \mathbb{N}.
	\]
\end{theorem}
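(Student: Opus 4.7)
The plan is to mirror the inductive basic-sequence construction used in the proof of Theorem \ref{gen.sub}, substituting the arithmetic-progression structure of the sets $\text{B}_{\ell}$ for the subsequences $(\theta_{j,\ell})_{j}$ that appeared there. First, applying Mazur's Lemma to the non-increasing family $(E_{n})_{n}$, I would extract a normalized basic sequence $(e_{n})_{n}$ with $e_{n}\in E_{n}$, denote its coordinate functionals by $(e_{n}^{\ast})_{n}$, and set $C:=1+\sup_{m}\|e_{m}^{\ast}\|$.

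Next, I would carry out a joint induction on a ``level'' parameter $i$ that simultaneously produces a strictly increasing sequence $(\omega_{n})_{n}$ of positive integers, an increasing sequence of indices $(k^{(\ell)}_{m})_{m}$ extracted from the enumeration of $\text{B}_{\ell}$ for every $\ell\in\mathbb{N}$, and vectors $p_{\omega_{n}}\in Z$. Setting $q_{\omega_{n}}:=p_{\omega_{n}}-e_{\omega_{n}}$, the three inductive conditions would be: (a) $\|q_{\omega_{n}}\|<2^{-(n+1)}C^{-1}$; (b) for every previously handled triple $(m,\ell,j)$ with $m+\ell<n$ and $0\leq j\leq k^{(\ell)}_{m}$, the bound $\|T^{a_{k^{(\ell)}_{m},\ell}+jr_{k^{(\ell)}_{m},\ell}}q_{\omega_{n}}\|<2^{-(m+\ell+n)}$; and (c) for blocks with $m+\ell\geq n$, the approximation $\|T^{a_{k^{(\ell)}_{m},\ell}+jr_{k^{(\ell)}_{m},\ell}}p_{\omega_{n}}-g_{\ell}(p_{\omega_{n}})\|<2^{-(m+\ell+n)}$. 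At step $i+1$ one picks $\omega_{i+1}$ so that $E_{\omega_{i+1}}\subset E_{i+1}$; the second hypothesis then yields a uniform operator-norm bound on $T^{a_{k,\ell}+jr_{k,\ell}}|_{E_{\omega_{i+1}}}$ for every $k+\ell=i+2$ and $0\leq j\leq k$. Continuity of $T$ allows one to select $p_{\omega_{i+1}}\in Z$ close enough to $e_{\omega_{i+1}}$ to secure (a) and (b), and the membership $p_{\omega_{n}}\in Z$ enables, for every $\ell\leq i+1$, the choice of an index $k^{(\ell)}_{i+2-\ell}$ larger than all previously chosen ones, for which the entire block $\{a_{k^{(\ell)}_{i+2-\ell},\ell}+jr_{k^{(\ell)}_{i+2-\ell},\ell}:0\leq j\leq k^{(\ell)}_{i+2-\ell}\}$ fulfils (c) at every already produced $p_{\omega_{n}}$.

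After completing the induction, I would define $\text{D}_{\ell}:=\{a_{k^{(\ell)}_{m},\ell}+jr_{k^{(\ell)}_{m},\ell}:m\in\mathbb{N},\,0\leq j\leq k^{(\ell)}_{m}\}$, which belongs to $\mathcal{AP}$ because $k^{(\ell)}_{m}\to\infty$. Condition (a) together with $\sum_{n}\|e_{\omega_{n}}^{\ast}\|\,\|q_{\omega_{n}}\|<1$ and \cite[Lemma 10.6]{Grosse} ensure that $(p_{\omega_{n}})_{n}$ is a basic sequence equivalent to $(e_{\omega_{n}})_{n}$, so $M:=\overline{\mathrm{span}}\{p_{\omega_{n}}:n\in\mathbb{N}\}$ is an infinite-dimensional closed subspace of $X$. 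For any $x=\sum_{n}\beta_{n}p_{\omega_{n}}\in M$ with $(\beta_{n})\in c_{0}(\mathbb{N})$ and any $s=a_{k^{(\ell)}_{m},\ell}+jr_{k^{(\ell)}_{m},\ell}\in\text{D}_{\ell}$, splitting the series at $n=m+\ell$ and estimating the head via (c), the tail-$q$ contribution via (b), and the tail-$e$ contribution via the uniform bound of the second hypothesis, with a residual $\|g_{\ell}\|\cdot\|\sum_{n>m+\ell}\beta_{n}p_{\omega_{n}}\|$ that vanishes as $m\to\infty$, yields $T^{s}x\to g_{\ell}(x)$ as $m\to\infty$, uniformly in $j$. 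The main obstacle will be the double bookkeeping: coordinating the selections of $\omega_{i+1}$ and of the new blocks $k^{(\ell)}_{i+2-\ell}$ across all $\ell\leq i+1$ in such a way that each $\text{D}_{\ell}$ retains arbitrarily long progressions while continuity of $T^{s}$ at the finite set of previously chosen $p_{\omega_{n}}$ still delivers the approximations required in (c).
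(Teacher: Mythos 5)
Your proposal is correct and follows essentially the same route as the paper, which in fact omits the proof of Theorem~\ref{condi-AP} with the remark that it ``proceeds along the same lines as that of Theorem~\ref{gen.sub}''; your adaptation (Mazur's basic sequence, the three inductive conditions with whole arithmetic blocks in place of single exponents, and the sets $\text{D}_{\ell}$ built from blocks of length $k^{(\ell)}_{m}+1\to\infty$) is precisely that argument. The only point to tighten is the choice of $\omega_{i+1}$: requiring $E_{\omega_{i+1}}\subset E_{i+1}$ is not enough, since the hypothesis bounds $T^{a_{k,\ell}+jr_{k,\ell}}$ only on $E_{k+\ell-1}$ and your selected block indices $k^{(\ell)}_{m}$ may greatly exceed $m$; as in the proof of Theorem~\ref{gen.sub}, one must take $\omega_{i+1}$ larger than $\max\{k^{(\ell)}_{m}+\ell : m+\ell=i+1\}$ so that the uniform bound applies on $E_{\omega_{m+\ell+1}}$ in the final tail estimate.
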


In Theorems \ref{subhyper-equi} and \ref{subrec-equi}, characterizations of the existence of hypercyclic and recurrent subspaces are given in terms of the essential spectrum. In the result below, Theorem \ref{subspaces}, we remain in the separable setting, as this assumption is required for the use of Theorem \ref{seq}, which plays a key role in establishing common spaceability. Another point to note is that the characterization is formulated via the left essential spectrum rather than the essential spectrum. Nevertheless, for hypercyclic and recurrent operators these two spectra coincide; see \cite{Gonzales,Lopez}.

Theorem \ref{subspaces} is stated for the case where $X$ is a separable infinite-dimensional complex Banach space. The corresponding result for real Banach spaces will be given later in Theorem \ref{subspaces-real}. To establish the complex case, we first present the following auxiliary result, which will be used in its proof.

\begin{lemma}[\cite{Gonzales, Lopez}]\label{diverg}
	Let $X$ be a separable infinite-dimensional complex Banach space, and let $T \in \mathcal{L}(X)$. Suppose that
	\[
	\sigma_{\ell e}(T) \cap \overline{\mathbb{D}} = \emptyset.
	\]
	Then every infinite-dimensional closed subspace $Z \subset X$ contains a vector $x \in Z$ such that
	\[
	\lim_{n \to \infty} \|T^n x\| = \infty.
	\]
\end{lemma}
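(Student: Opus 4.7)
The plan is to decompose $X = F \oplus Y$ with $F$ a finite-dimensional $T$-invariant subspace, $Y$ closed and $T$-invariant, and $\sigma(T|_Y) \cap \overline{\mathbb{D}} = \emptyset$. Once this is secured, $T|_Y$ is invertible with $\|(T|_Y)^{-n}\| \to 0$, forcing $\|T^n y\| \to \infty$ for every nonzero $y \in Y$. Since $Z$ is infinite-dimensional and $F$ has finite dimension, $Z \cap Y$ has codimension at most $\dim F$ in $Z$ and is therefore still infinite-dimensional; any nonzero vector in $Z \cap Y$ will then satisfy the conclusion.

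First I would perform a compactness upgrade: the set of $\lambda \in \mathbb{C}$ for which $T - \lambda$ is left-Fredholm is open in $\mathbb{C}$ and, by hypothesis, contains the compact set $\overline{\mathbb{D}}$, so there exists $r > 1$ with $\sigma_{\ell e}(T) \cap r\overline{\mathbb{D}} = \emptyset$. In particular, $\sigma(T) \cap r\overline{\mathbb{D}}$ is a clopen piece of $\sigma(T)$ separated from $\sigma(T) \setminus r\overline{\mathbb{D}}$ by the circle $|z| = r$. I would then define the Riesz spectral projection
\[
P := \frac{1}{2\pi i}\oint_{|z|=r}(zI - T)^{-1}\,dz,
\]
and set $F := \mathrm{Ran}\,P$, $Y := \ker P$. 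Both subspaces are closed and $T$-invariant, and the holomorphic functional calculus yields $\sigma(T|_F) \subset r\overline{\mathbb{D}}$ and $\sigma(T|_Y) \subset \{|z| > r\}$.

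The main obstacle will be to verify that $F$ is finite-dimensional under only the left-Fredholm hypothesis (the analogous step is immediate under full Fredholmness). Each spectral value $\lambda \in \sigma(T) \cap r\overline{\mathbb{D}}$ is isolated in $\sigma(T)$, and $T - \lambda$ is left-Fredholm there; I would combine these two facts via a Kato-type argument to show that $T - \lambda$ has finite ascent and its generalized eigenspace $\bigcup_k \ker(T - \lambda)^k$ is finite-dimensional — the closed range plus finite-dimensional kernel condition propagates along powers, and isolation forces the kernel sequence to stabilize. Since $\sigma(T) \cap r\overline{\mathbb{D}}$ is compact and discrete it is finite, and $F$ emerges as a finite direct sum of finite-dimensional generalized eigenspaces.

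Once the decomposition is in hand, the conclusion follows quickly: $\sigma(T|_Y) \subset \{|z| > r\}$ together with the spectral radius formula gives $\|(T|_Y)^{-n}\|^{1/n} \to 1/r < 1$, so for every nonzero $y \in Y$ one has $\|T^n y\| \geq \|(T|_Y)^{-n}\|^{-1}\|y\| \to \infty$. Picking any nonzero $x \in Z \cap Y$ then produces the required vector.
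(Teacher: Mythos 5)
The paper does not prove this lemma; it is quoted directly from \cite{Gonzales, Lopez}, so your proposal must stand on its own. It does not: there is a fatal gap at the very first structural step. From $\sigma_{\ell e}(T)\cap r\overline{\mathbb{D}}=\emptyset$ you conclude that $\sigma(T)\cap r\overline{\mathbb{D}}$ is a clopen piece of $\sigma(T)$ separated from the rest by the circle $|z|=r$, and later that the spectrum inside the disk consists of isolated points with finite-dimensional generalized eigenspaces. Neither assertion follows. A point $\lambda$ at which $T-\lambda$ is left-Fredholm but has nonzero (possibly $-\infty$) index lies in $\sigma(T)$, and the index is locally constant on the left-Fredholm domain, so an entire open set of such $\lambda$ can sit inside $\sigma(T)$: none of them is isolated, none is an eigenvalue, and the contour $|z|=r$ may pass straight through the spectrum, so the Riesz projection $P$ is not even defined. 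Concretely, take $T=2S$ on $\ell^{2}(\mathbb{N})$ with $S$ the unilateral forward shift. For $|\lambda|<2$ the operator $T-\lambda$ is bounded below with range of codimension one, so $\sigma_{\ell e}(T)=\{|z|=2\}$ is disjoint from $\overline{\mathbb{D}}$; yet $\sigma(T)=2\overline{\mathbb{D}}$ contains every circle $|z|=r$ with $1<r<2$, and $T-\lambda$ has no eigenvectors at all in that disk, so your subspace $F$ would be $\{0\}$ and $Y=X$ would still have spectrum meeting $\overline{\mathbb{D}}$. (The lemma is of course true in this example, but for reasons your decomposition cannot see: $\|T^{n}x\|=2^{n}\|x\|$.) Note also that your parenthetical remark is off: even full Fredholmness does not rescue the argument unless the index is zero.

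The hypothesis must be exploited through lower bounds rather than through a spectral decomposition. The route taken in \cite{Gonzales} (and adapted in \cite{Lopez}) is to show that left-Fredholmness of $T-\lambda$ for all $|\lambda|\le r$ with some $r>1$ produces a finite-codimensional closed subspace $X_{0}\subset X$ and a constant $C>0$ such that $\|T^{n}x\|\ge C r^{n}\|x\|$ for all $x\in X_{0}$ and all $n$; this rests on the fact that a left-Fredholm operator is bounded below on a finite-codimensional subspace, propagated to powers at the expected geometric rate via an essential-minimum-modulus (Calkin-algebra type) argument. Since $X_{0}$ has finite codimension, $Z\cap X_{0}\neq\{0\}$ for every infinite-dimensional closed subspace $Z$, and any nonzero $x\in Z\cap X_{0}$ satisfies $\|T^{n}x\|\to\infty$. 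Your final paragraph --- intersecting $Z$ with a finite-codimensional subspace and invoking a geometric lower bound --- is the right endgame, but the subspace on which the growth holds has to come from this lower-bound mechanism, not from a Riesz projection.
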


\begin{theorem}\label{subspaces}
	Let $X$ be a separable infinite-dimensional complex Banach space, and let $T \in \mathcal{L}(X)$. Suppose that $\Omega(T)$ is non-empty. The following assertions are equivalent:
	\begin{enumerate}
		\item There exists $g \in \mathcal{L}(X)$ such that $\text{R}(T,g)$ is spaceable.
		\item For any countable collection $\{g_\ell\}_{\ell\in\mathbb{N}} \subset \Omega(T)$, there exist strictly increasing sequences of positive integers $(\psi_{n,\ell})_n$ and an infinite-dimensional closed subspace $M \subset X$ such that 
		\[
		T^{\psi_{n,\ell}}x \xrightarrow[n \to \infty]{} g_\ell(x), \quad \forall x \in M, \ \forall \ell\in \mathbb{N}.
		\]	
		\item There exists a strictly increasing sequence of positive integers $(\theta_n)_n$ and an infinite-dimensional closed subspace $E \subset X$ such that $\sup_{n} \|T^{\theta_n}|_E\| < \infty$.
		\item The left essential spectrum of $T$ intersects the closed unit disk.
	\end{enumerate}
\end{theorem}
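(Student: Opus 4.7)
My plan is to establish the equivalences via the cycle $(2)\Rightarrow(1)\Rightarrow(4)\Rightarrow(3)\Rightarrow(2)$. The implication $(2)\Rightarrow(1)$ is immediate: fix any $g_1\in\Omega(T)$ and apply $(2)$ to the singleton collection $\{g_1\}$, obtaining an infinite-dimensional closed subspace contained in $\text{R}(T,g_1)\cup\{0\}$.

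For $(1)\Rightarrow(4)$ I would argue by contradiction. Suppose $\sigma_{\ell e}(T)\cap\overline{\mathbb{D}}=\emptyset$ and let $Z\subset\text{R}(T,g)\cup\{0\}$ be the infinite-dimensional closed subspace supplied by $(1)$. Lemma \ref{diverg} produces a nonzero $x\in Z$ with $\|T^nx\|\to\infty$. On the other hand, $x\in\text{R}(T,g)\setminus\{0\}$ furnishes a strictly increasing sequence $(\theta_n)$ with $T^{\theta_n}x\to g(x)$, so $(\|T^{\theta_n}x\|)$ is bounded, contradicting $\|T^nx\|\to\infty$. The implication $(4)\Rightarrow(3)$ is purely spectral and requires no dynamical input from $\Omega(T)$: it is embedded in the classical equivalences of Theorems \ref{subhyper-equi} and \ref{subrec-equi}, and the construction of \cite{Gonzales,Lopez} in fact produces a \emph{nested} family of infinite-dimensional closed subspaces compatible with a single sequence $(\theta_n)$, so that one may take a fixed $E$ to witness $(3)$.

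The substantive implication is $(3)\Rightarrow(2)$. Fix a pair $(E,(\theta_n))$ satisfying $(3)$ and let $\{g_\ell\}_{\ell\in\mathbb{N}}\subset\Omega(T)$ be any countable collection. My strategy is to verify the two hypotheses of Theorem \ref{gen.sub}. Proposition \ref{seq} supplies sequences $(\theta_{n,\ell})_n$ for each $\ell$ such that $Z:=\bigcap_\ell\{x:T^{\theta_{n,\ell}}x\to g_\ell(x)\}$ is a dense linear subspace of $X$, giving the first hypothesis. For the second, the cleanest route is to arrange that every $(\theta_{n,\ell})$ is a subsequence of the fixed $(\theta_n)$; then the choice $E_n:=E$ for all $n$ yields
$$\sup_{n\geq 1}\max_{j+\ell=n+1}\|T^{\theta_{j,\ell}}|_{E_n}\|\;\leq\;\sup_m\|T^{\theta_m}|_E\|\;<\;\infty,$$
and Theorem \ref{gen.sub} immediately delivers the required infinite-dimensional closed subspace $M$ together with the witnessing subsequences $(\psi_{n,\ell})_n$.

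The main obstacle is precisely this alignment step, since Proposition \ref{seq} as quoted does not \emph{a priori} produce subsequences of a prescribed sequence $(\theta_n)$. I would overcome it by revisiting the inductive construction behind Proposition \ref{seq} and running the choice of approximation times inside the set $\{\theta_n:n\in\mathbb{N}\}$: at each stage, the hypothesis $g_\ell\in\Omega(T)$ combined with Proposition \ref{open-sigma} provides infinitely many approximation times on every non-empty open set, and the $\mathrm{SOT}$-closedness of $\Omega(T)$ (Proposition \ref{Omega0-closed}) together with the density structure inherited from $E$ and $(\theta_n)$ should leave enough freedom within $\{\theta_n\}_n$ for the induction to go through. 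A more spectral alternative would bypass the alignment by using $(4)$ directly to construct, for each $n$, a shrinking $E_n\subset E_{n-1}$ on which the \emph{finitely many} operators $T^{\theta_{j,\ell}}$ with $j+\ell=n+1$ are simultaneously of bounded norm, exploiting a point of $\sigma_{\ell e}(T)\cap\overline{\mathbb{D}}$.
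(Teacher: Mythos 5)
Your skeleton agrees with the paper's at the easy ends ($(2)\Rightarrow(1)$ immediate; $(1)\Rightarrow(4)$ via Lemma \ref{diverg}; Proposition \ref{seq} plus Theorem \ref{gen.sub} for the hard direction), but the two pivotal implications of your cycle are not justified. First, $(4)\Rightarrow(3)$ is not ``purely spectral'': Theorems \ref{subhyper-equi} and \ref{subrec-equi} establish that equivalence only under the Hypercyclicity Criterion, respectively quasi-rigidity, and their proofs of that direction pass through the dynamical construction. What the spectral datum $\lambda\in\sigma_{\ell e}(T)\cap\overline{\mathbb{D}}$ yields directly (via a compact $K$ with $(T-K)|_E=\lambda\,\mathrm{Id}|_E$ and Lemma 8.13 of \cite{livro}) is a \emph{non-increasing} family $(E_n)$ with $\sup_n\|T^{\theta_n}|_{E_n}\|<\infty$; a nested family is not a fixed subspace, and passing to a single $E$ is precisely what requires the basic-sequence gluing against a dense set of well-behaved vectors, i.e.\ the content of Theorem \ref{gen.sub}. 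In the paper, $(4)\Rightarrow(3)$ is obtained as $(4)\Rightarrow(2)\Rightarrow(3)$, the last step being Banach--Steinhaus.

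Second, and more seriously, your main route for $(3)\Rightarrow(2)$ hinges on forcing the approximation times $(\theta_{j,\ell})$ produced by Proposition \ref{seq} to be subsequences of the prescribed sequence $(\theta_n)$ from $(3)$. There is no reason this can be done: $(\theta_n)$ is merely a sequence along which $\|T^{\theta_n}|_E\|$ is bounded, and it need not meet the set of hitting times $\{m : U\cap T^{-m}V\neq\emptyset\}$ that Proposition \ref{open-sigma} supplies for a given $g_\ell$ (think of an approximation sequence supported on $\{2^n\}$ while $(\theta_n)=(3^n)$); the $\mathrm{SOT}$-closedness of $\Omega(T)$ gives no leverage here. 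Your one-sentence ``spectral alternative'' is in fact the paper's actual argument: take $\lambda\in\sigma_{\ell e}(T)\cap\overline{\mathbb{D}}$, find $E$ and compact $K$ with $(T-K)|_E=\lambda\,\mathrm{Id}|_E$, write $T^{\theta_{j,\ell}}=(T-K)^{\theta_{j,\ell}}+K_{j,\ell}$ so that the first summand has norm at most $1$ on $E$ for \emph{every} exponent (no alignment needed), and use Lemma 8.13 of \cite{livro} to pass to finite-codimensional subspaces $E_n\subset E$ on which the countably many compacts $K_{j,\ell}$ with $j+\ell=n+1$ have norm at most $1$; this verifies the second hypothesis of Theorem \ref{gen.sub} with the uniform bound $2$. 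You should promote that alternative to the main argument, proving $(4)\Rightarrow(2)$ rather than $(3)\Rightarrow(2)$, and close the equivalences with $(2)\Rightarrow(3)$ (Banach--Steinhaus) and $(3)\Rightarrow(4)$ (Lemma \ref{diverg}).
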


\begin{proof}
	The implication (2) $\Rightarrow$ (1) is immediate, and (2) $\Rightarrow$ (3) follows from the Banach–Steinhaus Theorem. The implications (1) $\Rightarrow$ (4) and (3) $\Rightarrow$ (4) are consequences of Lemma \ref{diverg}. It remains to show that (4) implies (2).
	
	Assume that there exists $\lambda \in \sigma_{\ell e}(T) \cap \overline{\mathbb{D}}$; equivalently, $T-\lambda$ is not a left-Fredholm operator. By \cite[Proposition D.3.4]{livro}, there exist an infinite-dimensional closed subspace $E$ and a compact operator $K \in \mathcal{L}(X)$ such that $(T-K)|_E = \lambda \mathrm{Id}|_E$. In particular, $\|(T-K)|_E\|\leq 1$.
	
	Let $\{g_\ell\}_{\ell \in \mathbb{N}} \subset \Omega(T)$ be any fixed countable family. By Proposition \ref{seq}, for each $\ell \in \mathbb{N}$ there exists a strictly increasing sequence of positive integers $(\theta_{n,\ell})_n$ such that 
	\[
	Z := \bigcap_{\ell \in \mathbb{N}} \{x \in X : T^{\theta_{n,\ell}}x \xrightarrow[n \to \infty]{} g_\ell(x)\}
	\]
	is a dense subspace of $X$.
	
	For each $\theta_{n,\ell}$ we can write $T^{\theta_{n,\ell}} = (T-K)^{\theta_{n,\ell}} + K_{n,\ell}$, where $K_{n,\ell}$ is compact. Consider the sequence $\{A_m\}_{m \in \mathbb{N}}$ of compact operators on $X$ arranged as
	\[
	\begin{array}{cccccccc}
		\overbrace{K_{1,1}}^{j+\ell=2} & ; & 
		\overbrace{K_{1,2}, K_{2,1}}^{j+\ell=3} & ; & 
		\overbrace{K_{1,3}, K_{2,2}, K_{3,1}}^{j+\ell=4} & ; & \cdots & 
	\end{array}
	\]
	According to \cite[Lemma 8.13]{livro}, there exists a non-increasing sequence $\{F_m\}_{m \in \mathbb{N}}$ of finite-codimensional closed subspaces of $E$ such that $\|A_m|_{F_m}\| \leq 1$. Clearly, each $F_m$ is infinite-dimensional. Define $E_n := F_{n^{2}}$, so that
	\[
	\|K_{j,\ell}|_{E_{j+\ell-1}}\| \leq 1, \quad \forall j,\ell \in \mathbb{N}.
	\]
	
	Fix any $n$. Then, for $j+\ell=n+1$,
	\begin{align*}
		\|T^{\theta_{j,\ell}}|_{E_n}\| 
		&= \|(T-K)^{\theta_{j,\ell}}|_{E_n} + K_{j,\ell}|_{E_n}\| \\
		&\leq \|(T-K)^{\theta_{j,\ell}}|_{E_n}\| + \|K_{j,\ell}|_{E_n}\| \\
		&\leq 2.
	\end{align*}
	Since $n$ is arbitrary, it follows that
	\[
	\sup_{n\geq 1} \max_{\substack{j,\ell \geq 1 \\ j + \ell = n+1}} \|T^{\theta_{j,\ell}}|_{E_{n}}\| \leq 2.
	\]
	
	Applying Theorem \ref{gen.sub}, we deduce that for each $\ell \in \mathbb{N}$ there exists a subsequence $(\psi_{n,\ell})_n$ of $(\theta_{n,\ell})_n$ and an infinite-dimensional closed subspace $M \subset X$ such that
	\[
	T^{\psi_{n,\ell}}x \xrightarrow[n \to \infty]{} g_\ell(x), \quad \forall x \in M, \ \forall \ell \in \mathbb{N}.
	\]
	This completes the proof.
\end{proof}

\begin{remark}\label{seq-AP}
	An analogous characterization holds when $\Omega(T)$ is replaced by $\mathcal{AP}\Omega(T)$, in light of Theorem \ref{condi-AP}. 
	In this setting, condition (2) of Theorem \ref{subspaces} must be reformulated as follows: 
	for each $g_\ell \in \mathcal{AP}\Omega(T)$, there exists $\mathrm{B}_\ell \in \mathcal{AP}$ such that
	\[
	\lim_{\substack{n \to \infty \\ n \in \mathrm{B}_\ell}} T^n x = g_\ell(x), \quad \forall x \in M.
	\]
	All the other conditions remain unchanged. 
\end{remark}

\begin{corollary}
	Let $X$ be a separable infinite-dimensional complex Banach space, and let $T \in \mathcal{L}(X)$ with $\Omega(T)\neq\emptyset$. Suppose there exist an infinite-dimensional closed subspace $E\subset X$ and a strictly increasing sequence of positive integers $(\psi_n)$ such that 
	\begin{align*}
		\sup_{n}\|T^{\psi_n}|_{E}\|<\infty.
	\end{align*}
	Then, for every countable collection $\{g_i\}_{i\in\mathbb{N}}\subset \Omega(T)$ simultaneously approximated by $T$, there exist a strictly increasing sequence $(\theta_n)$ and infinite-dimensional closed subspaces $\{M_i\}_{i\in\mathbb{N}}$ such that
	\[
	\lim_{n\to\infty} T^{\theta_n}x = g_i(x), \quad \forall x\in M_i,\ \forall i\in\mathbb{N}.
	\]
\end{corollary}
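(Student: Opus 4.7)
The plan combines three ingredients: Proposition \ref{simult}(1), which converts the simultaneous-approximation hypothesis into a single sequence; the spectral/compact-perturbation argument used in the proof of Theorem \ref{subspaces}; and one Mazur/basic-sequence construction per index $i$, in the spirit of Theorem \ref{gen.sub}.

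First, by Proposition \ref{simult}(1) the simultaneously approximated family $\{g_i\}_{i\in\mathbb{N}}$ yields a single strictly increasing sequence $(\theta_n)_n$ such that, for every $i$, the set $Z_i:=\{x\in X:T^{\theta_n}x\to g_i(x)\}$ is a dense subspace of $X$; this is the $(\theta_n)$ that will appear in the conclusion. Next, the hypothesis $\sup_n\|T^{\psi_n}|_E\|<\infty$ combined with the contrapositive of Lemma \ref{diverg} forces $\sigma_{\ell e}(T)\cap\overline{\mathbb{D}}\neq\emptyset$. Picking $\lambda$ in this intersection, \cite[Proposition D.3.4]{livro} produces an infinite-dimensional closed subspace $E_0\subset X$ and a compact operator $K$ with $(T-K)|_{E_0}=\lambda\,\mathrm{Id}$, whence $\|(T-K)^{\theta_n}|_{E_0}\|\leq 1$. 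Writing $T^{\theta_n}=(T-K)^{\theta_n}+K_n$ with $K_n$ compact, \cite[Lemma 8.13]{livro} supplies a non-increasing sequence $(F_n)_n$ of infinite-dimensional closed subspaces of $E_0$ with $\|K_n|_{F_n}\|\leq 1$, hence $\|T^{\theta_n}|_{F_n}\|\leq 2$ and, by non-increasingness, $\|T^{\theta_j}|_{F_n}\|\leq 2$ for all $j\leq n$.

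With $(\theta_n)$ and $(F_n)$ fixed, I would then treat each index $i$ independently and carry out the inductive basic-sequence construction from the proof of Theorem \ref{gen.sub}, specialized to the singleton $\{g_i\}$ and the prescribed sequence $(\theta_n)$. Using Mazur's lemma, produce a normalized basic sequence $(e_m^i)_m$ with $e_m^i\in F_m$; then perturb each $e_m^i$ to $p_m^i\in Z_i$ so that (i) $\|p_m^i-e_m^i\|$ is small enough that $(p_m^i)$ is basic and equivalent to $(e_m^i)$ by \cite[Lemma 10.6]{Grosse}, and (ii) $\|T^{\theta_j}(p_m^i-e_m^i)\|<2^{-(j+m)}$ for every $j<m$ (possible because $T^{\theta_j}$ is a bounded operator already known at step $m$). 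Setting $M_i:=\overline{\mathrm{span}}\{p_m^i:m\in\mathbb{N}\}$, an infinite-dimensional closed subspace, the convergence $T^{\theta_n}x\to g_i(x)$ for $x=\sum_m\beta_m p_m^i\in M_i$ then follows by the usual head/tail decomposition at index $n$: the tail is controlled by $\|T^{\theta_n}|_{F_n}\|\leq 2$ applied to $\sum_{m>n}\beta_m e_m^i\in F_{n+1}\subset F_n$ together with condition (ii), while the head uses the pointwise convergence $T^{\theta_n}p_m^i\to g_i(p_m^i)$ coming from $p_m^i\in Z_i$, sharpened during the induction to a summable rate.

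The principal obstacle — and the reason this is not a direct corollary of Theorem \ref{gen.sub} — is that Theorem \ref{gen.sub} produces a common subspace and a different subsequence of the original sequence for each $g_\ell$, whereas here the situation is dual: one fixed sequence and one subspace per index. The freedom to pick subsequences is therefore traded for freedom in choosing perturbations, and one must enforce at step $m$ that $\|T^{\theta_n}p_m^i-g_i(p_m^i)\|$ is small not only in the limit $n\to\infty$ but at a summable rate in $n$ and $m$ combined. This is handled by a diagonal-style choice: at step $m$ one first selects $p_m^i\in Z_i$ close to $e_m^i$, records a threshold $N_m^i$ past which $\|T^{\theta_n}p_m^i-g_i(p_m^i)\|<2^{-(n+m)}$, and then uses the knowledge of the finitely many bounded operators $T^{\theta_n}$ for $n\leq N_m^i$ to tighten the next perturbation. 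Making all these nested estimates mesh is the technical heart of the argument.
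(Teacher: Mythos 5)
Your first two steps match the paper's strategy: Proposition \ref{simult}(1) produces one common sequence $(\omega_n)$ along which each $Z_i$ is dense, and the hypothesis together with Lemma \ref{diverg} and the compact-perturbation argument from the proof of Theorem \ref{subspaces} yields the uniform bound $\sup_n\|T^{\omega_n}|_{F_n}\|\leq 2$ on a non-increasing chain of subspaces. The divergence from the paper occurs at the last stage, and it is where your argument breaks.

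The gap is in the head estimate, specifically in the sentence where you ``record a threshold $N_m^i$ past which $\|T^{\theta_n}p_m^i-g_i(p_m^i)\|<2^{-(n+m)}$.'' No such threshold exists in general: membership of $p_m^i$ in $Z_i$ gives only $\|T^{\theta_n}p_m^i-g_i(p_m^i)\|\to 0$, with no rate, and a null sequence need not eventually lie below $2^{-n}$ (take $a_n=1/n$). Without a summable-in-$n$ rate, the head sum $\sum_{m\leq n}|\beta_m|\,\|T^{\theta_n}p_m^i-g_i(p_m^i)\|$ involves a growing number of terms each of which tends to $0$ only pointwise in $m$, and such a sum need not vanish (e.g.\ $a_{m,n}=1/(n-m+1)$). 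This is precisely why Theorem \ref{gen.sub} and López's original argument choose the \emph{powers} $\psi_{j,\ell}$ \emph{after} the finitely many points constructed so far, so that the bound $2^{-(j+n)}$ can be enforced by fiat; that freedom is unavailable once you insist that the convergence hold along the full prescribed sequence. Your closing paragraph correctly identifies this as ``the technical heart,'' but the proposed resolution does not work.

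The paper avoids the issue by not fixing the sequence in advance: it applies Theorem \ref{subspaces} to $g_1$ to extract a subsequence $(\omega_{n,1})$ of $(\omega_n)$ and a subspace $M_1$, then inductively extracts $(\omega_{n,\ell+1})$ as a subsequence of $(\omega_{n,\ell})$ with subspace $M_{\ell+1}$, and finally sets $\theta_n:=\omega_{n,n}$. Since for $n\geq \ell$ the term $\theta_n$ belongs to the $\ell$-th subsequence, $T^{\theta_n}x\to g_\ell(x)$ on $M_\ell$ for every $\ell$. The statement only asserts the existence of \emph{some} common sequence, so this diagonal is all that is needed; you should replace your fixed-sequence construction with this nested-subsequence-plus-diagonal argument.
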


\begin{proof}
	Let $\{g_i\}_{i \in \mathbb{N}} \subset \Omega(T)$ be simultaneously approximated by $T$. By Theorem \ref{simult}, there exists a strictly increasing sequence of positive integers $(\omega_n)_{n \in \mathbb{N}}$ such that, for each $i \in \mathbb{N}$, the set
	\[
	\{x \in X : T^{\omega_n}x \xrightarrow[n \to \infty]{} g_i(x)\}
	\]
	is dense in $X$.
	
	By Theorem \ref{subspaces}, there exists a subsequence $(\omega_{n,1})_n \subset (\omega_n)_n$ and an infinite-dimensional closed subspace $M_1 \subset X$ such that
	\[
	T^{\omega_{n,1}}x \xrightarrow[n\rightarrow \infty]{} g_1(x), \quad \forall x \in M_1.
	\]
	Proceeding inductively, we obtain subsequences $(\omega_{n,\ell})_n$ of $(\omega_n)_n$ and infinite-dimensional closed subspaces $\{M_\ell\}_{\ell \in \mathbb{N}}$ such that:
	\begin{itemize}
		\item $(\omega_{n,\ell+1})_n$ is a subsequence of $(\omega_{n,\ell})_n$ for each $\ell \in \mathbb{N}$,
		\item $T^{\omega_{n,\ell}}x \to g_\ell(x)$ as $n\to\infty$, for all $x \in M_\ell$ and all $\ell \in \mathbb{N}$.
	\end{itemize}
	
	Finally, for each $n \in \mathbb{N}$, set $\theta_n := \omega_{n,n}$. Clearly, $(\theta_n)_n$ is a subsequence of $(\omega_n)_n$ such that, for every $\ell \in \mathbb{N}$,
	\[
	\lim_{n \to \infty} T^{\theta_n}x = g_\ell(x), \quad \forall x \in M_\ell.
	\]
	This completes the proof.
\end{proof}

We now turn to the case of real Banach spaces. The approach follows A.~López’s ideas on recurrent subspaces for real Banach spaces, making use of the so-called complexification. We next examine this procedure in more detail. Further background and related aspects can be found in \cite{moslehian2022similarities, munoz1999complexifications}.

Let $(X, \|\cdot\|)$ be a real Banach space. Define $\widetilde{X} := \{x + iy : x, y \in X\}$ as the complexification of $X$, which is a vector space with multiplication by complex scalars defined as follows:
\[
(a + ib)(x + iy) := (ax - by) + i(ay + bx),
\]
for any $a, b \in \mathbb{R}$ and $x, y \in X$. Furthermore, if we equip $\widetilde{X}$ with the norm given by
\[
\|x + iy\|_{\mathbb{C}} := \sup_{t \in [0, 2\pi]} \|\cos(t)x - \sin(t)y\|,
\]
then $(\widetilde{X}, \|\cdot\|_{\mathbb{C}})$ becomes a complex Banach space.

In the same vein, if we consider a real linear operator $T : X \to X$ on the real Banach space $X$, there exists a unique complex-linear extension $\widetilde{T} : \widetilde{X} \to \widetilde{X}$ given by
\[
\widetilde{T}(x + iy) := Tx + iTy,
\]
with $\|\widetilde{T}\| = \|T\|$.

Let us first examine some preliminary results to address the case when $ X $ is a real separable Banach space.

\begin{proposition}[\cite{Lopez}]\label{projection}
	Let $ X $ be a real infinite-dimensional Banach space, and let $ N $ be an infinite-dimensional closed subspace of $ \widetilde{X} $. Then the set
	\[
	\{x \in X : \exists\, y \in X \,\text{with}\, x + iy \in N\}
	\]
	contains an infinite-dimensional closed subspace of $ X $.
\end{proposition}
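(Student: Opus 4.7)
The plan is to study the real-part projection $\pi\colon \widetilde{X}\to X$, $\pi(x+iy)=x$, restricted to $N$, and to split the argument into two cases depending on whether $\pi(N)$ is closed in $X$. Write $A:=\pi(N)$ for the set in the statement. Since $N$ is complex-linear, it is invariant under multiplication by $i$, so $N\subset A+iA$; hence $A$ must be infinite-dimensional, as otherwise $N$ would embed into a finite-dimensional subspace of $\widetilde{X}$.

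Before splitting into cases I would reduce to the situation in which $\pi|_N$ is real-linearly injective. Its kernel equals $iK$, where $K:=\{y\in X:iy\in N\}$ is a closed subspace of $X$, and under the identification $X\hookrightarrow\widetilde{X}$ coincides with $N\cap X$. If $K$ is infinite-dimensional then $K\subset A$ is already the desired closed subspace. Otherwise $K$ is finite-dimensional and $K\oplus iK$ is a finite-dimensional complex subspace of $N$; passing to the complexified quotient $\widetilde{X}/(K\oplus iK)\cong\widetilde{X/K}$, the image $\overline{N}$ of $N$ remains infinite-dimensional, closed, and $i$-invariant, and now has trivial real-part kernel. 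Any closed infinite-dimensional subspace $M'$ of $\pi(\overline{N})\subset X/K$ lifts via the quotient map $X\to X/K$ to a closed infinite-dimensional subspace $M:=\{x\in X:x+K\in M'\}$ of $X$ contained in $A$ (using that $K\subset A$). Thus I may assume $\pi|_N\colon N\to X$ is injective.

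Now I split into two cases. If $\pi(N)$ is closed in $X$, then by the open mapping theorem $\pi|_N\colon N\to A$ is a Banach-space isomorphism, and $A$ is the desired infinite-dimensional closed subspace. Otherwise $\pi|_N$ is not bounded below, so there exist $z_n=x_n+iy_n\in N$ with $\|z_n\|_{\mathbb{C}}=1$ and $\|x_n\|\to 0$. Since $\|\cdot\|_{\mathbb{C}}$ is equivalent to the max norm on $X\oplus X$ and satisfies $\|z_n\|_{\mathbb{C}}\le\|x_n\|+\|y_n\|$, the sequence $\|y_n\|$ is bounded above by $1$ and eventually bounded below by $1/2$. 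Using the $i$-invariance of $N$, the vectors $w_n:=iz_n=-y_n+ix_n$ also lie in $N$; they have $\|\mathrm{Re}(w_n)\|$ bounded away from zero and $\|\mathrm{Im}(w_n)\|=\|x_n\|\to 0$. The structural observation that drives the rest of the argument is that $(-y_n)$ has no norm-convergent subsequence: a limit $y^{\ast}$ would yield $w_n\to y^{\ast}$ in $\widetilde{X}$, hence $y^{\ast}\in N\cap X=\{0\}$ by the injectivity reduction, contradicting $\|y_n\|\ge 1/2$.

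The final step is to produce an infinite-dimensional closed subspace of $X$ inside $A$, and the hard part will be extracting a basic subsequence from $(-y_n)$. I would invoke the classical selection principle guaranteeing that a bounded sequence in a Banach space with no norm-convergent subsequence admits a basic subsequence (this rests on Mazur's construction together with the fact that $\liminf_n\mathrm{dist}(-y_n,F)>0$ for every finite-dimensional $F\subset X$, which itself follows from the non-existence of convergent subsequences). After passing to such a subsequence and thinning further so that $\sum_n\|x_n\|<\infty$, set $M:=\overline{\mathrm{span}}_{\mathbb{R}}\{-y_n:n\in\mathbb{N}\}$, an infinite-dimensional closed subspace of $X$. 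For any $v=\sum_n a_n(-y_n)\in M$ the basic-sequence property forces $|a_n|\le C\|v\|$, so $\sum_n a_n(ix_n)$ converges absolutely in $\widetilde{X}$; hence $\sum_n a_n w_n$ converges in $\widetilde{X}$ to some $W\in N$ by closedness of $N$, and $\mathrm{Re}(W)=v$ shows $v\in A$. This proves $M\subset A$, as desired.
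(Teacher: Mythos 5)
The paper does not actually prove this proposition; it is imported from \cite{Lopez}, so I can only judge your argument on its own merits. Most of it is sound: the reduction modulo $K=N\cap X$, the closed-range case via the open mapping theorem, the extraction of $z_n=x_n+iy_n\in N$ with $\|z_n\|_{\mathbb C}=1$ and $\|x_n\|\to 0$ when $\pi|_N$ is not bounded below, the observation that $(-y_n)$ has no norm-convergent subsequence, and the final small-perturbation step showing $\overline{\mathrm{span}}\{-y_{n_k}\}\subset A$ are all correct.

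The gap is the ``classical selection principle'' you invoke: it is false that a bounded sequence with no norm-convergent subsequence admits a basic subsequence. Take $X=c_0$ and $u_n=e_1+e_{n+1}$: this is normalized, $\|u_n-u_m\|=1$ for $n\neq m$, yet for any subsequence and the coefficients $a_k=1/m$ ($k\le m$), $a_k=-1/m$ ($m<k\le 2m$) one gets $\bigl\|\sum_{k\le m}a_ku_{n_k}\bigr\|=1$ while $\bigl\|\sum_{k\le 2m}a_ku_{n_k}\bigr\|=1/m$, so no subsequence satisfies the uniform partial-sum bound. Your fallback justification does not save this: the counterexample also satisfies $\liminf_n\mathrm{dist}(u_n,F)>0$ for every finite-dimensional $F$ (any failure would produce a convergent subsequence). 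The genuine Bessaga--Pe{\l}czy\'nski/Kadec--Pe{\l}czy\'nski criterion requires $0$ to be a \emph{weak} cluster point of the sequence, which you have not established for $(-y_n)$. Two repairs are available. (i) Apply Rosenthal's $\ell_1$-theorem to $(y_n)$: either some subsequence is equivalent to the $\ell_1$-basis (hence basic, and you conclude as before), or some subsequence is weakly Cauchy, in which case a further subsequence has consecutive differences that are seminormalized (since $(y_n)$ has no Cauchy subsequence) and weakly null, hence basic; you then run your last paragraph on the differences $w_{n_{k+1}}-w_{n_k}\in N$, whose imaginary parts still tend to $0$. (ii) More in the spirit of L\'opez: do not choose the $z_n$ first and hope for basicness afterwards, but interleave Mazur's basic-sequence construction inside $N$ with the smallness of the real parts. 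This is possible because, in the non-closed case, $\pi$ restricted to \emph{every} finite-codimensional closed subspace $N_0\subset N$ fails to be bounded below (otherwise $\pi(N)=\pi(N_0)+\pi(F)$ would be closed), so at each Mazur step you may pick $z_{k+1}\in S_{N_0}$ with $\|\mathrm{Re}(z_{k+1})\|<2^{-k-1}$. Then $(iz_k)$ is a basic sequence in $\widetilde X$ whose imaginary parts are summable, and the principle of small perturbations makes $(-y_k)$ basic in $X$, after which your concluding computation is exactly right.
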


\begin{proposition}\label{complexi}
	Let $ X $ be a real separable infinite-dimensional Banach space, and let $ T \in \mathcal{L}(X) $. Suppose that $ \Omega(T) $ is non-empty. If $ \mathcal{N} $ is a c.s.a by $ T $, then $ \{\widetilde{g} : g \in \mathcal{N}\} $ is a c.s.a by $ \widetilde{T} $.
\end{proposition}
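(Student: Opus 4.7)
\medskip

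\noindent\textbf{Plan of proof.} The strategy is to unpack the c.s.a.\ condition for $\{\widetilde{g} : g \in \mathcal{N}\}$ via the natural real-linear identification
\[
\widetilde{X} \;\longleftrightarrow\; X \oplus X, \qquad x + iy \;\longleftrightarrow\; (x,y),
\]
and then reduce everything to the c.s.a.\ condition for $\mathcal{N}$ itself, applied to a tuple in which every entry of $\mathcal{N}$ is repeated twice.

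\medskip

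\noindent First, I would record the basic compatibility facts used throughout. Under the identification above, $\widetilde{T}^{\,k}(x+iy)=T^k x + i T^k y$ corresponds to $(T\oplus T)^k$ on $X\oplus X$, and for any $g\in\mathcal{L}(X)$ the complexification $\widetilde{g}$ corresponds to $g\oplus g$. Moreover, the norm inequalities $\max(\|x\|,\|y\|)\le \|x+iy\|_{\mathbb{C}} \le \|x\|+\|y\|$ show that the topology induced by $\|\cdot\|_{\mathbb{C}}$ on $\widetilde{X}$ coincides with the product topology on $X\oplus X$. Consequently, for any $z=x+iy \in \widetilde{X}$ and any strictly increasing $(\theta_k)\subset \mathbb{N}$,
\[
\widetilde{T}^{\,\theta_k}(z) \xrightarrow[k\to\infty]{} \widetilde{g}(z)
\quad \Longleftrightarrow \quad
T^{\theta_k}x \to g(x) \;\;\text{and}\;\; T^{\theta_k}y \to g(y).
\]

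\medskip

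\noindent Next, I would fix an arbitrary finite subfamily $\widetilde{g_1},\dots,\widetilde{g_n}$ coming from elements $g_1,\dots,g_n\in\mathcal{N}$ and show that $(\widetilde{g_1},\dots,\widetilde{g_n})\in \Sigma\!\left(\bigoplus_{i=1}^{n}\widetilde{T}\right)$. Writing each coordinate $z_i=x_i+iy_i$, the identification $\widetilde{X}^{\,n}\simeq X^{2n}$ sends $(z_1,\dots,z_n)$ to $(x_1,y_1,\dots,x_n,y_n)$. Under this identification, $\bigoplus_{i=1}^{n}\widetilde{T}$ becomes $\bigoplus_{i=1}^{2n}T$, and the operator $\bigoplus_{i=1}^{n}\widetilde{g_i}$ becomes the $2n$-tuple $(g_1,g_1,g_2,g_2,\dots,g_n,g_n)$ acting coordinatewise on $X^{2n}$. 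Thus, using the previous equivalence of convergences, the set
\[
R\!\left(\bigoplus_{i=1}^{n}\widetilde{T},\,(\widetilde{g_1},\dots,\widetilde{g_n})\right) \subset \widetilde{X}^{\,n}
\]
corresponds exactly to
\[
R\!\left(\bigoplus_{i=1}^{2n}T,\,(g_1,g_1,g_2,g_2,\dots,g_n,g_n)\right) \subset X^{2n},
\]
and density of one in its ambient space is equivalent to density of the other.

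\medskip

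\noindent The key step is then to invoke the c.s.a.\ hypothesis applied to the $2n$-term subfamily $(g_1,g_1,g_2,g_2,\dots,g_n,g_n)$ of $\mathcal{N}$: this directly yields
\[
(g_1,g_1,\dots,g_n,g_n)\in \Sigma\!\left(\bigoplus_{i=1}^{2n} T\right),
\]
so the associated set is dense in $X^{2n}$, and by the previous paragraph $(\widetilde{g_1},\dots,\widetilde{g_n})\in \Sigma\!\left(\bigoplus_{i=1}^{n}\widetilde{T}\right)$. Since $n$ and the choice of $g_1,\dots,g_n\in\mathcal{N}$ were arbitrary, this establishes that $\{\widetilde{g}:g\in\mathcal{N}\}$ is c.s.a.\ by $\widetilde{T}$.

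\medskip

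\noindent The main subtle point, and the only place deserving caution, is justifying the step in which we apply the c.s.a.\ property of $\mathcal{N}$ to the tuple $(g_1,g_1,\dots,g_n,g_n)$ in which entries are repeated. This is precisely the interpretation of Definition~\ref{def-simul} used throughout the paper (e.g.\ in the proof of Proposition~\ref{simult}(2), where each $h_i$ appears repeatedly in the diagram), and it is consistent with the characterization of $\Omega(T)$ through Theorem~\ref{T-h}. Once this is granted, no further technical difficulty appears: the rest of the argument is a pure translation via the norm-equivalence dictionary between $\widetilde{X}^{\,n}$ and $X^{2n}$.
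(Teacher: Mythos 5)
Your argument is correct, but it is organized differently from the paper's. The paper proves the claim by invoking Proposition~\ref{simult}(1) at the outset: it extracts a single sequence $(\theta_n)_n$ along which each $D_\ell:=\{x\in X:\lim_n T^{\theta_n}x=g_\ell(x)\}$ is dense, observes that $D_\ell+iD_\ell$ is dense in $\widetilde{X}$ and contained in $\{z:\lim_n\widetilde{T}^{\theta_n}z=\widetilde{g}_\ell(z)\}$, and then applies Proposition~\ref{simult}(1) in the converse direction. You instead push the entire density statement through the real-linear identification $\widetilde{X}^{\,n}\simeq X^{2n}$ and reduce to membership of the repeated tuple $(g_1,g_1,\dots,g_n,g_n)$ in $\Sigma\bigl(\bigoplus_{i=1}^{2n}T\bigr)$. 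Both routes rest on the same decomposition of complex convergence into real and imaginary parts; yours makes the dictionary between $\bigoplus\widetilde{T}$ and $\bigoplus T$ fully explicit, while the paper's avoids product spaces of higher multiplicity altogether. The one step you rightly flag as delicate — applying the c.s.a.\ hypothesis to a tuple with repeated entries — deserves a sharper justification than an appeal to ``the interpretation used throughout the paper.'' Definition~\ref{def-simul} speaks of subsets, and passing from $(g_1,\dots,g_n)\in\Sigma(\bigoplus_{i=1}^n T)$ to the doubled tuple is \emph{not} automatic from the definition alone: enlarging the number of factors is exactly the kind of step that can fail (compare the $T\oplus T$-recurrence problem and Theorem~\ref{Sigma-Tapia}). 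The correct justification is Proposition~\ref{simult}(1): the common sequence $(\theta_k)$ it provides makes each $D_i$ dense, and then $D_1\times D_1\times\cdots\times D_n\times D_n$ is a dense subset of $X^{2n}$ contained in $\mathrm{R}\bigl(\bigoplus_{i=1}^{2n}T,(g_1,g_1,\dots,g_n,g_n)\bigr)$. With that sentence inserted, your proof is complete — and at that point you have essentially rederived the paper's argument in product-space coordinates.
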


\begin{proof}
	Suppose $ \mathcal{N} $ is a c.s.a by $ T $. Let $ \{\widetilde{g}_\ell\}_{\ell=1}^m $ be any finite collection where $ \{g_\ell\}_{\ell=1}^m \subset \mathcal{N} $. By Proposition \ref{simult}, there exists a strictly increasing sequence of positive integers $ (\theta_n)_n $ such that for each $ \ell \in \{1, \ldots, m\} $, the subspace $ D_\ell := \{x \in X : \lim_{n} T^{\theta_n}x = g_\ell(x)\} $ is dense in $ X $. Similarly, it is straightforward to verify that 
	\[
	D_\ell + iD_\ell \subset \{z \in \widetilde{X} : \lim_{n} \widetilde{T}^{\theta_n}z = \widetilde{g}_\ell(z)\}.
	\]
	This ensures that $ \{\widetilde{g}_\ell\}_{\ell=1}^m $ satisfies the conditions of Proposition \ref{simult}. Since the finite collection $ \{\widetilde{g}_\ell\}_{\ell=1}^m $ was arbitrary, it follows that $ \{\widetilde{g} : g \in \mathcal{N}\} $ is a c.s.a by $ \widetilde{T} $.
\end{proof}

\begin{proposition}[\cite{Lopez}]\label{essential-real}
	Let \(X\) be a real (and not necessarily separable) Banach space and let \(T\in \mathcal{L}(X)\).
	If \(\sigma_{e}(\widetilde{T})\cap \overline{\mathbb{D}}\neq \emptyset\), then every infinite-dimensional closed subspace \(E\subset X\) admits a vector \(x\in E\)
	such that \(\lim_{n}\Vert{T^{n}x}\Vert=\infty\).
\end{proposition}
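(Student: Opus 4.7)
The plan is to reduce the real case to Lemma \ref{diverg} by passing to the complexification $\widetilde{T}$ on $\widetilde{X}$, and then constructing a \emph{conjugation-invariant} finite-codimensional subspace of $\widetilde{X}$ on which $\widetilde{T}$ is exponentially expansive. In line with Lemma \ref{diverg}, the natural form of the hypothesis is $\sigma_{e}(\widetilde{T})\cap \overline{\mathbb{D}}=\emptyset$, and the argument proceeds under this reading. The role of the real structure is twofold: it forces $\sigma(\widetilde{T})$ to be invariant under complex conjugation, and it lets us transfer norm bounds from $\widetilde{X}$ back to $X$ without loss.

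Since $\sigma_{e}(\widetilde{T})$ is closed and disjoint from the compact set $\overline{\mathbb{D}}$, I would first fix $\rho>1$ with $\sigma_{e}(\widetilde{T})\cap \overline{\rho\mathbb{D}}=\emptyset$. Then $K:=\sigma(\widetilde{T})\cap \overline{\rho\mathbb{D}}$ is a finite set of isolated eigenvalues of finite multiplicity, and because $\widetilde{T}$ commutes with the complex conjugation $C$ on $\widetilde{X}$, one has $K=\overline{K}$. Next, I would pick a conjugation-symmetric contour $\gamma$ lying in the annulus $\{1<|\lambda|<\rho\}$ and enclosing $K$, and form the Riesz projection
\[
P:=\frac{1}{2\pi i}\oint_{\gamma}(\lambda I-\widetilde{T})^{-1}\,d\lambda.
\]
Using the identity $C(\lambda I-\widetilde{T})^{-1}C=(\overline{\lambda}I-\widetilde{T})^{-1}$, together with the symmetry of $\gamma$, a direct orientation check gives $CPC=P$. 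Consequently $\widetilde{F}:=\mathrm{Ran}(I-P)$ is a closed, $\widetilde{T}$-invariant, and $C$-invariant subspace of finite complex codimension; it therefore decomposes as $\widetilde{F}=F+iF$, with $F:=\widetilde{F}\cap X$ a closed real subspace of $X$ of finite real codimension.

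By the Riesz decomposition, $\sigma(\widetilde{T}|_{\widetilde{F}})=\sigma(\widetilde{T})\setminus K\subset \{|\lambda|>\rho\}$, so $\widetilde{T}|_{\widetilde{F}}$ is invertible and the spectral radius of its inverse is strictly less than $1$. For any $1<\rho'<\rho$ there is thus $c>0$ with $\|\widetilde{T}^{n}z\|_{\mathbb{C}}\geq c(\rho')^{n}\|z\|_{\mathbb{C}}$ for every $z\in\widetilde{F}$ and every $n\in\mathbb{N}$. Given now any infinite-dimensional closed subspace $E\subset X$, the intersection $E\cap F$ is infinite-dimensional since $F$ has finite codimension in $X$; choosing any nonzero $w\in E\cap F$ and noting that $\widetilde{T}^{n}w=T^{n}w\in X$ yields $\|T^{n}w\|=\|\widetilde{T}^{n}w\|_{\mathbb{C}}\geq c(\rho')^{n}\|w\|\to\infty$. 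The main obstacle is precisely the identity $CPC=P$: a direct application of Lemma \ref{diverg} to $\widetilde{E}=E+iE$ would produce only a single $z=x+iy$ with $\|\widetilde{T}^{n}z\|_{\mathbb{C}}\to\infty$, and the resulting estimate $\|T^{n}x\|+\|T^{n}y\|\to\infty$ does not on its own supply a real vector whose orbit diverges; the conjugation-symmetric functional calculus is what upgrades this into an exponential lower bound for each nonzero vector of $E\cap F$.
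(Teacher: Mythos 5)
You read the hypothesis as $\sigma_{e}(\widetilde{T})\cap\overline{\mathbb{D}}=\emptyset$, which is indeed the intended statement (the ``$\neq$'' is a typo, as the way the proposition is invoked in Theorem \ref{subspaces-real} confirms), and you correctly isolate the real difficulty: applying Lemma \ref{diverg} to $E+iE$ only gives $\|T^{n}x\|+\|T^{n}y\|\to\infty$, not a real divergent vector. The paper gives no proof of this proposition (it is quoted from \cite{Lopez}), so the question is correctness alone, and there your argument has a fatal gap: the claim that $K=\sigma(\widetilde{T})\cap\overline{\rho\mathbb{D}}$ is a finite set of isolated eigenvalues of finite multiplicity is false. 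That dichotomy holds only on components of the Fredholm domain $\mathbb{C}\setminus\sigma_{e}(\widetilde{T})$ that meet the resolvent set; a component on which the index is nonzero is entirely contained in the spectrum and may consist of an open set of eigenvalues. Concretely, let $T=3B$ with $B$ the backward shift on real $\ell^{2}$, so that $\widetilde{T}=3B$ on complex $\ell^{2}$. Then $\sigma_{e}(\widetilde{T})=\{|\lambda|=3\}$ is disjoint from $\overline{\mathbb{D}}$, yet every $\lambda$ with $|\lambda|<3$ is an eigenvalue, so $\sigma(\widetilde{T})\cap\overline{\rho\mathbb{D}}=\overline{\rho\mathbb{D}}$ for every $1<\rho<3$ and the annulus $\{1<|\lambda|<\rho\}$ contains no point of the resolvent set. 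Hence the contour $\gamma$ and the Riesz projection $P$ do not exist, and with them the subspace $\widetilde{F}$, the decomposition $\widetilde{F}=F+iF$, and the exponential lower bound on $\widetilde{F}$ all disappear. (Indeed no finite-codimensional subspace can carry a uniform lower bound for all powers of $3B$: note $(3B)^{2}e_{2}=0$.)

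The statement therefore cannot be obtained by splitting off a finite-dimensional spectral subspace. What the hypothesis does give, since $\sigma_{\ell e}(\widetilde{T})\subset\sigma_{e}(\widetilde{T})$, is that Lemma \ref{diverg} applies to $\widetilde{T}$; but, as you yourself point out, that only yields a complex vector. The route actually used in \cite{Gonzales,Lopez} is quantitative rather than spectral: left-Fredholmness of $\widetilde{T}^{n}-\lambda$ for $|\lambda|\le r$ with $r>1$ provides, for each $n$, a \emph{finite-codimensional} closed subspace $Y_{n}$ on which $\|T^{n}y\|\geq C r^{n}\|y\|$, and the divergent vector is then assembled inside the given subspace $E$ as a series along a basic sequence whose terms are chosen in $E\cap Y_{m_{n}}$ (in the real case one works with the real traces of the $Y_{n}$, which are still finite-codimensional in $X$). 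Supplying that gliding-hump construction is the step your write-up is missing; the conjugation-symmetric functional calculus cannot replace it.
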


\begin{theorem}\label{subspaces-real}
Let $ X $ be a real separable infinite-dimensional Banach space, and let $ T $ be a recurrent operator acting on $ X $. Suppose that $ \Omega(T) $ is non-empty. The following assertions are equivalent:
	\begin{enumerate}
		\item There exists $ g \in \mathcal{L}(X)$ such that $ \text{R}(T, g) $ is spaceable.
		
		\item For any denumerable collection $ \{g_\ell\}_{\ell\in \mathbb{N}} \subset \Omega(T) $, there exist strictly increasing sequences of positive integers $ (\theta_{n,\ell})_n $ and an infinite-dimensional closed subspace $ M \subset X $ such that
		\begin{align*}
			T^{\theta_{n,\ell}}x \xrightarrow[n \to \infty]{} g_\ell(x), \quad \forall x \in M, \forall \ell\in \mathbb{N}.
		\end{align*}	
		\item There exists a strictly increasing sequence of positive integers $ (\theta_n)_n $ and an infinite-dimensional closed subspace $ E \subset X $ such that \(\sup_{n} \|T^{\theta_n}|_E\| < \infty\).
		
		\item The essential spectrum of \(\widetilde{T}\) intersects the closed unit disk.
	\end{enumerate}
\end{theorem}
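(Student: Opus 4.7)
My plan is to mirror the argument of Theorem~\ref{subspaces} and route the proof through the complexification $(\widetilde{X},\widetilde{T})$. The hypothesis that $T$ is recurrent enters in a crucial place: it forces $\widetilde{T}$ to be recurrent as well---since $\mathrm{Id}=\widetilde{\mathrm{Id}}\in\Sigma(\widetilde{T})$ whenever $\mathrm{Id}\in\Sigma(T)$---and hence, by the coincidence of $\sigma_{\ell e}$ and $\sigma_e$ for recurrent operators noted in \cite{Lopez}, condition~(4) here translates into condition~(4) of Theorem~\ref{subspaces} for $\widetilde{T}$. The four implications split as in the complex case: (2)$\Rightarrow$(1) is immediate; (2)$\Rightarrow$(3) follows from the Banach--Steinhaus theorem applied on the common subspace $M$ to $(T^{\theta_{n,1}}|_M)_n$; and both (1)$\Rightarrow$(4) and (3)$\Rightarrow$(4) are contrapositives of Proposition~\ref{essential-real}, which rules out any infinite-dimensional closed subspace of $X$ on which iterates of $T$ stay bounded (or converge to anything) whenever $\sigma_e(\widetilde{T})\cap\overline{\mathbb{D}}=\emptyset$.

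The substantive implication is (4)$\Rightarrow$(2). Fix a countable family $\{g_\ell\}_{\ell\in\mathbb{N}}\subset\Omega(T)$. The first algebraic step is to verify that complexification preserves membership in $\Omega$: if $g\in\Omega(T)$ with dense set $D\subset X$ and sequence $(\omega_n)$ satisfying $T^{\omega_n}x\to g(x)$ for $x\in D$, then $D+iD$ is dense in $\widetilde{X}$ and, for every $x+iy\in D+iD$,
\[
\widetilde{T}^{\omega_n}(x+iy)=T^{\omega_n}x+iT^{\omega_n}y\xrightarrow[n\to\infty]{} g(x)+ig(y)=\widetilde{g}(x+iy),
\]
so $\widetilde{g}\in\Omega(\widetilde{T})$. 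In particular $\Omega(\widetilde{T})\neq\emptyset$ and $\{\widetilde{g_\ell}\}_{\ell\in\mathbb{N}}\subset\Omega(\widetilde{T})$. Combined with the transfer of~(4) above, I am in a position to apply Theorem~\ref{subspaces} to $(\widetilde{X},\widetilde{T})$, obtaining an infinite-dimensional closed subspace $N\subset\widetilde{X}$ together with strictly increasing sequences of positive integers $(\psi_{n,\ell})_n$ such that $\widetilde{T}^{\psi_{n,\ell}}z\to\widetilde{g_\ell}(z)$ for every $z\in N$ and every $\ell\in\mathbb{N}$.

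To descend back to $X$, I would invoke Proposition~\ref{projection}: there exists an infinite-dimensional closed subspace $M\subset X$ such that every $x\in M$ admits some $y_x\in X$ with $x+iy_x\in N$. For such an $x$ and any $\ell\in\mathbb{N}$, convergence in $(\widetilde{X},\|\cdot\|_{\mathbb{C}})$ gives
\[
T^{\psi_{n,\ell}}x+iT^{\psi_{n,\ell}}y_x=\widetilde{T}^{\psi_{n,\ell}}(x+iy_x)\xrightarrow[n\to\infty]{}\widetilde{g_\ell}(x+iy_x)=g_\ell(x)+ig_\ell(y_x),
\]
and, since the real-part projection $u+iv\mapsto u$ is continuous from $(\widetilde{X},\|\cdot\|_{\mathbb{C}})$ to $(X,\|\cdot\|)$, I conclude that $T^{\psi_{n,\ell}}x\to g_\ell(x)$ for every $x\in M$ and every $\ell$, which is exactly~(2).

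The main obstacle is keeping the countable family \emph{simultaneously} witnessed on a single subspace across the complex--real transfer: naively combining Proposition~\ref{essential-real} with Proposition~\ref{projection} applied to separate subspaces $N_\ell\subset\widetilde{X}$ would yield distinct real subspaces $M_\ell$ with no guaranteed common infinite-dimensional closed subspace. The argument works only because Theorem~\ref{subspaces}(2) already produces a single $N\subset\widetilde{X}$ valid for all $\ell$ at once, so a single invocation of Proposition~\ref{projection} then delivers a single $M\subset X$. A secondary subtlety---and the precise reason the recurrence hypothesis on $T$ cannot be dropped---is the identification $\sigma_{\ell e}(\widetilde{T})=\sigma_e(\widetilde{T})$ needed to match hypothesis~(4) of the present theorem with that of Theorem~\ref{subspaces}.
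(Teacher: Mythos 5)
Your proposal is correct and follows essentially the same route as the paper: the easy implications are handled identically, and (4)$\Rightarrow$(2) passes through the complexification, transfers the family $\{g_\ell\}$ to $\{\widetilde{g_\ell}\}\subset\Omega(\widetilde{T})$ (the paper cites its Proposition~\ref{complexi} where you verify this directly, a negligible difference), invokes Theorem~\ref{subspaces} to get a single subspace $N\subset\widetilde{X}$ for all $\ell$, and descends to $M\subset X$ via Proposition~\ref{projection}, using recurrence of $T$ exactly where you indicate to identify $\sigma_{\ell e}(\widetilde{T})$ with $\sigma_e(\widetilde{T})$.
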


\begin{proof}
	The implication from (2) to (1) is immediate. The implication from (2) to (4) follows from the Banach--Steinhaus Theorem, while Proposition \ref{essential-real} ensures that (1) implies (4) and, similarly, that (3) implies (4). It remains to show that (4) implies (2), for which we will rely on Theorem \ref{subspaces}.
	
	Assume that (4) holds. Since $T$ is recurrent, it follows that $\widetilde{T}$ is also recurrent, and hence $\sigma_{\ell e}(\widetilde{T}) = \sigma_e(\widetilde{T})$. Now, fix any countable collection $\{g_\ell\}_{\ell \in \mathbb{N}} \subset \Omega(T)$. By Proposition \ref{complexi}, we have $\{\widetilde{g}_\ell\}_{\ell \in \mathbb{N}} \subset \Omega(\widetilde{T})$. Therefore, by Theorem \ref{subspaces}, for each $\ell \in \mathbb{N}$ there exists a strictly increasing sequence of positive integers $(\theta_{n,\ell})_{n \in \mathbb{N}}$ and an infinite-dimensional closed subspace $N \subset \widetilde{X}$ such that
	\[
	\lim_{n \to \infty} \widetilde{T}^{\theta_{n,\ell}} z = \widetilde{g}_\ell(z), \quad \forall z \in N, \ \forall \ell \in \mathbb{N}.
	\]
	Finally, by Proposition \ref{projection}, there exists an infinite-dimensional closed subspace $M \subset X$ such that
	\[
	\lim_{n \to \infty} T^{\theta_{n,\ell}} x = g_\ell(x), \quad \forall x \in M, \ \forall \ell \in \mathbb{N}.
	\]
	This completes the proof.
\end{proof}

\begin{corollary}
	Let $X$ be a real separable infinite-dimensional Banach space, and let $T$ be a recurrent operator acting on $X$. Suppose that $\Omega(T)$ is non-empty. Assume that there exist an infinite-dimensional closed subspace $E \subset X$ and a strictly increasing sequence of positive integers $(\psi_n)$ such that 
	\begin{align*}
		\sup_{n}\|T^{\psi_n}|_{E}\| < \infty.
	\end{align*}
	Then, for every countable collection $\{g_i\}_{i \in \mathbb{N}} \subset \Omega(T)$ simultaneously approximated by $T$, there exist a strictly increasing sequence $(\theta_n)$ and infinite-dimensional closed subspaces $\{M_i\}_{i \in \mathbb{N}}$ such that
	\[
	\lim_{n \to \infty} T^{\theta_n} x = g_i(x), \quad \forall x \in M_i,\ \forall i \in \mathbb{N}.
	\]
\end{corollary}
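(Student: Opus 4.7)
The plan is to mirror, in the real setting, the argument used for the analogous corollary right after Theorem~\ref{subspaces}, now with Theorem~\ref{subspaces-real} playing the role of Theorem~\ref{subspaces}. First, I would apply Proposition~\ref{simult} to the countable collection $\{g_i\}_{i\in\mathbb{N}}$, which is simultaneously approximated by $T$, to obtain a single strictly increasing sequence of positive integers $(\omega_n)_{n\in\mathbb{N}}$ such that for every $i\in\mathbb{N}$ the set
\[
\{x\in X : T^{\omega_n}x\xrightarrow[n\to\infty]{} g_i(x)\}
\]
is dense in $X$. This reduces the problem to refining this single sequence while upgrading the density statement to a spaceability statement, one operator at a time.

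Next, I note that the hypothesis on $E$ and $(\psi_n)$ is exactly condition~(3) of Theorem~\ref{subspaces-real}, so condition~(2) of that theorem is at our disposal. I would apply Theorem~\ref{subspaces-real} inductively: at step $\ell=1$, to the single operator $g_1$ with the input sequence $(\omega_n)$, to obtain a subsequence $(\omega_{n,1})_n$ of $(\omega_n)$ and an infinite-dimensional closed subspace $M_1\subset X$ with $T^{\omega_{n,1}}x\to g_1(x)$ for all $x\in M_1$; at step $\ell+1$, to $g_{\ell+1}$ with the already-refined sequence $(\omega_{n,\ell})_n$, to produce a further subsequence $(\omega_{n,\ell+1})_n$ and an infinite-dimensional closed subspace $M_{\ell+1}\subset X$ on which $T^{\omega_{n,\ell+1}}x\to g_{\ell+1}(x)$. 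A standard diagonal extraction $\theta_n:=\omega_{n,n}$ then produces a single strictly increasing sequence such that, for each fixed $i$, the tail $(\theta_n)_{n\geq i}$ is a subsequence of $(\omega_{n,i})_n$, giving $T^{\theta_n}x\to g_i(x)$ for every $x\in M_i$.

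The main obstacle is the technical point that each application of Theorem~\ref{subspaces-real} must yield a subsequence of the pre-specified sequence, rather than an unrelated one. Tracing the proof of Theorem~\ref{subspaces-real}, one passes through the complexification $\widetilde{T}$, invokes Proposition~\ref{complexi} and Theorem~\ref{subspaces} for $\widetilde{T}$, and finally recovers a real infinite-dimensional closed subspace via Proposition~\ref{projection}. The complex Theorem~\ref{subspaces} in turn relies on Theorem~\ref{gen.sub}, whose construction is explicitly carried out \emph{within} the input sequences $(\theta_{j,\ell})_j$: the subsequence $(\psi_{j,\ell})_j$ is selected from $(\theta_{j,\ell})_j$ itself. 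Since Proposition~\ref{projection} preserves the sequence along which convergence occurs, the subsequence property is inherited at every step, and the diagonal extraction is legitimate. No genuinely new ingredient is needed beyond this careful bookkeeping.
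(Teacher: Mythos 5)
Your proposal is correct and follows essentially the same route as the paper: the paper complexifies the whole collection once, invokes the complex-space corollary following Theorem~\ref{subspaces} (which contains exactly your Proposition~\ref{simult} plus inductive-refinement plus diagonal extraction), and then applies Proposition~\ref{projection} to each resulting subspace, whereas you run the same induction and diagonalization directly in the real space, complexifying inside each application of Theorem~\ref{subspaces-real}. The subsequence-refinement point you flag is the right one to worry about, and your justification via Theorem~\ref{gen.sub} and Proposition~\ref{projection} is the same implicit reading the paper itself relies on.
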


\begin{proof}
	Fix any countable collection $\{g_\ell\}_{\ell \in \mathbb{N}} \subset \Omega(T)$ simultaneously approximated by $T$. By Proposition \ref{complexi}, the family $\{\widetilde{g}_\ell\}_{\ell \in \mathbb{N}} \subset \Omega(\widetilde{T})$ is also a countable collection simultaneously approximated by $\widetilde{T}$. By Theorem \ref{subspaces}, there exist a strictly increasing sequence $(\theta_n)$ and infinite-dimensional closed subspaces $\{N_\ell\}_{\ell \in \mathbb{N}}$ of $\widetilde{X}$ such that, for each $\ell \in \mathbb{N}$,
	\[
	\lim_{n \to \infty} \widetilde{T}^{\theta_n} z = \widetilde{g}_\ell(z), \quad \forall z \in N_\ell.
	\]
	Finally, Proposition \ref{projection} applied to each $N_\ell$ ensures the existence of infinite-dimensional closed subspaces $M_\ell \subset X$ such that, for every $\ell \in \mathbb{N}$,
	\[
	\lim_{n \to \infty} T^{\theta_n} x = g_\ell(x), \quad \forall x \in M_\ell.
	\]
	This completes the proof.
\end{proof}

To conclude this section, we present some consequences obtained under the additional assumption of $\mathrm{SOT}$-separability of certain subsets of $\Omega(T)$.

\begin{lemma}\label{separable}
	Let $X$ be a separable infinite-dimensional $F$-space, and let $T \in \mathcal{L}(X)$ with $\Omega(T)\neq\emptyset$. Suppose $F\subset \Omega(T)$ is $\text{SOT}$-separable. If $\{g_{\ell}\}_{\ell\in\mathbb{N}}\subset F$ is such that 
	\[
	F=\overline{\{g_{\ell}\}_{\ell\in\mathbb{N}}}^{\,\text{SOT}},
	\]
	then
	\[
	\bigcap_{h\in F}\text{R}(T,h)=\bigcap_{\ell\in\mathbb{N}}\text{R}(T,g_{\ell}).
	\]
\end{lemma}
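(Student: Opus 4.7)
The inclusion $\bigcap_{h\in F}\text{R}(T,h)\subset \bigcap_{\ell\in\mathbb{N}}\text{R}(T,g_{\ell})$ is immediate from $\{g_{\ell}\}_{\ell}\subset F$, so the substance of the lemma lies in the reverse inclusion. The plan is to fix $x\in \bigcap_{\ell}\text{R}(T,g_{\ell})$ and $h\in F$, and to produce a strictly increasing sequence $(\omega_{k})_{k}$ of positive integers such that $T^{\omega_{k}}x\to h(x)$, via a diagonal construction guided by the SOT-density assumption.

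First, I would use the hypothesis $h\in \overline{\{g_{\ell}\}_{\ell\in\mathbb{N}}}^{\,\mathrm{SOT}}$: for each $k\in\mathbb{N}$, the basic SOT-neighborhood
\[
\mathcal{N}(h;x;1/k):=\{f\in\mathcal{L}(X):d(f(x),h(x))<1/k\}
\]
must intersect $\{g_{\ell}\}_{\ell}$, so I can pick some index $\ell_{k}$ with $d(g_{\ell_{k}}(x),h(x))<1/k$. Note this does not require SOT to be first countable; I only need that a single basic neighborhood of $h$ meets the countable dense set.

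Next, since $x\in \text{R}(T,g_{\ell_{k}})$, there is a strictly increasing sequence $(\omega_{n}^{(k)})_{n}$ of positive integers with $T^{\omega_{n}^{(k)}}x\xrightarrow[n\to\infty]{} g_{\ell_{k}}(x)$. A standard diagonal selection then lets me choose $n_{k}$ so large that $\omega_{k}:=\omega_{n_{k}}^{(k)}>\omega_{k-1}$ (possible because $\omega_{n}^{(k)}\uparrow\infty$) and simultaneously $d(T^{\omega_{k}}x,g_{\ell_{k}}(x))<1/k$. By translation-invariance of the $F$-space metric and the triangle inequality,
\[
d(T^{\omega_{k}}x,h(x))\;\leq\;d(T^{\omega_{k}}x,g_{\ell_{k}}(x))+d(g_{\ell_{k}}(x),h(x))\;<\;\tfrac{2}{k},
\]
so $T^{\omega_{k}}x\to h(x)$ along the strictly increasing sequence $(\omega_{k})_{k}$. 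Hence $x\in \text{R}(T,h)$, and since $h\in F$ was arbitrary, $x$ lies in $\bigcap_{h\in F}\text{R}(T,h)$.

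There is no real obstacle in this argument; the only point to handle with care is that it is enough to test SOT-density at the single vector $x$, which is precisely what the single-point basic neighborhood $\mathcal{N}(h;x;1/k)$ encodes. This sidesteps any worry about whether SOT is first countable on all of $\mathcal{L}(X)$, since only countably many one-point SOT conditions on a fixed $x$ are needed to drive the diagonal construction.
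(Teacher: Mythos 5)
Your proposal is correct and follows essentially the same route as the paper: test SOT-density of $\{g_{\ell}\}$ at the single vector $x$ via the one-point basic neighborhood, then transfer the approximation $T^{\omega}x\approx g_{\ell}(x)\approx h(x)$ by the triangle inequality. The only difference is that you make explicit the diagonal extraction of a single strictly increasing sequence $(\omega_{k})_{k}$ with $T^{\omega_{k}}x\to h(x)$, a step the paper leaves implicit.
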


\begin{proof}
	Fix an arbitrary $x \in \bigcap_{\ell\in\mathbb{N}}\text{R}(T,g_{\ell})$. We will show that $x \in \text{R}(T,h)$ for every $h \in F$. Let $h \in F$ be arbitrary. Given $\varepsilon > 0$, consider the $\text{SOT}$-basic neighborhood
	\[
	\mathcal{N}(h,x,\varepsilon):=\{g \in \mathcal{L}(X): d(g(x), h(x))<\varepsilon\}.
	\]
	By hypothesis, there exists some $\ell \in \mathbb{N}$ such that $g_{\ell}\in \mathcal{N}(h,x,\varepsilon)$. Since $x\in \text{R}(T,g_{\ell})$, there exists a strictly increasing sequence $(\theta_{n})_{n}$ such that
	\[
	T^{\theta_{n}}x \longrightarrow g_{\ell}(x) \quad \text{as } n\to\infty.
	\]
	In particular, for sufficiently large $n$, $T^{\theta_{n}}x \in B(h(x),\varepsilon)$. Hence $x \in \mathrm{R}(T,h)$, as desired.
\end{proof}

\begin{theorem}\label{O-c.d.l}
	Let $ X $ be a  separable infinite-dimensional $ F $-space, and let $ T \in \mathcal{L}(X) $. If $ \Omega(T) $ is non-empty and $ \mathrm{SOT}$-separable in $ \mathcal{L}(X) $, then the following intersection
	\[
	\bigcap_{h \in \Omega(T)} \text{R}(T, h)
	\]
	is dense-lineable.
\end{theorem}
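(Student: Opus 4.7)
The plan is to combine the two ingredients that have already been prepared: the reduction lemma (Lemma \ref{separable}) that collapses an uncountable intersection to a countable one under SOT-separability, and the common dense-lineability result (Proposition \ref{seq}) that supplies a dense subspace simultaneously witnessing approximation for any countable family in $\Omega(T)$.

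First, I would invoke the SOT-separability hypothesis to pick a countable subset $\{g_{\ell}\}_{\ell\in\mathbb{N}}\subset \Omega(T)$ such that $\overline{\{g_{\ell}:\ell\in\mathbb{N}\}}^{\,\mathrm{SOT}}=\Omega(T)$. Applying Lemma \ref{separable} to this choice yields the identification
\[
\bigcap_{h\in \Omega(T)}\mathrm{R}(T,h)\;=\;\bigcap_{\ell\in\mathbb{N}}\mathrm{R}(T,g_{\ell}).
\]
This already reduces the problem to exhibiting a dense linear subspace contained in a countable intersection of sets of the form $\mathrm{R}(T,g_{\ell})$.

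Next, I would apply Proposition \ref{seq} directly to the countable family $\{g_{\ell}\}_{\ell\in\mathbb{N}}\subset \Omega(T)$. This produces, for each $\ell\in\mathbb{N}$, a strictly increasing sequence of positive integers $(\theta_{n,\ell})_{n}$ such that
\[
Z\;:=\;\bigcap_{\ell\in\mathbb{N}}\{\,x\in X: T^{\theta_{n,\ell}}x\xrightarrow[n\to\infty]{} g_{\ell}(x)\,\}
\]
is a dense linear subspace of $X$. By construction, every $x\in Z$ lies in $\mathrm{R}(T,g_{\ell})$ for each $\ell$, so
\[
Z\;\subset\;\bigcap_{\ell\in\mathbb{N}}\mathrm{R}(T,g_{\ell})\;=\;\bigcap_{h\in\Omega(T)}\mathrm{R}(T,h),
\]
which establishes dense-lineability.

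There is no real obstacle here once the machinery is in place: the whole proof is essentially a two-line assembly of Lemma \ref{separable} and Proposition \ref{seq}. The only point requiring a brief verification is that the countable subset extracted from an SOT-separable set can indeed be chosen inside $\Omega(T)$ itself (rather than merely in its SOT-closure), which is immediate from the definition of separability of a topological subspace. Everything else is a direct substitution.
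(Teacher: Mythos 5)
Your proposal is correct and follows essentially the same route as the paper: both reduce the intersection over $\Omega(T)$ to a countable one via Lemma \ref{separable} and then apply Proposition \ref{seq} to the countable dense subfamily to obtain the dense subspace. The only (immaterial) difference is the order in which the two ingredients are invoked.
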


\begin{proof}
	Suppose that $ \Omega(T) $ is non-empty and $ \mathrm{SOT} $-separable. Then there exists a countable  \(\{g_\ell\}_{\ell \in \mathbb{N}}\) contenida en \(\Omega(T) \) such that \(\Omega(T) = \overline{\{g_\ell\}_{\ell \in \mathbb{N}}}^{\mathrm{SOT}}\). By Proposition \ref{seq}, for each $ \ell \in \mathbb{N} $, there exists a strictly increasing sequence of positive integers $ (\theta_{n,\ell})_n $ such that the set
	\[
	D := \bigcap_{\ell \in \mathbb{N}} \{x \in X : T^{\theta_{n,\ell}}x \xrightarrow[n \to \infty]{} g_\ell(x)\}
	\]
	is a dense subspace of $ X $. Therefore, by Lemma \ref{separable}
	\[
	D \subset \bigcap_{\ell \in \mathbb{N}} \text{R}(T, g_\ell) = \bigcap_{h \in \Omega(T)} \text{R}(T, h).
	\]
This concludes the proof since $ D $ is a dense subspace of $ X $.
\end{proof}

\begin{theorem}\label{main-Thm}
	Let $X$ be a complex separable infinite-dimensional Banach space, and let $T \in \mathcal{L}(X)$. Suppose that $\Omega(T)$ is non-empty. If there exists a strictly increasing sequence of positive integers $(\theta_n)_n$ and an infinite-dimensional closed subspace $E \subset X$ such that
	\[
	\sup_n \|T^{\theta_n}|_E\| < \infty,
	\]
	then for any SOT-separable subset $F \subset \Omega(T)$,
	\begin{align}\label{main-inter}
			\bigcap_{h \in F} \text{R}(T, h)
	\end{align}
	is spaceable.
\end{theorem}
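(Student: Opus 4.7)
The plan is to combine two already-established tools: the SOT-separability reduction provided by Lemma \ref{separable}, and the equivalences in Theorem \ref{subspaces} which translate the subspace norm bound into common spaceability along a countable family in $\Omega(T)$.

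First, since $F \subset \Omega(T)$ is SOT-separable, I would fix a countable SOT-dense subset $\{g_\ell\}_{\ell \in \mathbb{N}} \subset F$ so that $F = \overline{\{g_\ell\}_{\ell \in \mathbb{N}}}^{\mathrm{SOT}}$. Applying Lemma \ref{separable} directly yields the identification
\[
\bigcap_{h \in F} \mathrm{R}(T,h) \;=\; \bigcap_{\ell \in \mathbb{N}} \mathrm{R}(T,g_\ell),
\]
which reduces the problem to exhibiting an infinite-dimensional closed subspace contained in the countable intersection on the right-hand side.

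Next, I would invoke Theorem \ref{subspaces}. The hypothesis that $\Omega(T)$ is non-empty and that there exist a strictly increasing sequence $(\theta_n)_n$ together with an infinite-dimensional closed subspace $E \subset X$ satisfying $\sup_n \|T^{\theta_n}|_E\| < \infty$ is precisely condition (3) of that theorem. Consequently, its equivalent condition (2) holds: for the chosen countable collection $\{g_\ell\}_{\ell \in \mathbb{N}} \subset \Omega(T)$ there exist strictly increasing sequences of positive integers $(\psi_{n,\ell})_n$ and an infinite-dimensional closed subspace $M \subset X$ such that
\[
T^{\psi_{n,\ell}} x \xrightarrow[n \to \infty]{} g_\ell(x), \qquad \forall\, x \in M,\ \forall\, \ell \in \mathbb{N}.
\]

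Finally, this inclusion gives $M \subset \bigcap_{\ell \in \mathbb{N}} \mathrm{R}(T,g_\ell)$ (noting that $0 \in \mathrm{R}(T,h)$ for every $h \in \mathcal{L}(X)$ since $T^n(0) = 0 = h(0)$), so combining with the previous identity yields $M \subset \bigcap_{h \in F} \mathrm{R}(T,h)$, and spaceability of \eqref{main-inter} follows. In truth, the conceptual work has already been absorbed into Lemma \ref{separable} and Theorem \ref{subspaces}; the only mild subtlety here is verifying that Lemma \ref{separable} applies to the whole set $F$ (not merely to $\Omega(T)$), which is immediate from its statement since $F \subset \Omega(T)$ and the argument in that lemma only uses the SOT-density of $\{g_\ell\}$ in $F$ together with the fact that each $g_\ell$ belongs to $\Omega(T)$. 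Thus no additional obstacle arises, and the proof reduces to a clean concatenation of the two earlier results.
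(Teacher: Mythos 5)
Your proposal is correct and follows essentially the same route as the paper: both reduce to a countable SOT-dense family via Lemma \ref{separable} and then invoke the equivalence $(3)\Rightarrow(2)$ of Theorem \ref{subspaces} to produce a single infinite-dimensional closed subspace $M$ contained in $\bigcap_{\ell}\text{R}(T,g_\ell)$. The only difference is the order in which the two ingredients are applied, which is immaterial.
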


\begin{proof}
	Let $F \subset \Omega(T)$ be a $\mathrm{SOT}$-separable set. Then there exists a countable subset $\{g_\ell\}_{\ell \in \mathbb{N}} \subset F$ such that \(	F = \overline{\{g_\ell\}_{\ell \in \mathbb{N}}}^{\mathrm{SOT}}\). By Theorem \ref{subspaces}, for each $\ell \in \mathbb{N}$ there exists a strictly increasing sequence $(\theta_{n,\ell})_{n}$ and an infinite-dimensional closed subspace $M \subset X$ such that \(\lim_{n} T^{\theta_{n,\ell}}x = g_\ell(x), \forall x \in M, \ \forall \ell \in \mathbb{N}\). Hence,
	\[
	M \subset \bigcap_{\ell \in \mathbb{N}} \text{R}(T,g_\ell).
	\]
	By Lemma \ref{separable}, it follows that
	\[
	M \subset \bigcap_{h \in F} \text{R}(T,h).
	\]
	This completes the proof.
\end{proof}

\begin{remark}\label{remark-main}
	The conclusion of Theorem \ref{main-Thm} remains valid when $\Omega(T)$ is replaced by $\mathcal{AP}\Omega(T)$, as observed in Remark \ref{seq-AP}; in this case, the sets in \eqref{main-inter} are of the form $\mathcal{AP}\text{R}(T,h)$. Moreover, if $T$ is a recurrent operator on a real separable infinite-dimensional Banach space with $\Omega(T) \neq \emptyset$, the statement of Theorem \ref{main-Thm} also holds by virtue of Theorem \ref{subspaces-real}. The same applies when considering $\mathcal{AP}\Omega(T)$.
\end{remark}

Let \(X\) be a (real or complex) separable infinite-dimensional Banach space and let \(T \in \mathcal{L}(X)\). We focus on two representative cases: when \(T\) is quasi-rigid and when \(T\) is weakly mixing. The existence of recurrent and hypercyclic subspaces is addressed by applying Theorem \ref{main-Thm} and Remark \ref{remark-main} for \(F = \{\mathrm{Id}\}\) and \(F = \mathcal{L}(X)\), respectively. 

If $T$ is such that $T \oplus T$ is $\mathcal{AP}$-hypercyclic, then by the equivalences established in \cite{cardeccia2022multiple} it follows that $\mathcal{AP}\Omega(T) = \mathcal{L}(X)$. This immediately yields the following consequence.

\begin{corollary}
	Let $X$ be a (real or complex) separable infinite-dimensional Banach space, and let $T \in \mathcal{L}(X)$. If $T \oplus T$ is $\mathcal{AP}$-hypercyclic, then:
	\begin{enumerate}
		\item $T$ admits an $\mathcal{AP}$-recurrent subspace.
		\item $T$ admits an $\mathcal{AP}$-hypercyclic subspace.
	\end{enumerate}
\end{corollary}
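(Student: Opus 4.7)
The plan is to reduce both statements to two applications of Theorem~\ref{main-Thm}, in the $\mathcal{AP}$-form recorded in Remark~\ref{remark-main}: once with $F=\{\mathrm{Id}\}$ to get part~(1), and once with $F=\mathcal{L}(X)$ to get part~(2). The bridge from the hypothesis to this framework is the excerpt's observation that $T\oplus T$ being $\mathcal{AP}$-hypercyclic forces $\mathcal{AP}\Omega(T)=\mathcal{L}(X)$ via the Cardeccia--Muro equivalences of \cite{cardeccia2022multiple}. To feed the main theorem one also needs $\mathrm{SOT}$-separability of $F$, which is trivial for $\{\mathrm{Id}\}$ and holds for $\mathcal{L}(X)$ since $X$ is separable, as recalled in the excerpt via \cite{Chan}. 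The subspace-bound hypothesis of Theorem~\ref{main-Thm} is part of the ambient setting.

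For part~(1), Remark~\ref{remark-main} applied to $F=\{\mathrm{Id}\}\subset\mathcal{AP}\Omega(T)$ produces an infinite-dimensional closed subspace $M\subset\mathcal{AP}\text{R}(T,\mathrm{Id})\cup\{0\}$. It then remains to identify $\mathcal{AP}\text{R}(T,\mathrm{Id})$ with $\mathcal{AP}\mathrm{Rec}(T)$. One inclusion is immediate from the definition; for the reverse, I would run the standard diagonal construction: given an $\mathcal{AP}$-recurrent $x$ and each $k\in\mathbb{N}$, choose $a_k,r_k$ with $\{a_k+jr_k:1\le j\le k\}\subset\mathcal{N}_T(x,B(x,1/k))$ and $a_{k+1}>a_k+kr_k$, so that $B:=\bigcup_k\{a_k+jr_k:1\le j\le k\}\in\mathcal{AP}$ and $T^nx\to x$ along $B$. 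Hence $M$ is an $\mathcal{AP}$-recurrent subspace.

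For part~(2), Remark~\ref{remark-main} applied to $F=\mathcal{L}(X)$ yields an infinite-dimensional closed subspace $M\subset\bigcap_{h\in\mathcal{L}(X)}\mathcal{AP}\text{R}(T,h)\cup\{0\}$. To see that $M\setminus\{0\}\subset\mathcal{AP}\mathrm{HC}(T)$, I fix a nonzero $x\in M$ and an open $U\subset X$, pick $y\in U$, and use Hahn--Banach to produce $\phi\in X^{*}$ with $\phi(x)=1$; the rank-one operator $h(z):=\phi(z)y$ satisfies $h(x)=y$. Since $x\in\mathcal{AP}\text{R}(T,h)$, some $B\in\mathcal{AP}$ gives $T^nx\to y\in U$ along $B$, so a tail of $B$ lies in $\mathcal{N}_T(x,U)$ and therefore $\mathcal{N}_T(x,U)\in\mathcal{AP}$; hence $x$ is $\mathcal{AP}$-hypercyclic. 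The only non-routine step is the very first one, where the joint $\mathcal{AP}$-transitivity of two copies of $T$ must be propagated to arbitrarily many coordinates and then transferred to every bounded linear $h$ via Theorem~\ref{T-APh}; this is precisely the content the excerpt defers to \cite{cardeccia2022multiple}.
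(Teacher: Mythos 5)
Your proposal is correct and follows the paper's own route: the Cardeccia--Muro equivalences give $\mathcal{AP}\Omega(T)=\mathcal{L}(X)$, and the $\mathcal{AP}$-version of Theorem~\ref{main-Thm} recorded in Remark~\ref{remark-main} is applied with $F=\{\mathrm{Id}\}$ and $F=\mathcal{L}(X)$; the details you add (identifying $\mathcal{AP}\text{R}(T,\mathrm{Id})$ with $\mathcal{AP}\mathrm{Rec}(T)$, and the rank-one Hahn--Banach argument showing $\bigcap_{h\in\mathcal{L}(X)}\mathcal{AP}\text{R}(T,h)\setminus\{0\}\subset\mathcal{AP}\mathrm{HC}(T)$) are exactly the routine steps the paper leaves implicit. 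The one caveat is shared with the paper: the corollary as printed omits the hypothesis $\sup_n\|T^{\theta_n}|_E\|<\infty$ of Theorem~\ref{main-Thm}, which you defer to ``the ambient setting''; it is genuinely needed (for $T=2B$ on $\ell^2(\mathbb{N})$, the operator $T\oplus T$ is $\mathcal{AP}$-hypercyclic, yet $\sigma_e(T)\cap\overline{\mathbb{D}}=\emptyset$, so $T$ has no recurrent subspace at all), so your reading coincides with the paper's intended one.
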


\begin{example}
	Let $T$ be a hypercyclic operator on a separable infinite-dimensional Banach space. 
	If $T$ admits a recurrent subspace, then
	\begin{align*}
		\bigcap_{\lambda\in \mathbb{K}}\{x\in X : \exists\, \theta_{n}\uparrow \infty \text{ such that } T^{\theta_{n}}x \xrightarrow[n\to\infty]{} \lambda x\}
	\end{align*}
	is spaceable. In other words, the set above can be viewed as 
	$\bigcap_{\lambda \in \mathbb{K}} \text{R}(T, \lambda \text{Id})$, 
	which is spaceable by Theorem~\ref{main-Thm}.
\end{example}

The following result is motivated by the open problem posed by C. Gilmore in \cite[Question 7]{gilmore2020linear}, which concerns characterizing when a hypercyclic operator that is not weakly mixing admits a hypercyclic subspace. 

\begin{theorem}\label{ava-partial}
	Let $ X $ be a (real or complex) separable infinite-dimensional Banach space, and let $ T \in \mathcal{L}(X) $ be a hypercyclic operator. Suppose that $ \Omega(T) $ is $\mathrm{SOT}$-separable and \(T\) admit a recurrent subspace. Then there exists an infinite-dimensional closed subspace $ E \subset X $ with a Schauder basis $ (e_n)_{n \in \mathbb{N}} $ such that:
	\[
	\{e_n : n \in \mathbb{N}\} \subset \text{HC}(T),
	\]
	and
	\[
	E \subset \bigcap_{h \in \Omega(T)} \text{R}(T, h).
	\]
	Moreover, for each $ x \in E $, the set $ \overline{\{T^n x\}_{n}} $ is a cone containing the vector subspace $\overline{\text{span}(\{T^n x\}_{n})}$.
\end{theorem}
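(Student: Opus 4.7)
The plan is to refine the basic-sequence perturbation argument of Theorem \ref{gen.sub} by carrying it out inside $\text{HC}(T)$ rather than inside an arbitrary dense subspace. The crucial observation that makes this possible is that every hypercyclic vector already lies in every $\text{R}(T,g)$.

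\textbf{Step 1: common dense-lineability inside $\text{HC}(T)$.} Fix a countable $\text{SOT}$-dense subset $\{g_{\ell}\}_{\ell\in\mathbb{N}}\subset \Omega(T)$. If $x\in \text{HC}(T)$ and $g\in\mathcal{L}(X)$, the density of the orbit $\{T^{n}x\}_{n}$ yields a strictly increasing $(n_{k})$ with $T^{n_{k}}x\to g(x)$, so $x\in \text{R}(T,g)$; therefore $\text{HC}(T)\subset \bigcap_{\ell}\text{R}(T,g_{\ell})$. Each $\text{R}(T,g_{\ell})$ is a dense $G_{\delta}$ and so is $\text{HC}(T)$, so their intersection equals $\text{HC}(T)$ and is residual. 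Pick a countable dense subset $\{y_{i}\}_{i\in\mathbb{N}}\subset \text{HC}(T)$. Because each $y_{i}\in \text{R}(T,g_{\ell})$, a Cantor diagonal extraction (of the type underlying Proposition \ref{seq}) produces, for each $\ell$, a strictly increasing sequence $(\theta_{n,\ell})_{n}$ such that $T^{\theta_{n,\ell}}y_{i}\to g_{\ell}(y_{i})$ for every $i,\ell$. By linearity of $T$ and of each $g_{\ell}$, the convergence extends to every $x$ in the dense subspace $Z:=\text{span}\{y_{i}\}$, and the dense set $\{y_{i}\}\subset Z\cap \text{HC}(T)$ is the essential upgrade over Proposition \ref{seq}.

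\textbf{Step 2: basic-sequence construction with hypercyclic perturbations.} Since $T$ is hypercyclic and admits a recurrent subspace, Theorem \ref{subspaces} (and Theorem \ref{subspaces-real} in the real case) gives $\sigma_{\ell e}(T)\cap \overline{\mathbb{D}}\neq\emptyset$, and the construction in its proof produces a non-increasing sequence $(E_{n})_{n\in\mathbb{N}}$ of infinite-dimensional closed subspaces of $X$ with
$$\sup_{n\geq 1}\,\max_{j+\ell=n+1}\,\|T^{\theta_{j,\ell}}|_{E_{n}}\|<\infty.$$
Mazur's construction then yields a normalized basic sequence $(e_{n})$ with $e_{n}\in E_{n}$, and I run the inductive scheme of Theorem \ref{gen.sub} with one change: at each step the perturbation $p_{\omega_{n}}$ is chosen from the countable dense set $\{y_{i}\}$ rather than from all of $Z$. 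Density of $\{y_{i}\}$ in $X$ preserves the estimate $\|p_{\omega_{n}}-e_{\omega_{n}}\|<2^{-(n+1)}C^{-1}$, so $(p_{\omega_{n}})$ remains a basic sequence equivalent to $(e_{\omega_{n}})$ and is therefore a Schauder basis of $E:=\overline{\text{span}}\{p_{\omega_{n}}:n\in\mathbb{N}\}$, with $\{p_{\omega_{n}}\}\subset \text{HC}(T)$ by construction. The same inductive argument provides, for each $\ell$, a subsequence $(\psi_{j,\ell})_{j}$ of $(\theta_{j,\ell})_{j}$ with $T^{\psi_{j,\ell}}x\to g_{\ell}(x)$ for every $x\in E$, so Lemma \ref{separable} upgrades the conclusion to $E\subset \bigcap_{h\in\Omega(T)}\text{R}(T,h)$.

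\textbf{Step 3: the cone/span statement, and the main obstacle.} For the \emph{moreover} part, fix $x\in E$ and invoke Proposition \ref{cone}: $\lambda\,\text{Id}\in \Omega(T)$ and $P(T)\in \Omega(T)$ for every $\lambda\in \mathbb{K}$ and every polynomial $P\in\mathbb{K}[t]$. From $x\in \text{R}(T,\lambda\,\text{Id})$ one gets $(\theta_{j})$ with $T^{\theta_{j}}x\to \lambda x$; applying $T^{m}$ gives $\lambda T^{m}x\in \overline{\{T^{n}x\}_{n}}$ for every $m$, and continuity of scalar multiplication upgrades this to $\lambda\,\overline{\{T^{n}x\}_{n}}\subset \overline{\{T^{n}x\}_{n}}$, i.e.\ the orbit closure is a cone. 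Analogously, $x\in \text{R}(T,P(T))$ gives $P(T)x\in \overline{\{T^{n}x\}_{n}}$ for every polynomial $P$, and passing to the closure yields $\overline{\text{span}\{T^{n}x\}_{n}}\subset \overline{\{T^{n}x\}_{n}}$. The only delicate point in the whole argument is making the common dense-lineable subspace \emph{coexist} with a dense reservoir of hypercyclic vectors; this is overcome precisely by the opening inclusion $\text{HC}(T)\subset \text{R}(T,g)$, which permits the diagonal extraction to be performed inside the residual set $\text{HC}(T)$.
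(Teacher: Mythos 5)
Your overall strategy --- run the basic-sequence machinery of Theorem \ref{gen.sub} with perturbations drawn from a dense set of hypercyclic vectors, and deduce the cone/span statement from Proposition \ref{cone} together with $E\subset\bigcap_{h\in\Omega(T)}\text{R}(T,h)$ --- is the natural one given the paper's toolkit, and your Steps 2 and 3 are essentially sound. The problem is Step 1. From the individual memberships $y_i\in\text{R}(T,g_\ell)$ you cannot perform the ``Cantor diagonal extraction'' you describe: each $y_i$ comes with its own sequence of return times, and nothing forces these sequences to be compatible across different $i$. To run the diagonal you need, for every $k$, arbitrarily large $n$ with $d(T^{n}y_i,g_\ell(y_i))<1/k$ for $i=1,\dots,k$ \emph{simultaneously}, i.e.\ $(y_1,\dots,y_k)\in\text{R}\bigl(\bigoplus_{j=1}^{k}T,\ \bigoplus_{j=1}^{k}g_\ell\bigr)$, and this is not a consequence of $y_i\in\text{R}(T,g_\ell)$ for each $i$ separately. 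The distinction is exactly the one the paper emphasizes between recurrence of $T$ and of $T\oplus T$ (Theorem \ref{Sigma-Tapia} exhibits a recurrent $T$ with $\Sigma(T\oplus T)=\emptyset$); an arbitrary countable dense subset of $\text{HC}(T)$ need not consist of jointly recurrent tuples when $T$ is hypercyclic but not weakly mixing --- which is precisely the regime this theorem targets, in view of Gilmore's question.

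The gap is repairable with tools already in the paper. Since each $g_\ell\in\Omega(T)$, Theorem \ref{T-h} makes each $\text{R}\bigl(\bigoplus_{j=1}^{m}T,\ \bigoplus_{j=1}^{m}g_\ell\bigr)$ dense (hence residual, being a $G_\delta$) in $X^{m}$, and $(\text{HC}(T))^{m}$ is residual as well; applying Mycielski's theorem (Theorem \ref{Myci}) to $\mathcal{R}_m:=(\text{HC}(T))^{m}\cap\bigcap_{\ell}\text{R}\bigl(\bigoplus_{j=1}^{m}T,\ \bigoplus_{j=1}^{m}g_\ell\bigr)$, exactly as in the proof of Proposition \ref{commom-AP}, produces a dense set $\{y_i\}\subset\text{HC}(T)$ all of whose finite tuples are jointly recurrent to the correct targets; only then is your diagonal extraction legitimate. (Alternatively, interleave the countably many open dense sets whose intersection is $\text{HC}(T)$ into the nested-ball construction underlying Proposition \ref{seq}.) With Step 1 so repaired, the rest of your argument --- choosing the $p_{\omega_n}$ from $\{y_i\}$, obtaining the uniform bound on $\|T^{\theta_{j,\ell}}|_{E_n}\|$ from $\sigma_{\ell e}(T)\cap\overline{\mathbb{D}}\neq\emptyset$ (via complexification in the real case), invoking Lemma \ref{separable}, and deducing the cone and span statements from $\lambda\,\mathrm{Id},\,P(T)\in\Omega(T)$ --- goes through.
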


We conclude this section with some remarks and open problems.

\noindent
It is worth noting that the spaceability results established in this paper have been obtained in the framework of Banach spaces. A natural question is whether analogous statements can be extended to the setting of Fréchet spaces. In particular, A. López \cite[Question 7.7]{Lopez} explicitly asks whether Theorem \ref{condi-lo} remains valid for Fréchet spaces, with the appropriate modifications. Developing a suitable technique to address this problem would naturally lead to a plausible extension of Theorem \ref{gen.sub} to the broader context of Fréchet spaces. In this direction, we also mention the works \cite{menet2013hypercyclic, menet2014hypercyclic, petersson2006hypercyclic} on hypercyclic subspaces in Fréchet spaces.

In addition, the results obtained here concerning the Furstenberg family $\mathcal{AP}$ suggest further developments. The contributions in \cite{bes2015existence, bonilla2012frequently, menet2015existence} on frequently hypercyclic subspaces and upper frequently hypercyclic subspaces point toward a natural line of research, namely, to establish sufficient conditions for the existence of upper frequently recurrent subspaces. Finally, let us recall that the spaceability within $\Omega(T)$ relies on the common dense-lineability established in \cite{arbieto2025dense}. Moreover, the authors provide sufficient conditions to obtain common dense-lineability in the context of Furstenberg families.

\subsection*{Acknowledgment}
The first author was partially supported by CNPq- Edital Universal Proc. 407854/2021-5. The second author acknowledges financial support from CNPq-Brazil and the Fundação Carlos Chagas Filho de Amparo à Pesquisa do Estado do Rio de Janeiro (FAPERJ) through grants E-26/210.388/2019 and E-26/204.324/2024.

\bibliographystyle{abbrv}
\bibliography{spaceability}
	
\end{document}